\def\paragraph{\@startsection{paragraph}{4}%
  \z@\z@{-\fontdimen2\font}%
  {\normalfont\bfseries}}
\newcolumntype{L}{>{\centering\arraybackslash}m{5cm}}
\newcommand{\N}{\mathbb{N}}
\newcommand{\Z}{\mathbb{Z}}
\newcommand{\R}{\mathbb{R}}
\newcommand{\mc}{\mathcal}
\newcommand{\mb}{\mathbb}
\newcommand{\mf}{\mathfrak}
\renewcommand{\b}{\beta}
\newcommand{\g}{\gamma}
\renewcommand{\d}{\delta}
\newcommand{\e}{\varepsilon}
\newcommand{\w}{\omega}
\newcommand{\s}{\sigma}
\newcommand{\ph}{\varphi}
\newcommand{\p}{\partial}
\renewcommand{\t}{\tau}
\newcommand{\Si}{\Sigma}
\newcommand{\Mod}[1]{\ (\mbox{mod}\ #1)}
\newcommand{\set}[1]{\left\{#1\right\}}
\newcommand{\la}{\langle}
\newcommand{\ra}{\rangle}
\renewcommand{\r}{\rightarrow}
\newcommand{\xr}{\xrightarrow}
\newcommand{\hr}{\hookrightarrow}
\renewcommand{\emptyset}{\varnothing}
\newcommand{\diam}{\operatorname{diam}}
\newcommand{\inv}{^{-1}}
\newcommand{\bd}{\operatorname{Bd}}
\newcommand{\Ncal}{\mathcal{N}}		
\newcommand{\dis}{\operatorname{dis}}
\newcommand{\dn}{d_{\mathcal{N}}}	
\newcommand{\db}{d_{\operatorname{B}}}
\newcommand{\di}{d_{\operatorname{I}}}
\renewcommand{\H}{\mathcal{H}}
\newcommand{\U}{\mathcal{U}}
\newcommand{\pvec}{{\operatorname{\mathbf{PVec}}}}
\newcommand{\pers}{{\operatorname{\mathbf{Pers}}}}
\newcommand{\rank}{\operatorname{rank}}
\newcommand{\card}{\operatorname{card}}
\newcommand{\so}{\operatorname{so}}
\newcommand{\si}{\operatorname{si}}
\newcommand{\im}{\operatorname{im}}
\newcommand{\src}{\mf{D}^{\operatorname{so}}}
\newcommand{\sink}{\mf{D}^{\operatorname{si}}}
\newcommand{\id}{\operatorname{id}}
\newcommand{\pow}{\operatorname{pow}}
\renewcommand{\1}{^{(1)}}
\newcommand{\ceil}[1]{\left \lceil{#1}\right \rceil }
\newcommand{\floor}[1]{\left \lfloor{#1}\right \rfloor }
\newcommand{\Rsc}{\mathscr{R}}	
\renewcommand{\H}{\mathcal{H}}	
\newcommand{\Cc}{\mathcal{C}}
\newcommand{\dgm}{\operatorname{Dgm}}
\newcommand{\V}{\mathcal{V}}
\newcommand{\us}{\mathbb{S}} 
\newcommand{\ud}{\mathbb{D}}
\newcommand{\cech}{\operatorname{\mathbf{\check{C}}}}
\newcommand{\rips}{\operatorname{\mathbf{VR}}}
\newcommand{\vrg}{\overrightarrow{\operatorname{VR}}}
\newcommand{\wf}{\operatorname{wf}}
\newcommand{\ind}{\operatorname{Ind}}
\newtheorem{theorem}{Theorem}
\newtheorem{question}{Question}
\newtheorem{proposition}[theorem]{Proposition}
\newtheorem{lemma}[theorem]{Lemma}
\newtheorem{corollary}[theorem]{Corollary}
\theoremstyle{definition}
\newtheorem{definition}{Definition}
\newtheorem{example}[theorem]{Example}
\newtheorem{remark}[theorem]{Remark}
\newtheorem{conjecture}{Conjecture}
\newtheorem{claim}{Claim}
\newenvironment{subproof}[1][\proofname]{%
  \begin{proof}[#1]%
}{%
  \end{proof}%
}
\title[Persistent homology of asymmetric networks]{A functorial Dowker theorem and persistent homology of asymmetric networks}
\author{Samir Chowdhury and Facundo M\'emoli}
\begin{document}

\date{\today}
\subjclass[2010]{55U99, 68U05 55N35}

\email{chowdhury.57@osu.edu, memoli@math.osu.edu}

\address[S. Chowdhury]{Department of Mathematics, The Ohio State University. Phone: (614) 292-6805.}
\address[F. M\'emoli]{Department of Mathematics and Department of
  Computer Science and Engineering, The Ohio State University. Phone: (614) 292-4975,
Fax: (614) 292-1479.}
\begin{abstract} 
We study two methods for computing network features with topological underpinnings: the Rips and Dowker persistent homology diagrams. Our formulations work for general networks, which may be asymmetric and may have any real number as an edge weight.  We study the sensitivity of Dowker persistence diagrams to asymmetry via numerous theoretical examples, including a family of highly asymmetric cycle networks that have interesting connections to the existing literature. In particular, we characterize the Dowker persistence diagrams arising from asymmetric cycle networks. We investigate the stability properties of both the Dowker and Rips persistence diagrams, and use these observations to run a classification task on a dataset comprising simulated hippocampal networks. Our theoretical and experimental results suggest that Dowker persistence diagrams are particularly suitable for studying asymmetric networks. As a stepping stone for our constructions, we prove a functorial generalization of a theorem of Dowker, after whom our constructions are named.
\end{abstract} 

\maketitle

\tableofcontents
\section{Introduction} 

Networks are used throughout the sciences for representing the complex
relations that exist between the objects of a dataset
\cite{newman2003structure,kleinberg-book}. Network data arises from applications in social science \cite{kumar2010structure,kleinberg-book}, commerce and economy \cite{elliott2014financial,kleinberg-book,acemoglu2015systemic}, neuroscience \cite{sporns2011networks,sporns2012discovering,sporns2004motifs,rubinov2010complex,
pessoa2014understanding}, biology \cite{barabasi2004network,huson2010phylogenetic}, and defence \cite{masys2014networks}, to name a few sources. Networks are often directed, in the sense that weights attached to edges do not satisfy any symmetry property, and this asymmetry often precludes the applicability of many standard methods for data analysis. 

Network analysis problems come in a wide
range of flavors. One problem is in \emph{exploratory data analysis}:
given a network representing a dataset of societal, economic, or scientific value,
the goal is to obtain insights that are meaningful to the interested
party and can help uncover interesting phenomena. Another problem is
\emph{network classification}: given a ``bag" of networks representing
multiple instances of different phenomena, one wants to obtain a
clustering which groups the networks together according to the
different phenomena they represent. 

Because networks are often too complex to deal with directly, one
typically extracts certain invariants of networks, and infers structural properties
of the networks from properties of these invariants. While there are
numerous such network invariants in the existing literature, there is
growing interest in adopting a particular invariant arising from
\emph{persistent homology} \cite{frosini1992measuring,robins1999towards,edelsbrunner2002topological,zomorodian2005computing}, known as a \emph{persistence diagram}, to
the setting of networks. Persistence diagrams are used in the context
of finite metric space or point cloud data to pick out \emph{features of
significance} while rejecting random noise
\cite{edelsbrunner2002topological,carlsson2009topology}. Since a network on $n$ nodes is regarded,
in the most general setting, as an $n\times n$ matrix of real numbers,
i.e. as a generalized metric space, it is conceivable that one should
be able to describe persistence diagrams for networks as well. 

The motivation for computing persistence diagrams of networks is at least
two-fold: (1) comparing persistence diagrams has been shown to be a
viable method for \emph{shape matching} applications \cite{frosini1992measuring,frosini1999size, collins2004barcode,bot-stab,carlsson2005persistence,dgh-pers},
analogous to the network classification problem described above, and
(2) persistence diagrams have been successfully applied to feature
detection, e.g. in detecting the structure of protein molecules (see \cite{krishnamoorthy2007topological,xia2014persistent} and 
\cite[\S 6]{edelsbrunner2002topological}) and solid materials (see \cite{hiraoka2016hierarchical}) and might thus be a useful tool for exploratory analysis of network datasets.

We point the reader to \cite{ghrist2008barcodes,
  edelsbrunner2008persistent,
  carlsson2009topology,zigzag,weinberger2011persistent,burghelea2013topological,dey2014computing} for surveys of
persistent homology and its applications, and some recent extensions.

Some extant approaches that obtain persistence diagrams from networks assume that the underlying network data actually satisfies metric properties \cite{lee2011computing, khalid2014tracing}. A more general approach for obtaining persistence diagrams from networks is followed in \cite{horak2009persistent, carstens2013persistent, giusti2015clique,
petri2013topological}, albeit with the restriction that the input data sets are required to be symmetric matrices.

Our chief goal is to devise notions of persistent homology that are directly applicable to asymmetric networks in the most general sense, and are furthermore capable of absorbing structural information contained in the asymmetry. 

\subsection{Contributions and an overview of our approach}

\begin{figure}
\begin{tikzpicture}
\node (N) at (0,0){$\Ncal$};
\node (F) at (3,0){$\mc{F}$};
\node (D) at (6,0){$\dgm$};

\draw (N) edge[loop above, out=150, in=90, looseness=8, ->] node[left]{$\mf{s}$} (N);
\draw (N) edge[loop above, out=210, in=270, looseness=8, ->] node[left]{$\mf{t}$} (N);

\draw (N) edge[->, bend left, looseness =1.5] node[above]{$\mf{R}$} (F);
\draw (N) edge[->] node[above]{$\sink$} (F);
\draw (N) edge[->, bend right, looseness=1.5] node[above]{$\src$} (F);

\draw (F) edge[->] node[above]{$H_k$} (D);
\end{tikzpicture}
\captionsetup{width=.9\linewidth}
\caption{A schematic of some of the objects studied in this paper. $\Ncal$ is the collection of all weighted, directed networks (i.e. digraphs with possibly asymmetric real weights). $\mc{F}$ is the collection of filtered simplicial complexes. $\dgm$ is the collection of persistence diagrams. We study the Rips ($\mf{R}$) and Dowker ($\sink,\src$) filtrations, each of which takes a network as input and produces a filtered simplicial complex. $\mf{s}$ and $\mf{t}$ denote the network transformations of symmetrization (replacing a pair of weights between two nodes by the maximum weight) and transposition (swapping the weights between pairs of nodes). $\mf{R}$ is insensitive to both $\mf{s}$ and $\mf{t}$. But $\sink\circ \mf{t} = \src$, $\src\circ \mf{t} = \sink$, and in general, $\sink$ and $\src$ are \emph{not} invariant under $\mf{t}$ (Theorem \ref{thm:sym-trans-summary}).}
\label{fig:overview}
\end{figure}

In this paper, we study two types of persistence diagrams: the
\emph{Rips} and \emph{Dowker} diagrams. We define both invariants in the setting of asymmetric networks with real-valued
weights, without assuming any metric properties at all (not symmetry and not even that the
matrix representing the networks weights vanishes on the diagonal). As a key step in defining the Dowker persistence diagram, we first define two dual constructions, each of which can be referred to as a Dowker persistence diagram, and then prove a \emph{functorial Dowker theorem} which implies that these two possible diagrams are equivalent. Following the line of work in
\cite{dgh-pers}, where stability of Rips persistence diagrams arising
from finite metric spaces was first established, we formulate similar stability results for the Rips and Dowker persistence diagrams of a network. Through various examples, in particular a family of \emph{cycle networks}, we espouse the idea that Dowker persistence diagrams are more appropriate than Rips persistence diagrams for studying asymmetric networks. 
We test our methods by solving a network classification problem on a database of simulated hippocampal networks.

The first step in constructing a persistence diagram from a network is
to construct a nested sequence of simplicial complexes, i.e. a
simplicial filtration, which, in our work, will be the \emph{Rips} or
\emph{Dowker} filtrations associated to a network. Rips and Dowker
simplicial complexes and their associated filtrations are classically
defined for metric spaces \cite{de2004topological, ghrist-eat}, and the
generalization to networks that we use is a natural extension of the
metric versions. After producing the simplicial filtrations, the
standard framework of \emph{persistent homology} takes over, and we obtain the Rips or Dowker persistence diagrams. 

Practitioners of persistent homology might recall that there are
\emph{two} Dowker complexes \cite[p. 73]{ghrist-eat}, which we
describe as the \emph{source} and \emph{sink} Dowker complexes. A
subtle point to note here is that each of these Dowker complexes can
be used to construct a persistence diagram. A folklore result in the
literature about persistent homology of metric spaces, known as
\emph{Dowker duality}, is that the two persistence diagrams arising
this way are equal \cite[Remark 4.8]{chazal2014persistence}. In this
paper we prove a stronger result---a functorial Dowker theorem---from
which the duality follows easily. Furthermore, the context of this
result is strictly more general than that of metric spaces (see below
for a more thorough description of the functorial version of Dowker's theorem).

Providing a construction of Rips and Dowker persistence diagrams is not enough: in order for these invariants to be useful in practice, one must verify that the diagrams are \emph{stable}. In this context, stability means the following: the dissimilarity between two Rips (resp. Dowker) persistence diagrams obtained from two networks should be bounded above by a function of the dissimilarity between the two networks. To our knowledge, stability is not addressed in the existing literature on producing persistence diagrams from networks. In our work, we provide stability results for both the Rips and Dowker persistence diagrams (Propositions \ref{prop:rips-stab} and \ref{prop:dowker-stab}). One key ingredient in our proof of this result is a notion of \emph{network distance} that follows previous work in \cite{clust-net, nets-allerton, nets-icassp}. This network distance is analogous to the Gromov-Hausdorff distance between metric spaces, which has previously been used to prove stability results for hierarchical clustering \cite{carlsson2008persistent,clust-um} and Rips persistence diagrams obtained from finite metric spaces \cite[Theorem 3.1]{dgh-pers}. The Gromov-Hausdorff distance was later used in conjunction with the Algebraic Stability Theorem of \cite{chazal2009proximity} to provide alternative proofs of stability results for Rips and Dowker persistence diagrams arising from metric spaces \cite{chazal2014persistence}. Our proofs also involve this Algebraic Stability Theorem, but the novelty of our approach lies in a reformulation of the network distance (Proposition \ref{prop:dn-ko}) that yields direct maps between two networks, thus passing naturally into the machinery of the Algebraic Stability Theorem (without having to define auxiliary constructions such as multivalued maps, as in \cite{chazal2014persistence}).

A crucial issue that we point out in this paper is that even though we
can construct both Rips and Dowker persistence diagrams out of
asymmetric networks, Rips persistence diagrams appear to be
\emph{blind} to asymmetry, whereas Dowker persistence diagrams do exhibit
sensitivity to asymmetry. In the case of Rips complexes, this
purported insensitivity to asymmetry can be immediately seen from its
definition. In the case of Dowker complexes, we argue about its
sensitivity to asymmetry in two different ways. Firstly, we do so by
explicitly computing Dowker persistence diagrams of multiple examples
of asymmetric networks. In particular, we consider a family of highly asymmetric
networks, the \emph{cycle networks}, and by bulding upon results from
\cite{adamaszek2015vietoris,adamaszek2016nerve} we prove a complete characterization
result for the Dowker persistence diagrams---across all dimensions---of any
network belonging to this family.  These networks constitute directed
analogues of circles and may be \emph{motifs} of interest in
different applications related to network data analysis. More specifically, appearance of nontrivial 1-dimensional persistence in the Dowker persistence diagram of asymmetric network data may suggest the presence of directed cycles in the data. 

Some of our experimental results suggest that the Rips persistence diagrams of
this family of networks are pathological, in the sense that they do
not represent the signatures one would expect from the underlying
dataset, which is a directed circle. Dowker persistence diagrams, on
the other hand, are well-behaved in this respect in that they succeed
at capturing relevant features. Secondly, we study the degree to which
Dowker persistence diagrams are insensitive to changes
(such as edge flips, or transposition) in
the network structure. An overview of this thread of work is provided in Figure \ref{fig:overview}.\\

\paragraph{Dowker's theorem and a functorial generalization}

Let $X,Y$ be two totally ordered sets, and let $R\subseteq X\times Y$ be a nonempty relation. Then one can define two simplicial complexes $E_R$ and $F_R$ as follows. A finite subset $\s \subseteq X$ belongs to $E_R$ whenever there exists $y \in Y$ such that $(x,y) \in R$ for each $x\in \s$. Similarly a finite subset $\t \subseteq Y$ belongs to $F_R$ whenever there exists $x\in X$ such that $(x,y) \in R$ for each $y\in \t$. These constructions can be traced back to \cite{dowker1952homology}, who proved the following result that we refer to as \emph{Dowker's theorem}:

\begin{theorem}[Dowker's theorem; Theorem 1a, \cite{dowker1952homology}]\label{thm:dowker} Let $X,Y$ be two totally ordered sets, let $R\subseteq X\times Y$ be a nonempty relation, and let $E_R, F_R$ be as above. Then for each $k \in \Z_+$,
\[H_k(E_R) \cong H_k(F_R).\]
\end{theorem}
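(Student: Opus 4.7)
The plan is to derive Theorem \ref{thm:dowker} from the simplicial Nerve Lemma applied to a natural cover of $E_R$ by subcomplexes indexed by the elements of $Y$. For each $y \in Y$, set $\sigma_y := \{x \in X : (x,y) \in R\}$ and let $\Delta(\sigma_y) \subseteq E_R$ denote the full subcomplex on $\sigma_y$, i.e., the simplicial complex consisting of all finite subsets of $\sigma_y$. Directly from the definition of $E_R$, every simplex of $E_R$ is contained in some $\Delta(\sigma_y)$, so $\{\Delta(\sigma_y)\}_{y \in Y}$ is a cover of $E_R$ by subcomplexes.

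Next I would verify the hypotheses of the Nerve Lemma on this cover. The geometric realization $|\Delta(\sigma_y)|$ is a (possibly infinite-dimensional) simplex and hence contractible. For any finite collection $y_1, \ldots, y_n \in Y$, one has the identity
\[
\Delta(\sigma_{y_1}) \cap \cdots \cap \Delta(\sigma_{y_n}) = \Delta(\sigma_{y_1} \cap \cdots \cap \sigma_{y_n}),
\]
which is again a full simplex: empty precisely when $\sigma_{y_1} \cap \cdots \cap \sigma_{y_n}$ is empty, and contractible otherwise. By the Nerve Lemma for covers by subcomplexes (for instance, Bj\"orner's nerve theorem, or via an acyclic-carrier/Mayer--Vietoris spectral sequence argument), one obtains a weak equivalence $|E_R| \simeq |\mathscr{N}|$, where $\mathscr{N}$ is the nerve of the cover.

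The final step is to recognize that $\mathscr{N}$ is exactly $F_R$ as a simplicial complex. By the identification of intersections above, a finite subset $\tau = \{y_1, \ldots, y_n\} \subseteq Y$ is a simplex of $\mathscr{N}$ if and only if $\sigma_{y_1} \cap \cdots \cap \sigma_{y_n} \neq \varnothing$, which in turn is equivalent to the existence of some $x \in X$ with $(x, y_i) \in R$ for every $i$, i.e., $\tau \in F_R$. The chain of isomorphisms $H_k(E_R) \cong H_k(|E_R|) \cong H_k(|\mathscr{N}|) \cong H_k(F_R)$ then yields the conclusion.

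The principal obstacle is invoking a version of the Nerve Lemma applicable in this setting: the standard formulation is for good open covers of paracompact spaces, whereas here one needs its combinatorial analogue for covers of (possibly infinite-dimensional) simplicial complexes by subcomplexes with contractible nonempty finite intersections. Such a version is available and can also be proved from scratch by a Mayer--Vietoris spectral sequence argument, which is what I would cite. I also note that the total orderings on $X$ and $Y$ are inessential for this topological route; they enter only in Dowker's original chain-level proof, which uses the orderings to select canonical witnessing elements and to construct explicit inverse chain homotopy equivalences between $C_\ast(E_R)$ and $C_\ast(F_R)$, a strategy that is more cumbersome but better suited to a subsequent functorial refinement.
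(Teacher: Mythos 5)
Your argument is correct, but it is a genuinely different route from the one the paper takes. What you describe is essentially Bj\"{o}rner's proof of the \emph{strong} form of Dowker's theorem (Theorem \ref{thm:dowker-strong}): cover $E_R$ by the full simplices $\Delta(\sigma_y)$, observe that all nonempty finite intersections are again full simplices, identify the nerve of this cover with $F_R$, and invoke the simplicial Nerve Theorem \cite[Theorems 10.6, 10.9]{bjorner-book}; the only points to keep straight are that you must use a nerve theorem valid for covers of (possibly infinite) simplicial complexes by subcomplexes with an arbitrary index set --- Bj\"{o}rner's version suffices, whereas open-cover or finitely-indexed formulations do not --- and that the index set should be restricted to those $y$ with $\sigma_y\neq\emptyset$ so that the nerve's vertex set matches $V(F_R)$ (your remark that the total orders play no role here is also correct; they matter only for orientations and for Dowker's chain-level construction). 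The paper deliberately avoids this route: it proves the stronger Functorial Dowker Theorem (Theorem \ref{thm:dowker-functorial}) by an elementary extension of Dowker's original argument --- explicit simplicial maps $\Phi_{E_R}$ (least-vertex maps on barycentric subdivisions) and $\Psi_{F_R}$ (witness-choosing maps), contiguity claims, and the simplicial approximation to $\mc{E}_\bullet$ --- and then obtains Theorems \ref{thm:dowker} and \ref{thm:dowker-strong} as corollaries. The trade-off is this: your nerve argument is shorter and conceptually transparent, and already yields the homotopy equivalence $|E_R|\simeq|F_R|$, but the plain Nerve Lemma gives no naturality with respect to an inclusion of relations $R\subseteq R'$, which is exactly what is needed to compare the sink and source \emph{persistence} diagrams (Corollary \ref{cor:dowker-dual}); upgrading your argument to that setting requires a functorial nerve theorem of sufficient generality, whose availability is precisely the issue the paper discusses (see Question \ref{q:f-nerve-f-dowker} and Theorem \ref{thm:dowker-nerve-eq}), and which motivates the paper's self-contained, functorial proof.
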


There is also a strong form of Dowker's theorem that Bj\"{o}rner proves via the classical \emph{nerve theorem} \cite[Theorems 10.6, 10.9]{bjorner-book}:

\begin{theorem}[The strong form of Dowker's theorem; Theorem 10.9, \cite{bjorner-book}] \label{thm:dowker-strong}
Under the assumptions of Theorem \ref{thm:dowker}, we in fact have $|E_R| \simeq |F_R|$.
\end{theorem}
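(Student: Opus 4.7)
The plan is to deduce $|E_R|\simeq|F_R|$ by exhibiting a cover of $E_R$ by contractible subcomplexes whose nerve is literally $F_R$, and then invoking the nerve theorem. For each $y\in Y$, set $\sigma_y := \{x\in X : (x,y)\in R\}$; by the definition of $E_R$, the set $\sigma_y$ is itself a simplex of $E_R$ (witnessed by $y$), so we may form the full subcomplex $\overline{\sigma_y}\subseteq E_R$ spanned by its vertices.

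First I would check that $\{\overline{\sigma_y}\}_{y\in Y}$ covers $E_R$: an arbitrary simplex $\sigma\in E_R$ has, by definition, a witness $y\in Y$ with $\sigma\subseteq\sigma_y$, and hence $\sigma\in\overline{\sigma_y}$. Next I would identify the nerve of this cover. For any finite $\tau=\{y_1,\ldots,y_n\}\subseteq Y$, the intersection $\bigcap_{i=1}^n \overline{\sigma_{y_i}}$ equals the full subcomplex on $\sigma_{y_1}\cap\cdots\cap\sigma_{y_n}$, which is nonempty if and only if there exists $x\in X$ with $(x,y_i)\in R$ for every $i$---equivalently, if and only if $\tau\in F_R$. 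Hence the nerve of $\{\overline{\sigma_y}\}_{y\in Y}$ is exactly $F_R$. Moreover, every such nonempty intersection is a single closed simplex and therefore contractible; in particular each $\overline{\sigma_y}$ itself is contractible.

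With these pieces in place, Bj\"{o}rner's version of the nerve theorem (Theorem 10.6 of his book)---applicable to covers of a simplicial complex by subcomplexes for which all nonempty finite intersections are contractible---immediately yields $|E_R|\simeq|F_R|$. The main substantive point, and thus the principal obstacle, is invoking exactly the right nerve theorem: the classical open-cover formulation of Leray does not apply directly here, and one must use the variant for simplicial covers whose pieces and intersections are subcomplexes with contractible geometric realizations. Once that tool is accepted, the remaining verifications are purely combinatorial and entirely bypass any explicit chain-level computation, which contrasts with Dowker's original argument recovering only the homology-level statement of Theorem \ref{thm:dowker}.
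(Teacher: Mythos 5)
Your argument is correct, and it is essentially Bj\"{o}rner's own proof of this statement (the paper cites it as \cite[Theorems 10.6, 10.9]{bjorner-book}): cover $E_R$ by the full simplices $\overline{\sigma_y}$, observe the nerve is $F_R$ with all nonempty finite intersections again full simplices, and apply the nerve theorem for covers by subcomplexes. Two small points of care: since $X$ may be infinite, $\sigma_y=\{x:(x,y)\in R\}$ need not be a simplex of $E_R$ (simplices are finite), so you should work with the subcomplex of all finite nonempty subsets of $\sigma_y$, whose realization is still contractible; and you need the version of the nerve theorem that allows an arbitrary (possibly infinite) index set and infinite subcomplexes, which Bj\"{o}rner's Theorem 10.6 does. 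The paper deliberately takes a different route: it proves the stronger Functorial Dowker Theorem (Theorem \ref{thm:dowker-functorial}) by an elementary argument extending Dowker's original one---barycentric subdivisions, the maps $\Phi_\bullet,\Psi_\bullet$, and contiguity---and obtains Theorem \ref{thm:dowker-strong} as an immediate corollary. What the nerve-theorem route buys is brevity, at the cost of a non-elementary black box; what the paper's route buys is (i) an elementary, self-contained proof and (ii) \emph{functoriality}, i.e.\ homotopy equivalences commuting (up to homotopy) with the inclusions induced by nested relations $R\subseteq R'$, which is exactly what is needed to compare persistence diagrams (Corollary \ref{cor:dowker-dual}) and which your argument does not deliver without substantial extra work: as the paper's \S\ref{sec:dowker-nerve-equiv} shows, the functorial nerve and functorial Dowker statements are only proved equivalent in the finite setting, so upgrading the nerve approach to the functorial conclusion in general is not automatic.
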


The Functorial Dowker Theorem is the following generalization of the strong form of Dowker's theorem: instead of a single nonempty relation $R \subseteq X\times Y$, consider any pair of nested, nonempty relations $R\subseteq R' \subseteq X\times Y$. Then there exist homotopy equivalences between the geometric realizations of the corresponding complexes that commute with the canonical inclusions, up to homotopy. We formalize this statement below.

\begin{theorem}[The Functorial Dowker Theorem (FDT)]
\label{thm:dowker-functorial}
Let $X,Y$ be two totally ordered sets, let $R\subseteq R' \subseteq X\times Y$ be two nonempty relations, and let $E_R, F_R, E_{R'}, F_{R'}$ be their associated simplicial complexes. Then there exist homotopy equivalences $\Gamma_{|E_R|}:|F_R| \r |E_R|$ and $\Gamma_{|E_{R'}|}: |F_{R'}| \r |E_{R'}|$ such that the following diagram commutes up to homotopy:

\begin{center}
\begin{tikzpicture}[]
\node (1) at (0,0){$|F_R|$};
\node (2) at (3,0){$|F_{R'}|$};

\node (3) at (0,-2){$|E_{R}|$};
\node (4) at (3,-2){$|E_{R'}|$};

\draw (1) edge[->] node[above]{$|\iota_E|$} (2);
\draw (3) edge[->] node[above]{$|\iota_F|$}(4);
\draw (1) edge[->] node[left]{$\Gamma_{|E_R|}$} node[right] {$\simeq$}(3);
\draw (2) edge[->] node[right]{$\Gamma_{|E_{R'}|}$} node[left] {$\simeq$} (4);

\end{tikzpicture}
\end{center}

In other words, we have $|\iota_F|\circ \Gamma_{|E_R|} \simeq \Gamma_{|E_{R'}|} \circ |\iota_E|$, where $\iota_E,\iota_F$ are the canonical inclusions.
\end{theorem}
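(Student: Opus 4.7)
The plan is to reduce the theorem to a functorial version of the classical nerve theorem. For each relation $S \in \{R, R'\}$ and each $x \in X$ for which $\{y \in Y : (x,y) \in S\}$ is nonempty, I would consider the subcomplex $\mc{F}_S(x) := \{\t \in F_S : \{x\} \times \t \subseteq S\}$ of $F_S$. By construction, $\mc{F}_S(x)$ is the full simplex on $\{y \in Y : (x,y) \in S\}$, hence contractible. The collection $\{|\mc{F}_S(x)|\}_{x}$ forms a closed cover of $|F_S|$ since every $\t \in F_S$ admits some witness $x$ with $\{x\} \times \t \subseteq S$. Moreover, for any finite $\s = \{x_1, \ldots, x_k\} \subseteq X$, the intersection $\bigcap_i |\mc{F}_S(x_i)|$ is the full simplex on $\{y : (x_i, y) \in S \text{ for all } i\}$, which is nonempty exactly when $\s \in E_S$. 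Thus the nerve of this cover is canonically identified with $E_S$, and Theorem \ref{thm:dowker-strong} (the strong form of Dowker's theorem, or equivalently the classical nerve theorem) yields $|F_S| \simeq |E_S|$.

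For the homotopy-commutative square, I would invoke a functorial nerve theorem. Since $R \subseteq R'$, we have $\mc{F}_R(x) \subseteq \mc{F}_{R'}(x)$ for every $x$, so the cover of $|F_R|$ is compatible under the inclusion with the cover of $|F_{R'}|$. The natural tool here is the Mayer--Vietoris blowup complex $B_S$ associated to the pair $(|F_S|, \{|\mc{F}_S(x)|\}_x)$, which admits canonical projections $\pi^F_S \colon |B_S| \r |F_S|$ and $\pi^E_S \colon |B_S| \r |E_S|$. Contractibility of all nonempty cover intersections (each being a full simplex) implies that both $\pi^F_S$ and $\pi^E_S$ are homotopy equivalences. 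The blowup construction is strictly functorial in $S$: the inclusion $R \subseteq R'$ lifts to a simplicial inclusion $|B_R| \hookrightarrow |B_{R'}|$ commuting on the nose with both $\pi^F$ and $\pi^E$. Defining $\Gamma_{|E_S|} := \pi^E_S \circ \rho_S$, where $\rho_S$ is a homotopy inverse to $\pi^F_S$ chosen compatibly for $S = R$ and $S = R'$, would yield the desired pair of homotopy equivalences.

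The hard part will be producing homotopy inverses $\rho_R, \rho_{R'}$ that are themselves compatible with the horizontal inclusions, in the sense that $|\iota_{B}| \circ \rho_R \simeq \rho_{R'} \circ |\iota_E|$, where $|\iota_B|$ is the lift of the top horizontal inclusion to the blowup. This reduces to a cofibration argument: the inclusions $|B_R| \hookrightarrow |B_{R'}|$ and $|F_R| \hookrightarrow |F_{R'}|$ are both cofibrations (being inclusions of subcomplexes), so once $\rho_R$ is constructed, the composite $|\iota_B| \circ \rho_R \colon |F_R| \r |B_{R'}|$ can be extended to a homotopy inverse $\rho_{R'}$ of $\pi^F_{R'}$, possibly after a preliminary homotopy absorbed by the homotopy extension property. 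Applying $\pi^E_{R'}$ and exploiting the strict commutativity of the blowup square then transports this compatibility to the required homotopy $|\iota_F|\circ \Gamma_{|E_R|} \simeq \Gamma_{|E_{R'}|} \circ |\iota_E|$, completing the argument.
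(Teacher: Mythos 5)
Your route is genuinely different from the paper's. The paper proves the FDT by an elementary extension of Dowker's original argument: it builds the order-reversing maps $\Phi_{E_R}:E_R\1\r E_R$ and the witness maps $\Psi_{F_R}:E_R\1\r F_R$ on barycentric subdivisions, verifies four contiguity claims plus two further contiguity claims for the inclusions $R\subseteq R'$, and assembles the homotopy equivalence $\Gamma_{|E_R|}=|\Psi_{E_R}|\circ\mc{E}\inv_{|F_R|}$ from these. You instead reduce to a functorial nerve theorem via the cover $\{\mc{F}_S(x)\}_x$ of $F_S$ with nerve $E_S$ — which is exactly the cover $A_x$ the paper uses in \S\ref{sec:dowker-nerve-equiv} to show that the simplicial FNT implies the \emph{finite} FDT — and then propose to prove the needed FNT directly via the Mayer--Vietoris blowup complex. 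Two remarks on the comparison. First, your final step is easier than you think: no cofibration or homotopy-extension argument is needed to make the homotopy inverses compatible. If $\pi^F_R, \pi^F_{R'}$ are homotopy equivalences fitting into a strictly commutative square with the inclusions $\iota_B,\iota_F$, then for \emph{any} choices of homotopy inverses $\rho_R,\rho_{R'}$ one has $\iota_B\circ\rho_R\simeq \rho_{R'}\circ\pi^F_{R'}\circ\iota_B\circ\rho_R=\rho_{R'}\circ\iota_F\circ\pi^F_R\circ\rho_R\simeq\rho_{R'}\circ\iota_F$, and the desired homotopy-commutativity of the $\Gamma$ square follows by composing with $\pi^E_{R'}$ and using the strict commutativity of the blowup squares.

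The genuine gap is the cardinality of the cover. The theorem allows $X$ and $Y$ to be arbitrary (infinite) totally ordered sets, so the cover $\{\mc{F}_S(x)\}_{x\in X}$ of $F_S$ is in general infinite, and the functorial nerve theorems available in the literature (and the ones proved in this paper, Theorems \ref{thm:nerve-functorial-I} and \ref{thm:nerve-functorial-II}) are stated only for \emph{finite} covers; indeed the paper's equivalence result (Theorem \ref{thm:dowker-nerve-eq}) deliberately pairs the simplicial FNT with the \emph{finite} FDT, and the general FDT is strictly stronger in this respect. Your blowup-complex argument can in principle be pushed through for an infinite cover of a simplicial complex by subcomplexes — the projection $\pi^F_S$ to the union is a homotopy equivalence by a projection/cofibrancy lemma valid for arbitrary index sets, and $\pi^E_S$ is one by the homotopy-colimit form of the nerve lemma — but this requires invoking (and justifying functoriality of) homotopy colimit machinery for infinite diagrams, none of which is addressed in your write-up and none of which is off-the-shelf in the finite-cover references. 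As written, your argument proves only the finite FDT (Theorem \ref{thm:dowker-functorial-finite}); to recover the full statement you must either carry out the infinite homotopy-colimit argument carefully or fall back on an elementary argument in the style of the paper's.
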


From Theorem \ref{thm:dowker-functorial} we automatically obtain
Theorem \ref{thm:dowker-strong} (the strong form of Dowker's theorem)
as an immediate corollary. The strong form does not appear in Dowker's
original paper \cite{dowker1952homology}, but Bj\"{o}rner has given a
proof using the nerve theorem \cite[Theorems 10.6,
10.9]{bjorner-book}. Moreover, Bj\"{o}rner writes in a remark
following \cite[Theorem 10.9]{bjorner-book} that the nerve theorem and
the strong form of Dowker's theorem are equivalent, in the sense that
one implies the other. We were not able to find an elementary proof of
the strong form of Dowker's theorem in the existing
literature. However, such an elementary proof is provided by our proof
of Theorem \ref{thm:dowker-functorial} (given in Section \ref{sec:dowker-dual}), which we obtained by extending ideas in Dowker's original proof of Theorem \ref{thm:dowker}.\footnote{A thread with ideas towards the proof of Theorem \ref{thm:dowker-strong} was discussed in \cite[last accessed 4.24.2017]{Nlab-dowker}, but the proposed strategy was incomplete. We have inserted an addendum in \cite{Nlab-dowker} proposing a complete proof with a slightly different construction.} 

Whereas the Functorial Dowker Theorem and our elementary proof are of independent
interest, it has been suggested in \cite[Remark
4.8]{chazal2014persistence} that such a functorial version of Dowker's
theorem could also be proved using a functorial nerve theorem
\cite[Lemma 3.4]{chazal2008towards}. Despite being an interesting
possibility, we were not able to find a detailed proof of this
claim in the literature. In addition, Bj\"{o}rner's remark 
regarding the equivalence between the nerve theorem and the strong
form of Dowker's theorem suggests the following question:
\begin{question}
\label{q:f-nerve-f-dowker}
Are the Functorial Nerve Theorem (FNT) of
\cite{chazal2008towards} and the Functorial Dowker Theorem (FDT, Theorem
\ref{thm:dowker-functorial}) equivalent?
\end{question}

This question is of fundamental importance because the Nerve Theorem is a crucial tool in the applied topology literature and its functorial generalizations are equally important in persistent homology. In general, the answer is \emph{no}, and moreover, one (of the FNT and FDT) is not stronger than the other. The FNT of \cite{chazal2008towards} is stated for paracompact spaces, which are more general than the simplicial complexes of the FDT. However, the FNT of \cite{chazal2008towards} is stated for spaces with \emph{finitely-indexed} covers, so the associated nerve complexes are necessarily finite. All the complexes involved in the statement of the FDT are allowed to be infinite, so the FDT is more general than the FNT in this sense.

To clarify these connections, we formulate a simplicial Functorial Nerve Theorem (Theorem \ref{thm:nerve-functorial-II}) and prove it via a finite formulation of the FDT (Theorem \ref{thm:dowker-functorial-finite}). In turn, we show that the simplicial FNT implies the finite FDT, thus proving the equivalence of these formulations (Theorem \ref{thm:dowker-nerve-eq}).

\begin{remark}
Dowker complexes are also known to researchers who use Q-analysis to study social networks \cite{johnson2013hypernetworks,
atkin1975mathematical, atkin1972cohomology}. We perceive that
viewing Dowker complexes through the modern lens of persistence will enrich the classical framework of Q-analysis by incorporating additional information about the \emph{meaningfulness} of features, thus potentially opening new
avenues in the social sciences. 
\end{remark}

An announcement of part of our work has appeared in \cite{dowker-asilo}.

\subsection{Implementations}

Following work in \cite{curto2008cell, dabaghian2012topological}, we
implement our methods in the setting of classifying simulated
hippocampal networks. We simulate the activity pattern of hippocampal
cells in an animal as it moves around arenas with a number of
obstacles, and compile this data into a network which can be
interpreted as the transition matrix for the time-reversal of a Markov
process. The motivating idea is to ascertain whether, by just
observing hippocampal activity and not using any higher reasoning
ability, one might be able to determine the number of obstacles in the
arena that the animal has just finished traversing. The results of computing Dowker persistence diagrams suggest that the hippocampal activity is indeed sufficient to accurately count the number of obstacles in each arena.

Our datasets and software are available on \url{https://research.math.osu.edu/networks/Datasets.html} as part of the \texttt{PersNet} software package.

\subsection{Organization of the paper} Notation used globally is defined directly
below. \S\ref{sec:background} contains the necessary background on
persistent homology. \S\ref{sec:nets} contains our formulations for
networks, as well as some key ingredients of our stability
results. \S\ref{sec:rips} contains details about the Rips persistence
diagram. The first part of \S\ref{sec:dowker} contains details about
the Dowker persistence diagram. \S\ref{sec:dowker-dual} contains the
Functorial Dowker Theorem. The connection between the simplicial Functorial Nerve Theorem and the finite Functorial Dowker Theorem is detailed in \S\ref{sec:dowker-nerve-equiv}. In \S\ref{sec:symmetry} we show that Dowker complexes are sensitive to asymmetry. \S\ref{sec:cycle} contains a family of asymmetric networks, the \emph{cycle networks},
and a full characterization of their Dowker persistence diagrams. 
In \S\ref{sec:exp} we provide details on an implementation of our
methods. Finally, proofs of statements not contained in the main body
of the paper are relegated to Appendix \ref{app:proofs}, whereas details
about the characterization results for Dowker persistence diagrams of
cycle networks are given in Appendix \ref{sec:cycle-addendum}.

\subsection{Notation}

We will write $\mathbb{K}$ to denote a field, which we will fix and use throughout the paper. We will write $\Z_+$ and $\R_+$ to denote the nonnegative integers and reals, respectively. The extended real numbers $\R\cup \set{\infty, -\infty}$ will be denoted $\overline{\R}$. The cardinality of a set $X$ will be denoted $\card(X)$. The collection of nonempty subsets of a set $X$ will be denoted $\pow(X)$. The natural numbers $\set{1,2,3,\ldots}$ will be denoted by $\N$. The dimension of a vector space $V$ will be denoted $\dim(V)$. The rank of a linear transformation $f$ will be denoted $\rank(f)$. An isomorphism between vector spaces $V$ and $W$ will be denoted $V\cong W$. A homotopy relation for two maps $f,g:X\r Y$ between topological spaces will be denoted $f\simeq g$. Occasionally we will need to take about multisets, i.e. sets where elements can have multiplicity greater than 1. We will use square bracket notation $[\ldots]$ to denote multisets. Identity maps will be denoted by the notation $\id_\bullet$. Given a simplicial complex $\Si$, we will often write $V(\Si)$ to denote the vertex set of $\Si$. We will write $\bd(\s)$ to denote the boundary of a simplex $\s$.

\section{Background on persistent homology}\label{sec:background}

We assume that the reader is familiar with terms and concepts related to simplicial homology, and refer to \cite{munkres-book} for details. Here we describe our choices of notation. Whenever we have a simplicial complex over a set $X$ and a $k$-simplex $\set{x_0,x_1,\ldots, x_k}$, $k\in \Z_+$, we will assume that the simplex is \emph{oriented} by the ordering $x_0< x_1 < \ldots < x_k$. We will write $[x_0,x_1,\ldots,x_k]$ to denote the equivalence class of the even permutations of this chosen ordering, and $-[x_0,x_1,\ldots,x_k]$ to denote the equivalence class of the odd permutations of this ordering. Given a simplicial complex $\Si$, we will denote its geometric realization by $|\Si|$. The \emph{weak topology} on $|\Si|$ is defined by requiring that a subset $A \subseteq |\Si|$ is closed if and only if $A \cap |\s|$ is closed in $|\s|$ for each $\s \in \Si$.
A simplicial map $f: \Si \r \Xi$ between two simplicial complexes induces a map $|f|: |\Si| \r |\Xi|$ between the geometric realizations, defined as $|f|(\sum_{v\in \Si}a_v v):= \sum_{v\in \Si}a_v f(v)$. These induced maps satisfy the usual composition identity: given simpicial maps $f:\Si \r \Xi$ and $g:\Xi \r \Upsilon$, we have $|g\circ f| = |g| \circ |f|$. To see this, observe the following: 
\begin{equation}\label{eq:htpy-func}
|g\circ f|(\sum_{v\in \Si}a_v v) = \sum_{v\in \Si} a_vg(f(v)) = |g|(\sum_{v\in \Si}a_v f(v)) = |g|\circ|f|(\sum_{v\in \Si}a_v v).
\end{equation}

A \emph{filtration} of a simplicial complex $\Si$ (also called a \emph{filtered simplicial complex}) is defined to be a nested sequence $\{\Si^{\d}\subseteq \Si^{\d'}\}_{\d\leq \d' \in \R}$ of simplicial complexes satisfying the condition that there exist $\d_I,\, \d_F \in \R$ such that $\Si^{\d} = \emptyset $ for all $\d \leq \d_I$, and $\Si^{\d} = \Si \text{ for all }\d \geq \d_F$.

Fix a field $\mb{K}$. 
Given a finite simplicial complex $\Si$ and a dimension $k \in \Z_+$, we will denote a \emph{$k$-chain} in $\Si$ as $\sum_ia_i\s_i$, where each $a_i \in \mathbb{K}$ and $\s_i \in \Si$. 
We write $C_k(\Si)$ or just $C_k$ to denote the $\mathbb{K}$-vector space of all $k$-chains. We will write $\p_k$ to denote the associated \emph{boundary map} $\p_k : C_k \r C_{k-1}$:
\[\p_k[x_0,\ldots,x_k]:=\sum_i(-1)^i[x_0,\ldots,\hat{x}_i,\ldots, x_k], \text{ where $\hat{x}_i$ denotes omission of $x_i$ from the sequence.} \]  

We will write $\Cc=(C_k,\p_k)_{k\in \Z_+}$ to denote a \emph{chain complex}, i.e. a sequence of vector
spaces  with boundary maps such that
$\p_{k-1}\circ \p_k =0$. 
Given a chain complex $\Cc$ and any $k\in \Z_+$, the \emph{$k$-th homology of the chain complex $\Cc$} is denoted $H_k(\Cc) :=\ker(\p_k)/\im(\p_{k+1})$. The \emph{$k$-th Betti number} of $\Cc$ is denoted $\b_k(\Cc)$.

Given a simplicial map $f$ between simplicial complexes, we write $f_*$ to denote the induced chain map between the corresponding chain complexes \cite[\S 1.12]{munkres-book}, and $(f_k)_{\#}$ to denote the linear map on $k$th homology vector spaces induced for each $k \in \Z_+$.

The operations of passing from simplicial complexes and simplicial maps to chain complexes and induced chain maps, and then to homology vector spaces with induced linear maps, will be referred to as \emph{passing to homology}. Recall the following useful fact, often referred to as \emph{functoriality of homology} \cite[Theorem 12.2]{munkres-book}: given a composition $g\circ f$ of simplicial maps, we have
\begin{equation}
(g_k\circ f_k)_\# = (g_k)_\#\circ (f_k)_\# \qquad\text{ for each } k\in \Z_+.
\label{eq:functoriality}
\end{equation}

A \emph{persistence vector space} is defined to be a family of vector spaces $\{U^\d\xr{\mu_{\d,\d'}} U^{\d'}\}_{\d\leq \d'\in \R}$ such that: (1) $\mu_{\d,\d}$ is the identity for each $\d \in \R$, and (2) $\mu_{\d,\d''} = \mu_{\d',\d''}\circ \mu_{\d,\d'}$ for each $\d \leq \d' \leq \d'' \in \R$. The persistence vector spaces that we consider in this work also satisfy the following conditions: (1) $\dim(U^\d) <\infty$ at each $\d\in \R$, (2) there exist $\d_I,\, \d_F \in \R$ such that all maps $\mu_{\d,\d'}$ are isomorphisms for $\d,\d' \geq \d_F$ and for $\d,\d' \leq \d_I$, and (3) there are only finitely many values of $\d \in \R$ such that $U^{\d-\e} \not\cong U^{\d}$ for each $\e>0$. Here $\d$ is referred to as a \emph{resolution} parameter, and such a persistence vector space is described as being \emph{$\R$-indexed}. The collection of all such persistence vector spaces is denoted $\pvec(\R)$. Observe that by fixing $k \in \Z_+$ and passing to the $k$th homology vector space at each step $\Si^{\d}$ of a filtered simplicial complex $(\Si^{\d})_{\d \in \R}$, the functoriality of homology gives us the $k$th persistence vector space associated to $(\Si^{\d})_{\d \in \R}$, denoted 
\[\H_k(\Si) := \{H_k(\Cc^{\d})\xr{(\iota_{\d,\d'})_\#} H_k(\Cc^{\d'})\}_{\d \leq \d' \in \R}.\]

The elements of $\pvec(\R)$ contain only a finite number of vector spaces, up to isomorphism. By the classification results in \cite[\S5.2]{carlsson2005persistence}, it is possible to associate a full invariant, called a \emph{persistence barcode} or \emph{persistence diagram}, to each element of $\pvec(\R)$. 
This barcode is a multiset of \emph{persistence intervals}, and is represented as a set of lines over a single axis. The barcode of a persistence vector space $\V$ is denoted $\pers(\V)$. The intervals in $\pers(\V)$ can be represented as the \emph{persistence diagram of $\V$}, which is as a multiset of points lying on or above the diagonal in $\overline{\R}^2$, counted with multiplicity. More specifically,
\[\dgm(\V):=\big[(\d_i,\d_{j+1}) \in \overline{\R}^2 : [\d_i,\d_{j+1}) \in \pers(\V) \big],\]
where the multiplicity of $(\d_i,\d_{j+1})\in \overline{\R}^2$ is given by the multiplicity of $[\d_i,\d_{j+1}) \in \pers(\V)$.

Persistence diagrams can be compared using the \emph{bottleneck distance}, which we denote by $\db$. Details about this distance, as well as the other material related to persistent homology, can be found in \cite{chazal2012structure}. Numerous other formulations of the material presented above can be found in \cite{edelsbrunner2002topological, zomorodian2005computing, zigzag, edelsbrunner2010computational, edelsbrunner2014persistent,  bauer-isom, ph-self}.

\begin{remark}\label{rem:trivial-diag} Whenever we describe a persistence diagram as being \emph{trivial}, we mean that either it is empty, or it does not have any off-diagonal points. 
\end{remark}

\subsection{Interleaving distance and stability of persistence vector spaces.}
\label{sec:background-int}

In what follows, we will consider $\R$-indexed persistence vector spaces $\pvec(\R)$.

Given $\e \geq 0$, two $\R$-indexed persistence vector spaces $\V=\{V^\d\xr{\nu_{\d,\d'}} V^{\d'}\}_{\d\leq \d'}$ and $\U=\{U^\d\xr{\mu_{\d,\d'}} U^{\d'}\}_{\d\leq \d'}$ are said to be \emph{$\e$-interleaved} \cite{chazal2009proximity,bauer-isom} if there exist two families of linear maps
\begin{align*}
\{\ph_{\d,\d+\e}&:V^\d \r V^{\d + \e}\}_{\d \in \R},\\ 
\{\psi_{\d,\d+\e}&:U^\d \r U^{\d + \e}\}_{\d \in \R}
\end{align*}
such that the following diagrams commute for all $\d' \geq \d\in \R$:

\[ \begin{tikzcd}[column sep=large]
V^\d \arrow{r}{\nu_{\d,\d'}} \arrow[swap]{dr}{\ph_\d} & 
V^{\d'}\arrow{dr}{\ph_{\d'}} & 
{} & 
{} & 
V^{\d+\e} \arrow{r}{\nu_{\d+\eta,\d'+\eta}} &
V^{\d'+\e} \\
{} &
U^{\d+\e} \arrow{r}{\mu_{\d+\eta,\d'+\eta}} & 
U^{\d'+\e} &
U^{\d} \arrow{ur}{\psi_\d} \arrow{r}{\mu_{\d,\d'}} &
U^{\d'} \arrow[swap]{ur}{\psi_{\d'}} & {}
\end{tikzcd} \]

\[ \begin{tikzcd}[column sep=large]
V^\d \arrow{rr}{\nu_{\d,\d+2\e}} \arrow[swap]{dr}{\ph_\d} & {} &
V^{\d+2\e} & 
{}&
V^{\d+\e} \arrow{dr}{\psi_{\d+\e}}\\
{} &
U^{\d+\e} \arrow[swap]{ur}{\ph_{\d+\e}} & 
{}&
U^{\d} \arrow{ur}{\psi_{\d}} \arrow{rr}{\mu_{\d,\d+2\e}} & {} &
U^{\d+2\eta} 
\end{tikzcd} \]

The purpose of introducing $\e$-interleavings is to define a pseudometric on the collection of persistence vector spaces. The \emph{interleaving distance} between two $\R$-indexed persistence vector spaces $\V,\U$ is given by:
\[\di(\U,\V) := \inf \{\e \geq 0 : \text{$\U$ and $\V$ are $\e$-interleaved}\}.\]
One can verify that this definition induces a pseudometric on the collection of persistence vector spaces \cite{chazal2009proximity, bauer-isom}. The interleaving distance can then be related to the bottleneck distance as follows:
\begin{theorem}[Algebraic Stability Theorem, \cite{chazal2009proximity}] Let $\U, \V$ be two $\R$-indexed persistence vector spaces. Then,
\[\db(\dgm(\U),\dgm(\V))\leq \di(\U,\V).\]
\end{theorem}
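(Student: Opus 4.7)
The plan is to reduce the inequality to an explicit matching problem between the two persistence diagrams and then extract that matching from the interleaving data. By the finiteness hypotheses imposed on elements of $\pvec(\R)$ in the excerpt (pointwise finite dimensionality, only finitely many critical resolutions, and stability outside a compact range), the structure theorem decomposes both $\U$ and $\V$ as finite direct sums of interval modules, and the diagrams $\dgm(\U)$ and $\dgm(\V)$ are multisets whose multiplicities are recovered from the rank function by the Möbius-type formula
\[\mu^{\V}(b,d) = \rank(V^{b}\to V^{d^-}) - \rank(V^{b^-}\to V^{d^-}) - \rank(V^{b}\to V^{d}) + \rank(V^{b^-}\to V^{d}),\]
and analogously for $\U$, where the one-sided limits $b^-, d^-$ are well defined because the module has only finitely many critical values. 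So the task reduces to the following: given an $\e$-interleaving between $\U$ and $\V$, produce a partial matching between $\dgm(\U)$ and $\dgm(\V)$ such that every matched pair has $\ell^\infty$-distance at most $\e$ and every unmatched point has $\ell^\infty$-distance at most $\e$ to the diagonal. Proving this for every $\e > \di(\U,\V)$ and taking the infimum then yields the bottleneck bound.

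The quantitative input from the interleaving comes as rank inequalities. Composing the maps $\ph_\bullet$ and $\psi_\bullet$ with the internal structure maps $\nu_{\bullet,\bullet}$ and $\mu_{\bullet,\bullet}$, and using the commuting triangles of the $\e$-interleaving, one obtains
\[\rank(V^{s+\e}\to V^{t-\e}) \;\leq\; \rank(U^{s}\to U^{t}) \;\leq\; \rank(V^{s-\e}\to V^{t+\e})\]
for all $s\leq t$ for which the inner arrows are defined, together with the symmetric pair with $\U$ and $\V$ swapped. Feeding these into the Möbius formula yields the \emph{box lemma}: for any axis-aligned box $B\subset \overline{\R}^2$ lying strictly above the diagonal, the number of points of $\dgm(\V)$ in $B$ is at most the number of points of $\dgm(\U)$ in the $\e$-thickened box $B^\e$, provided $B^\e$ still lies above the diagonal. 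The box lemma is the combinatorial shadow of the interleaving on which the rest of the proof rests.

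The heart of the argument, and its main obstacle, is constructing the partial matching from these box inequalities. I would follow an induced-matching strategy: totally order the points of $\dgm(\U)$ by death coordinate (breaking ties by birth coordinate) and sweep through them, at each step matching the current point $(b,d)\in\dgm(\U)$ to the earliest available point of $\dgm(\V)$ lying inside its $\e$-box, while deferring any point within $\e$ of the diagonal. The box lemma forces the existence of a valid partner whenever $d - b > 2\e$, and a careful counting argument, comparing the multiplicities assigned by the Möbius formula to suitably chosen small boxes, shows that greedy choices never trap the algorithm in an unmatchable configuration and that every point left unmatched on either side necessarily lies within $\e$ of the diagonal. The resulting matching has cost at most $\e$, giving $\db(\dgm(\U),\dgm(\V))\leq \e$ and hence the theorem upon taking the infimum. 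The delicate step is precisely this bookkeeping near the diagonal, where intervals of length up to $2\e$ must be allowed to go unmatched without breaking the matching on the surviving intervals; handling the endpoint equalities and the shifts between open and closed inequalities in the box lemma is the part of the argument that requires genuine combinatorial care, while the reduction and the rank estimates above are essentially formal.
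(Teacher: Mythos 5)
The paper itself offers no proof of this statement—it is imported from Chazal et al.\ \cite{chazal2009proximity} as a black box—so your proposal has to stand on its own, and as written it has a genuine gap, plus a directional error en route. The displayed rank inequalities are backwards: from an $\e$-interleaving one gets, for $t-s\geq 2\e$, that $\rank(U^{s}\to U^{t})\leq \rank(V^{s+\e}\to V^{t-\e})$ (the composite $U^{s}\to V^{s+\e}\to V^{t-\e}\to U^{t}$ equals the structure map of $\U$, so its rank is bounded by that of the middle map), together with the statement with $\U$ and $\V$ exchanged; your first inequality as stated already fails when $\U=\V$ is a single interval module. That slip is repairable and the Möbius-formula reduction and the box lemma it feeds are fine.

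The unrepaired problem is the final step: extracting an $\e$-matching from the box lemma is the entire content of the theorem, and the greedy sweep you describe is asserted rather than proved. What is actually needed is Hall's condition for the bipartite graph joining points of $\dgm(\U)$ of persistence greater than $2\e$ to the points of $\dgm(\V)$ within $\ell^\infty$-distance $\e$, i.e.\ a lower bound on the number of $\V$-points in an \emph{arbitrary union} of $\e$-boxes; single-box inequalities do not yield this by any direct count, since the thickened boxes overlap and points get counted several times, and an ``earliest available partner'' rule can get stuck in general unless one proves a structural lemma you have not supplied. This is exactly why the published proofs do something different: Cohen-Steiner--Edelsbrunner--Harer and Chazal et al.\ first handle the ``easy'' case where $\e$ is small compared with the gaps of the diagrams and then remove that restriction by interpolating a one-parameter family of persistence modules between $\U$ and $\V$ and invoking the triangle inequality for $\db$; Bauer and Lesnick instead construct the matching algebraically from the interleaving morphism through the barcode of its image module (their induced matchings), which is not the box-counting sweep your use of that name suggests. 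So the sentence ``a careful counting argument shows that greedy choices never trap the algorithm'' is a restatement of the theorem's hard core, not a proof of it; to finish along your route you would need to establish the Hall-type inequality for unions of boxes, or else switch to the interpolation or image-module arguments.
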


Stability results are at the core of persistent homology, beginning with the classical bottleneck stability result in \cite{bot-stab}. One of our key contributions is to use the Algebraic Stability Theorem stated above, along with Lemma \S\ref{sec:nets} stated below, to prove stability results for methods of computing persistent homology of a network. 

Before stating the following lemma, recall that two simplicial maps $f,g: \Si \r \Xi$ are \emph{contiguous} if for any simplex $\s \in \Si$, $f(\s) \cup g(\s)$ is a simplex of $\Xi$. Contiguous maps satisfy the following useful properties:
\begin{proposition}[Properties of contiguous maps]
\label{prop:contigo-props}
Let $f,g: \Si \r \Xi$ be two contiguous simplicial maps. Then,
\begin{enumerate}
\item $|f|,|g|:|\Si| \r |\Xi|$ are homotopic \cite[\S 3.5]{spanier-book}, and  
\item The chain maps induced by $f$ and $g$ are chain homotopic, and as a result, the induced maps $f_\#$ and $g_\#$ for homology are equal \cite[Theorem 12.5]{munkres-book}.
\end{enumerate}
\end{proposition}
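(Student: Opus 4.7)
The plan is to prove both parts by standard constructions: a straight-line homotopy of realizations for part (1), and an explicit prism-style chain homotopy for part (2). Neither step is deep, but each requires us to unpack what contiguity buys us.

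For part (1), the key observation is that contiguity guarantees, for every simplex $\s = \{x_0,\ldots,x_k\} \in \Si$, that $f(\s) \cup g(\s)$ spans a single simplex $\tau(\s) \in \Xi$. Hence $|f|$ and $|g|$ both send $|\s|$ into the convex set $|\tau(\s)| \subseteq |\Xi|$. I would then define
\[ H : |\Si| \times [0,1] \r |\Xi|, \qquad H(x,t) := (1-t)\,|f|(x) + t\,|g|(x), \]
where the right-hand side is interpreted inside the affine simplex $|\tau(\s)|$ whenever $x \in |\s|$. Continuity on each $|\s| \times [0,1]$ is immediate, and consistency on overlaps (faces) follows because $\tau$ respects the face relation. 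Under the weak topology on $|\Si|$, this gives global continuity, producing the desired homotopy $|f| \simeq |g|$.

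For part (2), I would construct an explicit chain homotopy $D : C_k(\Si) \r C_{k+1}(\Xi)$ by the classical prism formula
\[ D[x_0,\ldots,x_k] := \sum_{i=0}^{k} (-1)^i\,[f(x_0),\ldots,f(x_i),g(x_i),\ldots,g(x_k)], \]
where each term is interpreted as $0$ if a repeated vertex appears. Contiguity ensures every $\{f(x_0),\ldots,f(x_i),g(x_i),\ldots,g(x_k)\}$ sits inside the simplex $f(\s)\cup g(\s) \in \Xi$, so $D$ is well-defined. The verification
\[ \p_{k+1} D + D \p_k = g_* - f_* \]
follows from a standard telescoping computation: expanding $\p_{k+1}D$ splits into terms that either cancel pairwise across consecutive $i$, or cancel against $D\p_k$, leaving only the boundary terms $[g(x_0),\ldots,g(x_k)]$ and $-[f(x_0),\ldots,f(x_k)]$. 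Passing to homology, chain-homotopic maps induce equal maps, giving $(f_k)_\# = (g_k)_\#$ for each $k$.

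The only real obstacle is the bookkeeping of signs in the chain-homotopy identity, where one has to match the indices of the canceling front- and back-faces carefully; I would present this as a sign-tracking argument rather than a full index-by-index expansion, since the result is classical and matches Munkres' Theorem 12.5. Part (1) is essentially immediate once contiguity is translated into the existence of the common simplex $\tau(\s)$.
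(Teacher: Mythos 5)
Your proposal is correct, but note that the paper itself offers no proof of this proposition at all---it simply cites Spanier \cite[\S 3.5]{spanier-book} for part (1) and Munkres \cite[Theorem 12.5]{munkres-book} for part (2), so any comparison is really with those classical arguments. Your part (1) is exactly Spanier's argument: contiguity gives, for each $\s \in \Si$, a common simplex containing $f(\s)\cup g(\s)$, and the straight-line homotopy is well defined on each closed cell and compatible on faces; the only point worth making explicit is that continuity of $H$ on all of $|\Si|\times[0,1]$ uses the fact that $[0,1]$ is compact, so that $|\Si|\times[0,1]$ still carries the weak topology determined by the sets $|\s|\times[0,1]$. Your part (2), however, takes a genuinely different route from the cited source: Munkres proves the statement via the acyclic carrier theorem (the carrier $\s \mapsto$ the full simplex on $f(\s)\cup g(\s)$ is acyclic and carries both chain maps), whereas you construct the chain homotopy explicitly by the prism formula $D[x_0,\ldots,x_k]=\sum_i(-1)^i[f(x_0),\ldots,f(x_i),g(x_i),\ldots,g(x_k)]$, with degenerate terms set to zero. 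Both are valid: the explicit prism is constructive and self-contained (contiguity is precisely what makes every prism term a simplex of $\Xi$, and your telescoping identity $\p D + D\p = g_* - f_*$ checks out, e.g.\ in degrees $0$ and $1$), at the cost of the sign and degeneracy bookkeeping you defer; the acyclic-carrier argument trades the explicit formula for a soft existence statement and avoids all sign computations, which is presumably why Munkres---and hence the paper---prefers it. Neither route has a gap for the purposes of this proposition.
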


\begin{lemma}[Stability Lemma]
\label{lem:stab}
Let $\mf{F}, \mf{G}$ be two filtered simplicial complexes written as
\[\{\mf{F}^\d \xr{s_{\d,\d'}} \mf{F}^{\d'}\}_{\d'\geq \d\in \R} \text{ and } \{\mf{G}^\d \xr{t_{\d,\d'}} \mf{G}^{\d'}\}_{\d'\geq \d\in \R},\]
where $s_{\d,\d'}$ and $t_{\d,\d'}$ denote the natural inclusion maps.  
Suppose $\eta\geq 0$ is such that there exist families of simplicial maps $\set{\ph_\d:\mf{F}^\d \r \mf{G}^{\d+\eta}}_{\d\in \R}$ and $\set{\psi_\d:\mf{G}^\d \r \mf{F}^{\d+\eta}}_{\d\in \R}$ such that the following are satisfied for any $\d' \geq \d$:
\begin{enumerate}
\item $t_{\d+\eta,\d'+\eta}\circ \ph_\d$ and $\ph_{\d'}\circ s_{\d,\d'}$ are contiguous
\item $s_{\d+\eta,\d'+\eta}\circ \psi_\d$ and $\psi_{\d'}\circ t_{\d,\d'}$ are contiguous
\item $\psi_{\d+\eta}\circ \ph_\d$ and $s_{\d,\d+2\eta}$ are contiguous
\item $\ph_{\d+\eta}\circ \psi_\d$ and $t_{\d,\d+2\eta}$ are contiguous.
\end{enumerate}

All the diagrams are as below:

\[ \begin{tikzcd}[column sep=large]
\mf{F}^\d \arrow{r}{s_{\d,\d'}} \arrow[swap]{dr}{\ph_\d} & 
\mf{F}^{\d'}\arrow{dr}{\ph_{\d'}} & 
{} & 
{} & 
\mf{F}^{\d+\eta} \arrow{r}{s_{\d+\eta,\d'+\eta}} &
\mf{F}^{\d'+\eta} \\
{} &
\mf{G}^{\d+\eta} \arrow{r}{t_{\d+\eta,\d'+\eta}} & 
\mf{G}^{\d'+\eta} &
\mf{G}^{\d} \arrow{ur}{\psi_\d} \arrow{r}{t_{\d,\d'}} &
\mf{G}^{\d'} \arrow[swap]{ur}{\psi_{\d'}} & {}
\end{tikzcd} \]

\[ \begin{tikzcd}[column sep=large]
\mf{F}^\d \arrow{rr}{s_{\d,\d+2\eta}} \arrow[swap]{dr}{\ph_\d} & {} &
\mf{F}^{\d+2\eta} & 
{} & 
{}&
\mf{F}^{\d+\eta} \arrow{dr}{\ph_{\d+\eta}}\\
{} &
\mf{G}^{\d+\eta} \arrow[swap]{ur}{\psi_{\d+\eta}} & 
{} &
{}&
\mf{G}^{\d} \arrow{ur}{\psi_\d} \arrow{rr}{t_{\d,\d+2\eta}} & {} &
\mf{G}^{\d+2\eta} 
\end{tikzcd} \]

For each $k\in \Z_+$, let $\H_k(\mf{F}), \H_k(\mf{G})$ denote the $k$-dimensional persistence vector spaces associated to $\mf{F}$ and $\mf{G}$. Then for each $k\in \Z_+$,
\[\db(\dgm_k(\H_k(\mf{F})),\dgm_k(\H_k(\mf{G}))) \leq \di(\H_k(\mf{F}),\H_k(\mf{G})) \leq \eta.\] 
\end{lemma}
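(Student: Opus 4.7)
The first inequality $\db(\dgm_k(\H_k(\mf{F})),\dgm_k(\H_k(\mf{G}))) \leq \di(\H_k(\mf{F}),\H_k(\mf{G}))$ is immediate from the Algebraic Stability Theorem cited above, so the plan is to focus on the second inequality and establish that $\H_k(\mf{F})$ and $\H_k(\mf{G})$ are $\eta$-interleaved as $\R$-indexed persistence vector spaces. Fix $k\in \Z_+$ throughout. The natural strategy is to pass the simplicial maps $\ph_\d$ and $\psi_\d$ to homology and verify that the resulting linear maps satisfy the four interleaving conditions from \S\ref{sec:background-int}.

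Concretely, I would define $\Phi_\d := (\ph_\d)_{\#} : H_k(\mf{F}^\d) \r H_k(\mf{G}^{\d+\eta})$ and $\Psi_\d := (\psi_\d)_{\#} : H_k(\mf{G}^\d) \r H_k(\mf{F}^{\d+\eta})$ for each $\d\in \R$. The structure maps of $\H_k(\mf{F})$ and $\H_k(\mf{G})$ are, by definition, the linear maps $(s_{\d,\d'})_{\#}$ and $(t_{\d,\d'})_{\#}$ induced by the inclusions. To verify that $\{\Phi_\d\}$ and $\{\Psi_\d\}$ assemble into an $\eta$-interleaving, I need to check the two ``square'' diagrams and the two ``triangle'' diagrams at the level of $H_k$.

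For the square diagrams, the first contiguity hypothesis asserts that $t_{\d+\eta,\d'+\eta}\circ \ph_\d$ and $\ph_{\d'}\circ s_{\d,\d'}$ are contiguous. By Proposition \ref{prop:contigo-props}(2), contiguous simplicial maps induce the same linear map on $H_k$, so these two compositions agree after passing to homology. Combining this with the functoriality identity \eqref{eq:functoriality}, we obtain $(t_{\d+\eta,\d'+\eta})_{\#}\circ \Phi_\d = \Phi_{\d'}\circ (s_{\d,\d'})_{\#}$, which is precisely the top interleaving square. The symmetric argument using hypothesis (2) yields the bottom square $(s_{\d+\eta,\d'+\eta})_{\#}\circ \Psi_\d = \Psi_{\d'}\circ (t_{\d,\d'})_{\#}$. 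For the triangle diagrams, hypothesis (3) combined with Proposition \ref{prop:contigo-props}(2) and \eqref{eq:functoriality} gives $\Psi_{\d+\eta}\circ \Phi_\d = (s_{\d,\d+2\eta})_{\#}$, and hypothesis (4) gives $\Phi_{\d+\eta}\circ \Psi_\d = (t_{\d,\d+2\eta})_{\#}$. Together these are exactly the four conditions defining an $\eta$-interleaving, so $\di(\H_k(\mf{F}),\H_k(\mf{G})) \leq \eta$.

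There is no substantial obstacle here: the lemma is essentially a translation of the interleaving definition from the level of persistence vector spaces down to the level of filtered simplicial complexes with simplicial maps, with contiguity playing the role of commutativity. The only point that requires care is the correct bookkeeping of functoriality, namely that compositions of simplicial maps induce the claimed compositions of linear maps on $H_k$; this is handled uniformly by \eqref{eq:functoriality}.
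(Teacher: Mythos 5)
Your proposal is correct and follows essentially the same route as the paper's proof: pass $\ph_\d,\psi_\d$ to homology, use the fact that contiguous simplicial maps induce equal maps on $H_k$ (Proposition \ref{prop:contigo-props}) together with functoriality \eqref{eq:functoriality} to turn the four contiguity hypotheses into the four interleaving identities, and conclude $\di(\H_k(\mf{F}),\H_k(\mf{G}))\leq \eta$, with the bottleneck bound coming from the Algebraic Stability Theorem. No gaps to report.
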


\section{Background on networks and our network distance}
\label{sec:nets}
A \emph{network} is a pair $(X,\w_X)$ where $X$ is a finite set and $\w_X: X\times X \r \R$ is a \emph{weight function}. Note that $\w_X$ need not satisfy the triangle inequality, any symmetry condition, or even the requirement that $\w_X(x,x) = 0$ for all $x\in X$. The weights are even allowed to be negative. The collection of all such networks is denoted $\Ncal$.

When comparing networks, one needs a way to correlate points in one network with points in the other. To see how this can be done, let $(X,\w_X), (Y,\w_Y) \in \Ncal$. Let $R$ be any nonempty relation between $X$ and $Y$, i.e. a nonempty subset of $X \times Y$. The \emph{distortion} of the relation $R$ is given by:
\[\dis(R):=\max_{(x,y),(x',y')\in R}|\w_X(x,x')-\w_Y(y,y')|.\] 

A \emph{correspondence between $X$ and $Y$} is a relation $R$ between $X$ and $Y$ such that $\pi_X(R)=X$ and $\pi_Y(R)=Y$, where $\pi_X:X\times Y \r X$ and $\pi_Y:X\times Y \r Y$ denote the natural projections. The collection of all correspondences between $X$ and $Y$ will be denoted $\Rsc(X,Y)$.

Following previous work in \cite{clust-net, nets-allerton, nets-icassp} the \emph{network distance} $\dn:\Ncal \times \Ncal \r \R_+$ is then defined as:
\[\dn(X,Y):=\frac{1}{2}\min_{R\in\Rsc}\dis(R).\]

It can be verified that $\dn$ as defined above is a pseudometric, and that the networks at 0-distance can be completely characterized \cite{nets-allerton}. Next we wish to prove the reformulation in Proposition \ref{prop:dn-ko}. First we define the distortion of a map between two networks. Given any $(X,\w_X),(Y,\w_Y)\in \Ncal$ and a map $\ph:(X,\w_X) \r (Y,\w_Y)$, the \emph{distortion} of $\ph$ is defined as:
\[\dis(\ph):= \max_{x,x'\in X}|\w_X(x,x')-\w_Y(\ph(x),\ph(x'))|.\]
Next, given maps $\ph:(X,\w_X)\r (Y,\w_Y)$ and $\psi:(Y,\w_Y)\r (X,\w_X)$, we define two \emph{co-distortion} terms: 
\begin{align*}C_{X,Y}(\ph,\psi) &:= \max_{(x,y)\in X\times Y}|\w_X(x,\psi(y)) - \w_Y(\ph(x),y)|,\\  C_{Y,X}(\psi,\ph) &:= \max_{(y,x)\in Y\times X}|\w_Y(y,\ph(x)) - \w_X(\psi(y),x)|.
\end{align*}

\begin{proposition}
\label{prop:dn-ko}
Let $(X,\w_X), (Y,\w_Y)\in \Ncal$. Then,
\[ \dn(X,Y) = \tfrac{1}{2}\min\{\max(\dis(\ph),\dis(\psi),C_{X,Y}(\ph,\psi), C_{Y,X}(\psi,\ph)) : \ph:X \r Y, \psi:Y \r X \text{ any maps}\}.\] 
\end{proposition}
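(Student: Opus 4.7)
The plan is to prove the two inequalities separately by constructing a correspondence from a pair of maps and, conversely, extracting a pair of maps from a correspondence.

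For the inequality $\dn(X,Y) \leq \tfrac{1}{2}\max(\dis(\ph),\dis(\psi),C_{X,Y}(\ph,\psi),C_{Y,X}(\psi,\ph))$, I would take arbitrary maps $\ph : X \to Y$ and $\psi : Y \to X$ and form the relation
\[ R_{\ph,\psi} := \{(x,\ph(x)) : x \in X\} \cup \{(\psi(y),y) : y \in Y\} \subseteq X \times Y. \]
Since the first set projects onto $X$ and the second onto $Y$, $R_{\ph,\psi}$ is a correspondence. To bound $\dis(R_{\ph,\psi})$ I would consider an arbitrary pair of points $(x,y),(x',y') \in R_{\ph,\psi}$ and split into four cases according to whether each of these comes from the graph of $\ph$ or from the ``anti-graph'' of $\psi$. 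The diagonal cases yield expressions of the form $|\w_X(x,x') - \w_Y(\ph(x),\ph(x'))|$ (bounded by $\dis(\ph)$) and $|\w_X(\psi(y),\psi(y')) - \w_Y(y,y')|$ (bounded by $\dis(\psi)$), while the two off-diagonal cases produce exactly the quantities controlled by $C_{X,Y}(\ph,\psi)$ and $C_{Y,X}(\psi,\ph)$. Thus $\dis(R_{\ph,\psi}) \leq \max(\dis(\ph),\dis(\psi),C_{X,Y}(\ph,\psi),C_{Y,X}(\psi,\ph))$, and taking the infimum over pairs of maps and applying the definition of $\dn$ yields the desired inequality.

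For the reverse inequality, I would start with any correspondence $R \in \Rsc(X,Y)$. Since $\pi_X(R) = X$ and $\pi_Y(R) = Y$, for each $x \in X$ I can choose some $\ph(x) \in Y$ with $(x,\ph(x)) \in R$, and for each $y \in Y$ some $\psi(y) \in X$ with $(\psi(y),y) \in R$. This defines maps $\ph : X \to Y$ and $\psi : Y \to X$ whose graph and anti-graph are contained in $R$. Consequently each of the four quantities $\dis(\ph)$, $\dis(\psi)$, $C_{X,Y}(\ph,\psi)$, $C_{Y,X}(\psi,\ph)$ is a maximum over pairs of elements of $R$, and so is bounded above by $\dis(R)$. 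Minimizing over $R$ then gives the remaining inequality.

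The argument is essentially a bookkeeping exercise and I do not anticipate any serious obstacle; the main thing to be careful about is the case analysis in the first direction, specifically verifying that the two ``mixed'' cases produce precisely the two co-distortion quantities $C_{X,Y}$ and $C_{Y,X}$ rather than some asymmetric hybrid. Since $\w_X$ and $\w_Y$ are not assumed to be symmetric, one must distinguish these two terms carefully and not collapse them. Note also that finiteness of $X$ and $Y$ makes the choice of $\ph$ and $\psi$ in the second direction unproblematic (no appeal to the axiom of choice is needed), and guarantees that the infima in the statement are attained as minima.
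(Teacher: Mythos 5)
Your proposal is correct and follows essentially the same route as the paper's proof: both directions are established by passing between a pair of maps $(\ph,\psi)$ and the correspondence $\{(x,\ph(x))\}\cup\{(\psi(y),y)\}$, with the same four-case analysis matching the two diagonal cases to $\dis(\ph),\dis(\psi)$ and the two mixed cases to the two co-distortion terms. The only difference is the order in which you treat the two inequalities, which is immaterial.
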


\begin{remark} Proposition \ref{prop:dn-ko} is analogous to a result of Kalton and Ostrovskii \cite[Theorem 2.1]{kalton1997distances} where---instead of $\dn$---one has the Gromov-Hausdorff distance between metric spaces. We remark that when restricted to the special case of networks that are also metric spaces, the network distance $\dn$ agrees with the Gromov-Hausdorff distance. Details on the Gromov-Hausdorff distance can be found in \cite{burago}. 

An important remark is that in the Kalton-Ostrovskii formulation, there is only one co-distortion term. When Proposition \ref{prop:dn-ko} is applied to metric spaces, the two co-distortion terms become equal by symmetry, and thus the Kalton-Ostrovskii formulation is recovered. But \emph{a priori}, the lack of symmetry in the network setting requires us to consider both terms. 
\end{remark}

\begin{remark} In the following sections, we  propose methods for computing persistent homology of networks, and prove that they are stable via Lemma \ref{lem:stab}. Note that similar results, valid in the setting of metric spaces, have appeared in \cite{dgh-pers,chazal2014persistence}. Whereas the proofs in \cite{chazal2014persistence} invoke an auxiliary construction of multivalued maps arising from correspondences, our proofs simply use the maps $\ph, \psi$ arising directly from the reformulation of $\dn$ (Proposition \ref{prop:dn-ko}), thus streamlining the treatment. \end{remark}

When studying the effect of asymmetry on persistent homology, it will be useful to consider the network transformations that we define next.

\begin{definition}[Symmetrization and Transposition] 
\label{defn:sym-trans}
Define the \emph{max-symmetrization} map $\mf{s}:\Ncal \r \Ncal$ by $(X,\w_X)\mapsto (X,\widehat{\w_X})$, where for any network $(X,\w_X)$, we define $\widehat{\w_X}:X\times X \r \R$ as follows:
\[\widehat{\w_X}(x,x'):= \max(\w_X(x,x'),\w_X(x',x)), \text{ for } x,x'\in X.\]
Also define the \emph{transposition} map $\mf{t}:\Ncal \r \Ncal$ by $(X,\w_X) \mapsto (X,\w_X^\top)$, where for any $(X,\w_X) \in \Ncal$, we define $\w_X^\top(x,x'):= \w_X(x',x)$ for $x,x'\in X$. For convenience, we denote $X^\top:=\mf{t}(X)$ for any network $X$. 
\end{definition}

We are now ready to formulate our two methods for computing persistent
homology of networks. The Rips filtration is the ``workhorse'' of
persistent homology of metric spaces so it is natural to consider its
generalization to general asymmetric networks.

\section{The Rips filtration of a network}
\label{sec:rips}
Recall that for a metric space $(X,d_X)$, the \emph{Rips complex} is defined for each $\d \geq 0$ as follows:
\[\mf{R}^\d_X := \set{\s \in \pow(X): \diam(\s) \leq \d}, \text{ where } \diam(\s) := \max_{x,x'\in \s}d_X(x,x').\]

Following this definition, we can define the Rips complex for a network $(X,\w_X)$ as follows:
\[\mf{R}^\d_X:=\{\s \in \pow(X) : \max_{x,x'\in \s}\w_X(x,x') \leq \d\}.\]

To any network $(X,\w_X)$, we may associate the \emph{Rips filtration} $\{\mf{R}^\d_X\hr \mf{R}^{\d'}_X\}_{\d\leq \d'}$. We denote the $k$-dimensional persistence vector space associated to this filtration by $\H_k^{\mf{R}}(X)$, and the corresponding persistence diagram by $\dgm_k^{\mf{R}}(X)$. The Rips filtration is stable to small perturbations of the input data:9

\begin{proposition}
\label{prop:rips-stab}
Let $(X,\w_X), (Y,\w_Y) \in \Ncal$. Then $\db(\dgm_k^{\mf{R}}(X),\dgm_k^{\mf{R}}(Y)) \leq 2\dn(X,Y).$
\end{proposition}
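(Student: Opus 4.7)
The plan is to obtain stability as a direct consequence of the Stability Lemma (Lemma \ref{lem:stab}) together with the reformulation of $\dn$ from Proposition \ref{prop:dn-ko}. Since the networks are finite, the minimum in Proposition \ref{prop:dn-ko} is attained, so I would begin by choosing maps $\ph: X \to Y$ and $\psi: Y \to X$ realizing the infimum, and setting $\eta := 2\dn(X,Y)$, so that $\max(\dis(\ph), \dis(\psi), C_{X,Y}(\ph,\psi), C_{Y,X}(\psi,\ph)) \leq \eta$.

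Next, I would promote $\ph$ and $\psi$ to simplicial maps at the filtration level. Define $\ph_\d : \mf{R}^\d_X \to \mf{R}^{\d+\eta}_Y$ and $\psi_\d : \mf{R}^\d_Y \to \mf{R}^{\d+\eta}_X$ by applying $\ph$ and $\psi$ vertexwise. To see these are well defined: if $\s \in \mf{R}^\d_X$, then for any $x,x'\in \s$, $\w_Y(\ph(x),\ph(x')) \leq \w_X(x,x') + \dis(\ph) \leq \d + \eta$, hence $\ph(\s) \in \mf{R}^{\d+\eta}_Y$; the argument for $\psi_\d$ is symmetric.

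Then I would verify the four contiguity conditions from Lemma \ref{lem:stab} with parameter $\eta$. Conditions (1) and (2), which compare $\ph_\d$ (resp.\ $\psi_\d$) followed by an inclusion with an inclusion followed by $\ph_{\d'}$ (resp.\ $\psi_{\d'}$), hold trivially because both compositions are literally equal as simplicial maps, applying $\ph$ (resp.\ $\psi$) vertexwise. The substantive conditions are (3) and (4): for $\s \in \mf{R}^\d_X$ I must show $\s \cup \psi(\ph(\s)) \in \mf{R}^{\d+2\eta}_X$, and analogously on the $Y$ side. Pairs of vertices within $\s$ give weight $\leq \d$; pairs within $\psi(\ph(\s))$ give weight $\leq \w_X(a,a') + \dis(\ph) + \dis(\psi) \leq \d + 2\eta$ by two applications of the distortion bound. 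The delicate case is a mixed pair $a \in \s$ and $\psi(\ph(a')) \in \psi(\ph(\s))$, where the co-distortion terms enter: $|\w_X(a,\psi(\ph(a'))) - \w_Y(\ph(a),\ph(a'))| \leq C_{X,Y}(\ph,\psi) \leq \eta$, so $\w_X(a,\psi(\ph(a'))) \leq \d + 2\eta$, and the symmetric mixed pair is controlled by $C_{Y,X}(\psi,\ph)$.

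Finally, applying Lemma \ref{lem:stab} yields $\db(\dgm_k^{\mf{R}}(X),\dgm_k^{\mf{R}}(Y)) \leq \eta = 2\dn(X,Y)$, which is the claim. I expect the only nontrivial step to be the mixed-pair check in conditions (3) and (4); this is precisely where the two co-distortion terms of Proposition \ref{prop:dn-ko} are needed, and explains why the asymmetric setting requires both $C_{X,Y}$ and $C_{Y,X}$ rather than a single Kalton--Ostrovskii-style term.
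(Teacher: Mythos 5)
Your proof is correct and takes essentially the same route the paper intends: the paper omits the Rips proof precisely because it is the verbatim adaptation of the proof of Proposition \ref{prop:dowker-stab}, namely extracting $\ph,\psi$ from Proposition \ref{prop:dn-ko}, checking they induce simplicial maps shifting the filtration parameter by $\eta$, verifying the four contiguity conditions of Lemma \ref{lem:stab} (with the co-distortion terms handling exactly the mixed pairs you identify), and concluding via that lemma. Your observation that both $C_{X,Y}$ and $C_{Y,X}$ are needed because the Rips condition ranges over ordered pairs of an asymmetric weight is the right reading of why the network setting departs from Kalton--Ostrovskii.
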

We omit the proof because it is similar to that of Proposition \ref{prop:dowker-stab}, which we will prove in detail.

\begin{remark}
\label{rem:rips-benefits}
The preceding proposition serves a dual purpose: (1) it shows that the
Rips persistence diagram is robust to noise in input data, and (2) it
shows that instead of computing the network distance between two
networks, one can compute the bottleneck distance between their Rips
persistence diagrams as a suitable proxy. The advantage to computing
bottleneck distance is that it can be done in polynomial time (see
\cite{efrat2001geometry}), whereas computing $\dn$ is NP-hard in
general. This follows from the fact that the problem of computing $\dn$ includes the problem of computing the Gromov-Hausdorff distance between finite metric spaces, which is an NP-hard problem \cite{schmiedl}. We remark that the idea of computing Rips persistence diagrams to compare finite metric spaces first appeared in \cite{dgh-pers}, and moreover, that Proposition \ref{prop:rips-stab} is an extension of Theorem 3.1 in \cite{dgh-pers}.
\end{remark}

The Rips filtration in the setting of symmetric networks has been used in \cite{horak2009persistent, carstens2013persistent, giusti2015clique,
petri2013topological}, albeit without addressing stability results. To our knowledge, Proposition \ref{prop:rips-stab} is the first quantitative result justifying the constructions in these prior works.
 
\begin{remark}[Rips is insensitive to asymmetry] 
\label{rem:rips-symm}
A critical weakness of the Rips complex construction is that it is not sensitive to asymmetry. To see this, recall the symmetrization map defined in Definition \ref{defn:sym-trans}, and let $(X,\w_X) \in \Ncal$. Now for any $\s \in \pow(X)$, we have $\max_{x,x' \in \s}\w_X(x,x') = \max_{x,x'\in \s}\widehat{\w_X}(x,x').$ It follows that for each $\d \geq 0$, the Rips complexes of $(X,\w_X)$ and $(X,\widehat{\w_X})=\mf{s}(X,\w_X)$ are equal, i.e. $\mf{R} = \mf{R} \circ \mf{s}$. Thus the Rips persistence diagrams of the original and max-symmetrized networks are equal. 
\end{remark}

\section{The Dowker filtration of a network}\label{sec:dowker}

Given $(X,\omega_X)\in \mathcal{N}$,  and for any $\d \in \R$,
consider the following relation on $X$: 
\begin{equation}
R_{\d,X}:=\set{(x,x') : \w_X(x,x') \leq \d}. 
\label{eq:relation}
\end{equation}
Then $R_{\d,X} \subseteq X \times X$, and $R_{\d_F,X} = X \times X$ for some sufficiently large $\d_F$. Furthermore, for any $\d' \geq \d$, we have $R_{\d,X} \subseteq R_{\d',X}$. Using $R_{\d,X}$, we build a simplicial complex $\sink_\d$ as follows: 
\begin{equation}\label{eq:d-sink}
\sink_{\d,X}:=\set{\s=[x_0,\ldots, x_n] : \text{ there exists } x'\in X \text{ such that } (x_i,x')\in R_{\d,X} \text{ for each } x_i}.
\end{equation}

If $\s \in \sink_{\d,X}$, it is clear that any face of $\s$ also belongs to $\sink_{\d,X}$. We call $\sink_{\d,X}$ the \emph{Dowker $\d$-sink simplicial complex} associated to $X$, and refer to $x'$ as a \emph{$\d$-sink} for $\s$ (where $\s$ and $x'$ should be clear from context). 

Since $R_{\d,X}$ is an increasing sequence of sets, it follows that $\sink_{\d,X}$ is an increasing sequence of simplicial complexes. In particular, for $\d'\geq \d$, there is a natural inclusion map $\sink_{\d,X} \hr \sink_{\d',X}$.  We write $\sink_X$ to denote the filtration $\{\sink_{\d,X} \hr \sink_{\d',X}\}_{ \d \leq \d'}$ associated to $X$. We call this the \emph{Dowker sink filtration on $X$}. 
We will denote the $k$-dimensional persistence diagram arising from this filtration by $\dgm_k^{\si}(X)$.

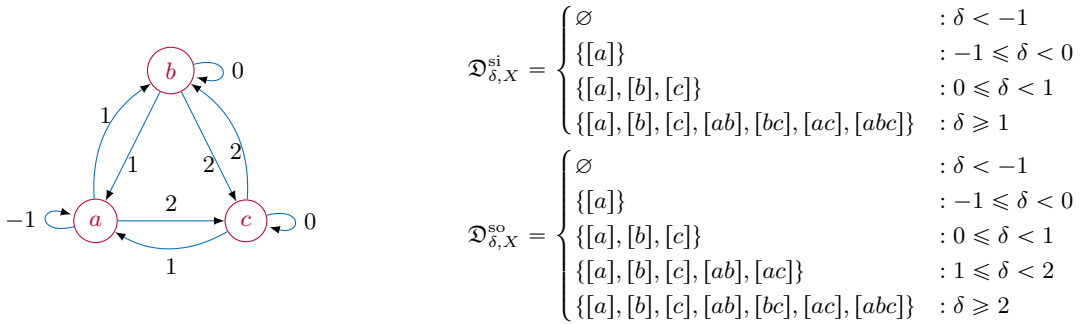
\begin{figure}[b]
\begin{center}
\begin{tikzpicture}[every node/.style={font=\footnotesize}]
\tikzset{>=latex}

\begin{scope}[draw,purple]
\node[circle,draw](2) at (2,0){$a$};
\node[circle,draw](3) at (3,2){$b$};
\node[circle,draw](4) at (4,0){$c$};
\end{scope}

\begin{scope}[draw,xshift=6cm,yshift=1cm]
\node[](5) at (5,1){
$
\sink_{\d,X}=
\begin{cases}
\emptyset &: \d<-1 \\
\{[a]\} &:-1\leq \d<0\\
\{[a],[b],[c]\} &: 0\leq \d < 1\\
\{[a],[b],[c],[ab],[bc],[ac],[abc]\} &: \d \geq 1
\end{cases}
$
};
\end{scope}

\begin{scope}[draw,xshift=6cm,yshift=-1.2cm]
\node[](50) at (5,1){
$
\src_{\d,X}=
\begin{cases}
\emptyset &: \d<-1 \\
\{[a]\} &:-1\leq \d<0\\
\{[a],[b],[c]\} &: 0\leq \d < 1\\
\{[a],[b],[c],[ab],[ac]\} &: 1\leq \d < 2\\
\{[a],[b],[c],[ab],[bc],[ac],[abc]\} &: \d \geq 2
\end{cases}
$
};
\end{scope}

\begin{scope}[xshift=1cm,yshift=1cm]
\node (0) at (4,0){};
\node (1) at (6,0){};
\end{scope}

\begin{scope}[draw=NavyBlue]
\path[->] (2) edge [loop left] node[left]{$-1$}(2);
\path[->] (3) edge [loop right] node[right]{$0$}(3);
\path[->] (4) edge [loop right] node[right]{$0$}(4);
\path[->] (2) edge [bend left] node[above]{$1$} (3);
\path[->] (3) edge [] node[below]{$1$} (2);
\path[->] (2) edge [] node[above]{$2$} (4);
\path[->] (3) edge [] node[below]{$2$} (4);
\path[->] (4) edge [bend left] node[below]{$1$} (2);
\path[->] (4) edge [bend right] node[below]{$2$} (3);
\end{scope}


\end{tikzpicture}
\caption{Computing the Dowker sink and source complexes of a network $(X,\w_X)$. Observe that the sink and source complexes are different in the range $1\leq \d < 2$.}
\label{fig:dowker-three-node}
\end{center}
\end{figure}

Note that we can define a dual construction as follows: 
\begin{equation}\label{eq:d-src}
\src_{\d,X}:=\set{\s=[x_0,\ldots, x_n] : \text{ there exists } x'\in X \text{ such that } (x',x_i)\in R_{\d,X} \text{ for each } x_i}.
\end{equation}

We call $\src_{\d,X}$ the \emph{Dowker $\d$-source simplicial complex} associated to $X$. The filtration $\{\src_{\d,X} \hr \src_{\d',X}\}_{ \d \leq \d'}$ associated to $X$ is called the \emph{Dowker source filtration}, denoted $\src_X$. 
We denote the $k$-dimensional persistence diagram arising from this filtration by $\dgm_k^{\so}(X)$. Notice that any construction using $\sink_{\d,X}$ can also be repeated using $\src_{\d,X}$, so we focus on the case of the sink complexes and restate results in terms of source complexes where necessary. 
In particular, we will prove in \S\ref{sec:dowker-dual} that 
\[\dgm_k^{\si}(X) = \dgm_k^{\so}(X) \text{ for any } k\in \Z_+,\] 
so it makes sense to talk about ``the" Dowker diagram associated to $X$.

The sink and source filtrations are not equal in general; this is illustrated in Figure \ref{fig:dowker-three-node}.

As in the case of the Rips filtration, both the Dowker sink and source
filtrations are stable.

\begin{proposition}
\label{prop:dowker-stab}
Let $(X,\w_X), (Y,\w_Y) \in \Ncal$. Then $\db(\dgm_k^{\bullet}{(X)},\dgm_k^{\bullet}(Y)) \leq 2\dn(X,Y).$ Here $\dgm^{\bullet}$ refers to either of $\dgm^{\si}$ and $\dgm^{\so}$. 
\end{proposition}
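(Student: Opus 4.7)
The plan is to invoke the Stability Lemma (Lemma \ref{lem:stab}) together with the reformulation of $\dn$ given in Proposition \ref{prop:dn-ko}. Set $\eta := 2\,\dn(X,Y)$; by Proposition \ref{prop:dn-ko} there exist maps $\ph: X \to Y$ and $\psi: Y \to X$ such that $\dis(\ph), \dis(\psi), C_{X,Y}(\ph,\psi), C_{Y,X}(\psi,\ph)$ are all at most $\eta$. I will use these point maps to construct families of simplicial maps between the Dowker filtrations and verify the four contiguity conditions of Lemma \ref{lem:stab}.

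For each $\d \in \R$, define $\ph_\d : \sink_{\d,X} \to \sink_{\d+\eta,Y}$ on vertices by $x \mapsto \ph(x)$, and analogously $\psi_\d : \sink_{\d,Y} \to \sink_{\d+\eta,X}$ by $y \mapsto \psi(y)$. The first step is to check these are genuinely simplicial. Given $\s = [x_0,\ldots,x_n] \in \sink_{\d,X}$ with $\d$-sink $x' \in X$, the candidate sink for $\ph(\s)$ is $\ph(x')$: since $|\w_X(x_i,x') - \w_Y(\ph(x_i),\ph(x'))| \leq \dis(\ph) \leq \eta$, we obtain $\w_Y(\ph(x_i),\ph(x')) \leq \d+\eta$ for each $i$, so $\ph(\s) \in \sink_{\d+\eta,Y}$. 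The argument for $\psi_\d$ is identical.

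For the contiguity conditions, the first two are immediate: the maps $t_{\d+\eta,\d'+\eta}\circ\ph_\d$ and $\ph_{\d'}\circ s_{\d,\d'}$ agree on vertices (both send $x \mapsto \ph(x)$, the difference being only the filtration level of the codomain), so they are equal as simplicial maps, hence contiguous; similarly for condition 2. The substantive work is in conditions 3 and 4. For condition 3, let $\s = [x_0,\ldots,x_n] \in \sink_{\d,X}$ with $\d$-sink $x'$; we must exhibit a single $(\d+2\eta)$-sink in $X$ for the union $\{x_0,\ldots,x_n\} \cup \{\psi(\ph(x_0)),\ldots,\psi(\ph(x_n))\}$. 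I claim that $\psi(\ph(x'))$ works. For the original vertices $x_i$, the co-distortion estimate gives $|\w_X(x_i,\psi(\ph(x'))) - \w_Y(\ph(x_i),\ph(x'))| \leq C_{X,Y}(\ph,\psi) \leq \eta$, and combined with $\w_Y(\ph(x_i),\ph(x')) \leq \d+\eta$ from the previous paragraph, we obtain $\w_X(x_i,\psi(\ph(x'))) \leq \d+2\eta$. For the composed vertices $\psi(\ph(x_i))$, the distortion of $\psi$ yields $|\w_Y(\ph(x_i),\ph(x')) - \w_X(\psi(\ph(x_i)),\psi(\ph(x')))| \leq \eta$ directly, giving the same bound. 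Condition 4 is symmetric, invoking $C_{Y,X}(\psi,\ph)$ and $\dis(\ph)$ in the dual roles.

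Applying Lemma \ref{lem:stab} with this $\eta$ yields $\db(\dgm_k^{\si}(X),\dgm_k^{\si}(Y)) \leq \eta = 2\,\dn(X,Y)$. The argument for the source filtration is entirely dual: one replaces the relation $R_{\d,X}$ by its transpose throughout and proceeds verbatim, as the role of $\ph$ and $\psi$ does not depend on which coordinate of the weight function plays the role of the ``sink.'' The main conceptual obstacle is organizational rather than technical: one must track which of the four estimates controlled by Proposition \ref{prop:dn-ko} is used at each step, and in particular observe that both co-distortion terms $C_{X,Y}$ and $C_{Y,X}$ are genuinely needed (one for each of the two round-trip contiguity checks), which is the precise point at which the asymmetry of networks forces a departure from the metric-space treatment of \cite{chazal2014persistence}.
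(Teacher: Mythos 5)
Your proposal is correct and follows essentially the same route as the paper's proof: the reformulation of $\dn$ from Proposition \ref{prop:dn-ko}, the induced simplicial maps $\ph_\d,\psi_\d$ with shift $\eta=2\dn(X,Y)$, verification of the four contiguity conditions (with $\psi(\ph(x'))$ as the common $(\d+2\eta)$-sink, using exactly the same distortion and co-distortion estimates), and then Lemma \ref{lem:stab}. The only differences are cosmetic, e.g.\ you bound $\w_X(\psi(\ph(x_i)),\psi(\ph(x')))$ via $\dis(\psi)$ applied to the already-shifted bound in $Y$ rather than the paper's two-step triangle inequality through $\w_X(x_i,x')$.
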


\begin{proof}[Proof of Proposition \ref{prop:dowker-stab}]
Both cases are similar, so we just prove the result for $\dgm^{\si}$. 
Let $\eta=2\dn(X,Y)$. Then by Proposition \ref{prop:dn-ko}, there exist maps $\ph:X \r Y, \psi: Y \r X$ such that 
\[\max(\dis(\ph),\dis(\psi), C_{X,Y}(\ph,\psi),C_{Y,X}(\psi,\ph))\leq \eta.\]
First we check that $\ph,\psi$ induce simplicial maps $\ph_\d:\sink_{\d,X} \r \sink_{\d+\eta,Y}$ and $\psi_\d:\sink_{\d,Y} \r \sink_{\d+\eta,Y}$ for each $\d\in \R$. 

Let $\d' \geq \d \in \R$. Let $\s=[x_0,\ldots, x_n] \in \sink_{\d,X}$. Then there exists $x' \in X$ such that $\w_X(x_i,x') \leq \d$ for each $0\leq i \leq n$. Fix such an $x'$. Since $\dis(\ph) \leq \eta$, we have the following for each $i$:
\[\vert \w_X(x_i,x') - \w_Y(\ph(x_i),\ph(x'))\vert \leq \eta.\]

So $\w_Y(\ph(x_i),\ph(x')) \leq \w_X(x_i,x') + \eta \leq \d + \eta$ for each $0\leq i\leq n$. Thus $\ph_{\d}(\s):=\set{\ph(x_0),\ldots, \ph(x_n)}$ is a simplex in $\sink_{\d + \eta,Y}$. Thus the map on simplices $\ph_\d$ induced by $\ph$ is simplicial for each $\d\in \R$.

Similarly we can check that the map $\psi_\d$ on simplices induced by $\psi$ is simplicial. Now to prove the result, it will suffice to check the contiguity conditions in the statement of Lemma \ref{lem:stab}. Consider the following diagram:

\[ \begin{tikzcd}[column sep=large]
\sink_{\d,X} \arrow{r}{s_{\d,\d'}} \arrow[swap]{dr}{\ph_\d} & \sink_{\d',X}\arrow{dr}{\ph_{\d'}} & \\
&\sink_{\d+\eta,Y} \arrow{r}{t_{\d+\eta,\d'+\eta}} & \sink_{\d'+\eta,Y}
\end{tikzcd} \]

Here $s_{\d,\d'}$ and $t_{\d+\eta,\d'+\eta}$ are the inclusion maps. 
We claim that $t_{\d+\eta,\d'+\eta} \circ \ph_\d$ and $\ph_{\d'} \circ s_{\d,\d'}$ are contiguous simplicial maps. To see this, let $\s \in \sink_{\d,X}$. Since $s_{\d,\d'}$ is just the inclusion, it follows that $t_{\d+\eta,\d'+\eta}(\ph_\d(\s)) \cup \ph_{\d'}(s_{\d,\d'}(\s))=\ph_\d(\s),$ which is a simplex in $\sink_{\d+\eta,Y}$ because $\ph_\d$ is simplicial, and hence a simplex in $\sink_{\d'+\eta,Y}$ because the inclusion $t_{\d+\eta,\d'+\eta}$ is simplicial. Thus $t_{\d+\eta,\d'+\eta} \circ \ph_\d$ and $\ph_{\d'} \circ s_{\d,\d'}$ are contiguous, and their induced linear maps for homology are equal. By a similar argument, one can show that $s_{\d+\eta,\d'+\eta}\circ \psi_\d$ and $\psi_{\d'}\circ t_{\d,\d'}$ are contiguous simplicial maps as well. 

Next we check that the maps $\psi_{\d+\eta}\circ \ph_\d$ and $s_{\d,\d+2\eta}$ in the figure below are contiguous.

\[ \begin{tikzcd}
\sink_{\d,X} \arrow{rr}{s_{\d,\d+2\eta}} \arrow[swap]{dr}{\ph_\d} & {} &
\sink_{\d+2\eta,X}\\
{} &
\sink_{\d+\eta,Y} \arrow[swap]{ur}{\psi_{\d+\eta}} & 
{} 
\end{tikzcd} \]

Let $x_i\in \s$. Note that for our fixed $\s = [x_0,\ldots, x_n]\in \sink_{\d,X}$ and $x'$, we have:
\begin{align*}
|\w_X(x_i,x')-\w_X(\psi(\ph(x_i)),\psi(\ph(x')))|&\leq 
|\w_X(x_i,x') - \w_Y(\ph(x_i),\ph(x'))|\\
&+ |\w_Y(\ph(x_i),\ph(x')) - \w_X(\psi(\ph(x_i)),\psi(\ph(x')))| \\
&\leq 2\eta.\\
\text{Thus we obtain }\hfill \w_X(\psi(\ph(x_i)),\psi(\ph(x')))&\leq \w_X(x_i,x')+ 2\eta \leq \d+2\eta.
\end{align*}

Since this holds for any $x_i \in \s$, it follows that $\psi_{\d+\eta}(\ph_\d(\s)) \in \sink_{\d+2\eta,X}$. We further claim that 
\[\t:=\s \cup \psi_{\d+\eta}(\ph_\d(\s)) = \set{x_0,x_1,\ldots, x_n, \psi(\ph(x_0)),\ldots, \psi(\ph(x_n))}\] 
is a simplex in $\sink_{\d+2\eta,X}$. Let $0\leq i\leq n$. It suffices to show that $\w_X(x_i,\psi(\ph(x')) \leq \d + 2\eta$. 

Notice that from the reformulation of $\dn$ (Proposition \ref{prop:dn-ko}), we have 
\[C_{X,Y}(\ph,\psi) = \max_{(x,y)\in X\times Y}|\w_X(x,\psi(y)) - \w_Y(\ph(x),y)| \leq \eta .\]
Let $y = \ph(x')$. Then $|\w_X(x_i,\psi(y)) - \w_Y(\ph(x_i),y)| \leq \eta$. In particular,
\[\w_X(x_i,\psi(\ph(x'))) \leq \w_Y(\ph(x_i),\ph(x')) + \eta \leq \w_X(x_i,x') + 2\eta \leq \d+2\eta.\]

Since $0\leq i\leq n$ were arbitrary, it follows that $\t \in \sink_{\d+2\eta,X}$. Thus $\psi_{\d+\eta}\circ \ph_\d$ and $s_{\d,\d+2\eta}$ are contiguous. Similarly, one can use the $\dis(\psi)$ and $C_{Y,X}(\psi,\ph)$ terms to show that $t_{\d,\d+2\eta}$ and $\ph_{\d+\eta}\circ \psi_\d$ are contiguous. 

The result now follows by an application of Lemma \ref{lem:stab}. \end{proof}

\begin{remark}
\label{rem:dowker-benefits}
The preceding proposition shows that the Dowker persistence diagram is robust to noise in input data, and that the bottleneck distance between Dowker persistence diagrams arising from two networks can be used as a proxy for computing the actual network distance. Note the analogy with Remark \ref{rem:rips-benefits}.
\end{remark}

Both the Dowker and Rips filtrations are valid methods for computing persistent homology of networks, by virtue of their stability results (Propositions \ref{prop:rips-stab} and \ref{prop:dowker-stab}). However, we present the Dowker filtration as an appropriate method for capturing directionality information in directed networks. In \S\ref{sec:symmetry} we discuss this particular feature of the Dowker filtration in full detail.

\begin{remark}[Symmetric networks]
In the setting of symmetric networks, the Dowker sink and source simplicial filtrations coincide, and so we automatically obtain $\dgm_k^{\so}(X)=\dgm_k^{\si}(X)$ for any $k\in \Z_+$ and any $(X,\w_X)\in \Ncal$. 
\end{remark}

\begin{remark}[The metric space setting and relation to witness complexes]
When restricted to the setting of metric spaces, the Dowker complex resembles a construction called the witness complex \cite{de2004topological}. In particular, a version of the Dowker complex for metric spaces, constructed in terms of \emph{landmarks} and \emph{witnesses}, was discussed in \cite{chazal2014persistence}, along with stability results. When restricted to the special networks that are pseudo-metric spaces, our definitions and results agree with those presented in \cite{chazal2014persistence}.
\end{remark}

\subsection{The Functorial Dowker Theorem and equivalence of diagrams}\label{sec:dowker-dual}

Let $(X,\w_X)\in \Ncal$, and let $\d \in \R$ be such that $R_{\d,X}$ is nonempty. By applying Dowker's theorem (Theorem \ref{thm:dowker}) to the setting $Y=X$, we have $H_k(\sink_{\d,X}) \cong H_k(\src_{\d,X})$, for any $k \in \Z_+$. We still have this equality in the case where $R_{\d,X}$ is empty, because then $\sink_{\d,X}$ and $\src_{\d,X}$ are both empty. Thus we obtain:

\begin{corollary}\label{cor:dowker} Let $(X,\w_X)\in\Ncal$, $\d \in \R$, and $k\in \Z_+$. Then,
\[H_k(\sink_{\d,X}) \cong H_k(\src_{\d,X}).\]
\end{corollary}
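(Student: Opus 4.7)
The plan is to apply Dowker's theorem (Theorem \ref{thm:dowker}) directly, taking both totally ordered sets to be $X$ (any total order on the finite set $X$ suffices) and the relation to be $R_{\d,X} \subseteq X \times X$ from (\ref{eq:relation}). This is essentially the one-line argument sketched in the paragraph preceding the corollary, and my job is simply to fill in the identification of the two complexes.

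First I would verify that $\sink_{\d,X}$ and $\src_{\d,X}$ are precisely the two Dowker simplicial complexes associated to $R_{\d,X}$. Viewing the relation as a subset of $X_1 \times X_2$ with $X_1 = X_2 = X$, and comparing (\ref{eq:d-sink}) with the definition of $E_R$ given just before Theorem \ref{thm:dowker}, one sees that a finite subset $\s \subseteq X_1$ lies in $E_{R_{\d,X}}$ iff there exists $y \in X_2$ with $(x,y) \in R_{\d,X}$ for every $x \in \s$ --- which is exactly the $\d$-sink condition. Hence $\sink_{\d,X} = E_{R_{\d,X}}$, and dually, by the same comparison with the definition of $F_R$, one obtains $\src_{\d,X} = F_{R_{\d,X}}$.

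Once this identification is made, the case $R_{\d,X} \neq \emptyset$ is immediate from Theorem \ref{thm:dowker}: for each $k \in \Z_+$,
\[H_k(\sink_{\d,X}) \;=\; H_k(E_{R_{\d,X}}) \;\cong\; H_k(F_{R_{\d,X}}) \;=\; H_k(\src_{\d,X}).\]
In the remaining case $R_{\d,X} = \emptyset$, both $\sink_{\d,X}$ and $\src_{\d,X}$ are empty (as already observed in the statement), so the isomorphism holds vacuously in every degree.

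There is no real obstacle: the entire content of the argument is the observation that the classical Dowker setup applies verbatim with $X$ playing both roles, once any total order on the finite set $X$ has been fixed. I would note, however, that this single-parameter isomorphism is strictly weaker than what the persistent setting will ultimately demand. To conclude $\dgm_k^{\si}(X) = \dgm_k^{\so}(X)$ one needs the isomorphisms at each $\d$ to be compatible with the inclusions $R_{\d,X} \subseteq R_{\d',X}$, which is exactly what the Functorial Dowker Theorem (Theorem \ref{thm:dowker-functorial}) provides; the present corollary is only the pointwise shadow of that stronger statement.
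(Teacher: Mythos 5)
Your proposal is correct and follows the paper's own argument essentially verbatim: apply Theorem \ref{thm:dowker} with $Y=X$ and the relation $R_{\d,X}$, after identifying $\sink_{\d,X}$ and $\src_{\d,X}$ with $E_{R_{\d,X}}$ and $F_{R_{\d,X}}$, and treat the case $R_{\d,X}=\emptyset$ separately since both complexes are then empty. Your closing remark that this pointwise isomorphism is weaker than the functorial statement needed for equality of persistence diagrams also matches the paper's subsequent discussion.
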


In the persistent setting, Theorem \ref{thm:dowker} and Corollary \ref{cor:dowker} suggest the following question: 
\begin{quote}
\textit{Given a network $(X,\w_X)$ and a fixed dimension $k\in \Z_+$, are the persistence diagrams of the Dowker sink and source filtrations of $(X,\w_X)$ necessarily equal?}
\end{quote}

In what follows, we provide a positive answer to the question above. Our strategy is to use the Functorial Dowker Theorem (Theorem \ref{thm:dowker-functorial}), for which we will provide a complete proof below. The Functorial Dowker Theorem implies equality between sink and source persistence diagrams.

\begin{corollary}[Dowker duality]
\label{cor:dowker-dual} Let $(X,\w_X)\in \Ncal$, and let $k\in \Z_+$. Then,
\[\dgm_k^{\si}(X)=\dgm_k^{\so}(X).\]
Thus we may call either of the diagrams above the \emph{$k$-dimensional Dowker diagram of $X$}, denoted $\dgm_k^{\mf{D}}(X)$.
\end{corollary}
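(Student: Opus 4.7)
The plan is to upgrade the pointwise Dowker isomorphism of Corollary \ref{cor:dowker} to an isomorphism of $\R$-indexed persistence vector spaces by means of the Functorial Dowker Theorem. As a setup, I would observe that, taking $Y = X$ and $R = R_{\d,X}$ from (\ref{eq:relation}), the complexes $\src_{\d,X}$ and $\sink_{\d,X}$ defined in (\ref{eq:d-src}) and (\ref{eq:d-sink}) coincide exactly with the complexes $E_R$ and $F_R$ appearing in the statement of Theorem \ref{thm:dowker-functorial}.

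Next, for any pair $\d \leq \d' \in \R$, the nesting $R_{\d,X} \subseteq R_{\d',X}$ places us in the hypothesis of Theorem \ref{thm:dowker-functorial}, which yields homotopy equivalences $\Gamma_\d : |\src_{\d,X}| \to |\sink_{\d,X}|$ and $\Gamma_{\d'} : |\src_{\d',X}| \to |\sink_{\d',X}|$ making the square with the canonical inclusions commute up to homotopy. To assemble these pairwise outputs into a single coherent family $\{\Gamma_\d\}_{\d \in \R}$ that works simultaneously across all pairs, I would rely on the constructive proof of Theorem \ref{thm:dowker-functorial} given later in Section \ref{sec:dowker-dual}: inspection of that construction shows $\Gamma_\d$ depends only on $R_{\d,X}$ and the fixed total ordering of $X$, so a canonical choice at each $\d$ is available, with homotopy commutativity of every inclusion square then provided by the theorem. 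For $\d$ small enough that $R_{\d,X}$ is empty, both $\src_{\d,X}$ and $\sink_{\d,X}$ are empty and the square commutes trivially.

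Passing to $k$-dimensional homology then converts each $\Gamma_\d$ into an isomorphism $H_k(\src_{\d,X}) \to H_k(\sink_{\d,X})$ (homotopy equivalences induce isomorphisms on homology), and converts the homotopy-commutative inclusion squares into strictly commutative ones (homotopic maps induce equal maps on homology, cf.\ Proposition \ref{prop:contigo-props}(2) for the simplicial analogue). This is exactly an isomorphism of $\R$-indexed persistence vector spaces $\H_k^{\so}(X) \cong \H_k^{\si}(X)$, so the two have identical persistence barcodes and hence identical persistence diagrams. Equivalently, the isomorphism constitutes a $0$-interleaving, and the Algebraic Stability Theorem then yields $\db(\dgm_k^{\so}(X), \dgm_k^{\si}(X)) = 0$, which for these finite barcodes forces equality.

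The main obstacle in this plan is the coherence step: as formulated, Theorem \ref{thm:dowker-functorial} supplies $\Gamma$-maps only for one pair of relations at a time, and turning this into a genuine isomorphism of persistence vector spaces requires knowing from the proof of the FDT that a single, $R_{\d,X}$-determined choice of $\Gamma_\d$ is available that serves all pairs $\d \le \d'$ uniformly. Once that bookkeeping is in hand, functoriality of homology together with the standard fact that persistence diagrams are complete invariants of isomorphism classes of persistence vector spaces finish the argument cleanly.
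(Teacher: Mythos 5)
Your proposal is correct and follows essentially the same route as the paper: apply the Functorial Dowker Theorem to the nested relations $R_{\d,X}\subseteq R_{\d',X}$, pass to $k$-th homology where the homotopy-commutative squares become strictly commutative squares of isomorphisms, treat the empty-relation resolutions separately, and conclude via the Persistence Equivalence Theorem (your ``complete invariant''/$0$-interleaving step plays exactly this role). The coherence issue you flag is genuine but is resolved just as you suggest---the map $\Gamma_{|E_R|}$ constructed in the proof of Theorem \ref{thm:dowker-functorial} depends only on the relation (the choices of sinks yield contiguous maps, hence equal maps on homology), a point the paper's proof leaves implicit.
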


Before proving the corollary, we state an $\R$-indexed variant of the Persistence Equivalence Theorem \cite{edelsbrunner2010computational}. This particular version follows from the \emph{isometry theorem} \cite{bauer-isom}, and we refer the reader to \cite[Chapter 5]{chazal2012structure} for an expanded presentation of this material.

\begin{theorem}[Persistence Equivalence Theorem]
\label{thm:pet} 
Consider two persistence vector spaces $\U=\{U^{\d} \xr{\mu_{\d,\d'}} U^{\d'}\}_{\d\leq\d' \in \R }$ and $\V=\{V^{\d} \xr{\nu_{\d,\d'}} V^{\d'}\}_{\d \leq \d'\in \R}$ with connecting maps $f_{\d}:U^{\d}\r V^{\d'}$.
\begin{center}
\begin{tikzpicture}[]
\node (00) at (-3,0){$\cdots$};
\node (1) at (0,0){$V^{\d}$};
\node (2) at (3,0){$V^{\d'}$};
\node (3) at (6,0){$V^{\d''}$};
\node (01) at (9,0){$\cdots$};

\node (02) at (-3,2){$\cdots$};
\node (4) at (0,2){$U^{\d}$};
\node (5) at (3,2){$U^{\d'}$};
\node (6) at (6,2){$U^{\d''}$};
\node (03) at (9,2){$\cdots$};

\draw (00) edge[->] (1);
\draw (3) edge[->] (01);
\draw (02) edge[->] (4);
\draw (6) edge[->] (03);

\draw (1) edge[->] (2);
\draw (2) edge[->] (3);
\draw (4) edge[->] (5);
\draw (5) edge[->] (6);
\draw (1) edge[<-] node[right]{$f_{\d}$} (4);
\draw (2) edge[<-] node[right]{$f_{\d'}$} (5);
\draw (3) edge[<-] node[right]{$f_{\d''}$} (6);
\end{tikzpicture}
\end{center}

If the $f_{\d}$ are all isomorphisms and each square in the diagram above commutes, then:
\[\dgm(\U) = \dgm(\V).\]
\end{theorem}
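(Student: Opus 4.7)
The plan is to observe that the hypothesis ``each $f_\d$ is an isomorphism and every square commutes'' is exactly the statement that $\U$ and $\V$ are isomorphic as persistence vector spaces (equivalently, as functors from $(\R,\leq)$ to $\mathbb{K}$-vector spaces). I would then exhibit a $0$-interleaving and invoke the Algebraic Stability Theorem to conclude equality of the two persistence diagrams.

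Concretely, my first step is to set $\eta = 0$ and define the would-be interleaving maps $\ph_\d := f_\d^{-1} : V^\d \to U^\d$ and $\psi_\d := f_\d : U^\d \to V^\d$ (matching the roles in the definition of interleaving given in \S\ref{sec:background-int}). The rectangular commuting diagrams required for a $0$-interleaving are then exactly the commuting squares assumed in the hypothesis, together with the squares obtained by applying $f_\bullet^{-1}$ to both sides of those squares (which commute automatically since the $f_\d$ are isomorphisms). The two triangular conditions require $\psi_{\d+0}\circ \ph_\d = \nu_{\d,\d+0}$ and $\ph_{\d+0}\circ \psi_\d = \mu_{\d,\d+0}$; but the left-hand sides are $f_\d \circ f_\d^{-1} = \id_{V^\d}$ and $f_\d^{-1}\circ f_\d = \id_{U^\d}$, while the right-hand sides are also identities by axiom (1) in the definition of a persistence vector space. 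Hence $\U$ and $\V$ are $0$-interleaved, so $\di(\U,\V) \leq 0$.

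Applying the Algebraic Stability Theorem yields $\db(\dgm(\U),\dgm(\V)) \leq \di(\U,\V) = 0$. The final step is to upgrade $\db = 0$ to honest equality of the multisets. This is where the standing tameness assumptions on $\pvec(\R)$ come in: condition (3) forces each diagram to have only finitely many off-diagonal points, and for finite multisets of points in $\overline{\R}^2$ the bottleneck distance is a genuine metric (not merely a pseudometric), so $\db(\dgm(\U),\dgm(\V)) = 0$ implies $\dgm(\U) = \dgm(\V)$.

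The only potentially delicate step is the last one. If one prefers to avoid invoking metric properties of $\db$ on finite diagrams, an alternative is to bypass the Algebraic Stability Theorem entirely and cite the isometry theorem \cite{bauer-isom} directly: a natural isomorphism between tame persistence modules yields identical interval decompositions, and hence identical barcodes, which is exactly what we need.
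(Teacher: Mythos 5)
Your argument is correct, but it is worth noting that the paper does not actually prove Theorem \ref{thm:pet}: it is stated as a known result, with the remark that it ``follows from the isometry theorem \cite{bauer-isom}'' and a pointer to \cite[Chapter 5]{chazal2012structure}. What you supply is therefore a genuine (and self-contained) proof where the paper offers only a citation. Your route --- reading the hypothesis as a $0$-interleaving with $\ph_\d = f_\d^{-1}$, $\psi_\d = f_\d$, applying the Algebraic Stability Theorem to get $\db(\dgm(\U),\dgm(\V)) = 0$, and then upgrading to equality --- is sound: the rectangle conditions are the commuting squares (and their inverses), the triangle conditions collapse to $f_\d\circ f_\d^{-1} = \id$ and $f_\d^{-1}\circ f_\d = \id$, and the final upgrade works because the tameness conditions on $\pvec(\R)$ guarantee finitely many off-diagonal points, each at positive distance from the diagonal, so a cost-zero matching must pair identical points. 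Your proposed alternative is actually the more economical argument and is essentially what the paper's citation amounts to: the hypothesis says precisely that $\U$ and $\V$ are isomorphic as persistence modules, and the barcode is an isomorphism invariant by the classification results of \cite[\S 5.2]{carlsson2005persistence}, so $\dgm(\U)=\dgm(\V)$ with no metric detour at all. The interleaving route buys you a proof that only uses machinery already deployed elsewhere in the paper (Lemma \ref{lem:stab} uses the same pattern), at the cost of the extra step establishing that $\db$ separates finite diagrams; the direct route is shorter but leans on the uniqueness of the interval decomposition.
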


\begin{proof}[Proof of Corollary \ref{cor:dowker-dual}] Let $\d \leq \d' \in \R$, and consider the relations $R_{\d,X} \subseteq R_{\d',X} \subseteq X\times X$. Suppose first that $R_{\d,X}$ and $R_{\d',X}$ are both nonempty. By applying Theorem \ref{thm:dowker-functorial}, we obtain homotopy equivalences between the source and sink complexes that commute with the canonical inclusions up to homotopy. Passing to the $k$-th homology level, we obtain persistence vector spaces that satisfy the commutativity properties of Theorem \ref{thm:pet}. The result follows from Theorem \ref{thm:pet}. 

In the case where $R_{\d,X}$ and $R_{\d',X}$ are both empty, there is nothing to show because all the associated complexes are empty. Suppose $R_{\d,X}$ is empty, and $R_{\d',X}$ is nonempty. Then $\sink_{\d,X}$ and $\src_{\d,X}$ are empty, so their inclusions into $\sink_{\d',X}$ and $\src_{\d',X}$ induce zero maps upon passing to homology. Thus the commutativity of Theorem \ref{thm:pet} is satisfied, and the result follows by Theorem \ref{thm:pet}.\end{proof}

\paragraph{The proof of the Functorial Dowker Theorem}
It remains to prove Theorem \ref{thm:dowker-functorial}. Because the proof involves numerous maps, we will adopt the notational convention of adding a subscript to a function to denote its codomain---e.g. we will write $f_B$ to denote a function with codomain $B$. 

First we recall the construction of a combinatorial barycentric subdivision (see \cite[\S 2]{dowker1952homology}, \cite[\S 4.7]{lefschetz1942algebraic}, \cite[Appendix A]{barmak2011algebraic}).

\begin{definition}[Barycentric subdivisions]\label{def:subdivision} For any simplicial complex $\Si$, one may construct a new simplicial complex $\Si^{(1)}$, called the \emph{first barycentric subdivision}, as follows:
\[\Si^{(1)}:=\set{[\s_1,\s_2,\ldots, \s_p] : 
\s_1 \subseteq \s_2 \subseteq \ldots \subseteq \s_p, \text{ each } \s_i \in \Si}.\]
Note that the vertices of $\Si^{(1)}$ are the simplices of $\Si$, and the simplices of $\Si^{(1)}$ are nested sequences of simplices of $\Si$. Furthermore, note that given any two simplicial complexes $\Si, \Xi$ and a simplicial map $f:\Si \r \Xi$, there is a natural simplicial map $f\1:\Si\1\r \Xi\1$ defined as:
\[f\1([\s_1,\ldots,\s_p]):=[f(\s_1),\ldots,f(\s_p)], \qquad \s_1\subseteq \s_2\subseteq \ldots, \s_p, \text{ each } \s_i\in \Si.\]
To see that this is simplicial, note that $f(\s_i) \subseteq f(\s_j)$ whenever $\s_i\subseteq \s_j$. 
As a special case, observe that any inclusion map $\iota:\Si \hr \Xi$ induces an inclusion map $\iota\1:\Si\1 \hr \Xi\1$.  

Given a simplex $\s=[x_0,\ldots, x_k]$ in a simplicial complex $\Si$, one defines the \emph{barycenter} to be the point $\mc{B}(\s):= \sum_{i=0}^k \tfrac{1}{k+1}x_i \in |\Si|$. Then the spaces $|\Si\1|$ and $|\Si|$ can be identified via a homeomorphism $\mc{E}_{|\Si|}:|\Si\1| \r |\Si|$ defined on vertices by $\mc{E}_{|\Si|}(\s):= \mc{B}(\s)$ and extended linearly. 
\end{definition}

Details on the preceding list of definitions can be found in \cite[\S 2.14-15, 2.19]{munkres-book}, \cite[\S 3.3-4]{spanier-book}, and also \cite[Appendix A]{barmak2011algebraic}. The next proposition follows from the discussions in these references, and is a simple restatement of \cite[Proposition A.1.5]{barmak2011algebraic}. We provide a proof in the appendix for completeness.

\begin{proposition}[Simplicial approximation to $\mc{E}_\bullet$]\label{prop:subdiv-identity} Let $\Si$ be a simplicial complex, and let $\Phi: \Sigma\1 \r \Sigma$ be a simplicial map such that $\Phi(\s) \in \s$ for each $\s \in \Sigma$. Then $|\Phi| \simeq \mc{E}_{|\Sigma|}$. 
\end{proposition}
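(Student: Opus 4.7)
The plan is to construct an explicit straight-line homotopy between $|\Phi|$ and $\mc{E}_{|\Sigma|}$, exploiting the fact that both maps send each closed simplex of $\Sigma^{(1)}$ into a common closed simplex of $\Sigma$. Concretely, I would define
\[
H : |\Sigma^{(1)}| \times [0,1] \longrightarrow |\Sigma|, \qquad H(x,t) := (1-t)\,|\Phi|(x) + t\,\mc{E}_{|\Sigma|}(x),
\]
using the ambient linear structure on the vector space in which both geometric realizations are embedded. The goal is then to show that $H$ actually takes values in $|\Sigma|$, is continuous with respect to the weak topology, and restricts to $|\Phi|$ at $t=0$ and to $\mc{E}_{|\Sigma|}$ at $t=1$.

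The first and crucial step is the following geometric observation. Let $\s = [\s_1, \s_2, \ldots, \s_p] \in \Sigma\1$, so by definition $\s_1 \subseteq \s_2 \subseteq \cdots \subseteq \s_p$ in $\Sigma$. A point $x \in |\s|$ can be written uniquely as $x = \sum_{i=1}^p a_i\, \s_i$ with $a_i \geq 0$ and $\sum a_i = 1$ (viewing the vertices $\s_i$ of $\Sigma\1$ as the barycenters $\mc{B}(\s_i)$). Then
\[
|\Phi|(x) = \sum_{i=1}^p a_i\, \Phi(\s_i), \qquad \mc{E}_{|\Sigma|}(x) = \sum_{i=1}^p a_i\, \mc{B}(\s_i).
\]
By hypothesis $\Phi(\s_i) \in \s_i \subseteq \s_p$ for each $i$, and clearly $\mc{B}(\s_i) \in |\s_i| \subseteq |\s_p|$. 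Thus both $|\Phi|(x)$ and $\mc{E}_{|\Sigma|}(x)$ lie in the convex set $|\s_p| \subseteq |\Sigma|$, and hence so does every convex combination $H(x,t)$. This shows $H(|\s| \times [0,1]) \subseteq |\s_p| \subseteq |\Sigma|$, so $H$ is well-defined.

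The second step is continuity. On each product $|\s| \times [0,1]$ of closed cells, the restriction of $H$ is manifestly continuous, being a linear combination of continuous maps into a finite-dimensional simplex. Since $|\Sigma^{(1)}|$ carries the weak topology and the cells $|\s| \times [0,1]$ cover $|\Sigma^{(1)}| \times [0,1]$ compatibly, a subset of $|\Sigma|$ is closed iff its preimage meets each such cell in a closed set; hence $H$ is continuous globally. Setting $t=0$ and $t=1$ recovers $|\Phi|$ and $\mc{E}_{|\Sigma|}$ respectively, so $H$ is the desired homotopy.

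I do not expect a serious obstacle here; the only subtlety is being precise about the weak topology in the continuity check, and about the identification between the abstract vertex $\s_i$ of $\Sigma\1$ and the point $\mc{B}(\s_i) \in |\Sigma|$ used when writing $\mc{E}_{|\Sigma|}$ as a linear combination. Both are standard and can be dealt with by a brief appeal to the definitions in \cite[\S 2]{munkres-book}, so the argument above should go through essentially verbatim.
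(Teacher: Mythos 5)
Your proposal is correct and rests on the same key computation as the paper's proof: writing a point of a simplex $[\s_1,\ldots,\s_p]$ of $\Sigma^{(1)}$ as $x=\sum_i a_i\s_i$ and observing that $|\Phi|(x)=\sum_i a_i\Phi(\s_i)$ and $\mc{E}_{|\Sigma|}(x)=\sum_i a_i\mc{B}(\s_i)$ both lie in the closed simplex of the top element $\s_p$ of the chain. The only difference is in how the homotopy is concluded: the paper notes that this makes $\Phi$ a simplicial approximation to $\mc{E}_{|\Sigma|}$ and cites the standard result that a simplicial approximation is homotopic to the approximated map, whereas you inline the proof of that result via the straight-line homotopy within $|\s_p|$ (your continuity argument is fine, since $[0,1]$ is locally compact the weak topology on the product agrees with the product topology), so the two arguments are essentially the same.
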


We now introduce some auxiliary constructions dating back to \cite{dowker1952homology} that use the setup stated in Theorem \ref{thm:dowker-functorial}. For any nonempty relation $R\subseteq X\times Y$, one may define \cite[\S 2]{dowker1952homology} an associated map $\Phi_{E_R} : E_R\1 \r E_R$ as follows: first define $\Phi_{E_R}$ on vertices of $E_R\1$ by $\Phi_{E_R}(\s)=s_{\s}$, where $s_{\s}$ is the least vertex of $\s$ with respect to the total order. Next, for any simplex $[\s_1,\ldots,\s_k]$ of $E_R\1$, where $\s_1\subseteq \ldots \subseteq \s_k$, we have $\Phi_{E_R}(\s_i)=s_{\s_i} \in \s_k$ for all $1\leq i\leq k$. Thus $[\Phi_{E_R}(\s_1),\ldots,\Phi_{E_R}(\s_k)]=[s_{\s_1},s_{\s_2},\ldots,s_{\s_k}]$ is a face of $\s_k$, hence a simplex of $\Si$. This defines $\Phi_{E_R}$ as a simplicial map $E_R\1 \r E_R$. This argument also shows that $\Phi_{E_R}$ is order-reversing: if $\s \subseteq \s'$, then $\Phi_{E_R}(\s) \geq \Phi_{E_{R}}(\s')$.

\begin{remark}\label{rem:subdiv-id} 
Applying Proposition \ref{prop:subdiv-identity} to the setup above, one sees that $|\Phi_{E_R}| \simeq \mc{E}_{|E_R|}$. After passing to a second barycentric subdivision $E_R^{(2)}$ (obtained by taking a barycentric subdivision of $E_R\1$) and obtaining a map $\Phi_{E_R\1}:E_R^{(2)} \r E_R\1$, one also has $|\Phi_{E_R\1}| \simeq \mc{E}_{|E_R\1|}$.
\end{remark}

One can also define \cite[\S 3]{dowker1952homology} a simplicial map $\Psi_{F_R}: E_R\1 \r F_R$ as follows. Given a vertex $\s=[x_0,\ldots, x_k] \in E_R\1$, one defines $\Psi_{F_R}(\s)=y_\s$, for some $y_\s \in Y$ such that $(x_i,y_\s) \in R$ for each $i$. To see why this vertex map is simplicial, let $\s\1 = [\s_0,\ldots, \s_k]$ be a simplex in $E_R\1$. Let $x \in \s_0$. Then, because $\s_0 \subseteq \s_1 \subseteq \ldots \subseteq \s_k$, we automatically have that $(x,\Psi_{F_R}(\s_i)) \in R$, for each $i=0,\ldots, k$. Thus $\Psi_{F_R}(\s\1)$ is a simplex in $F_R$. This definition involves a choice of $y_\s$ when writing $\Psi_{F_R}(\s) = y_\s$, but all the maps resulting from such choices are contiguous \cite[\S 3]{dowker1952homology}. 

The preceding map induces a simplicial map $\Psi_{F_R\1}:E_R^{(2)} \r F_R\1$ as follows. Given a vertex $\t\1 = [\t_0,\ldots, \t_k] \in E_R^{(2)}$, i.e. a simplex in $E_R\1$, we define $\Psi_{F_R\1}(\t\1) := [\Psi_{F_R}(\t_0),\ldots, \Psi_{F_R}(\t_k)]$. Since $\Psi_{F_R}$ is simplicial, this is a simplex in $F_R$, i.e. a vertex in $F_R\1$. Thus we have a vertex map $\Psi_{F_R\1}:E_R^{(2)} \r F_R\1$. To check that this map is simplicial, let $\t^{(2)} = [\t\1_0,\ldots, \t\1_p]$ be a simplex in $E_R^{(2)}$. Then $\t\1_0 \subseteq \t\1_1 \subseteq \ldots \subseteq \t\1_p$, and because $\Psi_{F_R}$ is simplicial, we automatically have 
\[\Psi_{F_R}(\t\1_0) \subseteq \Psi_{F_R}(\t\1_1) \subseteq \ldots \subseteq \Psi_{F_R}(\t\1_p).\] 
Thus $\Psi_{F_R\1}(\t^{(2)})$ is a simplex in $F_R\1$.

\begin{proof}[Proof of Theorem \ref{thm:dowker-functorial}]
We write $F_R^{(2)}$ to denote the barycentric subdivision of $F_R\1$, and obtain simplicial maps $\Phi_{F_R\1}:F_R^{(2)} \r F_R\1$, $\Phi_{F_R}:F_R^{(1)} \r F_R$, $\Psi_{E_R\1}:F_R^{(2)} \r E_R\1$, and $\Psi_{F_R}:E_R^{(1)} \r F_R$ as above. Consider the following diagram:

\begin{center}
\begin{tikzpicture}
\node (1) at (0,0){$F_R^{(2)}$};
\node (2) at (3,-1){$F_R^{(1)}$};
\node (3) at (6,-2){$F_R$};
\node (4) at (8,0){$F_{R'}^{(2)}$};
\node (5) at (11,-1){$F_{R'}^{(1)}$};
\node (6) at (14,-2){$F_{R'}$};
\node (7) at (-1,-3){$E_R^{(2)}$};
\node (8) at (2,-4){$E_R\1$};
\node (9) at (5,-5){$E_R$};
\node (10) at (7,-3){$E_{R'}^{(2)}$};
\node (11) at (10,-4){$E_{R'}\1$};
\node (12) at (13,-5){$E_{R'}$};

\draw (1) edge[->,violet!80,thick] node[right]{\small{$\Phi_{F_R\1}$}} (2);
\draw (2) edge[->,violet!80,thick] node[above]{\small{$\Phi_{F_R}$}} (3);
\draw (1) edge[->] node[left]{} (4);
\draw (4) edge[->] node[above right]{$\Phi_{F_{R'}\1}$} (5);
\draw (5) edge[->] node[above right]{$\Phi_{F_{R'}}$} (6);
\draw (3) edge[->] node[above]{} (6);

\draw (7) edge[->,teal!80,thick] node[below]{\small{$\Phi_{F_R\1}$}} (8);
\draw (8) edge[->,teal!80,thick] node[below]{\small{$\Phi_{F_R}$}} (9);
\draw (7) edge[->,dashed] node[left]{} (10);
\draw (10) edge[->,dashed] node[above right]{} (11);
\draw (11) edge[->,dashed] node[above]{} (12); 
\draw (9) edge[->] node[above]{} (12);

\draw (1) edge[->,orange,thick] node[left,pos=0.2]{\small{$\Psi_{E_R\1}$}} (8);
\draw (8) edge[->,orange,thick] node[below right,pos=0.2]{\small{$\Psi_{F_R}$}} (3);
\draw (4) edge[->,dashed] node[left]{} (11);
\draw (11) edge[->,dashed] node[below right]{\small{}} (6);

\begin{scope}[on background layer]
\draw (7) edge[->,NavyBlue!80,thick] node[above left,pos=0.2]{\small{$\Psi_{F_R\1}$}} (2);
\draw (2) edge[->,NavyBlue!80,thick] node[left,pos=0.2]{\small{$\Psi_{E_R}$}} (9);
\draw (10) edge[->,dashed] node[left]{} (5);
\draw (5) edge[->,dashed] node[below right]{\small{}} (12);
\end{scope}
\end{tikzpicture}
\end{center}

We proceed by claiming contiguity of the following.

\begin{center}
\begin{tikzpicture}[]
\begin{scope}
\node (1) at (0,0){$E_R^{(2)}$};
\node (2) at (2,0){$E_R^{(1)}$};
\node (3) at (4,0){$E_R$};
\node (7) at (2,2){$F_R\1$};
\draw (1) edge[->,teal] node[below]{\small{$\Phi_{E_R\1}$}} (2);
\draw (2) edge[->,teal] node[below]{\small{$\Phi_{E_R}$}} (3);
\draw (1) edge[->,NavyBlue] node[left]{\small{$\Psi_{F_R\1}$}} (7);
\draw (7) edge[->,NavyBlue] node[right]{\small{$\Psi_{E_R}$}} (3);
\end{scope}
\begin{scope}[xshift=3in]
\node (1) at (0,2){$F_R^{(2)}$};
\node (2) at (2,2){$F_R^{(1)}$};
\node (3) at (4,2){$F_R$};
\node (7) at (2,0){$E_R\1$};
\draw (1) edge[->,violet] node[below]{\small{$\Phi_{F_R\1}$}} (2);
\draw (2) edge[->,violet] node[below]{\small{$\Phi_{F_R}$}} (3);
\draw (1) edge[->,orange] node[below left]{\small{$\Psi_{E_R\1}$}} (7);
\draw (7) edge[->,orange] node[below right]{\small{$\Psi_{F_R}$}} (3);
\end{scope}
\begin{scope}[yshift=-1.5in]
\node (1) at (0,2){$F_R^{(2)}$};
\node (2) at (3,2){$F_R^{(1)}$};
\node (7) at (3,0){$E_R\1$};
\node (3) at (6,0){$E_R$};
\draw (1) edge[->,violet] node[below]{\small{$\Phi_{F_R\1}$}} (2);
\draw (2) edge[->,NavyBlue] node[left]{\small{$\Psi_{E_R}$}} (3);
\draw (1) edge[->,orange] node[below left]{\small{$\Psi_{E_R\1}$}} (7);
\draw (7) edge[->,teal] node[above]{\small{$\Phi_{E_R}$}} (3);
\end{scope}
\begin{scope}[yshift=-1.5in,xshift=3in]
\node (1) at (0,0){$E_R^{(2)}$};
\node (2) at (3,0){$E_R^{(1)}$};
\node (7) at (3,2){$F_R\1$};
\node (3) at (6,2){$F_R$};
\draw (1) edge[->,teal] node[above]{\small{$\Phi_{E_R\1}$}} (2);
\draw (2) edge[->,orange] node[below right]{\small{$\Psi_{F_R}$}} (3);
\draw (1) edge[->,NavyBlue] node[above left]{\small{$\Psi_{F_R\1}$}} (7);
\draw (7) edge[->,violet] node[below]{\small{$\Phi_{F_R}$}} (3);
\end{scope}
\end{tikzpicture}
\end{center}

\begin{claim}\label{cl:dowker-contigo-items} More specifically:
\begin{enumerate}
\item $\Phi_{E_R}\circ \Phi_{E_R\1}$ and $\Psi_{E_R}\circ \Psi_{F_R\1}$ are contiguous. \label{item:dowker-contigo-1}
\item $\Phi_{F_R}\circ \Phi_{F_R\1}$ and $\Psi_{F_R}\circ \Psi_{E_R\1}$ are contiguous. \label{item:dowker-contigo-2}
\item $\Psi_{E_R}\circ \Phi_{F_R\1}$ and $\Phi_{E_R}\circ \Psi_{E_R\1}$ are contiguous. \label{item:dowker-contigo-3}
\item $\Psi_{F_R}\circ \Phi_{E_R\1}$ and $\Phi_{F_R}\circ \Psi_{F_R\1}$ are contiguous. \label{item:dowker-contigo-4}
\end{enumerate}
\end{claim}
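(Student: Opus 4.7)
The plan is to prove each of the four contiguity claims by chasing the two relevant compositions on an arbitrary simplex of the double barycentric subdivision and exhibiting a common ``witness''---an element of $Y$ or $X$---which shows that the union of the two images sits inside a simplex of the target complex. The whole argument hinges on a careful choice of the simplicial approximations $\Phi_{E_R\1}$ and $\Phi_{F_R\1}$.

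I propose to define $\Phi_{E_R\1}:E_R^{(2)}\to E_R\1$ by sending each vertex of $E_R^{(2)}$---a chain $\mu=[\sigma_0\subsetneq\sigma_1\subsetneq\cdots\subsetneq\sigma_p]$ of simplices of $E_R$---to its smallest element $\sigma_0$, and $\Phi_{F_R\1}$ analogously. If $\mu\subseteq\mu'$ in $E_R\1$, then $\sigma_0^\mu\in\mu'$ forces $\sigma_0^{\mu'}\subseteq\sigma_0^\mu$, so on any flag $\mu_0\subseteq\cdots\subseteq\mu_k$ the induced vertex map produces a reverse-inclusion chain in $E_R$, i.e., a simplex of $E_R\1$. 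Thus $\Phi_{E_R\1}$ is simplicial and $\Phi_{E_R\1}(\mu)\in\mu$, so Proposition \ref{prop:subdiv-identity} guarantees $|\Phi_{E_R\1}|\simeq \mc{E}_{|E_R\1|}$.

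For item \eqref{item:dowker-contigo-1}, fix a simplex $[\mu_0,\ldots,\mu_k]$ of $E_R^{(2)}$ with $\mu_0\subseteq\cdots\subseteq\mu_k$ and write each $\mu_i=[\sigma_0^i\subsetneq\cdots\subsetneq\sigma_{p_i}^i]$. Unwinding the definitions gives $(\Phi_{E_R}\circ\Phi_{E_R\1})([\mu_0,\ldots,\mu_k])=[x_0,\ldots,x_k]$ with $x_i\in\sigma_0^i$, and $(\Psi_{E_R}\circ\Psi_{F_R\1})([\mu_0,\ldots,\mu_k])=[z_0,\ldots,z_k]$ where $(z_i,\Psi_{F_R}(\sigma))\in R$ for every $\sigma\in\mu_i$. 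From $\mu_0\subseteq\mu_i$ we have $\sigma_0^0\in\mu_i$ and $\sigma_0^i\subseteq\sigma_0^0$, so $x_i\in\sigma_0^0$ and $(z_i,\Psi_{F_R}(\sigma_0^0))\in R$ for every $i$; hence $y^\ast:=\Psi_{F_R}(\sigma_0^0)$ is a common witness, showing $\{x_0,\ldots,x_k,z_0,\ldots,z_k\}$ is a simplex of $E_R$. Item \eqref{item:dowker-contigo-2} follows by interchanging $X\leftrightarrow Y$ and $E\leftrightarrow F$ in this argument.

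For item \eqref{item:dowker-contigo-3}, a simplex $[\nu_0,\ldots,\nu_k]$ of $F_R^{(2)}$ with $\nu_i=[\tau_0^i\subsetneq\cdots\subsetneq\tau_{q_i}^i]$ yields the descending chain $\tau_0^k\subseteq\tau_0^{k-1}\subseteq\cdots\subseteq\tau_0^0$, and any $y^\ast\in\tau_0^k$ serves as a common witness showing the two images land in a single simplex of $E_R$. Item \eqref{item:dowker-contigo-4} is the $X\leftrightarrow Y$ transpose: the common witness is any $x^\ast\in\sigma_0^k$, using the chain $\sigma_0^k\subseteq\cdots\subseteq\sigma_0^0$ in $E_R$. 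The main obstacle throughout is this initial choice of $\Phi_{E_R\1},\Phi_{F_R\1}$: an arbitrary simplicial approximation (for instance, sending each chain to its top simplex, or to any vertex unrelated to the inclusion order) fails to align the images of the two compositions under a single witness, and the contiguity arguments above do not go through. The ``bottom of the chain'' choice is precisely what forces both images to sit inside the smallest simplex of the smallest flag (or its $\Psi$-image), at which point each of the four contiguity claims reduces to a short chain chase.
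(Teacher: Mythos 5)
Your proof is correct and takes essentially the same approach as the paper: the same Dowker-style chain chase using the inclusion-minimal (hence order-reversing) subdivision maps $\Phi_{E_R\1},\Phi_{F_R\1}$ together with an explicit common witness in $X$ or $Y$. The only presentational difference is that you verify items (1) and (3) directly and deduce (2) and (4) by the $X\leftrightarrow Y$, $E\leftrightarrow F$ symmetry, whereas the paper writes out (2) and (4) and attributes (1) and (3) to Dowker's original lemmas.
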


Items (\ref{item:dowker-contigo-1}) and (\ref{item:dowker-contigo-3}) appear in the proof of Dowker's theorem \cite[Lemmas 5, 6]{dowker1952homology}, and it is easy to see that a symmetric argument shows Items (\ref{item:dowker-contigo-2}) and (\ref{item:dowker-contigo-4}). For completeness, we will verify these items in this paper, but defer this verification to the end of the proof. 

By passing to the geometric realization and applying Proposition \ref{prop:contigo-props} and Remark \ref{rem:subdiv-id}, we obtain the following from Item (\ref{item:dowker-contigo-3}) of Claim \ref{cl:dowker-contigo-items}:
\begin{align*}
|\Psi_{E_R}|\circ |\Phi_{F_R\1}| &\simeq |\Phi_{E_R}|\circ|\Psi_{E_R\1}|,\\ 
|\Psi_{E_R}|\circ \mc{E}_{|F_R\1|} &\simeq \mc{E}_{|E_R|}\circ|\Psi_{E_R\1}|, &&\text{(Remark \ref{rem:subdiv-id})}\\
\mc{E}\inv_{|E_R|} \circ |\Psi_{E_R}| \circ \mc{E}_{|F_R\1|} &\simeq |\Psi_{E_R\1}|. &&\text{($\mc{E}$ is a homeomorphism, hence invertible)}
\end{align*}
Replacing this term in the expression for Item (\ref{item:dowker-contigo-2}) of Claim \ref{cl:dowker-contigo-items}, we obtain:
\begin{align*}
|\Psi_{F_R}| \circ |\Psi_{E_R\1}| &\simeq |\Phi_{F_R}|\circ |\Phi_{F_R\1}| \simeq \mc{E}_{|F_R|}\circ \mc{E}_{|F_R\1|},\\
|\Psi_{F_R}| \circ \mc{E}\inv_{|E_R|} \circ |\Psi_{E_R}| \circ \mc{E}_{|F_R\1|} &\simeq \mc{E}_{|F_R|}\circ \mc{E}_{|F_R\1|},\\
|\Psi_{F_R}| \circ \mc{E}\inv_{|E_R|} \circ |\Psi_{E_R}|\circ \mc{E}\inv_{|F_R|}  &\simeq \id_{|F_R|}.
\end{align*}
Similarly, we obtain the following from Item (\ref{item:dowker-contigo-4}) of Claim \ref{cl:dowker-contigo-items}:
\[|\Psi_{F_R}|\circ |\Phi_{E_R\1}| \simeq |\Phi_{F_R}| \circ |\Psi_{F_R\1}|, \text{ so } \mc{E}\inv_{|F_R|} \circ |\Psi_{F_R}| \circ \mc{E}_{|E_R\1}| \simeq |\Psi_{F_R\1}|.\]
Replacing this term in the expression for Item (\ref{item:dowker-contigo-1}) of Claim \ref{cl:dowker-contigo-items}, we obtain:
\begin{align*}
|\Psi_{E_R}|\circ |\Psi_{F_R\1}| & \simeq|\Phi_{E_R}|\circ |\Phi_{E_R\1}| \simeq \mc{E}_{|E_R|}\circ \mc{E}_{|E_R\1|}, \\
|\Psi_{E_R}|\circ \mc{E}\inv_{|F_R|} \circ |\Psi_{F_R}| \circ \mc{E}_{|E_R\1}| &\simeq \mc{E}_{|E_R|}\circ \mc{E}_{|E_R\1|}\\
|\Psi_{E_R}|\circ \mc{E}\inv_{|F_R|} \circ |\Psi_{F_R}|\circ \mc{E}\inv_{|E_R|} &\simeq \id_{|E_R|}
\end{align*}
Define $\Gamma_{|E_R|}: |F_R| \r |E_R|$ by $\Gamma_{|E_R|} := |\Psi_{E_R}|\circ \mc{E}\inv_{|F_R|}$. Then $\Gamma_{|E_R|}$ is a homotopy equivalence, with inverse given by $|\Psi_{F_R}|\circ \mc{E}\inv_{|E_R|}$. This shows that $|F_R|\simeq |E_R|$, for any nonempty relation $R\subseteq X\times Y$.

Next we need to show that $\Gamma_{|E_R|}$ commutes with the canonical inclusion. Consider the following diagram, where the $\iota_\bullet$ maps denote the respective canonical inclusions (cf. Definition \ref{def:subdivision}):

\begin{center}
\begin{tikzpicture}
\node (1) at (0,0){$F_R\1$};
\node (2) at (3,0){$F_{R'}\1$};
\node (3) at (0,-1.5){$F_R$};
\node (4) at (3,-1.5){$F_{R'}$};
\node (5) at (-2,-3){$E_R$};
\node (6) at (5,-3){$E_{R'}$};
\draw (1) edge[->] node[above]{\small{$\iota_{F\1}$}} (2);
\draw (3) edge[->] node[above]{\small{$\iota_F$}} (4);
\draw (5) edge[->] node[above]{\small{$\iota_E$}} (6);
\draw (1) edge[->] node[right]{\small{$\Phi_{F_R}$}} (3);
\draw (2) edge[->] node[left]{\small{$\Phi_{F_{R'}}$}} (4);
\draw (1) edge[->] node[left]{\small{$\Psi_{E_{R}}$}} (5);
\draw (2) edge[->] node[right]{\small{$\Psi_{E_{R'}}$}} (6);
\end{tikzpicture}
\end{center}

\begin{claim}\label{cl:dowker-func-1}
$\iota_E\circ \Psi_{E_R}$ and $\Psi_{E_{R'}}\circ \iota_{F\1}$ are contiguous.
\end{claim}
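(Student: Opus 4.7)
The plan is to verify contiguity by a direct computation, in which the nestedness of chains in the barycentric subdivision $F_R\1$ furnishes a common witness across both maps. First I would note that both compositions share the domain $F_R\1$ and codomain $E_{R'}$; the subtlety is that on each vertex $\t \in F_R\1$ (which is a simplex of $F_R$), the map $\Psi_{E_R}$ selects some $x_\t \in X$ satisfying $(x_\t,y) \in R$ for every $y \in \t$, whereas $\Psi_{E_{R'}}$ applied to the same $\t$ (now viewed as a simplex of $F_{R'}$ via $\iota_F$) independently selects some $x'_\t \in X$ satisfying $(x'_\t,y) \in R'$ for every $y \in \t$. These two choices need not agree, so strict equality of the two compositions is not to be expected; instead, for each simplex of $F_R\1$ I would exhibit a single element of $Y$ serving as a common witness in $R'$ for the union of the two images.

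Concretely, for an arbitrary simplex $\s\1 = [\t_0, \ldots, \t_k]$ of $F_R\1$ corresponding to a nested chain $\t_0 \subseteq \t_1 \subseteq \cdots \subseteq \t_k$ in $F_R$, the vertexwise nature of $\iota_{F\1}$ gives $\iota_{F\1}(\s\1) = [\t_0, \ldots, \t_k]$ inside $F_{R'}\1$, so the union I need to control takes the form
\[(\iota_E\circ \Psi_{E_R})(\s\1) \cup (\Psi_{E_{R'}}\circ \iota_{F\1})(\s\1) = \{x_{\t_0}, \ldots, x_{\t_k}\} \cup \{x'_{\t_0}, \ldots, x'_{\t_k}\}.\]
The key move is to pick any $y^* \in \t_0$, which is possible since $\t_0$ is a nonempty simplex; because $\t_0 \subseteq \t_i$ for every $i$, we have $y^* \in \t_i$ for every $i$, and the defining properties of the selected vertices immediately yield $(x_{\t_i}, y^*) \in R \subseteq R'$ and $(x'_{\t_i}, y^*) \in R'$. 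Hence $y^*$ is a common $R'$-witness for every listed vertex, so the union is a simplex of $E_{R'}$, establishing contiguity.

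There is not much to obstruct this argument beyond notational bookkeeping: it is essentially the same mechanism already exploited in Dowker's original paper to treat the single-relation case (cf.\ items \ref{item:dowker-contigo-1}--\ref{item:dowker-contigo-4} of Claim \ref{cl:dowker-contigo-items}), with the only new input being the containment $R \subseteq R'$, which merely weakens the relation-membership condition after applying $\iota_E$. The one pitfall to avoid is choosing the witness from some $\t_i$ with $i>0$, since such an element need not lie in every other $\t_j$; taking it from the smallest member $\t_0$ of the chain is what makes the argument go through uniformly in $i$.
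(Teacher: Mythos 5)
Your proof is correct and follows essentially the same route as the paper: both arguments take an arbitrary simplex $[\t_0,\ldots,\t_k]$ of $F_R\1$, pick a witness $y$ from the smallest simplex $\t_0$ of the chain, and use $\t_0\subseteq \t_i$ together with $R\subseteq R'$ to see that the union of the two images is a simplex of $E_{R'}$. If anything, your bookkeeping is slightly more careful than the paper's, which asserts all the relevant pairs lie in $R$ when the pairs coming from $\Psi_{E_{R'}}$ are only guaranteed to lie in $R'$.
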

\begin{claim}\label{cl:dowker-func-2}
$\iota_F\circ \Phi_{F_R}$ and $\Phi_{F_{R'}}\circ \iota_{F\1}$ are contiguous.
\end{claim}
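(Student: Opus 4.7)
The plan is to observe that Claim \ref{cl:dowker-func-2} reduces to a \emph{strict equality} of simplicial maps, from which contiguity follows trivially (any simplicial map is contiguous with itself). The key point is that $\Phi_{F_R}$ is defined \emph{canonically}, without any arbitrary choice: for each vertex $\s$ of $F_R\1$ (equivalently, a simplex of $F_R$), $\Phi_{F_R}(\s) = s_\s$ is the least element of $\s$ under the fixed total order on $Y$. This quantity depends only on the underlying subset of $Y$ and on the total order, and not on which ambient complex we regard $\s$ as living in.

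First I would verify the equality at the vertex level: for any $\s \in F_R$, viewed as a vertex of $F_R\1$, we have $\iota_F(\Phi_{F_R}(\s)) = \iota_F(s_\s) = s_\s$, since the simplicial inclusion $\iota_F$ acts as the identity on vertices. Symmetrically, $\Phi_{F_{R'}}(\iota_{F\1}(\s)) = \Phi_{F_{R'}}(\s) = s_\s$, using that $\iota_{F\1}$ also acts trivially on vertices by the functorial construction of barycentric subdivisions (Definition \ref{def:subdivision}). Extending to higher simplices, for any $[\s_1,\ldots,\s_k] \in F_R\1$ with $\s_1 \subseteq \ldots \subseteq \s_k$, both compositions yield $[s_{\s_1},\ldots,s_{\s_k}]$, which is a face of $\s_k$ in $F_{R'}$. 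Hence $\iota_F \circ \Phi_{F_R} = \Phi_{F_{R'}} \circ \iota_{F\1}$ as simplicial maps, which in particular implies that they are contiguous.

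I do not anticipate any serious obstacle for this particular claim: the canonical ``least-vertex'' recipe defining $\Phi$ propagates automatically across the inclusion, leaving nothing to reconcile. Where the real work lies is in the companion Claim \ref{cl:dowker-func-1}, since $\Psi_{E_R}$ involves an arbitrary choice of witness and the two compositions there need not coincide on the nose; to handle that, one would take an arbitrary $y \in \t_0$ for any simplex $[\t_0,\ldots,\t_k] \in F_R\1$, observe that $\t_0 \subseteq \t_i$ for each $i$, and then invoke the containment $R \subseteq R'$ to certify that $y$ simultaneously witnesses $\Psi_{E_R}([\t_0,\ldots,\t_k]) \cup \Psi_{E_{R'}}([\t_0,\ldots,\t_k])$ as a single simplex in $E_{R'}$.
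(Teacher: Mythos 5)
Your proof is correct, and it takes a genuinely (if mildly) different route from the paper's. The paper proves contiguity by observing that, for a simplex $\t\1=[\t_0,\ldots,\t_k]$ of $F_R\1$ with $\t_0\subseteq\cdots\subseteq\t_k$, both compositions send $\t\1$ to a face of $\t_k$, so the union of the two images is again a face of $\t_k$ and hence a simplex of $F_{R'}$. You instead observe that the two compositions are \emph{equal} as simplicial maps: since $\Phi_{F_R}$ and $\Phi_{F_{R'}}$ are both given by the canonical least-vertex rule with respect to the single fixed total order on $Y$, and since $\iota_F$ and $\iota_{F\1}$ act as the identity on underlying vertices, both compositions send the vertex $\s$ of $F_R\1$ to $s_\s$ and hence the simplex $[\s_1,\ldots,\s_k]$ to $[s_{\s_1},\ldots,s_{\s_k}]$. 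Equality trivially implies contiguity, so your argument is a strict strengthening of the claim as stated; the paper's argument is marginally more robust in that it would survive replacing the least-vertex rule by any order-compatible choice of vertex $\Phi(\s)\in\s$ made independently for $F_R$ and $F_{R'}$, whereas yours exploits the canonicity of the specific construction. Your closing remarks about Claim \ref{cl:dowker-func-1} also match the paper's treatment there: that is indeed where a genuine contiguity argument (via a common witness $y\in\t_0$ and the containment $R\subseteq R'$) is required.
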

Suppose Claim \ref{cl:dowker-func-2} is true. Then, upon passing to geometric realizations, we have:
\begin{align*}
|\iota_F| \circ \mc{E}_{|F_R|} \simeq |\iota_F|\circ |\Phi_{F_R}| \simeq |\Phi_{F_{R'}}|\circ |\iota_{F\1}| &\simeq \mc{E}_{|F_{R'}|}\circ |\iota_{F\1}|,\\
\mc{E}\inv_{|F_{R'}|}\circ |\iota_F| \circ \mc{E}_{|F_R|} &\simeq |\iota_{F\1}|.
\end{align*}
Suppose also that Claim \ref{cl:dowker-func-1} is true. Then we have:
\begin{align*}
|\Psi_{E_{R'}}| \circ |\iota_{F\1}| &\simeq |\iota_E|\circ |\Psi_{E_R}|,\\
|\Psi_{E_{R'}}| \circ \mc{E}\inv_{|F_{R'}|}\circ |\iota_F| \circ \mc{E}_{|F_R|} &\simeq |\iota_E|\circ |\Psi_{E_R}|,\\
|\Psi_{E_{R'}}| \circ \mc{E}\inv_{|F_{R'}|}\circ |\iota_F| &\simeq |\iota_E|\circ |\Psi_{E_R}| \circ \mc{E}\inv_{|F_R|}, \text{ i.e. }\\
\Gamma_{|E_{R'}|}\circ |\iota_F| &\simeq |\iota_E| \circ \Gamma_{|E_R|}.
\end{align*}

This proves the theorem. It only remains to prove the various claims.

\begin{subproof}[Proof of Claim \ref{cl:dowker-contigo-items}]
In proving Claim \ref{cl:dowker-contigo-items}, we supply the proofs of Items (\ref{item:dowker-contigo-2}) and (\ref{item:dowker-contigo-4}). These arguments are adapted from \cite[Lemmas 1, 5, and 6]{dowker1952homology}, where the proofs of Items (\ref{item:dowker-contigo-1}) and (\ref{item:dowker-contigo-3}) appeared.

For Item (\ref{item:dowker-contigo-2}), let $\t^{(2)}=[\t_0\1,\ldots,\t_k\1]$ be a simplex in $F_R^{(2)}$, where $\t_0\1 \subseteq \ldots \subseteq \t_k\1$ is a chain of simplices in $F_R\1$. By the order-reversing property of the map $\Phi_{F_R\1}$, we have that $\Phi_{F_R\1}(\t_0\1) \supseteq \Phi_{F_R\1}(\t_i\1)$ for each $i=0,\ldots, k$. Define $x:= \Psi_{E_R}(\Phi_{F_R\1}(\t_0\1))$. Then $(x,y) \in R$ for each $y \in \Phi_{F_R\1}(\t_0\1)$. But we also have $(x,\Phi_{F_R}(\Phi_{F_R\1}(\t_i\1))) \in R$ for each $i=0,\ldots, k$, because $\Phi_{F_R}(\Phi_{F_R\1}(\t_i\1)) \in \Phi_{F_R\1}(\t_i\1) \subseteq \Phi_{F_R}(\t_0\1)$ for each $i=0,\ldots, k$.

Next let $0\leq i \leq k$. For each $\t \in \t\1_i$, we have $\Psi_{E_R}(\t) \in \Psi_{E_R\1}(\t\1_i)$ (by the definition of $\Psi_{E_R\1}$). Because $\Phi_{F_R\1}(\t_0\1) \in \t_0\1 \subseteq \t_i\1$, we then have $x = \Psi_{E_R}(\Phi_{F_R\1}(\t_0\1)) \in \Psi_{E_R\1}(\t\1_i)$, which is a vertex of $E_R\1$ or alternatively a simplex of $E_R$. But then, by definition of $\Psi_{F_R}$, we have that $(x,\Psi_{F_R}(\Psi_{E_R\1}(\t_i\1))) \in R$. This holds for each $0\leq i \leq k$. Since $\t^{(2)}$ was arbitrary, this shows that $\Phi_{F_R}\circ \Phi_{F_R\1}$ and $\Psi_{F_R}\circ \Psi_{E_R\1}$ are contiguous. 

For Item (\ref{item:dowker-contigo-4}), let $\s^{(2)} = [\s\1_0,\ldots,\s\1_k]$ be a simplex in $E_R^{(2)}$. Let $0\leq i \leq k$. Then $\s\1_0 \subseteq \ldots \subseteq \s\1_k$, and $\Phi_{E_R\1}(\s\1_i) \in \s\1_i \subseteq \s\1_k$. So $\Psi_{F_R}(\Phi_{E_R\1}(\s_i\1)) \in \Psi_{F_R\1}(\s_k\1)$. On the other hand, we have $\Psi_{F_R\1}(\s_i\1) \subseteq \Psi_{F_R\1}(\s_k\1)$. Then $\Phi_{F_R}(\Psi_{F_R\1}(\s_i\1)) \in \Psi_{F_R\1}(\s_i\1) \subseteq \Psi_{F_R\1}(\s_k\1)$. Since $i$ was arbitrary, this shows that $\Psi_{F_R}\circ \Phi_{E_R\1}$ and $\Phi_{F_R}\circ \Psi_{F_R\1}$ both map the vertices of $\s^{(2)}$ to the simplex $\Psi_{F_R\1}(\s_k\1)$, hence are contiguous. This concludes the proof of the claim.
\end{subproof}

\begin{subproof}[Proof of Claim \ref{cl:dowker-func-1}]
Let $\t\1=[\t_0,\t_1,\ldots, \t_k] \in F_R\1$, where $\t_0 \subseteq \t_1\subseteq \ldots \subseteq \t_k$ is a chain of simplices in $F_R$. Then $\iota_{F\1}(\t\1) = \t\1$, and $\Psi_{E_{R'}}(\t\1)=[x_{\t_0},\ldots, x_{\t_k}]$, for some choice of $x_{\t_i}$ terms. Also we have $\iota_E\circ \Psi_{E_{R}}(\t\1) = [x'_{\t_0},\ldots, x'_{\t_k}]$ for some other choice of $x'_{\t_i}$ terms. For contiguity, we need to show that 
\[ [x_{\t_0},\ldots, x_{\t_k}, x'_{\t_0},\ldots, x'_{\t_k}] \in E_{R'}.\]
But this is easy to see: letting $y \in \t_0$, we have $\set{(x_{\t_0},y),\ldots,(x_{\t_k},y),(x'_{\t_0},y),\ldots,(x'_{\t_k},y)} \subseteq R$. Since $\t\1$ was arbitrary, it follows that we have contiguity. 
\end{subproof}

\begin{subproof}[Proof of Claim \ref{cl:dowker-func-2}]
Let $\t\1=[\t_0,\t_1,\ldots, \t_k] \in F_R\1$, where $\t_0 \subseteq \t_1\subseteq \ldots \subseteq \t_k$ is a chain of simplices in $F_R$. Then $\Phi_{F_R}(\t_i) \in \t_k$ for each $0\leq i \leq k$. Thus $\iota_F\circ \Phi_{F_R}(\t\1)$ is a face of $\t_k$. Similarly, $\Phi_{F_{R'}}\circ \iota_{F\1}(\t\1)$ is also a face of $\t_k$. Since $\t\1$ was an arbitrary simplex of $F_R\1$, it follows that $\iota_F\circ \Phi_{F_R}$ and $\Phi_{F_{R'}}\circ \iota_{F\1}$ are contiguous.\end{subproof}
\end{proof}

\subsection{The equivalence between the finite FDT and the simplicial FNTs}
\label{sec:dowker-nerve-equiv}

In this section, we present our answer to Question \ref{q:f-nerve-f-dowker}. We begin with a weaker formulation of Theorem \ref{thm:dowker-functorial} and some simplicial Functorial Nerve Theorems.

\begin{theorem}[The finite FDT]
\label{thm:dowker-functorial-finite}
Let $X,Y$ be two totally ordered sets, and without loss of generality, suppose $X$ is finite. Let $R\subseteq R' \subseteq X\times Y$ be two nonempty relations, and let $E_R, F_R, E_{R'}, F_{R'}$ be their associated simplicial complexes (as in Theorem \ref{thm:dowker-functorial}). Then there exist homotopy equivalences $\Gamma_{|E_R|}:|F_R| \r |E_R|$ and $\Gamma_{|E_{R'}|}: |F_{R'}| \r |E_{R'}|$ that commute up to homotopy with the canonical inclusions.
\end{theorem}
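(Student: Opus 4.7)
The plan is to deduce Theorem \ref{thm:dowker-functorial-finite} as an immediate corollary of Theorem \ref{thm:dowker-functorial}, the general FDT. Indeed, Theorem \ref{thm:dowker-functorial} was proved for arbitrary totally ordered sets $X,Y$ with no cardinality hypothesis, producing homotopy equivalences $\Gamma_{|E_R|}:|F_R|\r|E_R|$ and $\Gamma_{|E_{R'}|}:|F_{R'}|\r|E_{R'}|$ that commute up to homotopy with the canonical inclusions $|\iota_E|$ and $|\iota_F|$. The finite FDT asks for exactly this conclusion under the strictly weaker hypothesis that at least one of $X$ or $Y$ is finite, so the implication Theorem \ref{thm:dowker-functorial}$\,\Rightarrow\,$Theorem \ref{thm:dowker-functorial-finite} is immediate.

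The only cosmetic point to address is the ``without loss of generality'' clause: the statement and conclusion are symmetric under simultaneously swapping $(X,Y)$ with $(Y,X)$ and replacing each relation $R\subseteq X\times Y$ by its transpose $R^{T}\subseteq Y\times X$ defined by $(y,x)\in R^T$ iff $(x,y)\in R$. Under this swap the constructions $E_{\bullet}$ and $F_{\bullet}$ exchange, and the inclusions $|\iota_E|$ and $|\iota_F|$ are matched with each other. Thus if $Y$ is finite instead of $X$, one reduces to the case where the first coordinate set is finite by passing to $(Y,X,R^T,(R')^T)$. No new work is required to handle either case.

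The reason for isolating this finite version as a separate theorem is not that it demands a separate argument, but that it is the precise variant that can be placed in two-way correspondence with the simplicial Functorial Nerve Theorem (Theorem \ref{thm:nerve-functorial-II}). In the remainder of \S\ref{sec:dowker-nerve-equiv}, one proves the simplicial FNT \emph{from} Theorem \ref{thm:dowker-functorial-finite}, and then conversely derives Theorem \ref{thm:dowker-functorial-finite} from the simplicial FNT; this yields the promised equivalence (Theorem \ref{thm:dowker-nerve-eq}). Because the FNT as formulated there is restricted to finitely indexed covers, it is only this finite formulation of the FDT---and not the fully general Theorem \ref{thm:dowker-functorial}, which allows arbitrary (possibly infinite) complexes---that one can expect to be equivalent to the simplicial FNT. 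The main step, therefore, is not a new proof here but rather the construction of the functorial cover and its nerve in the subsequent arguments; Theorem \ref{thm:dowker-functorial-finite} itself is genuinely just a specialization.
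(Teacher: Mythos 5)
Your proposal is correct and matches the paper, which likewise observes that the finite FDT is an immediate consequence of the general FDT (Theorem \ref{thm:dowker-functorial}); the additional remark on handling the ``without loss of generality'' clause via transposing the relation is a harmless elaboration.
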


The finite FDT (Theorem \ref{thm:dowker-functorial-finite}) is an immediate consequence of the general FDT (Theorem \ref{thm:dowker-functorial}).

\begin{definition} Let $\mc{A} = \set{A_i}_{i\in I}$ be a family of nonempty sets indexed by $I$. The \emph{nerve} of $\mc{A}$ is the simplicial complex $\mc{N}(\mc{A}):= \{\s \in \pow(I) : \s \text{ is finite, nonempty, and } \cap_{i \in \s}A_i \neq \emptyset\}$.

\end{definition}

\begin{definition}[Covers of simplices and subcomplexes] Let $\Si$ be a simplicial complex. Then a collection of subcomplexes $\mc{A}_\Si = \{\Si_i\}_{i\in I}$ is said to be a \emph{cover of subcomplexes} for $\Si$ if $\Si = \cup_{i\in I}\Si_i$.
Furthermore, $\mc{A}_\Si$ is said to be a \emph{cover of simplices} if each $\Si_i \in \mc{A}_\Si$ has the property that $\Si_i = \pow(V(\Si_i))$. In this case, each $\Si_i$ has precisely one top-dimensional simplex, consisting of the vertex set $V(\Si_i)$.
\end{definition}

We present two \emph{simplicial} formulations of the Functorial Nerve Theorem that turn out to be equivalent; the statements differ in that one is about covers of simplices and the other is about covers of subcomplexes.


\begin{theorem}[Functorial Nerve I] 
\label{thm:nerve-functorial-I}
Let $\Si \subseteq \Si'$ be two simplicial complexes, and let $\mc{A}_\Si=\{\Si_i\}_{i\in I}$, $\mc{A}_{\Si'}=\{\Si'_i\}_{i\in I'}$ be finite covers of simplices for $\Si$ and $\Si'$ such that $I\subseteq I'$ and $\Si_i \subseteq \Si'_i$ for each $i \in I$. In particular, $\card(I') < \infty$. Suppose that for each finite subset $\s \subseteq I'$, the intersection $\cap_{i \in \s}\Si'_i$ is either empty or contractible (and likewise for $\cap_{i \in \s}\Si_i$). Then $|\Si| \simeq |\mc{N}(\mc{A}_\Si)|$ and $|\Si'| \simeq |\mc{N}(\mc{A}_{\Si'})|$, via maps that commute up to homotopy with the canonical inclusions.
\end{theorem}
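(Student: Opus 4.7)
The plan is to reduce the theorem directly to the Functorial Dowker Theorem (Theorem~\ref{thm:dowker-functorial}) by realizing each nerve $\mc{N}(\mc{A}_\bullet)$ as one of the two Dowker complexes associated to a suitably defined relation. The structural fact that makes this clean is that each $\Si'_i$ in a cover of simplices equals $\pow(V(\Si'_i))$, so a subcomplex in the cover is determined by its vertex set, and any intersection $\bigcap_{i \in \s}\Si'_i$ is itself a simplex or empty. In particular, the contractibility hypothesis is automatically satisfied for covers of simplices and will not actually be needed in this argument.

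Concretely, since $I'$ is finite and $V(\Si')$ is also finite (as $\Si'$ has only finitely many top-dimensional simplices), I would fix arbitrary total orders on $I'$ and $V(\Si')$ and use the restrictions to $I \subseteq I'$ and $V(\Si) \subseteq V(\Si')$. I then define nested relations $R \subseteq R' \subseteq I' \times V(\Si')$ by
\[ R' := \{(i,v) : v \in V(\Si'_i)\}, \qquad R := \{(i,v) : i \in I \text{ and } v \in V(\Si_i)\}. \]
The inclusion $R \subseteq R'$ follows from $\Si_i \subseteq \Si'_i$ for $i \in I$, and both relations are nonempty provided $\Si$ is nonempty (the degenerate cases reduce to a trivial conclusion).

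The crucial step is the identification of the Dowker complexes with the input data of the theorem. A subset $\s \subseteq V(\Si')$ lies in $E_{R'}$ iff some $i \in I'$ satisfies $\s \subseteq V(\Si'_i)$, which, because $\mc{A}_{\Si'}$ covers $\Si'$ by full simplices, is equivalent to $\s \in \Si'$; hence $E_{R'} = \Si'$. A subset $\t \subseteq I'$ lies in $F_{R'}$ iff $\bigcap_{i \in \t}V(\Si'_i) \neq \emptyset$, which is equivalent to $\bigcap_{i \in \t}\Si'_i \neq \emptyset$, i.e.\ $\t \in \mc{N}(\mc{A}_{\Si'})$; hence $F_{R'} = \mc{N}(\mc{A}_{\Si'})$. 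The identical argument gives $E_R = \Si$ and $F_R = \mc{N}(\mc{A}_\Si)$. Under these identifications, the canonical inclusions $\iota_E: E_R \hookrightarrow E_{R'}$ and $\iota_F: F_R \hookrightarrow F_{R'}$ become exactly the given inclusions $\Si \hookrightarrow \Si'$ and $\mc{N}(\mc{A}_\Si) \hookrightarrow \mc{N}(\mc{A}_{\Si'})$.

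With the dictionary in place, applying Theorem~\ref{thm:dowker-functorial} to $R \subseteq R'$ directly produces the required homotopy equivalences $\Gamma_{|E_R|}: |\mc{N}(\mc{A}_\Si)| \to |\Si|$ and $\Gamma_{|E_{R'}|}: |\mc{N}(\mc{A}_{\Si'})| \to |\Si'|$ commuting up to homotopy with the canonical inclusions, which is exactly the desired conclusion. I do not anticipate any serious obstacle here: essentially all of the content lies in the translation between simplex-cover nerves and Dowker complexes, after which the FDT does all of the heavy topological work.
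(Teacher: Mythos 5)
Your proposal is correct and is essentially the paper's own argument: encode membership of vertices in cover elements as a pair of nested relations, check that the two Dowker complexes of each relation are exactly $\Si$ (resp.\ $\Si'$) and $\mc{N}(\mc{A}_\Si)$ (resp.\ $\mc{N}(\mc{A}_{\Si'})$), and then let the (finite) Functorial Dowker Theorem do all the topological work --- and you are right that the contractibility hypothesis is never needed here, since for covers of simplices every nonempty intersection is automatically a full simplex. One small slip: since you take $R\subseteq R'\subseteq I'\times V(\Si')$, the complex built on subsets of $V(\Si')$ is $F_{R'}$ and the one on subsets of $I'$ is $E_{R'}$, so your identifications have the labels $E$ and $F$ interchanged; this is harmless because the FDT conclusion is symmetric in the two complexes (the paper avoids the issue by writing the relation inside $V\times I$ instead).
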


\begin{theorem}[Functorial Nerve II] 
\label{thm:nerve-functorial-II}
The statement of Theorem \ref{thm:nerve-functorial-I} holds even if $\mc{A}_\Si$ and $\mc{A}_{\Si'}$ are covers of subcomplexes. Explicitly, the statement is as follows. Let $\Si \subseteq \Si'$ be two simplicial complexes, and let $\mc{A}_\Si=\{\Si_i\}_{i\in I}$, $\mc{A}_{\Si'}=\{\Si'_i\}_{i\in I'}$ be finite covers of subcomplexes for $\Si$ and $\Si'$ such that $I\subseteq I'$ and $\Si_i \subseteq \Si'_i$ for each $i \in I$. In particular, $\card(I') < \infty$. Suppose that for each finite subset $\s \subseteq I'$, the intersection $\cap_{i \in \s}\Si'_i$ is either empty or contractible (and likewise for $\cap_{i \in \s}\Si_i$). Then $|\Si| \simeq |\mc{N}(\mc{A}_\Si)|$ and $|\Si'| \simeq |\mc{N}(\mc{A}_{\Si'})|$, via maps that commute up to homotopy with the canonical inclusions.
\end{theorem}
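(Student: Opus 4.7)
The strategy is to use the finite FDT to realize the nerve as the Dowker complex of an appropriate relation, and then bridge this to $|\Si'|$ via a cover-matching argument that exploits the good cover hypothesis. Associate to the cover $\mc{A}_{\Si'}=\{\Si'_i\}_{i\in I'}$ the relation $R\subseteq\Si'\times I'$ defined by $(\s,i)\in R$ iff $\s\in\Si'_i$, and analogously $R_0\subseteq\Si\times I$ for $\mc{A}_\Si$. Under the natural identifications coming from $\Si\subseteq\Si'$ and $I\subseteq I'$ one has $R_0\subseteq R$. A direct check gives $F_R=\mc{N}(\mc{A}_{\Si'})$ and $F_{R_0}=\mc{N}(\mc{A}_\Si)$: a face $[i_0,\ldots,i_n]\in F_R$ iff some simplex of $\Si'$ belongs to every $\Si'_{i_j}$, iff $\bigcap_j\Si'_{i_j}\neq\emptyset$. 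Since $I'$ is finite, Theorem \ref{thm:dowker-functorial-finite} furnishes homotopy equivalences $|E_R|\simeq|\mc{N}(\mc{A}_{\Si'})|$ and $|E_{R_0}|\simeq|\mc{N}(\mc{A}_\Si)|$ that commute up to homotopy with the inclusions induced by $R_0\subseteq R$.

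Next, observe that $E_R$ carries a natural simplex cover $\{\pow(\Si'_i)\}_{i\in I'}$ whose intersections $\pow(\Si'_i\cap\Si'_j)$ are empty or contractible, so Theorem \ref{thm:nerve-functorial-I} independently confirms $|E_R|\simeq|\mc{N}(\mc{A}_{\Si'})|$. To complete the proof I bridge $|\Si'|$ and $|E_R|$ as follows. There is a natural inclusion $(\Si')\1\hookrightarrow E_R$ sending a chain $\s_0\subseteq\cdots\subseteq\s_n$ to the subset $\{\s_0,\ldots,\s_n\}$, which is a simplex of $E_R$ because $\s_n$ lies in some $\Si'_i$ and $\Si'_i$ is closed under faces. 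This inclusion carries the subcomplex cover $\{(\Si'_i)\1\}_{i\in I'}$ of $(\Si')\1$ into the simplex cover $\{\pow(\Si'_i)\}_{i\in I'}$ of $E_R$. By the good cover hypothesis each piece $(\Si'_i)\1$ is contractible (since $|(\Si'_i)\1|\cong|\Si'_i|$), and each intersection $\bigcap_j(\Si'_{i_j})\1=(\bigcap_j\Si'_{i_j})\1$ is contractible or empty; the matching simplex cover pieces and their intersections are likewise contractible or empty, so on each piece and intersection the restricted inclusion is a map between contractible spaces and hence a homotopy equivalence.

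A Mayer--Vietoris blowup argument --- equivalently, the observation that both $(\Si')\1$ and $E_R$ admit homotopy equivalences to the common nerve $|\mc{N}(\mc{A}_{\Si'})|$ that are compatible with the inclusion --- then promotes these cover-wise equivalences to a global homotopy equivalence $|\Si'|\cong|(\Si')\1|\simeq|E_R|$. The restriction of this argument to $\Si$ and $R_0$ yields $|\Si|\simeq|E_{R_0}|$, and functoriality of the blowup in $\Si\hookrightarrow\Si'$ delivers the required homotopy commutativity with the canonical inclusions. Combined with the Dowker equivalences $|E_R|\simeq|\mc{N}(\mc{A}_{\Si'})|$ and $|E_{R_0}|\simeq|\mc{N}(\mc{A}_\Si)|$ from the first step, this proves Theorem \ref{thm:nerve-functorial-II}.

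The main obstacle is the last step, since the subcomplex cover $\{(\Si'_i)\1\}$ of $(\Si')\1$ is not itself a simplex cover and so Theorem \ref{thm:nerve-functorial-I} cannot be applied directly to $(\Si')\1$. The Mayer--Vietoris blowup serves as an explicit simplicial model for the homotopy colimit of the cover, and the good cover hypothesis ensures that this blowup is simultaneously homotopy equivalent to $|(\Si')\1|$, $|E_R|$, and $|\mc{N}(\mc{A}_{\Si'})|$ via canonical maps whose functoriality in the cover-preserving inclusion $\Si\hookrightarrow\Si'$ is the most delicate point.
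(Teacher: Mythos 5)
Your first step is sound and even elegant: encoding the cover by the relation $R\subseteq \Si'\times I'$ with $(\s,i)\in R$ iff $\s\in\Si'_i$ (and $R_0\subseteq R$ for the smaller cover) does give $F_R=\mc{N}(\mc{A}_{\Si'})$, $F_{R_0}=\mc{N}(\mc{A}_\Si)$, and since $I'$ is finite, Theorem \ref{thm:dowker-functorial-finite} legitimately yields $|E_R|\simeq|\mc{N}(\mc{A}_{\Si'})|$ and $|E_{R_0}|\simeq|\mc{N}(\mc{A}_\Si)|$ compatibly with the inclusions. (This relation is essentially the one the paper uses in the opposite direction, when deducing the finite FDT from FNT II.) The gap is in the bridge $|\Si'|\cong|(\Si')^{(1)}|\simeq|E_R|$. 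You correctly observe that FNT I cannot be applied to $(\Si')^{(1)}$ because $\{(\Si'_i)^{(1)}\}$ is only a cover by subcomplexes, and you then invoke a ``Mayer--Vietoris blowup / homotopy colimit'' principle: a map that respects two good covers and restricts to homotopy equivalences on all pieces and intersections is a global homotopy equivalence, functorially in $\Si\hookrightarrow\Si'$. That principle is nowhere available in the paper, and it is not a small technical lemma: the assertion that $|(\Si')^{(1)}|$ (equivalently $|\Si'|$) is equivalent to the blowup, hence to the nerve, of a good cover by subcomplexes \emph{is} the nerve theorem for subcomplex covers --- in its functorial form it is essentially the statement of Theorem \ref{thm:nerve-functorial-II} itself. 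So as written the argument is circular (or, read charitably, it outsources the entire content to unproved homotopy-colimit machinery), and the part you yourself flag as ``the most delicate point,'' the functoriality of the blowup comparison in the inclusion $\Si\hookrightarrow\Si'$, is exactly the part that is asserted rather than proved.

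For contrast, the paper closes precisely this gap by an elementary but laborious induction: it deforms the subcomplex covers into covers of simplices one index at a time, replacing $\Si_{l}$ by the full simplex $\pow(V(\Si_l))$, and uses the Functorial Gluing Lemma (Lemma \ref{lem:glue-func}, built on the Carrier Lemma \ref{lem:carrier}) to show at each step that the homotopy type, the nerve, inducedness of the pieces, and contractibility of all intersections are preserved, with homotopy equivalences that extend one another and hence commute with the canonical inclusions; only then does it apply Theorem \ref{thm:nerve-functorial-I}. If you want to salvage your route, you would need to prove your cover-wise-to-global gluing statement (functorially) by some comparable induction --- e.g.\ attaching the simplices of $E_R\setminus(\Si')^{(1)}$ in stages and controlling contractibility of the attachment loci via Lemmas \ref{lem:glue} and \ref{lem:glue-func} --- at which point the work is of the same order as the paper's proof. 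As it stands, the proposal has a genuine missing step.
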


The following result summarizes our answer to Question \ref{q:f-nerve-f-dowker}.

\begin{theorem}[Equivalence]\label{thm:dowker-nerve-eq}
The finite FDT, the FNT I, and the FNT II are all equivalent. Moreover, all of these results are implied by the FDT, as below:
\begin{center}
\begin{tikzpicture}
\node (fdt) at (-2,0){Theorem \ref{thm:dowker-functorial}};
\node (ffdt) at (1,0){Theorem \ref{thm:dowker-functorial-finite}};
\node (sfnt1) at (3,1){Theorem \ref{thm:nerve-functorial-I}};
\node (sfnt2) at (5,0){Theorem \ref{thm:nerve-functorial-II}};

\draw[-implies,double equal sign distance] (fdt) --  (ffdt);
\draw[-implies,double equal sign distance] (ffdt) --  (sfnt1);
\draw[-implies,double equal sign distance] (sfnt1) --  (sfnt2);
\draw[-implies,double equal sign distance] (sfnt2) --  (ffdt);
\end{tikzpicture}
\end{center}
\end{theorem}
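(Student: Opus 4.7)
My plan is to establish the four implications in the diagram, which jointly prove both the equivalence of the three finite statements and the fact that they are all consequences of the full FDT. The first implication, from Theorem \ref{thm:dowker-functorial} to Theorem \ref{thm:dowker-functorial-finite}, is tautological: the finite version is just the special case in which one of the totally ordered sets is finite.

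For the passage from the finite FDT to FNT I (Theorem \ref{thm:nerve-functorial-I}), I would turn the simplicial covers into relations. Given the covers $\mc{A}_\Si=\{\Si_i\}_{i\in I}$ of $\Si$ and $\mc{A}_{\Si'}=\{\Si'_i\}_{i\in I'}$ of $\Si'$, define $R'\subseteq V(\Si')\times I'$ by $(v,i)\in R'$ iff $v\in V(\Si'_i)$, and let $R\subseteq V(\Si)\times I$ be its restriction. Because each $\Si'_i$ is the full simplex $\pow(V(\Si'_i))$, a direct check from the definitions shows that the Dowker sink complex $E_{R'}$ coincides with $\Si'$ while the source complex $F_{R'}$ coincides with $\mc{N}(\mc{A}_{\Si'})$, and analogously for $R$. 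Finiteness of $I'$, together with the fact that each $\Si'_i$ has finitely many vertices, forces $V(\Si')=\bigcup_{i\in I'}V(\Si'_i)$ to be finite, so the finite FDT applies to $R\subseteq R'$ and immediately yields the required functorial homotopy equivalences.

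For FNT I $\Rightarrow$ FNT II (Theorem \ref{thm:nerve-functorial-II}), the strategy is to refine a cover of subcomplexes into a cover of simplices. Given $\mc{A}_{\Si'}=\{\Si'_i\}_{i\in I'}$ with contractible intersections, define the refined cover $\mc{A}^*_{\Si'}:=\{\pow(\s)\}$ indexed by pairs $(i,\s)$ with $\s$ a maximal simplex of $\Si'_i$; define $\mc{A}^*_\Si$ analogously. Each element of $\mc{A}^*_{\Si'}$ is a simplex, so FNT I gives $|\Si'|\simeq|\mc{N}(\mc{A}^*_{\Si'})|$ functorially in $\Si\subseteq\Si'$. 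The natural projection $(i,\s)\mapsto i$ of indexing sets induces a simplicial map $\pi_*:\mc{N}(\mc{A}^*_{\Si'})\to\mc{N}(\mc{A}_{\Si'})$. To promote this to a homotopy equivalence I would argue fiberwise: over each $\t\in\mc{N}(\mc{A}_{\Si'})$, the relevant subcomplex of $\mc{N}(\mc{A}^*_{\Si'})$ is, up to homotopy, the nerve of a simplex cover of the contractible subcomplex $\bigcap_{i\in\t}\Si'_i$ and is thus contractible by another application of FNT I. A homotopy colimit (equivalently, Quillen's Theorem A) argument then upgrades pointwise contractibility to a global homotopy equivalence, with naturality in $\Si\subseteq\Si'$ built in.

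For FNT II $\Rightarrow$ finite FDT, given $R\subseteq R'\subseteq X\times Y$ with $X$ finite, set $X^R_y:=\{x\in X:(x,y)\in R\}$ and define $X^{R'}_y$ analogously. The family $\{\pow(X^R_y)\}_{y\in Y}$ is a cover of $E_R$ by simplices, with intersections $\pow(\bigcap_y X^R_y)$ empty or contractible, and likewise for $R'$. Because $X$ is finite, only finitely many distinct sets $X^R_y, X^{R'}_y$ arise, so identifying $y\sim y'$ whenever both $X^R_y=X^R_{y'}$ and $X^{R'}_y=X^{R'}_{y'}$ yields finite indexing sets; FNT II then gives $|E_R|\simeq|\mc{N}(\mc{A})|$ and $|E_{R'}|\simeq|\mc{N}(\mc{A}')|$ compatibly with inclusions. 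A folding argument identifies the reduced nerves with $F_R$ and $F_{R'}$ up to homotopy, since collapsing simplicial vertices with identical stars is a homotopy equivalence. The main obstacle will be Step 3: producing the homotopy equivalence between the refined and unrefined nerves in a manner compatible with the inclusion $\Si\subseteq\Si'$ requires a careful fiberwise argument. A secondary technicality in Step 4 is verifying that the reduction from $Y$ to a finite quotient preserves $F_R$ (and $F_{R'}$) up to homotopy equivalences compatible with the nesting $R\subseteq R'$.
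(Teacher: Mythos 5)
Your first two implications and your last one are essentially sound. The deduction of Theorem \ref{thm:nerve-functorial-I} from the finite FDT via the vertex-membership relation $(v,i)\in R' \iff v\in V(\Si'_i)$ is the same argument the paper gives (the paper puts the finiteness on the index set $I'$ rather than on $V(\Si')$, but either side suffices for Theorem \ref{thm:dowker-functorial-finite}). For FNT II $\Rightarrow$ finite FDT you cover $E_R$ by the simplices $\pow(X^R_y)$, $y\in Y$, whereas the paper covers $F_R$ by simplices indexed by the finite set $V(E_R)\subseteq X$; your route then forces you to quotient $Y$ to a finite index set and to prove a vertex-domination (``folding'') lemma identifying the reduced nerve with $F_R$ compatibly with $R\subseteq R'$. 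This can be made rigorous (the retraction sending each $y$ to a chosen representative of its joint class is simplicial and contiguous to the identity), but it is extra work that the paper's choice of which complex to cover avoids entirely.

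The genuine gap is in Step 3, FNT I $\Rightarrow$ FNT II, which is precisely where the paper invests most of its effort. Two problems. First, your refined covers are not compatible with the inclusion $\Si\subseteq\Si'$: a maximal simplex of $\Si_i$ need not be maximal in $\Si'_i$, so the index set of $\mc{A}^*_\Si$ does not sit inside that of $\mc{A}^*_{\Si'}$ and the hypotheses ``$I\subseteq I'$ and $\Si_i\subseteq\Si'_i$'' of Theorem \ref{thm:nerve-functorial-I} fail for the refined covers as you define them; indexing by all simplices instead of maximal ones restores the inclusion but destroys the finiteness that Theorem \ref{thm:nerve-functorial-I} requires (and FNT II does not assume $\Si'$ finite, so finiteness of the set of maximal simplices is not available either). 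Second, and more seriously, the fiberwise claim misidentifies the fibers: for the simplicial projection $\pi_*:\mc{N}(\mc{A}^*_{\Si'})\r\mc{N}(\mc{A}_{\Si'})$, the preimage of the closed simplex spanned by $\t$ is the nerve of a simplex cover of the \emph{union} $\bigcup_{i\in\t}\Si'_i$, not of the intersection $\bigcap_{i\in\t}\Si'_i$. Contractibility of such unions is not a hypothesis of FNT II --- it is essentially the conclusion of the nerve theorem applied to the subfamily $\{\Si'_i\}_{i\in\t}$, so invoking it makes your Theorem-A argument circular; establishing it honestly requires an induction on $\card(\t)$ that glues one contractible piece at a time, and carrying that induction out \emph{functorially} in $\Si\subseteq\Si'$ is exactly the content of the paper's Functorial Gluing Lemma (Lemma \ref{lem:glue-func}) and the induction of Claim \ref{cl:nerve-cover-of-simplices}, which deform the complexes themselves (attaching the full simplices $\pow(V(\Si_{l,(\bullet)}))$ one index at a time, keeping the index sets and nerves fixed) rather than refining the covers. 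As written, your Step 3 omits this core argument, so the equivalence is not yet established.
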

We present the proof of Theorem \ref{thm:dowker-nerve-eq} over the course of the next few subsections.

\begin{remark} By virtue of Theorem \ref{thm:dowker-nerve-eq}, we will write \emph{simplicial FNT} to mean either of the FNT I or FNT II.
\end{remark}

\subsection*{Theorem \ref{thm:dowker-functorial-finite} implies Theorem \ref{thm:nerve-functorial-I}}

\begin{proof}[Proof of Theorem \ref{thm:nerve-functorial-I}] Let $V, V'$ denote the vertex sets of $\Si,\Si'$, respectively. We define the relations $R\subseteq V\times I$ and $R' \subseteq V'\times I'$ as follows: $(v,i) \in R \iff v \in \Si_i$ and $(v',i') \in R' \iff v' \in \Si_i'.$ Then $R \subseteq R'$, the set $I'$ is finite by assumption, and so we are in the setting of the finite FDT (Theorem \ref{thm:dowker-functorial-finite}) (perhaps invoking the Axiom of Choice to obtain the total order on $V'$). It suffices to show that $E_R = \Si$, $E_{R'} = \Si'$, $F_R = \mc{N}(\mc{A}_\Si)$, and $F_{R'} = \mc{N}(\mc{A}_{\Si'})$, where $E_R, E_{R'}, F_R, F_{R'}$ are as defined in Theorem \ref{thm:dowker-functorial}.

First we claim the $E_R = \Si$. By the definitions of $R$ and $E_R$, we have $E_R = \{\s \subseteq V : \; \exists  i \in I, \;
(v,i) \in R \; \forall \; v\in \s\} 
= \{\s \subseteq V : \;\exists  i \in I, \;
v\in \Si_i \; \forall \; v\in \s\}.$ Let $\s \in E_R$, and let $i \in I$ be such that $v \in \Si_i$ for all $v \in \s$. 
Then $\s \subseteq V(\Si_i)$, and since $\Si_i = \pow(V(\Si_i))$ by the assumption about covers of simplices, we have $\s \in \Si_i \subseteq \Si$. Thus $E_R \subseteq \Si$. Conversely, let $\s \in \Si$. Then $\s \in \Si_i$ for some $i$. Thus for all $v \in \s$, we have $(v,i) \in R$. It follows that $\s \in E_R$. This shows $E_R = \Si$. The proof that $E_{R'} = \Si'$ is analogous.

Next we claim that $F_R = \mc{N}(\mc{A}_\Si)$. By the definition of $F_R$, we have $F_R = \{\t \subseteq I : \; \exists  v \in V, \;
(v,i) \in R \; \forall \; i\in \t\}.$ Let $\t \in F_R$, and let $v \in V$ be such that $(v,i) \in R$ for each $i \in \t$. Then $\cap_{i \in \t}\Si_i \neq \emptyset$, and so $\t \in \mc{N}(\mc{A}_\Si)$. Conversely, let $\t \in \mc{N}(\mc{A}_\Si)$. Then $\cap_{i \in \t}\Si_i \neq \emptyset$, so there exists $v \in V$ such that $v \in \Si_i$ for each $i \in \t$. Thus $\s \in F_R$. This shows $F_R = \mc{N}(\mc{A}_\Si)$. The case for $R'$ is analogous.

An application of Theorem \ref{thm:dowker-functorial-finite} now completes the proof. \end{proof}

\subsection*{Theorem \ref{thm:nerve-functorial-II} implies Theorem \ref{thm:dowker-functorial-finite}}

\begin{proof}

Let $X$ and $Y$ be two sets, and suppose $X$ is finite. Let $R\subseteq R' \subseteq X \times Y$ be two relations. Consider the simplicial complexes $E_R, F_R, E_{R'}, F_{R'}$ as defined in Theorem \ref{thm:dowker-functorial}. Let $V_R:=V(E_R)$. For each $x \in V_R$, define $A_x:=\{\t \in F_R : (x,y) \in R \text{ for all } y \in \t\}$. Then $A_x$ is a subcomplex of $F_R$. Furthermore, $\cup_{x\in V_R}A_x = F_R$. To see this, let $\t\in F_R$. Then there exists $x\in X$ such that $(x,y) \in R$ for all $y\in \t$, and so $\t \in A_x$. 

Let $\mc{A}:= \{A_x : x \in V_R\}$. We have seen that $\mc{A}$ is a cover of subcomplexes for $F_R$. It is finite because the indexing set $V_R$ is a subset of $X$, which is finite by assumption. Next we claim that $\mc{N}(\mc{A}) = E_R$. Let $\s \in E_R$. Then there exists $y \in Y$ such that $(x,y) \in R$ for all $x\in \s$. Thus $\cap_{x\in \s}A_x \neq \emptyset$, and so $\s \in \mc{N}(\mc{A})$. Conversely, let $\s \in \mc{N}(\mc{A})$. Then $\cap_{x \in \s}A_x \neq \emptyset$, and so there exists $y \in Y$ such that $(x,y) \in R$ for all $x \in \s$. Thus $\s \in E_R$. 

Next we check that nonempty finite intersections of elements in $\mc{A}$ are contractible. Let $\s \in \mc{N}(\mc{A}) = E_R$. Let $V_\s:= \cap_{x \in \s}V(A_x) \subseteq V(F_R)$. We claim that $\cap_{x \in \s}A_x = \pow(V_\s)$, i.e. that the intersection is a full simplex in $F_R$, hence contractible. The inclusion $\cap_{x \in \s}A_x \subseteq \pow(V_\s)$ is clear, so we show the reverse inclusion. Let $\t \in \pow(V_\s)$, and let $y \in \t$. Then $y \in \cap_{x \in \s}A_x$, so $(x,y) \in R$ for each $x \in \s$. This holds for each $y \in \t$, so it follows that $\t \in \cap_{x\in \s}A_x$. Thus $\cap_{x \in \s}A_x = \pow(V_\s)$. We remark that this also shows that $\mc{A}$ is a cover of simplices for $F_R$. 

Now for each $x \in V(E_{R'})$, define $A'_x:= \{\t \in F_{R'} : (x,y) \in R' \text{ for all } y \in \t\}$. Also define $\mc{A}':= \{A'_x : x \in V(E_{R'})\}$. The same argument shows that $\mc{A}'$ is a finite cover of subcomplexes (in particular, a cover of simplices) for $F_{R'}$ with all finite intersections either empty or contractible, and that $E_{R'} = \mc{N}(\mc{A}')$. An application of Theorem \ref{thm:nerve-functorial-II} now shows that $|E_R| \simeq |F_R|$ and $|E_{R'}| \simeq |F_{R'}|$, via maps that commute up to homotopy with the inclusions $|E_R| \hr |E_{R'}|$ and $|F_R| \hr |F_{R'}|$. \end{proof}

\subsection*{Theorem \ref{thm:nerve-functorial-I} implies Theorem \ref{thm:nerve-functorial-II}}

We lead with some remarks about the ideas involved in this proof. Theorem \ref{thm:nerve-functorial-II} is a functorial statement in the sense that it is about an arbitrary inclusion $\Si \subseteq \Si'$. Restricting the statement to just $\Si$ would lead to a non-functorial statement. A proof of this non-functorial statement, via a non-functorial analogue of Theorem \ref{thm:nerve-functorial-I}, can be obtained using techniques presented in \cite{bjorner1985homotopy} (see also \cite[Theorem 15.24]{kozlov2007combinatorial}). We strengthen these techniques to our functorial setting and thus obtain a proof of Theorem \ref{thm:nerve-functorial-II} via Theorem \ref{thm:nerve-functorial-I}.

We first present a lemma related to barycentric subdivisions and several lemmas about gluings and homotopy equivalences. These will be used in proving Theorem \ref{thm:nerve-functorial-II}.

\begin{definition}[Induced subcomplex] Let $\Si$ be a simplicial complex, and let $\Delta$ be a subcomplex. Then $\Delta$ is an \emph{induced subcomplex} if $\Delta = \Si \cap \pow(V(\Delta))$. 
\end{definition}

\begin{lemma} 
\label{lem:induced-cplx}
Let $\Si$ be a simplicial complex, and let $\Delta$ be a subcomplex. Then $\Delta\1$ is an \emph{induced subcomplex} of $\Si\1$, i.e. $\Delta\1 = \Si\1 \cap \pow(V(\Delta\1))$. 
\end{lemma}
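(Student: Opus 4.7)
The plan is to unwind the definitions of barycentric subdivision and induced subcomplex, showing the two inclusions $\Delta\1 \subseteq \Si\1 \cap \pow(V(\Delta\1))$ and $\Si\1 \cap \pow(V(\Delta\1)) \subseteq \Delta\1$ essentially by chasing through what the vertex set of $\Delta\1$ is.

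First I would recall that by Definition \ref{def:subdivision}, the vertices of $\Delta\1$ are precisely the simplices of $\Delta$, and the simplices of $\Delta\1$ are the finite chains $\s_1 \subseteq \s_2 \subseteq \ldots \subseteq \s_p$ with each $\s_i \in \Delta$. Since $\Delta \subseteq \Si$, any such chain is also a chain of simplices in $\Si$, giving $\Delta\1 \subseteq \Si\1$. Moreover, every vertex of such a chain lies in $V(\Delta\1)$, so any simplex of $\Delta\1$ lies in $\pow(V(\Delta\1))$, which establishes the forward inclusion $\Delta\1 \subseteq \Si\1 \cap \pow(V(\Delta\1))$.

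For the reverse inclusion, I would take an arbitrary simplex $\t = [\s_1,\ldots,\s_p] \in \Si\1 \cap \pow(V(\Delta\1))$. Being a simplex of $\Si\1$ means $\s_1 \subseteq \ldots \subseteq \s_p$ is a chain in $\Si$; being in $\pow(V(\Delta\1))$ means each $\s_i$ is a vertex of $\Delta\1$, i.e.\ a simplex of $\Delta$. Thus the chain lies entirely in $\Delta$, so $\t \in \Delta\1$ by the definition of barycentric subdivision.

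There is no real obstacle here; the statement is essentially a tautology once one observes that a vertex of $\Delta\1$ is by definition a simplex of $\Delta$ (not merely a vertex of $\Delta$), so the condition of lying in $\pow(V(\Delta\1))$ forces every link in the chain to already belong to $\Delta$. The only thing to be slightly careful about is not to confuse $V(\Delta\1)$ (a set of simplices of $\Delta$) with $V(\Delta)$ (a set of vertices of $\Delta$).
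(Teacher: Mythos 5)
Your argument is correct and is essentially the same as the paper's proof: both inclusions are verified by unwinding the definition of $\Si\1$ as chains of simplices and observing that membership in $\pow(V(\Delta\1))$ forces every term of the chain to be a simplex of $\Delta$. Nothing further is needed.
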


\begin{proof} Let $\s$ be a simplex of $\Delta\1$. Then $\s$ belongs to $\Si\1$, and also to the full simplex $\pow(V(\Delta\1))$. Thus $\Delta\1 \subseteq \Si\1 \cap \pow(V(\Delta\1))$. Conversely, let $\s \in \Si\1 \cap \pow(V(\Delta\1))$. Since $\s \in \Si\1$, we can write $\s = [\t_0,\ldots, \t_k]$, where $\t_0 \subseteq \ldots \subseteq \t_k$. Since $\s \in \pow(V(\Delta\1))$ and the vertices of $\Delta\1$ are simplices of $\Delta$, we also know that each $\t_i$ is a simplex of $\Delta$. Thus $\s \in \Delta\1$. The equality follows.\end{proof}

\begin{lemma}[Carrier Lemma, \cite{bjorner1985homotopy} \S4]
\label{lem:carrier}
Let $X$ be a topological space, and let $\Si$ be a simplicial complex. Also let $f,g:X \r |\Si|$ be any two continuous maps such that $f(x), g(x)$ belong to the same simplex of $|\Si|$, for any $x \in X$. Then $f \simeq g$.
\end{lemma}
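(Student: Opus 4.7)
The plan is to construct the homotopy via the classical straight-line formula. Define $H \colon X \times [0,1] \to |\Si|$ by $H(x,t) := (1-t)\,f(x) + t\,g(x)$. By hypothesis, $f(x)$ and $g(x)$ lie in a common simplex $\s_x \in \Si$, and since $|\s_x|$ is convex, $H(x,t) \in |\s_x| \subseteq |\Si|$, so $H$ is well-defined as a map of sets. The endpoint conditions $H(\cdot,0) = f$ and $H(\cdot,1) = g$ are immediate from the formula, so the content of the proof reduces to establishing continuity of $H$ with respect to the weak topology on $|\Si|$.

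To handle continuity, I would proceed in two steps. First, I would observe that each barycentric coordinate $\alpha_v \colon |\Si| \to [0,1]$ is continuous in the weak topology, since for each $c \in [0,1]$ the preimage $\alpha_v^{-1}([0,c])$ meets every $|\s|$ in a closed subset of $|\s|$. Consequently the composite $\alpha_v \circ H = (1-t)(\alpha_v \circ f) + t(\alpha_v \circ g)$ is continuous as a function of $(x,t) \in X \times [0,1]$. Second, I would argue that $H$ locally factors through a finite subcomplex: fixing $(x_0,t_0)$, I would use the continuity of $f$ and $g$ together with the simplex-sharing hypothesis to produce a neighborhood $U$ of $x_0$ and a finite vertex set $V_0 \subseteq V(\Si)$ containing $\supp(f(x)) \cup \supp(g(x))$ for every $x \in U$. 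Taking $K$ to be the finite full subcomplex of $\Si$ on $V_0$, one has $H(U \times [0,1]) \subseteq |K|$. On the finite complex $|K|$ the weak topology coincides with the metric topology, so the coordinate-wise continuity from the first step immediately promotes to continuity of $H|_{U \times [0,1]}$ into $|K|$, and hence into $|\Si|$.

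The main obstacle I anticipate is precisely this local factorization. The weak topology on $|\Si|$ is strictly finer than the metric topology whenever $\Si$ fails to be locally finite, so a naive metric Lipschitz estimate on the formula for $H$ is not enough to conclude continuity into the weak topology. The leverage comes from combining the simplex-sharing hypothesis -- which forces $\supp(H(x,t)) \subseteq \supp(f(x)) \cup \supp(g(x))$ -- with the assumed continuity of $f$ and $g$ into the weak topology, in order to trap the image of a neighborhood inside a single finite subcomplex where the two topologies agree. Once continuity of $H$ has been verified this way, $H$ is itself the desired homotopy from $f$ to $g$, and the lemma follows.
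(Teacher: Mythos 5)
The paper offers no proof of this lemma; it is quoted as classical from Bj\"{o}rner et al., so your argument has to stand on its own. The skeleton is the standard one and most of it is sound: the straight-line homotopy $H(x,t)=(1-t)f(x)+tg(x)$ is well defined by convexity of the common closed simplex, the endpoint conditions are immediate, the barycentric coordinates are indeed continuous for the weak topology, and on a finite (sub)complex the weak and metric topologies agree. The whole weight of the proof therefore rests on your local factorization step, and that step is where the argument breaks.

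You assert that continuity of $f$ and $g$, combined with the carrier hypothesis, yields a neighborhood $U$ of $x_0$ and a \emph{finite} vertex set $V_0$ with $\supp(f(x))\cup\supp(g(x))\subseteq V_0$ for every $x\in U$. This is false as soon as $\Si$ fails to be locally finite. Take $\Si$ to be the infinite star with hub $v$ and edges $[v,w_n]$, $n\in\N$, let $X=|\Si|$ and $f=g=\id_{|\Si|}$ (the carrier hypothesis holds trivially). Every open neighborhood of the hub $v$ in the weak topology meets every edge, so $f(U)$ is contained in no finite subcomplex for any neighborhood $U$ of $v$: no point of a non-locally-finite complex has a neighborhood lying in a finite subcomplex, so continuity of $f$ and $g$ into the weak topology can never trap the image of an \emph{open} set in the way you need. (Here $H=\id$ is of course continuous, so this refutes the step, not the lemma.) The standard repairs are: (i) when $X$ is itself the realization of a simplicial or CW complex --- which covers every application of the lemma in this paper --- verify continuity of $H$ on each set $|\s'|\times I$ with $\s'$ a closed cell of $X$; since $|\s'|$ is \emph{compact}, $f(|\s'|)\cup g(|\s'|)$ lies in a finite subcomplex $L$, the subcomplex generated by the carriers $\supp(f(x))\cup\supp(g(x))$ for $x\in|\s'|$ is still finite because its vertices all lie in $V(L)$, and $X\times I$ carries the weak topology with respect to these pieces because $I$ is locally compact; or (ii) for a genuinely arbitrary topological space $X$, observe that $H$ is continuous into the \emph{metric} realization of $\Si$ (a sup-norm estimate on barycentric coordinates suffices there) and invoke Dowker's theorem that the identity map from the weak to the metric realization is a homotopy equivalence to transport the homotopy back. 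Either way, the needed finiteness must come from compactness or from weakening the topology on the target, not from continuity of $f$ and $g$ alone.
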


\begin{lemma}[Gluing Lemma, see Lemmas 4.2, 4.7, 4.9, \cite{bjorner1985homotopy}]
\label{lem:glue}
Let $\Si$ be a simplicial complex, and let $U \subseteq V(\Si)$. Suppose $|\Si \cap \pow(U)|$ is contractible. Then there exists a homotopy equivalence $\ph: |\Si \cup \pow(U)| \r |\Si|$. 
\end{lemma}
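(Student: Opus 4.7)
The plan is to show that the inclusion $|\Si| \hr |\Si \cup \pow(U)|$ is a homotopy equivalence and then take $\ph$ to be a homotopy inverse. The key structural observation is that $|\Si \cup \pow(U)|$ is the pushout of the CW complexes
\[
|\Si| \hookleftarrow |\Si \cap \pow(U)| \hr |\pow(U)|,
\]
with both legs being inclusions of subcomplexes, hence cofibrations with the homotopy extension property.

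First I would observe that the inclusion $|\Si \cap \pow(U)| \hr |\pow(U)|$ is a homotopy equivalence: the space $|\pow(U)|$ is a geometric simplex and hence contractible, $|\Si \cap \pow(U)|$ is contractible by hypothesis, and any map between contractible CW complexes is a homotopy equivalence. Combined with the fact that this inclusion is a cofibration (as an inclusion of a subcomplex of a CW complex), we are in position to apply the classical \emph{gluing lemma for cofibrations}: in a pushout square of CW complexes in which one of the legs is simultaneously a cofibration and a homotopy equivalence, the opposite leg is also a homotopy equivalence. In our diagram, this forces $|\Si| \hr |\Si \cup \pow(U)|$ to be a homotopy equivalence, and any homotopy inverse may be taken as $\ph$.

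Concretely, I would build $\ph$ by first producing a strong deformation retraction $H_t:|\pow(U)|\r|\pow(U)|$ of $|\pow(U)|$ onto $|\Si \cap \pow(U)|$, which is available because the CW pair $(|\pow(U)|,|\Si \cap \pow(U)|)$ has the homotopy extension property and the inclusion is an acyclic cofibration between contractible spaces. Extending $H_t$ by the identity on $|\Si|$ yields a continuous homotopy $\widetilde H_t: |\Si \cup \pow(U)| \r |\Si \cup \pow(U)|$; continuity follows because the two pieces agree pointwise on the overlap $|\Si \cap \pow(U)|$, where $H_t$ is stationary. Then $\ph := \widetilde H_1$ retracts $|\Si \cup \pow(U)|$ onto $|\Si|$, and $\widetilde H_t$ is a homotopy between the identity and the composition of $\ph$ with the inclusion, while $\ph$ restricted to $|\Si|$ is already the identity.

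The main obstacle is invoking (or reproving) the classical fact that an acyclic cofibration of CW pairs is a strong deformation retract, equivalently the gluing lemma for cofibrations. These are standard results in CW homotopy theory (see, e.g., Hatcher's Propositions 0.16--0.19 or the treatments in the references cited in the lemma statement), so the proof reduces to citing the appropriate statement; all remaining work consists of routine verifications of continuity and agreement on the overlap $|\Si \cap \pow(U)|$.
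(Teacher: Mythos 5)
Your proposal is correct, but it takes a genuinely different route from the paper. The paper proves this lemma by hand (inside the proof of its functorial strengthening, Lemma \ref{lem:glue-func}): it defines $\ph$ to be the identity on $|\Si|$ and then extends it inductively over the minimal simplices of $\pow(U)\setminus \Si$, using only the elementary fact that a map of a sphere $\bd(\s)$ into the contractible space $|\Si\cap\pow(U)|$ extends over the disk $\s$; the output is a retraction with the explicit property that $\ph(x)$ and $x$ always lie in a common simplex, whence $\iota\circ\ph\simeq\id$ by the Carrier Lemma (Lemma \ref{lem:carrier}). You instead recognize $|\Si\cup\pow(U)|$ as the pushout of $|\Si|\hookleftarrow|\Si\cap\pow(U)|\hr|\pow(U)|$, note that the right leg is an acyclic cofibration (both spaces contractible, subcomplex inclusion), and invoke the standard CW facts (HEP, ``homotopy-equivalent CW pair implies strong deformation retract'', cobase change of acyclic cofibrations) to extend a strong deformation retraction of $|\pow(U)|$ onto $|\Si\cap\pow(U)|$ by the identity on $|\Si|$. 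Both arguments are sound; the trade-off is that your route is shorter but outsources the real content to cofibration machinery, whereas the paper's elementary construction is self-contained and, more importantly, is engineered to deliver the carrier property of $\ph$ (every point stays in its simplex), which is exactly what the functorial version Lemma \ref{lem:glue-func} and the proof of Theorem \ref{thm:nerve-functorial-II} need downstream. (Your construction in fact also satisfies that property, since the entire deformation takes place inside the single closed simplex $|\pow(U)|$, but your write-up neither states nor needs it, since the bare statement of Lemma \ref{lem:glue} only asks for a homotopy equivalence.)
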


The Gluing and Carrier Lemmas presented above are classical. We provide full details for the Gluing lemma inside the proof of the following functorial generalization of Lemma \ref{lem:glue}.

\begin{lemma}[Functorial Gluing Lemma]
\label{lem:glue-func}
Let $\Si \subseteq \Si'$ be two simplicial complexes. Also let $U \subseteq V(\Si)$ and $U' \subseteq V(\Si')$ be such that $U \subseteq U'$. Suppose $|\Si \cap \pow(U)|$ and $|\Si' \cap \pow(U')|$ are contractible. Then,
\begin{enumerate}
\item There exists a homotopy equivalence $\ph: |\Si \cup \pow(U)| \r |\Si|$ such that $\ph(x)$ and $\id_{|\Si \cup \pow(U)|}(x)$ belong to the same simplex of $|\Si\cup \pow(U)|$ for each $x \in |\Si \cup \pow(U)|$. Furthermore, the homotopy inverse is given by the inclusion $\iota: |\Si| \hr |\Si \cup \pow(U)|$.  
\item Given a homotopy equivalence $\ph: |\Si \cup \pow(U)| \r |\Si|$ as above, there exists a homotopy equivalence $\ph': |\Si' \cup \pow(U')| \r |\Si'|$ such that $\ph'|_{|\Si\cup \pow(U)|} = \ph$, and $\ph'(x)$ and $\id_{|\Si' \cup \pow(U')|}(x)$ belong to the same simplex of $|\Si'\cup \pow(U')|$ for each $x \in |\Si' \cup \pow(U')|$. Furthermore, the homotopy inverse is given by the inclusion $\iota': |\Si'| \hr |\Si' \cup \pow(U')|$. 
\end{enumerate}
\end{lemma}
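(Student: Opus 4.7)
The approach is to build the homotopy equivalences $\ph$ and $\ph'$ by pasting retractions of the simplices $|\pow(U)|$ and $|\pow(U')|$ onto the contractible subcomplexes $|\Si \cap \pow(U)|$ and $|\Si' \cap \pow(U')|$ together with the identity maps on $|\Si|$ and $|\Si'|$, then verify the required properties via the Carrier Lemma (Lemma \ref{lem:carrier}). For Part 1, I will invoke the classical fact that a contractible CW-subcomplex $A$ of a contractible CW-complex $X$ is a strong deformation retract of $X$ (the inclusion is a cofibration and a weak equivalence, hence any inclusion of CW pairs that is a homotopy equivalence is a strong deformation retract). Applied with $X = |\pow(U)|$ (a simplex, hence contractible) and $A = |\Si \cap \pow(U)|$, this yields a retraction $r : |\pow(U)| \r |\Si \cap \pow(U)|$. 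Define $\ph$ by $\ph|_{|\Si|} = \id_{|\Si|}$ and $\ph|_{|\pow(U)|} = r$; the two pieces agree on $|\Si \cap \pow(U)|$, so $\ph$ is continuous. The same-simplex condition is trivial on $|\Si|$, while for $x \in |\pow(U)|$ both $x$ and $\ph(x) = r(x)$ lie in $|U|$, a single simplex of $\pow(U) \subseteq \Si \cup \pow(U)$. The identity $\ph \circ \iota = \id_{|\Si|}$ holds by construction, and the same-simplex condition combined with the Carrier Lemma yields $\iota \circ \ph \simeq \id_{|\Si \cup \pow(U)|}$, completing Part 1.

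For Part 2, I will construct a retraction $r' : |\pow(U')| \r |\Si' \cap \pow(U')|$ compatible with $r$ in the sense that $r'|_{|\pow(U)|} = r$. Given such a compatible pair, define $\ph'$ by $\ph'|_{|\Si'|} = \id$ and $\ph'|_{|\pow(U')|} = r'$. Continuity on the overlap $|\Si' \cap \pow(U')|$ is automatic, the restriction $\ph'|_{|\Si \cup \pow(U)|} = \ph$ follows from $r'|_{|\pow(U)|} = r$ together with $\id_{|\Si'|}|_{|\Si|} = \id_{|\Si|}$, the same-simplex condition follows exactly as in Part 1 (now using the top simplex $|U'|$ of $\pow(U')$), and the homotopy equivalence with inverse $\iota'$ follows by a second application of the Carrier Lemma. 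The plan for building the compatible pair is to construct $r'$ simultaneously with $r$: first build a retraction $r' : |\pow(U')| \r |\Si' \cap \pow(U')|$ satisfying the additional constraint $r'(|\pow(U)|) \subseteq |\Si \cap \pow(U)|$, and then set $r := r'|_{|\pow(U)|}$. The constrained retraction $r'$ will be built by inductively extending cell by cell over the cells of $|\pow(U')|$ outside of $|\pow(U)| \cup |\Si' \cap \pow(U')|$, using the contractibility of $|\Si' \cap \pow(U')|$ (as an ANR in the CW category) to fill in each cell's image.

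The main obstacle is precisely this constrained extension. A generic retraction of $|\pow(U')|$ onto $|\Si' \cap \pow(U')|$ carries $|\pow(U)|$ into arbitrary regions of $|\Si' \cap \pow(U')|$, not necessarily into the smaller subcomplex $|\Si \cap \pow(U)|$; requiring simultaneously that $r'|_{|\Si' \cap \pow(U')|} = \id$ and $r'(|\pow(U)|) \subseteq |\Si \cap \pow(U)|$ amounts to a functorial/relative strengthening of the standard retract lemma. The inductive step must check that the image constraint can be preserved at every cell, which is where I expect the technical heart of the argument to lie. Once this compatible construction is carried out, the remaining verifications—gluing with the identity maps, same-simplex bookkeeping, and the two invocations of the Carrier Lemma—proceed routinely and deliver both conclusions of the lemma.
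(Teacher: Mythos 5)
Your Part~1 is essentially correct and close to the paper's argument: the paper produces the retraction of $|\pow(U)|$ onto $|\Si \cap \pow(U)|$ by hand, extending over minimal simplices of $\pow(U)\setminus \Si$ using the fact that a map of a sphere into a contractible space extends over the disk, while you quote the CW-pair fact that a contractible subcomplex of a contractible complex is a deformation retract; in both cases the same-simplex condition is automatic because everything happens inside the closed top simplex $|U|$, and the Carrier Lemma gives $\iota\circ\ph \simeq \id_{|\Si\cup\pow(U)|}$.

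The gap is in Part~2, and it is not the ``constrained extension'' you defer -- it is the ansatz itself. You decree $\ph'|_{|\Si'|} = \id$ and $\ph'|_{|\pow(U')|} = r'$ with $r'|_{|\pow(U)|} = r$. But $|\Si'| \cap |\pow(U)| = |\Si' \cap \pow(U)|$ can be strictly larger than $|\Si \cap \pow(U)|$: the hypotheses allow $\Si'$ to contain simplices spanned by vertices of $U$ that are absent from $\Si$. Concretely, take $\Si$ the path with edges $\set{a,b},\set{b,c}$, $U = U' = \set{a,b,c}$, and $\Si'$ the full triangle on $\set{a,b,c}$; all contractibility hypotheses hold. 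Here $\ph = r$ retracts the solid triangle $|\pow(U)|$ onto the path and is not the identity off the path, yet the solid triangle lies inside $|\Si'|$, where your first clause forces $\ph' = \id$; this contradicts the required restriction $\ph'|_{|\Si\cup\pow(U)|} = \ph$. In general the conflict occurs on $|\Si' \cap \pow(U)| \setminus |\Si \cap \pow(U)|$, since a retraction fixes only points of its image. For the same reason, the constrained retraction $r'$ you plan to construct cannot exist: on that locus it would have to be the identity (being a retraction onto $|\Si' \cap \pow(U')|$) and simultaneously map into $|\Si \cap \pow(U)|$. So the technical heart you postpone is impossible as formulated, not merely delicate. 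The paper's proof avoids this precisely by \emph{not} requiring $\ph'$ to be a retraction onto $|\Si'|$: it defines $\ph'$ to equal $\ph$ on all of $|\Si \cup \pow(U)|$, the identity only on $|\Si'| \setminus |\Si \cup \pow(U)|$, and then extends over the remaining simplices of $\pow(U')$ into the contractible space $|\Si' \cap \pow(U')|$; the resulting map is in general not the identity on $|\Si'|$, and both $\iota'\circ\ph' \simeq \id_{|\Si'\cup\pow(U')|}$ and $\ph'\circ\iota' \simeq \id_{|\Si'|}$ are obtained only up to homotopy via the Carrier Lemma. Your Part~2 needs to be rebuilt along those lines rather than by gluing a retraction to the identity on $|\Si'|$.
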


\begin{proof}[Proof of Lemma \ref{lem:glue-func}] 
The proof uses this fact: any continuous map of an $n$-sphere $\us^n$ into a contractible space $Y$ can be continuously extended to a mapping of the $(n+1)$-disk $\ud^{n+1}$ into $Y$, where $\ud^{n+1}$ has $\us^n$ as its boundary \cite[p. 27]{spanier-book}. 
First we define $\ph$. On $|\Si|$, define $\ph$ to be the identity. Next let $\s$ be a minimal simplex in $|\pow(U) \setminus \Si|$. By minimality, the boundary of $\s$ (denoted $\bd(\s)$) belongs to $|\Si \cap \pow(U)|$, and $|\Si|$ in particular. 
Thus $\ph$ is defined on $\bd(\s)$, which is an $n$-sphere for some $n \geq 0$. Furthermore, $\ph$ maps $\bd(\s)$ into the contractible space $|\Si \cap \pow(U)|$. 
Then we use the aforementioned fact to extend $\ph$ continuously to all of $\s$ so that $\ph$ maps $\s$ into $|\Si \cap \pow(U)|$. Furthermore, both $\id_{|\Si \cup \pow(U)|}(\s) = \s$ and $\ph(\s)$ belong to the simplex $|\pow(U)|$.
By iterating this procedure, we obtain a retraction $\ph: |\Si \cup \pow(U)| \r |\Si|$ such that $\ph(x)$ and $x$ belong to the same simplex in $|\Si \cup \pow(U)|$, for each $x \in |\Si \cup \pow(U)|$. 
Thus $\ph$ is homotopic to $\id_{|\Si \cup \pow(U)|}$ by Lemma \ref{lem:carrier}. Thus we have a homotopy equivalence:
\[\id_{|\Si|}= \ph\circ \iota, \;  \iota \circ \ph \simeq \id_{|\Si \cup \pow(U)|}
\tag{here $\iota:= \iota_{|\Si| \hr |\Si \cup \pow(U)|}$} .\]

%
For the second part of the proof, suppose that a homotopy equivalence $\ph: |\Si \cup \pow(U)| \r |\Si|$ as above is provided. We need to extend $\ph$ to obtain $\ph'$. Define $\ph'$ to be equal to $\ph$ on $|\Si \cup \pow(U)|$, and equal to the identity on $G:= |\Si'|\setminus |\Si \cup \pow(U)|$. 
Let $\s$ be a minimal simplex in $|\pow(U')| \setminus G$. 
Then by minimality, $\bd(\s)$ belongs to $|\Si' \cap \pow(U')|$. 
As before, we have $\ph'$ mapping $\bd(\s)$ into the contractible space $|\Si' \cap \pow(U')|$, and we extend $\ph'$ continuously to a map of $\s$ into $|\Si' \cap \pow(U')|$. 
Once again, $\id_{|\Si' \cup \pow(U')|}(x)$ and $\ph'(x)$ belong to the same simplex $|\pow(U')|$, for all $x \in \s$.
Iterating this procedure gives a continuous map $\ph':|\Si' \cup \pow(U')| \r |\Si'|$. This map is not necessarily a retraction, because there may be a simplex $\s \in |\Si\cup \pow(U)| \cap |\Si'|$ on which $\ph'$ is not the identity. However, it still holds that $\ph'$ is continuous, and that $x, \ph'(x)$ get mapped to the same simplex for each $x \in |\Si'\cup \pow(U')|$. Thus Lemma \ref{lem:carrier} still applies to show that $\ph'$ is homotopic to $\id_{|\Si' \cup \pow(U')|}$. 

We write $\iota'$ to denote the inclusion $\iota': |\Si'| \hr |\Si' \cup \pow(U')|$. By the preceding work, we have $\iota'\circ \ph' \simeq \id_{|\Si' \cup \pow(U')|}$. Next let $x \in |\Si'|$. Then either $x \in |\Si'| \cap |\Si \cup \pow(U)|$, or $x \in G$. In the first case, we know that $\ph'(x) = \ph(x)$ and $\id_{|\Si'|}(x) = \id_{|\Si\cup \pow(U)|}(x)$ belong to the same simplex of $|\Si\cup \pow(U)|$ by the assumption on $\ph$. In the second case, we know that $\ph'(x) = x = \id_{|\Si'|}(x)$. Thus for any $x \in |\Si'|$, we know that $\ph'(x)$ and $\id_{|\Si'|}(x)$ belong to the same simplex in $|\Si' \cup \pow(U')|$. By Lemma \ref{lem:carrier}, we then have $\ph'|_{|\Si'|} \simeq \id_{|\Si'|}$. Thus $\ph'\circ \iota' \simeq \id_{|\Si'|}$. This shows that $\ph'$ is the necessary homotopy equivalence. \end{proof}

Now we present the proof of Theorem \ref{thm:nerve-functorial-II}. 

\noindent
\textbf{Notation.} Let $I$ be an ordered set. For any subset $J\subseteq I$, we write $(J)$ to denote the sequence $(j_1,j_2,j_3,\ldots)$, where the ordering is inherited from the ordering on $I$.

\begin{proof}[Proof of Theorem \ref{thm:nerve-functorial-II}] The first step is to functorially deform $\mc{A}_\Si$ and $\mc{A}_{\Si'}$ into covers of simplices while still preserving all associated homotopy types. Then we will be able to apply Theorem \ref{thm:nerve-functorial-I}. We can assume by Lemma \ref{lem:induced-cplx} that each subcomplex $\Si_i$ is induced, and likewise for each $\Si'_i$. We start by fixing an enumeration $I' = \{l_1,l_2,\ldots\}$. Thus $I'$ becomes an ordered set.

\paragraph*{Passing to covers of simplices.} We now define some inductive constructions. In what follows, we will define complexes denoted $\Si^\bullet, \Si'^\bullet$ obtained by ``filling in" $\Si$ and $\Si'$ while  preserving homotopy equivalence, as well as covers of these larger complexes denoted $\Si_{\star,\bullet}, \Si'_{\star,\bullet}$. First define:
\begin{align*}
\Si^{(l_1)} &:= {\begin{cases}
\Si \cup \pow(V(\Si_{l_1})) &: \text{ if }l_1 \in I\\
\Si &: \text{ otherwise}.
\end{cases}}\\
\Si'^{(l_1)} &:= \Si' \cup \pow(V(\Si'_{l_1})).
\end{align*}
Next, for all $i\in I$, define 
\begin{align*}
\Si_{i,(l_1)} &:= {\begin{cases}
\Si_i \cup \pow(V(\Si_i) \cap V(\Si_{l_1}))&: \text{ if }l_1 \in I\\
\Si_i &: \text{ otherwise}.
\end{cases}}
\end{align*}
And for all $i \in I'$, define
\begin{align*}
\Si'_{i,(l_1)} &:= \Si'_i \cup \pow(V(\Si'_i) \cap V(\Si'_{l_1})).
\end{align*}
Now by induction, suppose $\Si^{(l_1,\ldots, l_n)}$ and $\Si_{i,(l_1,\ldots, l_n)}$ are defined for all $i \in I$. 
Also suppose $\Si'^{(l_1,\ldots, l_n)}$ and $\Si'_{i,(l_1,\ldots, l_n)}$ are defined for all $i \in I'$. Then we define:
\begin{align*}
\Si^{(l_1,\ldots, l_n,l_{n+1})} &:= {\begin{cases}
\Si^{(l_1,\ldots,l_n)} \cup \pow(V(\Si_{l_{n+1},(l_1,\ldots,l_n)})) &: \text{ if }l_{n+1} \in I\\
\Si^{(l_1,\ldots,l_n)} &: \text{ otherwise}.
\end{cases}}\\
\Si'^{(l_1,\ldots, l_n,l_{n+1})} &:= \Si'^{(l_1,\ldots,l_n)} \cup \pow(V(\Si'_{l_{n+1},(l_1,\ldots,l_n)})).
\end{align*}
For all $i \in I$, we have 
\begin{align*}
\Si_{i,(l_1,l_2,\ldots, l_{n+1})} &:= {\begin{cases}
\Si_{i,(l_1,l_2,\ldots, l_n)} \cup \pow(V(\Si_{i,(l_1,l_2,\ldots, l_n)}) \cap V(\Si_{l_{n+1},(l_1,l_2,\ldots, l_n)}))&: \text{ if }l_{n+1} \in I\\
\Si_{i,(l_1,l_2,\ldots, l_n)} &: \text{ otherwise}.
\end{cases}}
\end{align*}
And for all $i \in I'$, we have
\begin{align*}
\Si'_{i,(l_1,l_2,\ldots, l_{n+1})} &:= \Si'_{i,(l_1,l_2,\ldots, l_n)} \cup \pow(V(\Si'_{i,(l_1,l_2,\ldots, l_n)}) \cap V(\Si'_{l_{n+1},(l_1,l_2,\ldots, l_n)})).
\end{align*}

Finally, for any $n \leq \card(I')$, we define $\mc{A}_{\Si,(l_1,\ldots, l_n)} :=\{\Si_{i,(l_1,\ldots, l_{n})} : i \in I\}$ and $\mc{A}_{\Si',(l_1,\ldots, l_n)} :=\{\Si'_{i,(l_1,\ldots, l_{n})} : i \in I'\}$. We will show that these are covers of $\Si^{(l_1,l_2,\ldots, l_n)}$ and $\Si'^{(l_1,l_2,\ldots, l_n)}$, respectively.

The next step is to prove by induction that for any $n \leq \card(I')$, we have $|\Si|\simeq |\Si^{(l_1,l_2,\ldots, l_n)}|$ and $|\Si'|\simeq |\Si'^{(l_1,l_2,\ldots, l_n)}|$, that $\mc{N}(\mc{A}_\Si) = \mc{N}(\mc{A}_{\Si,(l_1,l_2,\ldots, l_n)})$ and $\mc{N}(\mc{A}_{\Si'}) = \mc{N}(\mc{A}_{\Si',(l_1,l_2,\ldots, l_n)})$, and that nonempty finite intersections of the new covers $\mc{A}_{\Si,(l_1,l_2,\ldots, l_n)}, \mc{A}_{\Si',(l_1,l_2,\ldots, l_n)}$ remain contractible. For the base case $n=0$, we have $\Si = \Si^{()}$, $\Si' = \Si'^{()}$. Thus the base case is true by assumption. We present the inductive step next. 

\begin{claim}
\label{cl:nerve-cover-of-simplices} 
For this claim, let $\bullet$ denote $l_1,\ldots, l_n$, where $0 < n < \card(I')$. Define $l:= l_{n+1}$. Suppose the following is true:
\begin{enumerate}
\item The collections $\mc{A}_{\Si,(\bullet)}$ and $\mc{A}_{\Si',(\bullet)}$ are covers of $\Si^{(\bullet)}$ and $\Si'^{(\bullet)}$. 
\item The nerves of the coverings are unchanged: $\mc{N}(\mc{A}_\Si) = \mc{N}(\mc{A}_{\Si,(\bullet)})$ and $\mc{N}(\mc{A}_{\Si'}) = \mc{N}(\mc{A}_{\Si',(\bullet)})$.
\item Each of the subcomplexes $\Si_{i,(\bullet)}$, $i\in I$, and $\Si'_{j,(\bullet)}$, $j\in I'$ is induced in $\Si^{(\bullet)}$ and $\Si'^{(\bullet)}$, respectively.
\item Let $\s \subseteq I$. If $\cap_{i\in \s}\Si_{i,(\bullet)}$ is nonempty, then it is contractible. Similarly, let $\t \subseteq I'$. If $\cap_{i\in \t}\Si'_{i,(\bullet)}$ is nonempty, then it is contractible.
\item We have homotopy equivalences $|\Si|\simeq |\Si^{(\bullet)}|$ and $|\Si'|\simeq |\Si'^{(\bullet)}|$ via maps that commute with the canonical inclusions.
\end{enumerate}
Then the preceding statements are true for $\Si^{(\bullet,l)}$, $\Si'^{(\bullet,l)}$, $\mc{A}_{\Si,(\bullet,l)}$, and $\mc{A}_{\Si',(\bullet,l)}$ as well.
\end{claim}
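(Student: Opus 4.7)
My plan is to verify properties (1)--(5) at level $(\bullet, l)$ in the order (1), (3), (2), (4), (5), with Lemma \ref{lem:glue-func} doing all the topological work at step (5). The two cases $l \in I$ and $l \notin I$ run in parallel: if $l \notin I$, the $\Si$-side is literally unchanged and everything on that side is tautological, while the $\Si'$-side is verified by the same argument I sketch below with primes attached throughout. So assume $l \in I$, and set $U := V(\Si_{l,(\bullet)})$ and $U' := V(\Si'_{l,(\bullet)})$, so that
\[\Si^{(\bullet,l)} = \Si^{(\bullet)} \cup \pow(U), \qquad \Si_{i,(\bullet,l)} = \Si_{i,(\bullet)} \cup \pow(V(\Si_{i,(\bullet)}) \cap U),\]
and in particular $V(\Si_{i,(\bullet,l)}) = V(\Si_{i,(\bullet)})$.

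For (1) and (3), I would argue as follows. Since $\Si_{l,(\bullet,l)} = \pow(U)$, the cover property for $\mc{A}_{\Si,(\bullet,l)}$ on $\Si^{(\bullet,l)}$ is immediate from the inductive cover property. For the induced property (3), any $\t \in \Si^{(\bullet,l)}$ with $V(\t) \subseteq V(\Si_{i,(\bullet)})$ is either in $\Si^{(\bullet)}$ --- hence in $\Si_{i,(\bullet)}$ by the inductive induced property --- or in $\pow(U)$, in which case $\t \subseteq V(\Si_{i,(\bullet)}) \cap U$ and hence $\t \in \Si_{i,(\bullet,l)}$. For the nerve equality (2), the forward inclusion is automatic since intersections only grow; for the reverse, any vertex $v$ of a simplex in $\cap_{i \in \s} \Si_{i,(\bullet,l)}$ belongs to every $V(\Si_{i,(\bullet)})$, so the inductive induced property yields $\{v\} \in \cap_{i \in \s} \Si_{i,(\bullet)}$.

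For the contractibility property (4), I would combine the induced property just established with a direct computation to obtain the decomposition
\[\cap_{i \in \s} \Si_{i,(\bullet,l)} \;=\; T \cup W, \quad T := \cap_{i \in \s} \Si_{i,(\bullet)}, \quad W := \pow\!\left(U \cap (\cap_{i \in \s} V(\Si_{i,(\bullet)}))\right).\]
If $l \in \s$, then $T \subseteq W$ and the intersection collapses to the (possibly empty) full simplex $W$, trivially contractible when nonempty. If $l \notin \s$ and the intersection is nonempty, $T$ is contractible by the inductive hypothesis, $W$ is contractible as a full simplex, and $T \cap W$ identifies via the inductive induced property for $\Si_{l,(\bullet)}$ with $\cap_{i \in \s \cup \{l\}} \Si_{i,(\bullet)}$, which is contractible by the inductive hypothesis. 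The classical Gluing Lemma \ref{lem:glue} then yields contractibility of $T \cup W$.

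Property (5) reduces to a single application of Lemma \ref{lem:glue-func} to the pair $\Si^{(\bullet)} \subseteq \Si'^{(\bullet)}$ with $U \subseteq U'$. The hypothesis that $|\Si^{(\bullet)} \cap \pow(U)|$ and $|\Si'^{(\bullet)} \cap \pow(U')|$ are contractible reduces via the inductive induced property to the statement that $|\Si_{l,(\bullet)}|$ and $|\Si'_{l,(\bullet)}|$ are contractible --- the singleton case of the inductive property (4). The lemma then outputs homotopy equivalences $\ph : |\Si^{(\bullet,l)}| \r |\Si^{(\bullet)}|$ and $\ph' : |\Si'^{(\bullet,l)}| \r |\Si'^{(\bullet)}|$ whose homotopy inverses are the canonical inclusions and that agree on the common subspace; composing with the inductive equivalences $|\Si^{(\bullet)}| \simeq |\Si|$ and $|\Si'^{(\bullet)}| \simeq |\Si'|$ produces the new pair of equivalences compatible with all the canonical inclusions. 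The main obstacle, and the precise reason the plain Gluing Lemma \ref{lem:glue} does not suffice, is that we must simultaneously preserve compatibility with the horizontal inclusion $|\Si^{(\bullet,l)}| \hr |\Si'^{(\bullet,l)}|$ and with the vertical extensions $|\Si^{(\bullet)}| \hr |\Si^{(\bullet,l)}|$; it is exactly the second assertion of Lemma \ref{lem:glue-func}, which extends a chosen homotopy equivalence on the smaller pair rather than producing one independently on the larger pair, that handles this coordination in one stroke.
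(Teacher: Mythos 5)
Your proposal is correct and follows essentially the same route as the paper's proof: the same decomposition of $\cap_{i\in\s}\Si_{i,(\bullet,l)}$ into $\bigl(\cap_{i\in\s}\Si_{i,(\bullet)}\bigr)\cup\pow\bigl(\cap_{i\in\s}V(\Si_{i,(\bullet)})\cap V(\Si_{l,(\bullet)})\bigr)$ handled by Lemma \ref{lem:glue}, the same use of the induced-subcomplex property to reduce the contractibility hypotheses to $|\Si_{l,(\bullet)}|$ and $|\Si'_{l,(\bullet)}|$, and the same single application of the functorial gluing Lemma \ref{lem:glue-func} (with $\Phi'_l$ extending $\Phi_l$ to get strict commutativity) followed by composition with the inductive equivalences. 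Your element-wise verification of the induced property, the $l\in\s$ versus $l\notin\s$ split, and the explicit treatment of $l\notin I$ are only cosmetic variations on the paper's argument.
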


\begin{subproof}
For the first claim, we have $\Si^{(\bullet, l)} = \Si^{(\bullet)} \cup \pow(V(\Si_{l,(\bullet)})) \subseteq \cup_{i \in I}\Si_{i,(\bullet, l)}$. For the inclusion, we used the inductive assumption that $\Si^{(\bullet)} = \cup_{i \in I}\Si_{i,(\bullet)}$. Similarly, $\Si'^{(\bullet, l)} \subseteq \cup_{i \in I'} \Si'_{i,(\bullet,l)}$. 

For the second claim, let $i \in I$. Then $V(\Si_{i,(l_1)}) = V(\Si_i)$, and in particular, we have $V(\Si_{i,(\bullet,l)}) = V(\Si_{i,(\bullet)}) = V(\Si_i)$. Next observe that for any $\s \subseteq I$, the intersection 
\[\cap_{i \in \s}\Si_i \neq \emptyset \iff \cap_{i \in \s}V(\Si_i) \neq \emptyset \iff \cap_{i \in \s}V(\Si_{i,(\bullet,l)}) \neq \emptyset \iff \cap_{i \in \s}\Si_{i,(\bullet,l)} \neq \emptyset.\]
Thus $\mc{N}(\mc{A}_\Si) = \mc{N}(\mc{A}_{\Si,(\bullet)}) = \mc{N}(\mc{A}_{\Si,(\bullet,l)})$, and similarly $\mc{N}(\mc{A}_{\Si'}) = \mc{N}(\mc{A}_{\Si',(\bullet)}) = \mc{N}(\mc{A}_{\Si',(\bullet,l)})$.

For the third claim, again let $i \in I$. If $l \not\in I$, then $\Si_{i,(\bullet,l)} = \Si_{i,(\bullet)}$, so we are done by the inductive assumption. Suppose $l \in I$. Since $\Si_{i,(\bullet)}$ is induced by the inductive assumption, we have:
\begin{align*}
\Si_{i,(\bullet,l)} &= \Si_{i,(\bullet)} \cup (\pow(V(\Si_{i,(\bullet)})\cap V(\Si_{l,(\bullet)})))\\ 
&= (\Si^{(\bullet)} \cap \pow(V(\Si_{i,(\bullet)}))) \cup (\pow(V(\Si_{i,(\bullet)})) \cap \pow(V(\Si_{l,(\bullet)})))\\
& = (\Si^{(\bullet)} \cup \pow(V(\Si_{l,(\bullet)}))) \cap \pow(V(\Si_{i,(\bullet)}))\\
&= \Si^{(\bullet, l)} \cap \pow(V(\Si_{i,(\bullet)})) = \Si^{(l)} \cap \pow(V(\Si_{i,(\bullet,l)})).
\end{align*}
Thus $\Si_{i,(\bullet,l)}$ is induced. The same argument holds for the $I'$ case. 

For the fourth claim, let $\s \subseteq I$, and suppose $\cap_{i \in \s} \Si_{i,(\bullet,l)}$ is nonempty. By the previous claim, each $\Si_{i,(\bullet,l)}$ is induced. Thus we write:
\begin{align*}
\cap_{i\in \s}\Si_{i,(\bullet,l)} &= \Si^{(\bullet,l)} \cap \pow(\cap_{i \in \s}V(\Si_{i,(\bullet,l)})) \\
&= \Si^{(\bullet,l)} \cap \pow(\cap_{i \in \s}V(\Si_{i,(\bullet)}))\\
&= (\Si^{(\bullet)} \cup \pow(V(\Si_{l,(\bullet)})))  \cap \pow(\cap_{i \in \s}V(\Si_{i,(\bullet)}))\\
&= (\cap_{i \in \s} (\Si^{(\bullet)} \cap \pow(V(\Si_{i,(\bullet)}))) )	 \cup \pow(\cap_{i \in \s}V(\Si_{i,(\bullet)}) \cap V(\Si_{l,(\bullet)}))\\
&= (\cap_{i \in \s} \Si_{i,(\bullet)} )	 \cup \pow(\cap_{i \in \s}V(\Si_{i,(\bullet)}) \cap V(\Si_{l,(\bullet)})).
\end{align*}
For convenience, define $A:=(\cap_{i \in \s} \Si_{i,(\bullet)} )$ and $B:= \pow(\cap_{i \in \s}V(\Si_{i,(\bullet)}) \cap V(\Si_{l,(\bullet)}))$. Then $|A|$ is contractible by  inductive assumption, and $|B|$ is a full simplex, hence contractible. Also, $A\cap B$ has the form
\begin{align*}
&(\cap_{i \in \s} (\Si^{(\bullet)} \cap \pow(V(\Si_{i,(\bullet)}))) )	 \cap \pow(\cap_{i \in \s}V(\Si_{i,(\bullet)}) \cap V(\Si_{l,(\bullet)})) \\ 
=& \Si^{(\bullet)} \cap \pow(\cap_{i \in \s}V(\Si_{i,(\bullet)}) \cap V(\Si_{l,(\bullet)})) \\
=& \cap_{i \in \s}\Si_{i,(\bullet)} \cap \Si_{l,(\bullet)},
\end{align*}
and the latter is contractible by inductive assumption. Thus by Lemma \ref{lem:glue}, we have $|A\cup B|$ contractible. This proves the claim for the case $\s \subseteq I$. The case $\t \subseteq I'$ is similar.

Now we proceed to the final claim.
Since $\Si_{l,(\bullet)}$ is induced, we have $\Si_{l,(\bullet)} = \Si^{(\bullet)} \cap \pow(V(\Si_{l,(\bullet)}))$. By the contractibility assumption, we know that $|\Si_{l,(\bullet)}|$ is contractible. 
Also we know that $|\Si'_{l,(\bullet)}| = |\Si'^{(\bullet)} \cap \pow(V(\Si'_{l,(\bullet)}))|$ is contractible. By assumption we also have $V(\Si_{l,(\bullet)}) \subseteq V(\Si'_{l,(\bullet)})$. Thus by Lemma \ref{lem:glue-func}, we obtain homotopy equivalences $\Phi_l : |\Si^{(\bullet,l)}| \r |\Si^{(\bullet)}|$ and $\Phi'_l : |\Si'^{(\bullet,l)}| \r |\Si'^{(\bullet)}|$ such that $\Phi'_l$ extends $\Phi_l$.
Furthermore, the homotopy inverses of $\Phi_l$ and $\Phi'_l$ are just the inclusions $|\Si^{(\bullet)}| \hr |\Si^{(\bullet,l)}|$ and $|\Si'^{(\bullet)}| \hr |\Si'^{(\bullet,l)}|$.

Now let $\iota: |\Si^{(\bullet)}| \r |\Si'^{(\bullet)}|$ and $\iota_l: |\Si^{(\bullet,l)}| \r |\Si'^{(\bullet,l)}|$ denote the canonical inclusions. We wish to show the equality $\Phi'_l\circ \iota_l = \iota\circ \Phi_l$. 
Let $x \in|\Si^{(\bullet,l)}|$. Because $\Phi'_l$ extends $\Phi_l$ (this is why we needed the \emph{functorial} gluing lemma), we have
\[\Phi'_l(\iota_l(x))) = \Phi'_l(x) = \Phi_l(x) = \iota(\Phi_l(x)).\]
Since $x \in|\Si^{(\bullet,l)}|$ was arbitrary, the equality follows immediately. By the inductive assumption, we already have homotopy equivalences $|\Si^{(\bullet)}| \r |\Si|$ and $|\Si'^{(\bullet)}| \r |\Si'|$ that commute with the canonical inclusions. Composing these maps with $\Phi_l$ and $\Phi'_l$ completes the proof of the claim.\end{subproof}

By the preceding work, we replace the subcomplexes $\Si_l, \Si'_l$ by full simplices of the form $\Si_{l,(\bullet,l)},\Si_{l,(\bullet,l)}'$. In this process, the nerves remain unchanged and the complexes $\Si, \Si'$ are replaced by homotopy equivalent complexes $\Si^{(\bullet,l)}, \Si'^{(\bullet,l)}$. Furthermore, this process is functorial---the homotopy equivalences commute with the canonical inclusions $\Si \hr \Si^{(\bullet,l)}$ and $\Si' \hr \Si'^{(\bullet,l)}$.

Repeating the inductive process in Claim \ref{cl:nerve-cover-of-simplices} for all the finitely many $l \in I$ yields a simplicial complex $\Si^{(I)}$ along with a cover of simplices $\mc{A}_{\Si,(I)}$. 
We also perform the same procedure for all $l \in I' \setminus I$ (this does not affect $\Si^{(I)}$) to obtain a simplicial complex $\Si'^{(I')}$ along with a cover of simplices $\mc{A}_{\Si',(I')}$. 
Furthermore, $\Si^{(I)}$ and $\Si'^{(I)}$ are related to $\Si$ and $\Si'$ by a finite sequence of homotopy equivalences that commute with the canonical inclusions. Also, we have $\mc{N}(\mc{A}_{\Si}) = \mc{N}(\mc{A}_{\Si,(I)})$ and $\mc{N}(\mc{A}_{\Si'}) = \mc{N}(\mc{A}_{\Si',(I')})$. Thus we obtain the following picture: 

\begin{center}
\begin{tikzpicture}[]
\node (SI') at (3,0){$|\Si'^{(I')}|$};
\node (00) at (0,0){$\cdots$};
\node (1) at (-3,0){$|\Si'^{(l_1)}|$};
\node (2) at (-6,0){$|\Si'|$};
\node (3) at (9,0){$|\mc{N}(\mc{A}_{\Si'})|$};
\node (01) at (6,0){$|\mc{N}(\mc{A}_{\Si',(I')})|$};

\node (SI) at (3,2){$|\Si^{(I)}|$};
\node (02) at (0,2){$\cdots$};
\node (4) at (-3,2){$|\Si^{(l_1)}|$};
\node (5) at (-6,2){$|\Si|$};
\node (6) at (9,2){$|\mc{N}(\mc{A}_{\Si})|$};
\node (03) at (6,2){$|\mc{N}(\mc{A}_{\Si,(I)})|$};

\draw (SI) edge[->]node[above]{$\simeq$}  (02);
\draw (SI') edge[->]node[above]{$\simeq$}  (00);
\draw (00) edge[->] (1);
\draw (3) edge[double distance=2pt] (01);
\draw (02) edge[->] (4);
\draw (6) edge[double distance=2pt] (03);

\draw (1) edge[->]node[above]{$\simeq$}  (2);
\draw (00) edge[->]node[above]{$\simeq$}  (1);
\draw (4) edge[->]node[above]{$\simeq$}  (5);
\draw (02) edge[->]node[above]{$\simeq$}  (4);
\draw (1) edge[<-] node[right]{$\iota_{(l_1)}$} (4);
\draw (2) edge[<-] node[right]{$\iota$} (5);
\draw (3) edge[<-] node[right]{$\iota_{\mc{N}}$} (6);
\draw (01) edge[<-] node[right]{$\iota_{\mc{N},(I)}$} (03);
\draw (SI) edge[->]node[right]{$\iota_{(I)}$} (SI');
\draw (SI) edge[->]node[above]{$\simeq$} (03);
\draw (SI') edge[->]node[above]{$\simeq$} (01);
\end{tikzpicture}
\end{center}

By applying Theorem \ref{thm:nerve-functorial-I} to the block consisting of $|\Si^{(I)}|$, $|\Si'^{(I')}|$, $|\mc{N}(\mc{A}_{\Si,I})|$ and $|\mc{N}(\mc{A}_{\Si',I'})|$, we obtain a square that commutes up to homotopy. Then by composing the homotopy equivalences constructed above, we obtain a square consisting of $|\Si|$, $|\Si'|$, $|\mc{N}(\mc{A}_{\Si})|$, and $|\mc{N}(\mc{A}_{\Si'})|$ that commutes up to homotopy. Thus we obtain homotopy equivalences $|\Si| \simeq |\mc{N}(\mc{A}_\Si)|$ and $|\Si'| \simeq |\mc{N}(\mc{A}_{\Si'})|$ via maps that commute up to homotopy with the canonical inclusions. \end{proof}

\section{Dowker persistence diagrams and asymmetry}\label{sec:symmetry}

From the very definition of the Rips complex at any given resolution, one can see that the Rips complex is blind to asymmetry in the input data (Remark \ref{rem:rips-symm}). In this section, we argue that either of the Dowker source and sink complexes is sensitive to asymmetry. Thus when analyzing datasets containing asymmetric information, one may wish to use the Dowker filtration instead of the Rips filtration. In particular, this property suggests that the Dowker persistence diagram is a stronger invariant for directed networks than the Rips persistence diagram. 

In this section, we consider a family of examples, called \emph{cycle networks}, for which the Dowker persistence diagrams capture
meaningful structure, whereas the Rips persistence diagrams do not.

We then probe the question ``What happens to the Dowker or Rips persistence diagram of a network upon reversal of one (or more) edges?" Intuitively, if either of these persistence diagrams captures asymmetry, we would see a change in the diagram after applying this reversal operation to an edge.

\subsection{Cycle networks}\label{sec:cycle}

For each $n\in \N$, let $(X_n,E_n,W_{E_n})$ denote the weighted graph with vertex set $X_n:=\set{x_1,x_2,\ldots,x_n}$, edge set $E_n:=\set{(x_1,x_2),(x_2,x_3),\ldots,(x_{n-1},x_n),(x_n,x_1)}$, and edge weights $W_{E_n}:E_n \r \R$ given by writing $W_{E_n}(e)=1$ for each $e\in E_n$. Next let  $\w_{G_n}:X_n\times X_n \r \R$ denote the shortest path distance induced on $X_n\times X_n$ by $W_{E_n}$. Then we write $G_n:=(X_n,\w_{G_n})$ to denote the network with node set $X_n$ and weights given by $\w_{G_n}$. Note that $\w_{G_n}(x,x)=0$ for each $x\in X_n$. See Figure \ref{fig:cycle} for some examples.

We say that $G_n$ is the \emph{cycle network of length n}. One can interpret cycle networks as being highly asymmetric, because for every consecutive pair of nodes $(x_i,x_{i+1})$ in a graph $G_n$, where $1\leq i\mod(n)\leq n$, we have $\w_{G_n}(x_i,x_{i+1})=1$, whereas $\w_{G_n}(x_{i+1},x_i)=\diam(G_n) = n-1$, which is much larger than 1 when $n$ is large. 

To provide further evidence that Dowker persistence is sensitive to asymmetry, we computed both the Rips and Dowker persistence diagrams, in dimensions 0 and 1, of cycle networks $G_n$, for values of $n$ between 3 and 6. Computations were carried out using \texttt{Javaplex} in Matlab with $\Z_2$ coefficients. The results are presented in Figure \ref{fig:cycle}. Based on our computations, we were able to conjecture and prove the result in Theorem \ref{thm:cycleH1}, which gives a precise characterization of the 1-dimensional Dowker persistence diagram of a cycle network $G_n$, for any $n$. Furthermore, the 1-dimensional Dowker persistence barcode for any $G_n$ contains only one persistent interval, which agrees with our intuition that there is only one nontrivial loop in $G_n$. On the other hand, for large $n$, the 1-dimensional Rips persistence barcodes contain more than one persistent interval. This can be seen in the Rips persistence barcode of $G_6$, presented in Figure \ref{fig:cycle}. Moreover, for $n=3,4$, the 1-dimensional Rips persistence barcode does not contain any persistent interval at all. This suggests that Dowker persistence diagrams/barcodes are an appropriate method for analyzing cycle networks, and perhaps asymmetric networks in general.

\begin{figure}
\centering
\begin{subfigure}{0.3\linewidth}
\centering
\begin{tikzpicture}
\tikzset{>=latex}
\node[circle,draw](1) at (0,1.5){$x_1$};
\node[circle,draw](2) at (-1,0){$x_2$};
\node[circle,draw](3) at (1,0){$x_3$};
\node (4) at (0,-1){};

\path[->] (1) edge [bend right] node[above,pos=0.5]{$1$} (2);
\path[->] (2) edge [bend right] node[above,pos=0.5]{$1$} (3);
\path[->] (3) edge [bend right] node[above,pos=0.5]{$1$} (1);
\end{tikzpicture}
\caption*{The cycle network $G_3$}
\end{subfigure}
\begin{subfigure}{0.33\linewidth}
\centering
\includegraphics[width=\linewidth]{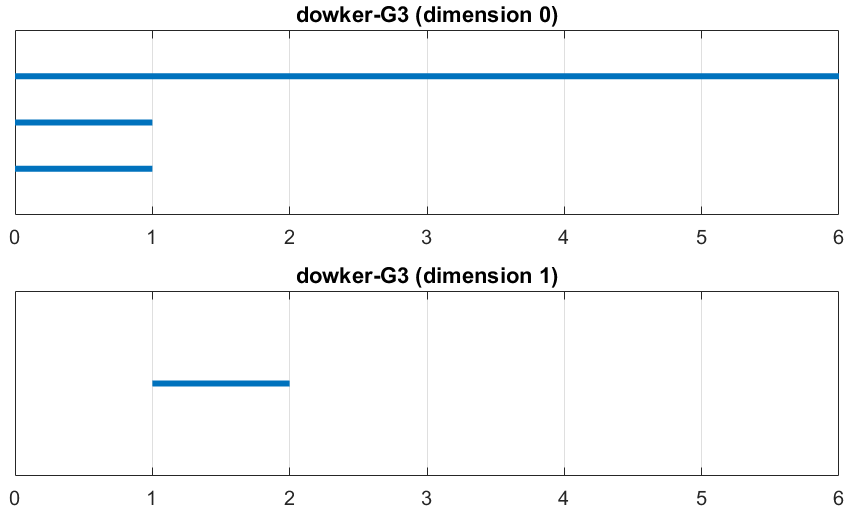}
\end{subfigure}
\begin{subfigure}{0.33\linewidth}
\centering
\includegraphics[width=\linewidth]{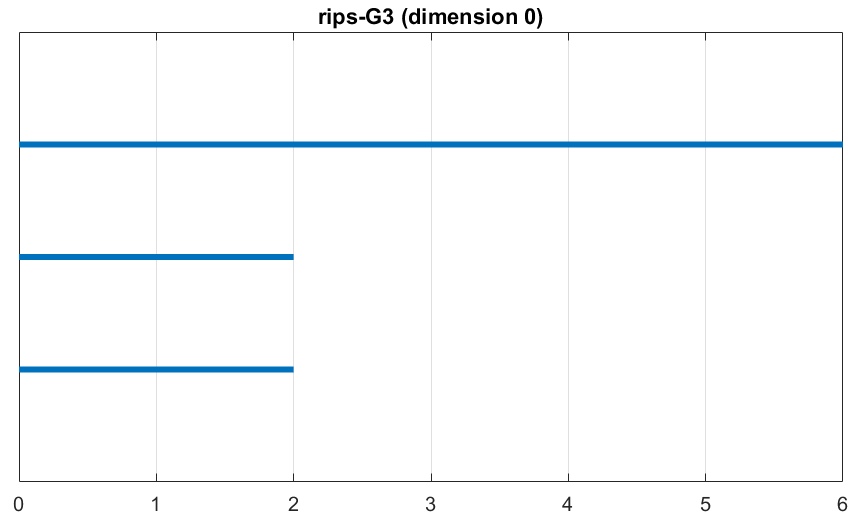}
\end{subfigure}
\hfill

\bigskip

\begin{subfigure}{0.3\linewidth}
\centering
\begin{tikzpicture}
\tikzset{>=latex}
\node[circle,draw](1) at (0,1.5){$x_1$};
\node[circle,draw](2) at (-1.5,0){$x_2$};
\node[circle,draw](3) at (0,-1.5,0){$x_3$};
\node[circle,draw](4) at (1.5,0){$x_4$};
\node (5) at (0,-2){};

\path[->] (1) edge [bend right] node[above,pos=0.5]{$1$} (2);
\path[->] (2) edge [bend right] node[above,pos=0.5]{$1$} (3);
\path[->] (3) edge [bend right] node[above,pos=0.5]{$1$} (4);
\path[->] (4) edge [bend right] node[above,pos=0.5]{$1$} (1);
\end{tikzpicture}
\caption*{The cycle network $G_4$}
\end{subfigure}
\begin{subfigure}{0.33\linewidth}
\centering
\includegraphics[width=\linewidth]{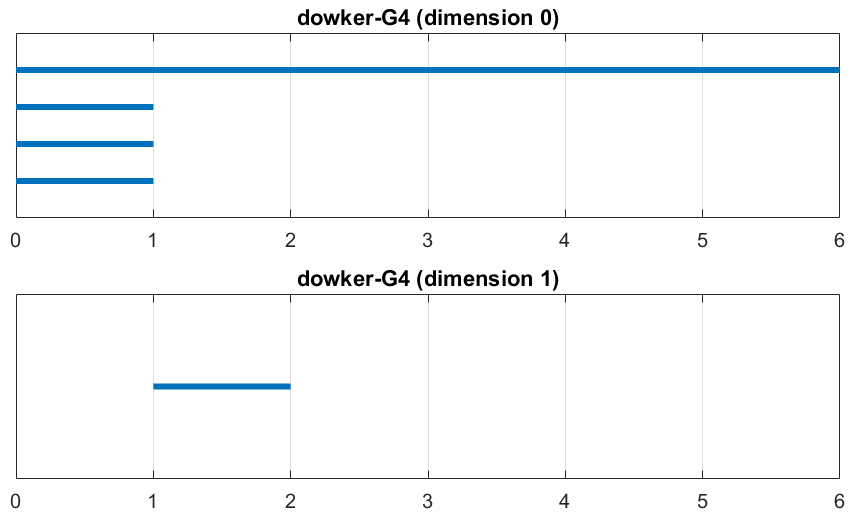}
\end{subfigure}
\begin{subfigure}{0.33\linewidth}
\centering
\includegraphics[width=\linewidth]{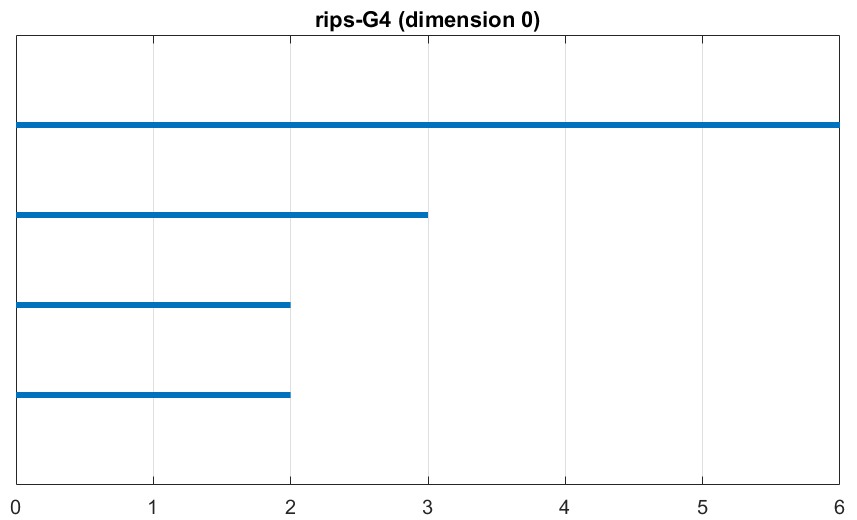}
\end{subfigure}\hfill

\bigskip

\begin{subfigure}{0.3\linewidth}
\centering
\begin{tikzpicture}
\tikzset{>=latex}
\node[circle,draw](1) at (0,1.5){$x_1$};
\node[circle,draw](2) at (1.5,0){$x_2$};
\node[circle,draw](3) at (1,-1.75){$x_3$};
\node[circle,draw](4) at (-1,-1.75){$x_4$};
\node[circle,draw](5) at (-1.5,0){$x_5$};
\node (6) at (0,-2){};

\path[->] (1) edge node[above,pos=0.5]{$1$} (2);
\path[->] (2) edge node[left,pos=0.5]{$1$} (3);
\path[->] (3) edge node[above,pos=0.5]{$1$} (4);
\path[->] (4) edge node[right,pos=0.5]{$1$} (5);
\path[->] (5) edge node[above,pos=0.5]{$1$} (1);
\end{tikzpicture}
\caption*{The cycle network $G_5$}
\end{subfigure}
\begin{subfigure}{0.33\linewidth}
\centering
\includegraphics[width=\linewidth]{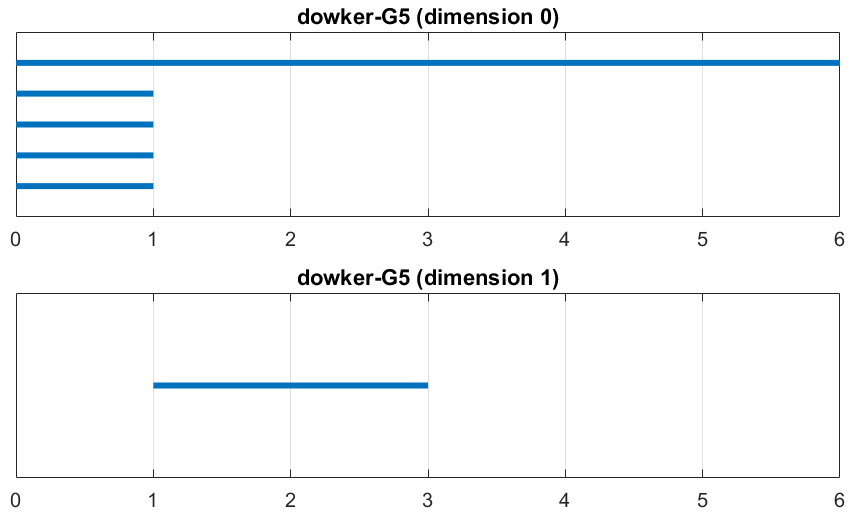}
\end{subfigure}
\begin{subfigure}{0.33\linewidth}
\centering
\includegraphics[width=\linewidth]{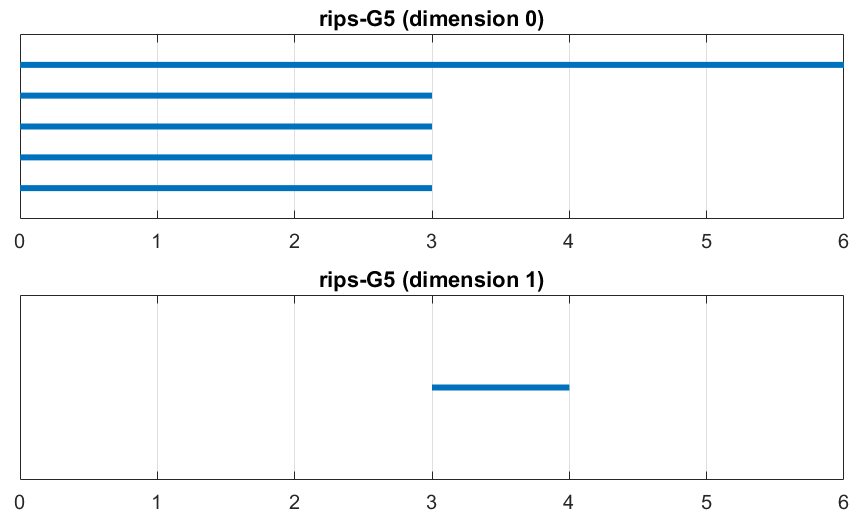}
\end{subfigure}\hfill

\bigskip

\begin{subfigure}{0.3\linewidth}
\centering
\begin{tikzpicture}
\tikzset{>=latex}
\node[circle,draw](1) at (-1,1.5){$x_1$};
\node[circle,draw](2) at (1,1.5){$x_2$};
\node[circle,draw](3) at (1.5,0){$x_3$};
\node[circle,draw](4) at (1,-1.5){$x_4$};
\node[circle,draw](5) at (-1,-1.5){$x_5$};
\node[circle,draw](6) at (-1.5,0){$x_6$};
\node (7) at (0,-2){};

\path[->] (1) edge node[above,pos=0.5]{$1$} (2);
\path[->] (2) edge node[left,pos=0.5]{$1$} (3);
\path[->] (3) edge node[left,pos=0.5]{$1$} (4);
\path[->] (4) edge node[above,pos=0.5]{$1$} (5);
\path[->] (5) edge node[right,pos=0.5]{$1$} (6);
\path[->] (6) edge node[right,pos=0.5]{$1$} (1);

\end{tikzpicture}
\caption*{The cycle network $G_6$}
\end{subfigure}
\begin{subfigure}{0.33\linewidth}
\centering
\includegraphics[width=\linewidth]{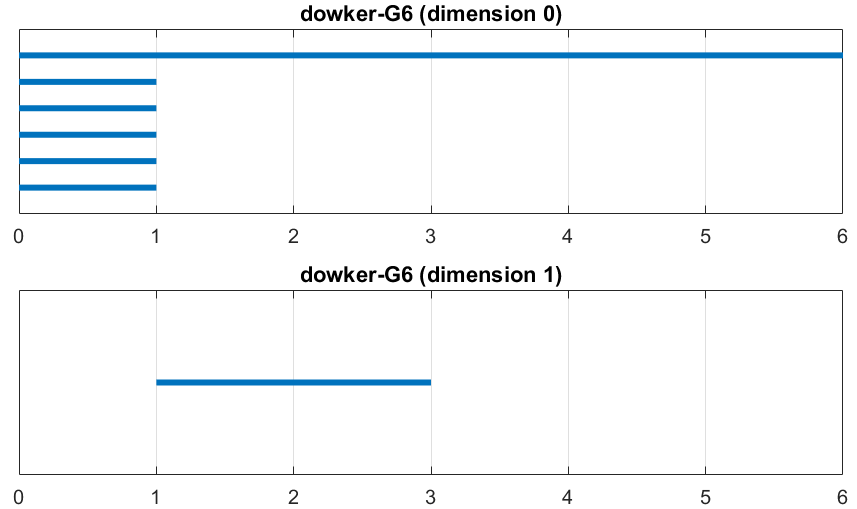}
\end{subfigure}
\begin{subfigure}{0.33\linewidth}
\centering
\includegraphics[width=\linewidth]{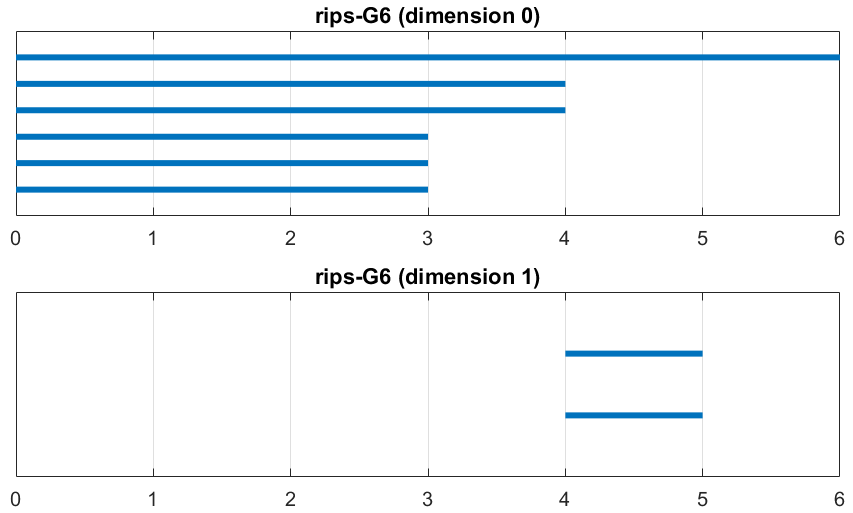}
\end{subfigure}\hfill

\caption{The first column contains illustrations of cycle networks $G_3,G_4,G_5$ and $G_6$. The second column contains the corresponding Dowker persistence barcodes, in dimensions 0 and 1. Note that the persistent intervals in the 1-dimensional barcodes agree with the result in Theorem \ref{thm:cycleH1}. The third column contains the Rips persistence barcodes of each of the cycle networks. Note that for $n=3,4$, there are no persistent intervals in dimension 1. On the other hand, for $n=6$, there are two persistent intervals in dimension 1. }
\label{fig:cycle}
\end{figure}

\medskip
\noindent
\textbf{Notation.} In the remainder of this section, we will prove results involving Dowker sink complexes of the cycle networks $G_n$ and associated vector spaces at a range of resolutions $\d$. For convenience, we will write $\sink_\d := \sink_{\d,G_n}$ (where $n$ will be fixed) and $C_k^\d := C_k(\sink_\d)$, the $k$-chain vector space associated to $\sink_\d$ for each $k\in \Z_+$. For each $k\in \Z_+$, the boundary map from $C_k^\d$ to $C_{k-1}^\d$ will be denoted $\p_k^\d$. Whenever we write $x_i$ to denote a vertex of $G_n$, the subscript $i$ should be understood as $i\Mod{n}$. We write $e_i$ to denote the 1-simplex $[x_i,x_{i+1}]$ for each $1\leq i \leq n$, where $x_{n+1}$ is understood to be $x_1$. Given an element $\g \in \ker(\p_k^\d)\subseteq C_1^\d$, we will write $\la \g \ra_\d$ to denote its equivalence class in the quotient vector space $\ker(\p_k^\d)/\im(\p^\d_k)$. We will refer to the operation of taking this quotient as \emph{passing to homology}.

The following theorem contains the characterization result for 1-dimensional Dowker persistence diagrams of cycle networks.

\begin{theorem}\label{thm:cycleH1} Let $G_n=(X_n,\w_{G_n})$ be a cycle network for some $n\in \N$, $n\geq 3$. 
Then we obtain:
\[\dgm^{\mf{D}}_1(G_n)=\set{(1,\ceil{n/2})\in \R^2}.\]
Thus $\dgm^{\mf{D}}_1(G_n)$ consists of precisely the point $(1,\ceil{n/2})\in \R^2$ with multiplicity 1.

\end{theorem}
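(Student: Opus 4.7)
The plan is to explicitly analyze the Dowker sink filtration $\{\sink_{\d,G_n}\}$ and track a single 1-cycle through its birth at $\d=1$ and death at $\d=\lceil n/2\rceil$. The key combinatorial observation is that a subset $\{x_{i_0},\dots,x_{i_k}\}$ lies in $\sink_{\d,G_n}$ if and only if the indices $\{i_0,\dots,i_k\}$ fit in a cyclic arc of $\lfloor\d\rfloor+1$ consecutive positions in $\Z_n$, because a common $\d$-sink $x_j$ corresponds precisely to a ``backward arc'' $\{x_{j-\lfloor\d\rfloor},\dots,x_j\}$ containing every $x_{i_\ell}$. Hence $\sink_{\d,G_n}$ is an arc-containment complex determined by $m:=\lfloor\d\rfloor$, and the filtration changes only at integer values of $\d$.

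For the birth, I would note that at $\d=1$ the complex $\sink_{1,G_n}$ is precisely the oriented $n$-gon, so $H_1(\sink_{1,G_n})\cong\mathbb{K}$ is generated by $[\g]$ where $\g:=\sum_{i=0}^{n-1}[x_i,x_{i+1}]$ (indices mod $n$). For the death at $\d=\lceil n/2\rceil$, I would exhibit the cone 2-chain $\eta:=\sum_{i=1}^{n-2}[x_0,x_i,x_{i+1}]$ inside $\sink_{\lceil n/2\rceil,G_n}$ and verify $\p\eta=\g$ by a direct telescoping computation. That each triangle $\{x_0,x_i,x_{i+1}\}$ actually lies in $\sink_{\lceil n/2\rceil,G_n}$ is a short case check: for $1\le i\le \lceil n/2\rceil-1$ the three vertices fit in the arc $\{x_0,x_1,\dots,x_{\lceil n/2\rceil}\}$, while for $n-\lceil n/2\rceil\le i\le n-2$ they fit in the wrap-around arc $\{x_{n-\lceil n/2\rceil},\dots,x_{n-1},x_0\}$; these two ranges together cover all of $\{1,\dots,n-2\}$ precisely because $m=\lceil n/2\rceil$ (so $2m\ge n$).

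The substantive step is to show that $[\g]$ survives throughout $1\le \d<\lceil n/2\rceil$ and that $H_1(\sink_{\d,G_n})$ remains one-dimensional, so as to preclude any additional persistence points. To see $[\g]\neq 0$, I would introduce the continuous map $\ph_\d:|\sink_{\d,G_n}|\to S^1$ defined on vertices by $x_i\mapsto e^{2\pi\sqrt{-1}\,i/n}$ and extended simplexwise by geodesic interpolation on $S^1$; this is well-defined because each simplex of $\sink_{\d,G_n}$ spans a circular arc of angular length $m\cdot 2\pi/n<\pi$ whenever $m\le \lceil n/2\rceil-1$. The restriction of $\ph_\d$ to the $n$-gon $\sink_{1,G_n}$ is the standard homotopy equivalence with $S^1$, so $(\ph_\d)_\#[\g]$ generates $H_1(S^1)$ and hence $[\g]\neq 0$ in $H_1(\sink_{\d,G_n})$. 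For the upper bound $\dim H_1\leq 1$, I would appeal to the homotopy-type classification for nerves of evenly spaced circular arcs in \cite{adamaszek2015vietoris,adamaszek2016nerve}: $\sink_{\d,G_n}$ is isomorphic to the nerve of the open-arc cover $\hat{B}_k=(k-m-\tfrac12,\,k+\tfrac12)\subseteq\R/n\Z$, and in the regime $1\le \d<\lceil n/2\rceil$ that machinery identifies $\sink_{\d,G_n}\simeq S^1$. Combining these inputs through the Persistence Equivalence Theorem (Theorem~\ref{thm:pet}) yields the single persistence interval $[1,\lceil n/2\rceil)$ and hence the claimed diagram. I expect the main obstacle to be the boundary regime $m=\lceil n/2\rceil-1$, where individual arcs approach a half-circle and pairwise intersections of the $\hat{B}_k$ can split into two connected components; this breaks the classical nerve theorem's good-cover hypothesis, and the finer circular-arc analysis of Adamaszek--Adams is required there to still conclude $\sink_{\d,G_n}\simeq S^1$.
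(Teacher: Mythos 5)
Your proposal is correct in outline but follows a genuinely different route from the paper's main-text proof. The paper's argument is entirely elementary and chain-level: it shows there are no $2$-simplices for $1\le\d<2$, that $v_n=\sum_i e_i$ generates $\ker\p_1$ there, that \emph{every} $1$-cycle at any $\d\ge 2$ is homologous to a multiple of $v_n$ (an explicit filling argument using the sink structure), that the cone bounds $v_n$ at $\d=\ceil{n/2}$, and that $v_n$ cannot bound earlier via a combinatorial contradiction: any $2$-chain with boundary $v_n$ could be modified, summand by summand, into one containing $[x_1,x_{\ceil{n/2}},x_{\ceil{n/2}+1}]$, which is shown not to lie in $\sink_{\d,G_n}$ for $\d<\ceil{n/2}$. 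You replace the last two ingredients by (i) a circle-valued map $|\sink_{\d,G_n}|\to S^1$, well defined because every simplex spans an arc of angular length $<\pi$ when $\lfloor\d\rfloor\le\ceil{n/2}-1$, under which $[\g]$ maps to a generator of $H_1(S^1)$, and (ii) the Adamaszek--Adams classification of nerves of evenly spaced arcs to get $\dim H_1\le 1$ in that range. This is essentially the strategy of the paper's Appendix~\ref{sec:cycle-addendum} proof of the general Theorem~\ref{thm:dowker-cyc-odd} (via Proposition~\ref{prop:dowker-cech-cplx} and Theorem~\ref{thm:cech-S1}), except that your explicit cycle $\g$ together with the degree argument controls the transition maps directly (generator maps to generator in one-dimensional $H_1$), so you do not need the harder statement that the inclusions are homotopy equivalences (Theorem~\ref{thm:cyc-cech-main}, proved there with winding fractions and cyclic maps). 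The trade-off: the paper's version is self-contained and pedagogical; yours is shorter on the hard step but imports nontrivial external machinery that the paper deliberately reserves for the higher-dimensional characterization. Your identification of $\sink_{\d,G_n}$ with an arc-containment complex, the birth analysis, and the telescoping cone computation coincide with the paper's.

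Two points need attention. First, your argument as written establishes the bar $[1,\ceil{n/2})$ for the class of $\g$, but to conclude that $\dgm_1^{\mf{D}}(G_n)$ has \emph{no other} off-diagonal points you must also rule out $1$-dimensional classes born at or after $\ceil{n/2}$; this does follow from the same classification you cite (for $\ceil{n/2}\le\d\le n-2$ the complex is a wedge of even-dimensional spheres or an odd sphere of dimension at least $3$, and for $\d\ge n-1$ it is the full simplex), but you should say so explicitly, since the cone only shows that $[\g]$ dies. Second, the simplexwise ``geodesic interpolation'' deserves a sentence of justification (e.g.\ lift each simplex's vertex set, which lies in an arc of length $<\pi$, to the universal cover $\R$, interpolate affinely, and project; consistency on shared faces holds because lifts differ by deck transformations), and the final bookkeeping is more naturally phrased as reading off the interval decomposition of a persistence module that is one-dimensional with isomorphic transition maps on $[1,\ceil{n/2})$ and zero elsewhere, rather than as an application of the Persistence Equivalence Theorem, which compares two modules along isomorphisms.
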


\begin{proof}[Proof of Theorem \ref{thm:cycleH1}]

The proof occurs in three stages: first we show that a 1-cycle appears at $\d=1$, next we show that this 1-cycle does not become a boundary until $\d=\ceil{n/2}$, and finally that any other 1-cycle belongs to the same equivalence class upon passing to homology (this shows that the single point in the persistence diagram has multiplicity 1).

Note that for $\d<1$, there are no 1-simplices in $\sink_\d$, and so $H_1(\sink_\d)$ is trivial. Suppose $1\leq \d < 2$. 

\begin{claim}\label{cl:cycle-2simp} There are no 2-simplices in $\sink_\d$ for $1\leq \d < 2$.
\end{claim}
\begin{subproof} To see this, let $x_i,x_j,x_k$ be any three distinct vertices in $X_n$. Assume towards a contradiction that there exists $x\in X_n$ such that $(x_i,x),(x_j,x),(x_k,x)\in R_{\d,X_n},$ where $R_{\d,X_n}$ is as given by Equation \ref{eq:relation}. Thus $\w_{G_n}(x_i,x)\in \{0,1\}$, so either $x=x_i$ or $x=x_{i+1}$. Similarly we get that $x=x_j$ or $x=x_{j+1}$, and that $x=x_k$ or $x=x_{k+1}$. But this is a contradiction, since $x_i,x_j,x_k$ are all distinct. \end{subproof}

By the claim, there are no 2-simplices in $\sink_\d$, so $\im(\p^\d_2)$ is trivial and the only 1-chains are linear combinations of $e_i$ terms. Next, we define: 
\[v_n:=e_1 + e_2 + \ldots + e_n=[x_1,x_2]+[x_2,x_3]+ \ldots + [x_n,x_1].\]
Note that $v_n\in C_1^\d$ for all $\d \geq 1$. One can further verify that $\p^\d_1(v_n) = 0$, for any $\d \geq 1$. In other words, $v_n$ is a 1-cycle for any $\d \geq 1$. 

\begin{claim}\label{cl:cycle-v} Let $1\leq \d < 2$. Then $v_n$ generates $\ker(\p^\d_1)\subseteq C_1^\d$.
\end{claim}

\begin{subproof} The only 1-simplices in $\sink_\d$ are of the form $e_i$, for $1\leq i \leq n$. So it suffices to show that any linear combination of the $e_i$ terms is a multiple of $v_n$. Let $u = \sum_{i=1}^na_ie_i \in \ker(\p^\d_1)$, for some $a_1,\ldots, a_n \in \mathbb{K}$. Then,
\begin{align*}
0=\p^\d_1(u)=\sum_{i=1}^na_i\p^\d_1(e_i) &= \sum_{i=1}^na_i([x_{i+1}]-[x_i])\\
&=\sum_{i=1}^n(a_{i-1}-a_i)[x_i], &&\text{where $x_0$ is understood to be $x_n$.}
\end{align*}
Since all the $[x_i]$ are linearly independent, it follows that $a_1=a_2=\ldots=a_n$. Thus it follows that $u$ is a constant multiple of $v_n$. This proves the claim. \end{subproof}

By the two preceding claims, it follows that $\set{\la v_n \ra_\d}$ is a basis for $H_1(\sink_\d)$, for $\d \in [1,2)$. More specifically, $\la v_n \ra_\d$ is a cycle that appears at $\d=1$ and does not become a boundary until at least $\d=2$, and any other cycle in $C_1^\d$, for $\d \in [1,2)$, is in the linear span of $v_n$. Next, suppose $\d \geq 2$. Note that this allows the appearance of cycles that are not in the span of $v_n$. In the next claim, we show that upon passing to homology, the equivalence class of any such cycle coincides with that of $v_n$. This will show that there can be at most one nontrivial element in $\dgm_1^{\si}(G_n)$. 

\begin{claim}\label{cl:cycle-decomp} Let $\d\geq 2$, and let $y = \sum_{i=1}^pa_i\s_i \in \ker(\p^\d_1)$ for some $p\in \N$, some $a_1,\ldots,a_p \in \mathbb{K}$, and some $\s_1,\ldots, \s_p\in \sink_\d$. Then there exists a choice of coefficients $(b_i)_{i=1}^n \in \mathbb{K}^n$ such that $z=\sum_{i=1}^nb_ie_i \in \ker(\p^\d_1)$ and $y-z \in \im (\p^\d_2)$. Moreover, we obtain $\la y \ra_\d = \la z \ra_\d = \la v_n \ra_\d$ upon passing to homology.
\end{claim}

\begin{figure}
\includegraphics[scale=0.5]{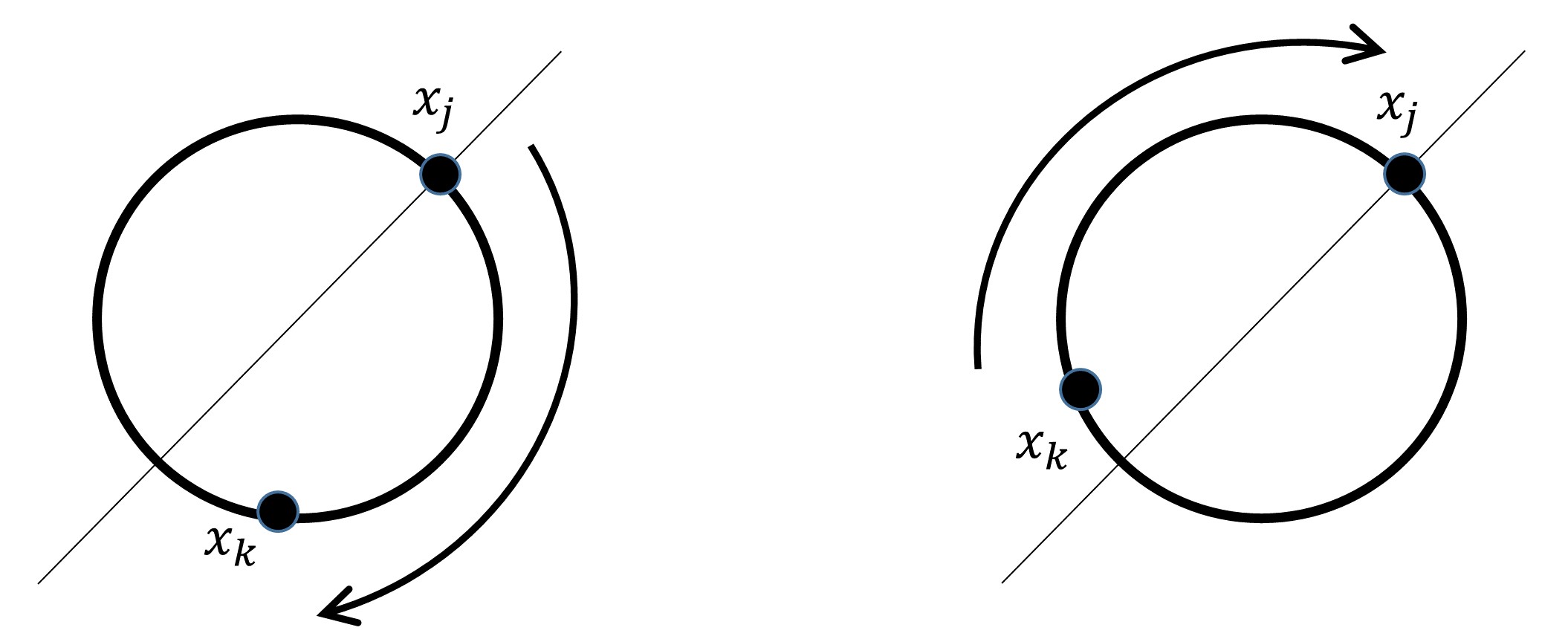}
\caption{Given two points $x_j,x_k \in X_n$, we have either $\w_{G_n}(x_j,x_k)\leq n/2$, or $\w_{G_n}(x_k,x_j)< n/2$. To see this, note that $\w_{G_n}(x,x')+\w_{G_n}(x',x)=n$ for any $x\neq x'\in X_n$.}
\label{fig:dowker-cycle-prop}
\end{figure}

\begin{figure}
\includegraphics[scale=0.5]{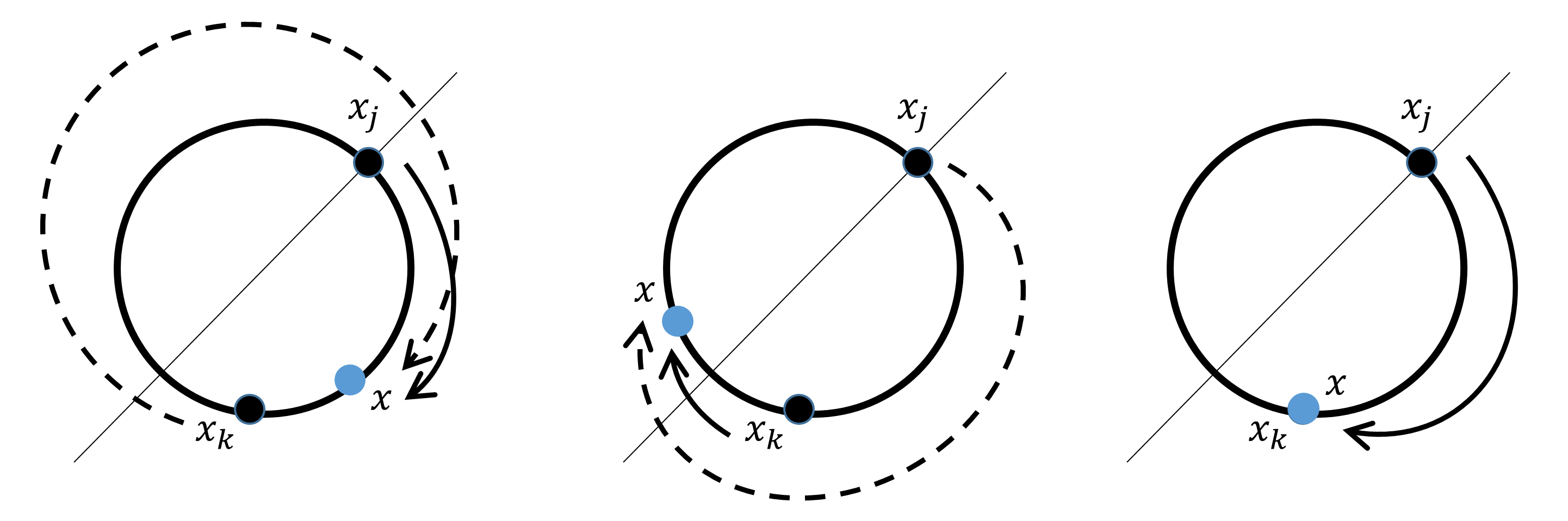}
\caption{Three possible locations for a $\d$-sink $x$ of a simplex $[x_j,x_k]$, assuming that $\w_{G_n}(x_j,x_k)\leq n/2$. For the figure on the left, note that $\w_{G_n}(x_k,x)\geq n/2 \geq \w_{G_n}(x_j,x_k)$. For the figure in the middle, note that $\w_{G_n}(x_j,x)\geq \w_{G_n}(x_j,x_k)$. Finally, for the figure on the right, where $x=x_k$, note that $\w_{G_n}(x_j,x)=\w_{G_n}(x_j,x_k)$ and $\w_{G_n}(x_k,x)=0$.}
\label{fig:dowker-cycle-prop-sink}
\end{figure}

\begin{subproof}
To see this, fix $\s_i \in \sink_\d$, and write $\s_i= [x_j,x_k]$ for some $1\leq j,k \leq n$. If $k=j+1$ (resp. $k=j-1$), then we already have $\s_i=e_j$ (resp. $\s_i=e_k$), so there is nothing more to show. Assume $k\not\in \{j+1,j-1\}$. Since $\w_{G_n}(x_j,x_k) + \w_{G_n}(x_k,x_j) = n$, we have two cases: (1) $\w_{G_n}(x_j,x_k)\leq n/2$, or (2) $\w_{G_n}(x_k,x_j)< n/2$. In the first case, we have $k=j+l $ for some integer $l\in [2,n/2]$ (all numbers are taken modulo $n$). In the second case, $j = k+l$ for some integer $l\in [2,n/2)$ (also modulo $n$). The situation is illustrated in Figure \ref{fig:dowker-cycle-prop}. Both cases are similar, so we only prove the case $\w_{G_n}(x_j,x_k) \leq n/2$. 

Recall that any $\d$-sink $x\in X_n$ for $[x_j, x_k]$ satisfies $\max(\w_{G_n}(x_j,x), \w_{G_n}(x_k,x)) \leq \d$, by the $\d$-sink condition (Equation \ref{eq:d-sink}). Also note that such a $\d$-sink $x$ satisfies 
\[\max(\w_{G_n}(x_j,x),\w_{G_n}(x_k,x)) \geq \w_{G_n}(x_j,x_k),\]
as can be seen from Figure \ref{fig:dowker-cycle-prop-sink}. So whenever some $x\in X_n$ is a $\d$-sink for $[x_j,x_k]$, we have $x_k$ as a valid $\d$-sink for $[x_j,x_k]$. Since $[x_j,x_k] \in \sink_\d$, it must have a $\d$-sink $x\in X_n$. Thus $x_k$ is a valid $\d$-sink for $[x_j,x_k]$. Next let $l\in [2,n/2]$ be an integer such that $k=j+l$ (modulo $n$). Notice that:
\[0=\w_{G_n}(x_k,x_k)=\w_{G_n}(x_{j+l},x_k)< \w_{G_n}(x_{j+l-1},x_k)< \ldots < \w_{G_n}(x_{j+1},x_k)<\w_{G_n}(x_j,x_k) \leq \d.\]
Then observe that: 
\[[x_j,x_{j+1},x_k],[x_{j+1},x_{j+2},x_k],\ldots, [x_{k-2},x_{k-1},x_k] \in \sink_\d,\] 
since $x_k$ is a $\d$-sink for all these 2-simplices. One can then verify the following:
\begin{align*}
&\p^\d_2\left([x_j,x_{j+1},x_k]+[x_{j+1},x_{j+2},x_k]+\ldots+[x_{k-2},x_{k-1},x_k]\right)\\
&=\p_2^\d\left(\sum_{q=0}^{k-j-2}[x_{j+q},x_{j+q+1},x_k]\right)\\
&=\sum_{q=0}^{k-j-2}[x_{j+q+1},x_k]-\sum_{q=0}^{k-j-2}[x_{j+q},x_k]+\sum_{q=0}^{k-j-2}[x_{j+q},x_{j+q+1}]\\
&=\sum_{q=0}^{k-j-2}[x_{j+q+1},x_k] - [x_j,x_k] - \sum_{q=0}^{k-j-3}[x_{j+q+1},x_k] +\sum_{q=0}^{k-j-2}[x_{j+q},x_{j+q+1}]\\
&=[x_j,x_{j+1}]+[x_{j+1},x_{j+2}]+\ldots+[x_{k-1},x_k]-[x_j,x_k]\\
&=e_j+e_{j+1} + \ldots + e_{k-1} -\s_i.
\end{align*}
Thus $a_i(e_j+e_{j+1}+\ldots+e_{k-1})-a_i\s_i \in\im (\p^\d_2)$. Repeating this process for all $\s_i$, $i\in \{1,\ldots,p\}$, we may obtain the coefficients $(b_i)_{i=1}^n$ such that $\sum_{i=1}^pa_i\s_i - \sum_{i=1}^nb_ie_i \in \im(\p^\d_2).$ Let $z=\sum_{i=1}^nb_ie_i$. Then $y-z \in \im(\p^\d_2)$. Moreover, since $\p^\d_1\circ\p^\d_2 = 0$, it follows that $\p^\d_1(y)-\p^\d_1(z)=0$, so $z\in \ker(\p^\d_1)$. 

Finally, note that an argument analogous to that of Claim \ref{cl:cycle-v} shows that $b_1=b_2=\ldots=b_n$. Hence it follows that $z$ is a multiple of $v_n$. Thus $\la z \ra_\d = \la v_n \ra_\d$. This proves the claim.
\end{subproof}

By Claims \ref{cl:cycle-v} and \ref{cl:cycle-decomp}, it follows that $H_1(\sink_\d)$ is generated by $\la v_n \ra_\d$ for all $\d \geq 1$, so $\dim(H_1(\sink_\d))\leq 1$ for all $\d \geq 1$. It remains to show that $\la v_n \ra_\d$ does not become trivial until $\d=\ceil{n/2}$. 

The cases $n=3,4$ can now be completed quickly, so we focus on these simpler situations first. For either of $n=3,4$, we have $\ceil{n/2}=2.$ Suppose $\d=2$ and $n=3$. Then we have $[x_1,x_2,x_3]\in \sink_\d$ because $\diam(G_n)=2$ and any of $x_1,x_2,x_3$ can be a $2$-sink for $[x_1,x_2,x_3]$. Then,
\[\p_2^\d([x_1,x_2,x_3])=[x_2,x_3]-[x_1,x_3]+[x_1,x_2]=e_1+e_2+e_3 = v_3.\]
Recall that by Claim \ref{cl:cycle-2simp}, $v_3\not\in \im(\p_2^\d)$ for any $\d<2$. Thus by Claim \ref{cl:cycle-v} and the preceding equation, $v_3$ generates $\ker(\p_1^\d)$ for $1\leq \d <2$, and becomes a boundary for precisely $\d\geq 2$. Thus $\dgm_1^{\si}(G_3)=\set{(1,2)}.$ Next, suppose $\d=2$ and $n=4$. Then we have $[x_1,x_2,x_3],[x_1,x_3,x_4]\in \sink_\d$ with $x_3, x_1$ as $2$-sinks, respectively. By a direct computation, we then have:
\[\p_2^\d([x_1,x_2,x_3]+[x_1,x_3,x_4])=e_1+e_2+e_3+e_4 = v_4.\]
By following the same argument as for the case $n=3$, we see that $\dgm_1^{\si}(G_4)=\set{(1,2)}.$

In the sequel, we assume that $n > 4$. Recall that it remains to show that $\la v_n \ra_\d$ does not become trivial until $\d=\ceil{n/2}$, and that $\la v_n \ra_\d = 0$ for all $\d\geq \ceil{n/2}$. We have already shown that $\la v_n \ra_\d$ is not trivial for $\d \in [1,2)$. We proceed by defining the following: 
\[\g_n:=[x_1,x_2,x_3] + [x_1,x_3,x_4] + \ldots + [x_1,x_{n-1},x_n] = \sum_{i=1}^{n-2}[x_1,x_{i+1},x_{i+2}].\]

\begin{claim} For each $\d \geq \ceil{n/2}$, we have $\g_n \in C_2^\d$ and $\p_2^\d(\g_n)=v_n$. In particular, $\la v_n \ra_\d = 0$ for all such $\d$. 
\end{claim}

\begin{subproof} Let $\d \geq \ceil{n/2}$. Notice that 
\[\w_{G_n}(x_{\ceil{n/2}+1},x_1) = n - \w_{G_n}(x_1,x_{\ceil{n/2}+1}) = n- \ceil{n/2} \leq n/2 \leq \ceil{n/2} \leq \d,\]
so $\w_{G_n}(x_i,x_1) \leq \d$ for each $i \in \{\ceil{n/2}+1,\ceil{n/2}+2,\ldots, n\}$. Then for each $i\in \{\ceil{n/2}+1,\ceil{n/2}+2,\ldots, n-1\}$, we have $[x_i,x_{i+1},x_1] \in \sink_\d$, with $x_1$ as a $\d$-sink.

Also notice that for each $i\in \{1,\ldots,\ceil{n/2}\}$,
\[\w_{G_n}(x_i,x_{\ceil{n/2}+1})\leq\w_{G_n}(x_1,x_{\ceil{n/2}+1})=\ceil{n/2} \leq \d,\]
so $\w_{G_n}(x_i,x_{\ceil{n/2}+1}) \leq \d$. Thus
for any $i\in \{2,\ldots,\ceil{n/2}\},$ we have $[x_1,x_i,x_{i+1}] \in \sink_\d$, with $x_{\ceil{n/2}+1}$ as a $\d$-sink. 

Combining the two preceding observations, we see that for any $i\in \{2,\ldots, n-2\}$, we have $[x_1,x_{i+1},x_{i+2}]\in \sink_\d$. It follows that $\g_n \in C_2^\d$.

Next we observe the following:
\begin{align*}
\p_2^\d(\g_n)&=\p_2^\d\left(\sum_{i=1}^{n-2}[x_1,x_{i+1},x_{i+2}]\right)\\
&=\sum_{i=1}^{n-2}[x_{i+1},x_{i+2}] - \sum_{i=1}^{n-2}[x_1,x_{i+2}]+
\sum_{i=1}^{n-2}[x_1,x_{i+1}]\\
&=\sum_{i=1}^{n-2}[x_{i+1},x_{i+2}] - 
\sum_{i=1}^{n-2}[x_1,x_{i+2}]+[x_1,x_2]+
\sum_{i=2}^{n-2}[x_1,x_{i+1}]\\
&=\sum_{i=1}^{n-2}[x_{i+1},x_{i+2}] + [x_1,x_2] - [x_1,x_n] = v_n.
\end{align*}
It follows that for any $\d \geq \ceil{n/2}$, we have $v_n \in \im(\p_2^\d)$, and so $\la v_n \ra_\d =0$ for each such $\d$. \end{subproof}

\begin{figure}
\includegraphics[scale=0.5]{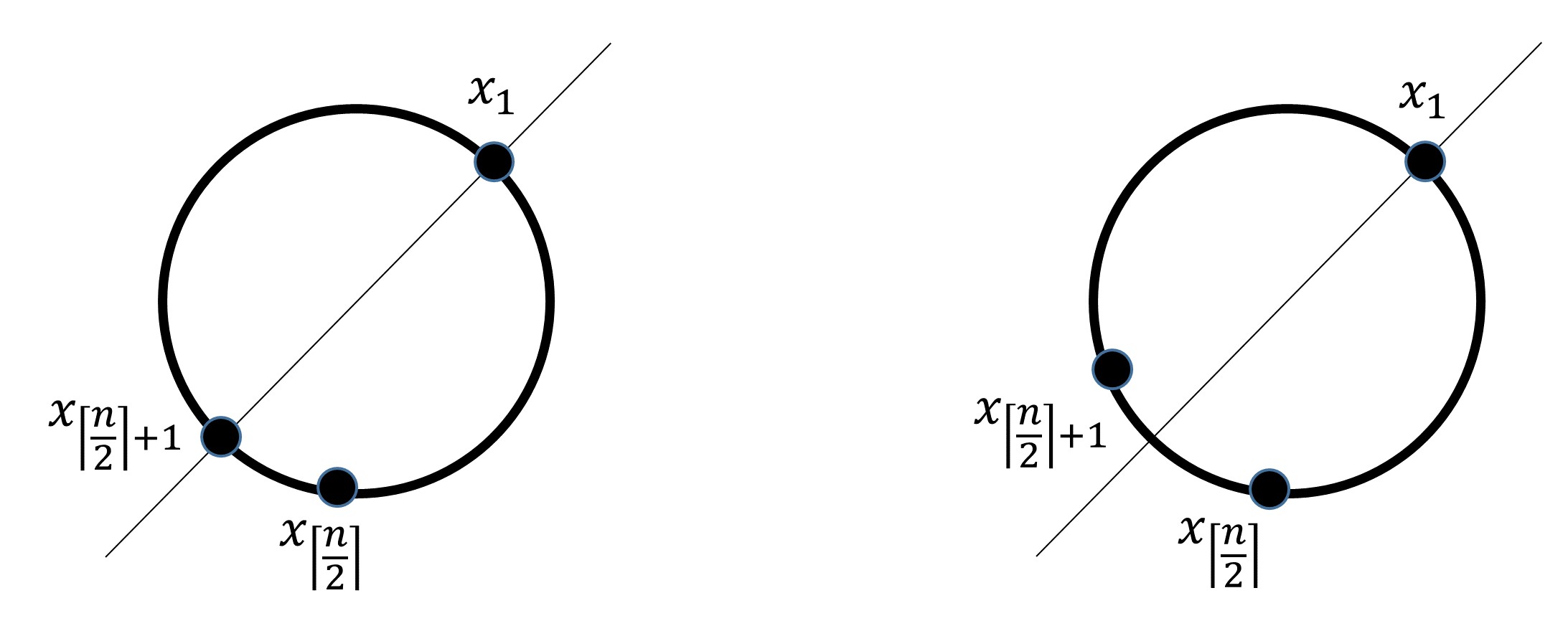}
\caption{Placement of $x_{\ceil{n/2}}$ and $x_{\ceil{n/2}-1}$, depending on whether $n$ is even or not.}
\label{fig:dowker-cycle-prop2}
\end{figure}

\begin{claim} There does not exist $\d \in [2,\ceil{n/2})$ such that $\la v_n \ra_\d$ is trivial. 
\end{claim} 
\begin{subproof} Let $2\leq \d< \ceil{n/2}$. As a first step, we wish to show that $\g_n \not\in C_2^\d$. For this step, it suffices to show that the 2-simplex $\s:= [x_1,x_{\ceil{n/2}},x_{\ceil{n/2}+1}]$ does not belong to $\sink_\d$. The placement of $x_{\ceil{n/2}}$ and $x_{\ceil{n/2}+1}$ is illustrated in Figure \ref{fig:dowker-cycle-prop2}. 

By an argument similar to that used in Figure \ref{fig:dowker-cycle-prop-sink}, one can verify that there exists a $\d$-sink for $\s$ if and only if at least one of $x_1,x_{\ceil{n/2}}, x_{\ceil{n/2}+1}$ is a $\d$-sink for $\s$. But note the following:
\begin{align*}
\w_{G_n}(x_{\ceil{n/2}},x_1)= n-(\ceil{n/2}-1) &= \begin{cases}
n/2 + 1 &: n \text{ even}\\
\ceil{n/2} &: n\text{ odd}
\end{cases}\\
&\geq \ceil{n/2} > \d,
\end{align*}
so $x_1$ cannot be a $\d$-sink for $\s$. Similarly we note that $\w_{G_n}(x_{\ceil{n/2}+1},x_{\ceil{n/2}}) = n > \d$ and $\w_{G_n}(x_1,x_{\ceil{n/2}+1})=\ceil{n/2} > \d,$ so neither $x_{\ceil{n/2}}$ nor $x_{\ceil{n/2}+1}$ can be $\d$-sinks for $\s$. Thus $\s \not \in \sink_\d$, and so $\g_n \not\in C_2^\d$.

Suppose there exists $\g' \in C_2^\d$ such that $\p_2^\d(\g') = v_n$. Since $[x_1,x_2]$ is a summand of $v_n$, we must have $a_j[x_1,x_2,x_j]$ as a summand of $\g'$, for some coefficient $a_j$ and some $3\leq j \leq n$. First suppose that $x_j$ is a sink for $[x_1,x_2,x_j]$. We claim that $\g'$ is homologous to a chain containing $[x_1,x_2,x_3]$ as a summand. If $j=3$, then we are done, so suppose $j > 3$. Then we also know that $[x_1,x_2,x_3,x_j]$ is a 3-simplex in $\sink_{\d}$. Let $\g''$ be the chain obtained from $\g'$ by replacing $[x_1,x_2,x_j]$ with $[x_1,x_2,x_3] - [x_2,x_3,x_j] + [x_1,x_3,x_j]$. Since $\p_3^\d([x_1,x_2,x_3,x_j]) = [x_2,x_3,x_j] - [x_1,x_3,x_j] + [x_1,x_2,x_j] -[x_1,x_2,x_3]$ and $\p_2^\d\circ \p_3^\d = 0$, we know that $\p_2^\d(\g'') = \p_2^\d(\g') = v_n$. 

Now $\p_2^\d([x_1,x_2,x_3])$ contributes an $[x_1,x_3]$ summand which does not appear in $v_n$, so it must be cancelled by some other terms in $\g'$ (resp. $\g''$). Thus there must exist another 2-simplex $[x_1,x_3,x_k]$ in $\sink_\d$, where $k\neq 2$. But we can repeat the preceding argument to obtain a chain homologous to $\g'$ containing both $[x_1,x_2,x_3]$ and $[x_1,x_3,x_4]$ as summands. Proceeding in this way, we obtain a chain homologous to $\g'$ that contains $[x_1,x_{\ceil{n/2}},x_{\ceil{n/2}+1}]$ as a summand. But this is a contradiction to what we have shown previously, i.e. that $[x_1,x_{\ceil{n/2}},x_{\ceil{n/2}+1}]$ is not a simplex in $\sink_\d$. 

In the case where $x_j$ is not a sink for $[x_1,x_2,x_j]$, we must have $x_2$ as a sink instead. Using similar reasoning as above, we can replace $\g'$ in this instance by a homologous chain containing $[x_n,x_1,x_2]$ as a summand. Since $[x_n,x_2]$ is not a summand of $v_n$, we can obtain another homologous chain containing $[x_{n-1},x_n,x_1]$ as a summand, then a homologous chain containing $[x_{n-1},x_n,x_1],[x_{n-2},x_{n-1},x_1]$ as summands, and so on until we again obtain a homologous chain containing $[x_1,x_{\ceil{n/2}},x_{\ceil{n/2}+1}]$ as a summand. Once again, this is a contradiction. This proves the claim. \end{subproof}

Thus we have shown that $v_n$ is a nontrivial cycle that appears at $\d=1$, and becomes a boundary at exactly $\d=\ceil{n/2}$. Furthermore, we have shown that upon passing to homology, the equivalence classes of all cycles coincide with that of $v_n$. Thus there is only one off-diagonal point $(1,\ceil{n/2})$ on the 1-dimensional persistence diagram, which appears with multiplicity one. This concludes the proof.\end{proof}

\begin{remark} From our experimental results (see Figure \ref{fig:cycle}), it appears that the 1-dimensional Rips persistence diagram of a cycle network does not admit a characterization as simple as that given by Theorem \ref{thm:cycleH1} for the 1-dimensional Dowker persistence diagram. Moreover, the Rips complexes $\mf{R}^\d_{G_n}, \d \in \R, n\in \N$ correspond to certain types of \emph{independence complexes} that appear independently in the literature, and whose homotopy types remain open \cite[Question 5.3]{engstrom2009complexes}. On a related note, we point the reader to \cite{adamaszek2015vietoris} for a complete characterization of the homotopy types of Rips complexes of points on the circle (equipped with the restriction of the arc length metric). 

To elaborate on the connection to \cite{engstrom2009complexes}, we write $H^k_n$ to denote the undirected graph with vertex set $\set{1,\ldots, n}$, and edges given by pairs $(i,j)$ where $1\leq i < j \leq n$ and either $j-i < k$ or $(n+i) - j < k$. Next we write $\ind(H^k_n)$ to denote the \emph{independence complex} of $H^k_n$, which is the simplicial complex consisting of subsets $\s \subseteq \set{1,2,\ldots, n}$ such that no two elements of $\s$ are connected by an edge in $H^k_n$. Then we have $\ind(H^k_n) = \mf{R}^{n-k}_{G_n}$ for each $k, n\in \N$ such that $k < n$. To gain intuition for this equality, fix a basepoint $1$, and consider the values of $j\in \N$ for which the simplex $[1,j]$ belongs to $\ind(H^k_n)$ and to $\mf{R}^{n-k}_{G_n}$, respectively. In either case, we have $k+1 \leq j \leq n-k+1$. Using the rotational symmetry of the points, one can then obtain the remaining 1-simplices. Rips complexes are determined by their 1-skeleton, so this suffices to construct $\mf{R}^{n-k}_{G_n}$. Analogously, $\ind(H^k_n)$ is determined by the edges in $H^k_n$, and hence also by its 1-skeleton. In \cite[Question 5.3]{engstrom2009complexes}, the author writes that the homotopy type of $\ind(H^k_n)$ is still unsolved. Characterizing the persistence diagrams $\dgm_k^{\mf{R}}(G_n)$ thus seems to be a useful future step, both in providing a computational suggestion for the homotopy type of $\ind(H^k_n)$, and also in providing a valuable example in the study of persistence of directed networks.

  \end{remark}

\begin{remark}
Theorem \ref{thm:cycleH1} has the following implication for data analysis: nontrivial 1-dimensional homology in the Dowker persistence diagram of an asymmetric network suggests the presence of directed cycles in the underlying data. Of course, it is not necessarily true that nontrivial 1-dimensional persistence can occur \emph{only} in the presence of a directed circle. 

\end{remark}

\begin{remark} Our motivation for studying cycle networks is that they constitute directed analogues of circles, and we were interested in seeing if the 1-dimensional Dowker persistence diagram would be able to capture this analogy. Theorem \ref{thm:cycleH1} shows that this is indeed the case: we get a single nontrivial 1-dimensional persistence interval, which is what we would expect when computing the persistent homology of a circle in the metric space setting. We further studied the 2-dimensional Dowker persistence diagrams of cycle networks. Our computational examples, some of which are illustrated in Figure \ref{fig:cycle-2dim}, enabled us to conjecture:

\begin{conjecture}\label{conj:dowker-dgm-2} Let $n\in \N$, $n \geq 3$, and let $G_n$ be a cycle network. If $n$ is odd, then $\dgm_2^{\mf{D}}(G_n)$ is trivial. If $n$ is even, then $\dgm_2^{\mf{D}}(G_n) = [(\frac{n}{2},\frac{n}{2}+1)\in \R^2],$ and the multiplicity of this point is $\frac{n}{2}-1$. 
\end{conjecture}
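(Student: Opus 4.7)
The plan is to reinterpret $\sink_{\d, G_n}$ as a nerve complex and invoke the classification of nerves of cyclic arc covers due to Adamaszek-Adams-Feshbach \cite{adamaszek2016nerve}. A simplex $\s = [x_{i_0}, \ldots, x_{i_k}]$ lies in $\sink_{\d, G_n}$ precisely when $\bigcap_j B^+_\d(x_{i_j}) \neq \emptyset$, where $B^+_\d(x_i) := \{x_i, x_{i+1}, \ldots, x_{i+\d}\}$ is the out-ball of radius $\d$; equivalently, the indices $\{i_0, \ldots, i_k\} \subseteq \Z/n$ must fit within some arc of $\d + 1$ consecutive integers. Thus $\sink_{\d, G_n}$ is isomorphic to the nerve of $n$ equally spaced arcs of length $\d + 1$ on a cycle of $n$ points, placing the problem squarely in the AAF framework.

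Next I would invoke the AAF classification, which asserts that the nerve of any collection of circular arcs is either contractible, homotopy equivalent to an odd-dimensional sphere $S^{2k+1}$, or homotopy equivalent to a wedge $\bigvee_j S^{2k}$ of equidimensional even spheres, with the combinatorial parameters $n$ and $\d$ determining the outcome. The core claim to establish is: for odd $n$, every $\sink_{\d, G_n}$ falls into the contractible-or-odd-sphere case, so $H_2 = 0$ throughout the filtration; for even $n$, the unique value at which the wedge is a bouquet of $2$-spheres is $\d = n/2$, with exactly $n/2 - 1$ summands, and $H_2 = 0$ for all other values of $\d$.

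Granting these homotopy-type identifications, the persistence diagram in dimension $2$ follows immediately. For odd $n$, an application of the Persistence Equivalence Theorem (Theorem \ref{thm:pet}) yields the trivial diagram. For even $n$, all $n/2 - 1$ classes of $H_2(\sink_{n/2, G_n})$ are born simultaneously at $\d = n/2$; since $H_2(\sink_{n/2+1, G_n}) = 0$, the map on $H_2$ induced by the inclusion $\sink_{n/2, G_n} \hr \sink_{n/2+1, G_n}$ vanishes, so all classes die simultaneously at $\d = n/2 + 1$, producing exactly $n/2 - 1$ copies of $(n/2, n/2+1)$ in $\dgm_2^{\mf{D}}(G_n)$.

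The principal obstacle will be extracting the exact multiplicity $n/2 - 1$ from the AAF classification, which requires a careful combinatorial analysis of the minimum covering structures by arcs of length $\d + 1$ on cycles of length $n$, including boundary cases such as $n = 4$ where $\sink_{n/2, G_n} \simeq S^2$ rather than a genuine wedge of spheres. A self-contained alternative, mirroring the explicit combinatorial strategy of Theorem \ref{thm:cycleH1}, would be to directly construct $n/2 - 1$ linearly independent 2-cycles in $C_2(\sink_{n/2, G_n})$, guided by the observation that for even $n$ the non-simplex triples of $\sink_{n/2, G_n}$ are exactly those whose indices alternate in parity (corresponding to the two minimal out-ball complement covers given by the even and odd vertices $\{x_0, x_2, \ldots, x_{n-2}\}$ and $\{x_1, x_3, \ldots, x_{n-1}\}$), and then exhibit explicit $3$-chains in $\sink_{n/2+1, G_n}$ whose boundaries witness the deaths of these generators; in this route, the hardest step is proving homological independence of the $n/2 - 1$ constructed generators.
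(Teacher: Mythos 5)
Your proposal is correct and follows essentially the same route as the paper: the paper proves the conjecture (as the $l=1$ case of Theorem \ref{thm:dowker-cyc-even}) by identifying $\sink_{\d,G_n}$ with the \v{C}ech/nerve complex of $n$ evenly spaced arcs on the circle (Proposition \ref{prop:dowker-cech-cplx}) and then invoking the Adamaszek--Adams et al.\ classification (Theorem \ref{thm:cech-S1}), using exactly your observation that the neighboring resolutions are odd spheres so the inclusions induce zero maps on $H_2$. The one point you flag as the ``principal obstacle''---extracting the multiplicity $n/2-1$---is not actually an obstacle: the evenly spaced version of the classification already states that at $k/n = l/(l+1)$ the nerve is a wedge of exactly $n-k-1$ copies of $S^{2l}$, which for $l=1$, $k=n/2$ gives the count $n/2-1$ directly, so neither the extra combinatorial analysis nor the explicit chain-level construction is needed.
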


This computationally motivated conjecture is in fact true; moreover, we have a full characterization of the persistence diagram of a cycle network across all dimensions $k \in \Z_+$. This characterization relies on results in \cite{adamaszek2015vietoris} and \cite{adamaszek2016nerve}, and is stated for even and odd dimensions below:

\begin{restatable}[Even dimension]{theorem}{thmdowkercyceven}
\label{thm:dowker-cyc-even} Fix $n\in \N$, $n \geq 3$. If $l\in \N$ is such that $n$ is divisible by $(l+1)$, and $k:=\tfrac{nl}{l+1}$ is such that $0\leq k \leq n-2$, then $\dgm^{\mf{D}}_{2l}(G_n)$ consists of precisely the point $(\tfrac{nl}{l+1},\tfrac{nl}{l+1} + 1)$ with multiplicity $\tfrac{n}{l+1} -1$. If $l$ or $k$ do not satisfy the conditions above, then $\dgm^{\mf{D}}_{2l}(G_n)$ is trivial. 
\end{restatable}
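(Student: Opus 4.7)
The plan is to reduce the computation to known results on the homotopy types of clique complexes of cyclic graphs. For the cycle network $G_n$, the relation $R_{\d,G_n}$ from Eq.~\ref{eq:relation} takes the explicit form $(x_i,x_j) \in R_{\d,G_n} \iff (j-i) \bmod n \leq \lfloor \d \rfloor$. Thus the ``forward $\d$-ball'' $B^+_\d(x_i) := \{x_j : \w_{G_n}(x_i,x_j) \leq \d\}$ is the arc of $\lfloor\d\rfloor+1$ consecutive vertices going forward from $x_i$. By the very definition of the Dowker sink complex (Eq.~\ref{eq:d-sink}), $\sink_{\d,G_n}$ is nothing but the nerve of the cover $\{B^+_\d(x_i)\}_{i=1}^n$, i.e., the simplicial complex on $X_n$ whose simplices are the subsets of $X_n$ that fit inside some cyclic arc of length $\lfloor\d\rfloor$. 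This identification places $\sink_{\d,G_n}$ squarely within the framework of cyclic Rips-style complexes studied by Adamaszek and collaborators.

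With this identification in hand, I would invoke the Adamaszek--Adams classification of homotopy types of Vietoris--Rips complexes of the circle \cite{adamaszek2015vietoris}, transported through the cyclic nerve theorem of \cite{adamaszek2016nerve}, to conclude that $|\sink_{\d,G_n}|$ is homotopy equivalent to an odd sphere $S^{2l+1}$ throughout each ``plateau'' interval of $\d$, and to a wedge of $S^{2l}$'s precisely at the critical integer scale $\d = \tfrac{nl}{l+1}$ (when this ratio is an integer in the admissible range). A direct count using the combinatorics of the cyclic cover then shows that at $\d = \tfrac{nl}{l+1}$ the wedge has exactly $\tfrac{n}{l+1} - 1$ summands. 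To translate this into the persistence diagram, one must check that $H_{2l}(\sink_{\d,G_n}) = 0$ for $\d < \tfrac{nl}{l+1}$ and for $\d \geq \tfrac{nl}{l+1}+1$, and that each of the $\tfrac{n}{l+1}-1$ generators contributes exactly the persistence interval $[\tfrac{nl}{l+1},\,\tfrac{nl}{l+1}+1)$. The deaths would be established by exhibiting explicit $(2l+1)$-simplices that enter the filtration at $\d = \tfrac{nl}{l+1}+1$ and whose boundaries span the generating $2l$-cycles, in direct analogy with the chain $\g_n$ used in the proof of Theorem~\ref{thm:cycleH1}. When either the divisibility condition $(l+1)\mid n$ or the range condition $0 \leq k \leq n-2$ fails, the classification shows that $|\sink_{\d,G_n}|$ is either contractible or an odd-dimensional sphere for \emph{every} $\d$, so $\dgm^{\mf{D}}_{2l}(G_n)$ must be trivial.

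The main obstacle is functoriality: the results of \cite{adamaszek2015vietoris} furnish only pointwise homotopy equivalences, whereas the Persistence Equivalence Theorem (Theorem~\ref{thm:pet}) demands that these equivalences commute, up to homotopy, with the filtration inclusions $\sink_{\d,G_n} \hookrightarrow \sink_{\d',G_n}$. To circumvent this, I would work directly at the chain level, tracking explicit bases for $H_{2l}$ through the birth and death transitions rather than invoking abstract homotopy equivalences at each scale. This combinatorial bookkeeping parallels and generalizes the three-part strategy used in the proof of Theorem~\ref{thm:cycleH1}---constructing a distinguished cycle, verifying its nontriviality in the relevant range, and producing an explicit filling at the death scale---and it is precisely this extended argument that would be deferred to Appendix~\ref{sec:cycle-addendum}.
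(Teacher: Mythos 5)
Your reduction is the same as the paper's: identify $\sink_{\d,G_n}$ with the \v{C}ech complex of $n$ evenly spaced points on the circle (this is exactly Proposition \ref{prop:dowker-cech-cplx}) and then read off the homotopy types from the Adamaszek et al.\ classification (Theorem \ref{thm:cech-S1}), including the count of $\tfrac{n}{l+1}-1$ wedge summands, which comes for free from the $\bigvee^{n-k-1}S^{2l}$ statement rather than needing a separate combinatorial count. Where you diverge is in how you close the argument. You correctly flag that pointwise homotopy equivalences do not by themselves determine the persistence module, but you then propose to resolve this by chain-level bookkeeping: explicit generators of $H_{2l}$ at the critical scale and explicit $(2l+1)$-dimensional fillings entering at scale $k+1$, in analogy with the chain $\g_n$ from Theorem \ref{thm:cycleH1}. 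This is unnecessary, and it is substantially harder than you suggest (writing down simplicial generators of $H_{2l}$ of a wedge of $2l$-spheres realized inside this complex, and matching fillings, is a real undertaking). The paper's observation is that in the even-dimensional case the functoriality problem is vacuous: the filtration is constant on integer plateaus, and at scales $k-\e$ and $k+\e$ the complex is homotopy equivalent to an odd sphere (or a wedge of even spheres of a \emph{different} even dimension, or a point), so $H_{2l}$ vanishes on both sides of the critical scale $k=\tfrac{nl}{l+1}$. The inclusion-induced maps into and out of $H_{2l}(\sink_{k,G_n})$ are therefore forced to be zero, whatever they are, and the dimension-$2l$ persistence module decomposes as $\tfrac{n}{l+1}-1$ copies of the interval $[k,k+1)$ with no further work. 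The genuine functoriality difficulty you anticipate does arise, but only in the odd-dimensional case (Theorem \ref{thm:dowker-cyc-odd}), where the homology persists across a range of scales; there the paper resolves it not by chain-level computation but by the winding-fraction/cyclic-map machinery of \cite{adamaszek2015vietoris} (Theorem \ref{thm:cyc-cech-main}), which upgrades the pointwise equivalences to equivalences induced by the inclusions.
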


As a special case, Theorem \ref{thm:dowker-cyc-even} proves Conjecture \ref{conj:dowker-dgm-2} by setting $l=1$. If $n$ is odd, then it is not divisible by $(l+1) = 2$, and so $\dgm^{\mf{D}}_2(G_n)$ is trivial. If $n$ is even, then it is divisible by $(l+1)=2$, and $\tfrac{nl}{l+1} = \tfrac{n}{2} \leq n-2$ because $n$ is at least 4. Thus $\dgm^{\mf{D}}_2(G_n)$ consists of the point $(\tfrac{n}{2},\tfrac{n}{2} + 1)$ with multiplicity $\tfrac{n}{2} - 1$.

\begin{restatable}[Odd dimension]{theorem}{thmdowkercycodd}\label{thm:dowker-cyc-odd} Fix $n\in \N$, $n\geq 3$. Then for $l\in \N$, define $M_l:=\set{m \in \N : \tfrac{nl}{l+1} < m < \tfrac{n(l+1)}{l+2}}$. If $M_l$ is empty, then $\dgm^{\mf{D}}_{2l+1}(G_n)$ is trivial. Otherwise, we have: 
\[\dgm^{\mf{D}}_{2l+1}(G_n) = \set{\left(a_l,\ceil{\tfrac{n(l+1)}{l+2}}\right)},\]
where $a_l:=\min\set{m \in M_l}.$
We use set notation (instead of multisets) to mean that the multiplicity is 1.
\end{restatable}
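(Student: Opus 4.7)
The plan is to reduce the computation to the classification of homotopy types of cyclic arc complexes established in \cite{adamaszek2015vietoris,adamaszek2016nerve}. First, I give an explicit combinatorial description of the Dowker sink complex. For the cycle network $G_n$ the weight is $\w_{G_n}(x_i,x_j)=(j-i)\bmod n$, so a subset $\s=\{x_{i_0},\ldots,x_{i_k}\}$ belongs to $\sink_{\d,G_n}$ iff there is an index $j$ with $(j-i_s)\bmod n\leq \d$ for every $s$; equivalently, all the $i_s$ lie in the cyclic arc of $\d+1$ consecutive vertices ending at $x_j$. Hence $\sink_{\d,G_n}$ is precisely the cyclic arc complex $\mc{A}^{\d+1}_n$ whose maximal simplices are the arcs of $\d+1$ consecutive vertices of $X_n$, and by the cyclic symmetry of $G_n$ (together with Corollary \ref{cor:dowker-dual}) it is enough to work with $\sink$.

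The second step is to invoke the Adamaszek--Adams classification of the homotopy types of these cyclic arc complexes. For $\d$ in the range $\tfrac{nl}{l+1}<\d<\tfrac{n(l+1)}{l+2}$ one has $|\sink_{\d,G_n}|\simeq \us^{2l+1}$, while at the rational boundary values $\d=\tfrac{nl}{l+1}$ (which occur only when $(l+1)\mid n$) the complex passes through the even-dimensional regime treated by Theorem \ref{thm:dowker-cyc-even}. Because $\d$ only needs to be tracked at integer values at which the simplicial complex changes, the persistence vector space $\H_{2l+1}(\sink_{G_n})$ is isomorphic to $\mathbb{K}$ at precisely those integers lying in $M_l$ and zero elsewhere. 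If $M_l=\varnothing$ this yields a trivial diagram; otherwise $H_{2l+1}$ is born at $\d=a_l=\min M_l$ and dies at $\d=\ceil{n(l+1)/(l+2)}$, the first integer past $M_l$, producing a single barcode interval $[a_l,\ceil{n(l+1)/(l+2)})$ and hence the asserted off-diagonal point of multiplicity one.

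To promote the pointwise homotopy equivalences to a statement about the persistence barcode, I combine the Functorial Dowker Theorem (Theorem \ref{thm:dowker-functorial}) with the Persistence Equivalence Theorem (Theorem \ref{thm:pet}), which together ensure that the connecting maps between the persistence vector spaces are isomorphisms on any subinterval of constant homotopy type. The principal obstacle is the second step: the Adamaszek--Adams results are typically stated pointwise, so one must extract a functorial version---either directly, or by building homotopy equivalences that commute up to homotopy with the canonical inclusions $\sink_{\d,G_n}\hookrightarrow \sink_{\d',G_n}$---and then verify that the generating $(2l+1)$-cycle first appears at $\d=a_l$ and is genuinely killed (rather than merely deformed into a distinct odd-dimensional class) at $\d=\ceil{n(l+1)/(l+2)}$. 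The dichotomy $M_l=\varnothing$ versus $M_l\ne\varnothing$ then matches exactly the arithmetic condition under which the transition between consecutive homotopy types passes through, respectively misses, an integer resolution.
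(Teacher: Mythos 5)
Your reduction of $\sink_{\d,G_n}$ to a cyclic arc complex (equivalently, to $\cech(\mb{X}_n,\tfrac{\d}{2n})$, which is how the paper phrases it in Proposition \ref{prop:dowker-cech-cplx}) and your appeal to the Adamaszek--Adams classification for the pointwise homotopy types are both correct, and you have correctly located the crux of the matter. But the crux is left unresolved, and the tools you propose for resolving it do not work. Knowing that $|\sink_{m,G_n}|\simeq \us^{2l+1}$ for every integer $m\in M_l$ does not tell you that the inclusions $\sink_{m,G_n}\hr\sink_{m+1,G_n}$ induce isomorphisms on $H_{2l+1}$: an inclusion between two complexes of the same homotopy type can perfectly well induce the zero map, in which case the barcode would consist of several unit-length intervals rather than one interval of multiplicity one. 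The Persistence Equivalence Theorem \emph{assumes} the connecting maps are isomorphisms; it cannot be used to prove they are. And the Functorial Dowker Theorem relates the sink and source complexes of a fixed relation across a nested pair of relations --- it says nothing about whether the inclusion $\sink_{\d,G_n}\hr\sink_{\d',G_n}$ is a homotopy equivalence. So the sentence claiming that these two theorems ``together ensure that the connecting maps \ldots are isomorphisms on any subinterval of constant homotopy type'' is simply false, and with it the multiplicity-one conclusion is unsupported.

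The paper closes exactly this gap with Theorem \ref{thm:cyc-cech-main}: if $\cech(X,r)$ and $\cech(X,r')$ are homotopy equivalent, then the \emph{inclusion} between them is a homotopy equivalence. This is not formal; its proof goes through the winding-fraction machinery of \cite{adamaszek2015vietoris} --- the homotopy equivalences $\rips(T_r(X),\tfrac{2r}{1+2r})\xr{\simeq}\cech(X,r)$ of Theorem \ref{thm:vr-cech-cd} commute with a cyclic map $\eta$ between the corresponding Vietoris--Rips graphs, and a cyclic map between graphs whose winding fractions lie in the same open interval $\bigl(\tfrac{l}{2l+1},\tfrac{l+1}{2l+3}\bigr)$ induces a homotopy equivalence of Rips complexes (Theorem \ref{thm:vrcirc-main}). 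That is the ``functorial version'' you say ``one must extract''; extracting it is the substantive content of the proof, and your proposal does not supply it.
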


In particular, for $l=0$, we have $\tfrac{nl}{l+1} = 0$ and $\tfrac{n(l+1)}{l+2} = \tfrac{n}{2} \geq 3/2$, so $1 \in M_l$. Thus we have $\dgm^{\mf{D}}_1(G_n) = \set{\left(1,\ceil{\tfrac{n}{2}}\right)},$ and so Theorem \ref{thm:dowker-cyc-odd} recovers Theorem \ref{thm:cycleH1} as a special case. However, whereas the proof of Theorem \ref{thm:cycleH1} is elementary and pedagogical (it relies on intuitive observations about the structure of a cycle network), the proofs of Theorems \ref{thm:dowker-cyc-even} and \ref{thm:dowker-cyc-odd} use sophisticated machinery developed across \cite{adamaszek2015vietoris} and \cite{adamaszek2016nerve}. We provide details for Theorem \ref{thm:cycleH1} in the body of the paper, and relegate full details of Theorems \ref{thm:dowker-cyc-even} and \ref{thm:dowker-cyc-odd} to Appendix \ref{sec:cycle-addendum}.

\begin{figure}
\begin{subfigure}{0.48\linewidth}
\includegraphics[width = \textwidth]{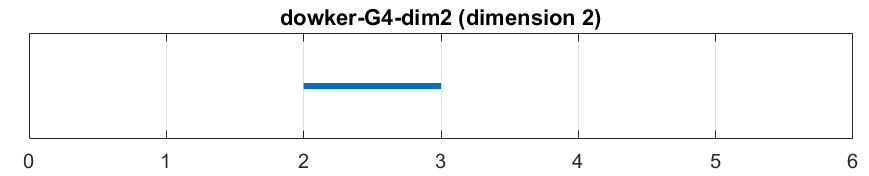}
\end{subfigure}
\begin{subfigure}{0.48\linewidth}
\includegraphics[width = \textwidth]{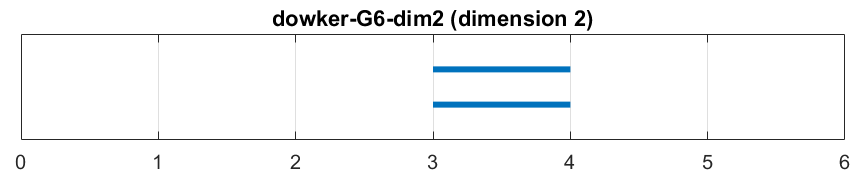}
\end{subfigure}
\begin{subfigure}{0.48\linewidth}
\includegraphics[width = \textwidth]{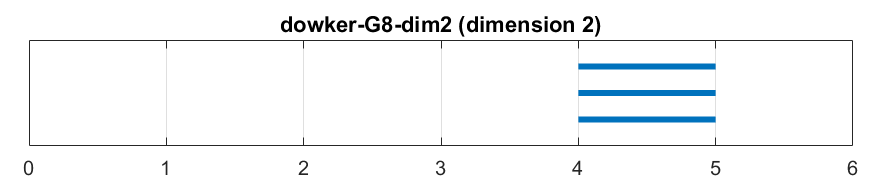}
\end{subfigure}
\caption{Sample 2-dimensional Dowker persistence barcodes for cycle
  networks $G_4,G_6,G_8$. In our experiments, 2-dimensional Dowker persistence barcodes for $G_n$ were always empty for $n$ odd.}

\label{fig:cycle-2dim}
\end{figure}

\end{remark}

\subsection{Sensitivity to network transformations}

We first make the following:
\begin{definition}[Pair swaps] Let $(X,\w_X) \in \Ncal$ be a network. For any $z,z'\in X$, define the \emph{$(z,z')$-swap} of $(X,\w_X)$ to be the network $S_X(z,z'):=(X^{z,z'},\w_X^{z,z'})$ defined as follows:
\begin{align*}
X^{z,z'}&:= X,\\
\text{For any $x,x'\in X^{z,z'}$,}\qquad \w_X^{z,z'}(x,x')&:=\begin{cases}
\w_X(x',x) &: x=z,x'=z'\\
\w_X(x',x) &: x'=z,x=z'\\
\w_X(x,x') &: \text{otherwise.}
\end{cases}
\end{align*}
\end{definition}

We then pose the following question:
\begin{quote}
\emph{Given a network $(X,\w_X)$ and an $(x,x')$-swap $S_X(x,x')$ for some
$x,x'\in X$, how do the Rips or Dowker persistence diagrams of
$S_X(x,x')$ differ from those of $(X,\w_X)$?}
\end{quote}
This situation is
illustrated in Figure \ref{fig:3-node-networks}. Example
\ref{ex:3-node} shows an example where the Dowker persistence diagram captures the
variation in a network that occurs after a pair swap, whereas the Rips
persistence diagram fails to capture this difference. Furthermore,
Remark \ref{rem:swap-rips} shows that Rips persistence diagrams always
fail to do so.

 We also consider the extreme situation where all the directions of the edges of a network are reversed, i.e. the network obtained by applying the pair swap operation to each pair of nodes. We would intuitively expect that the persistence diagrams would not change. The following discussion shows that the Rips and Dowker persistence diagrams are invariant under taking the transpose of a network.

 \begin{proposition}\label{prop:si-so}  
Recall the transposition map $\mf{t}$ and the shorthand notation $X^{\top}=\mf{t}(X)$ from Definition \ref{defn:sym-trans}.  Let $k\in \Z_+$. Then $\dgm_k^{\si}(X)=\dgm_k^{\so}(X^\top),$ and therefore $\dgm_k^{\mf{D}}(X)=\dgm_k^{\mf{D}}(X^\top)$ by Theorem \ref{thm:dowker-functorial}. 
%
 \end{proposition}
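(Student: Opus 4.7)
The plan is to unwind the definitions and observe that transposition simply interchanges the source and sink constructions at every filtration level.

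First I would note that for any $\d \in \R$, the relation $R_{\d, X^\top} \subseteq X \times X$ defined via Equation \eqref{eq:relation} satisfies
\[R_{\d, X^\top} = \{(x,x') : \w_X(x',x) \leq \d\} = \{(x,x') : (x',x) \in R_{\d,X}\},\]
i.e.\ $R_{\d,X^\top}$ is exactly the ``reverse'' of $R_{\d,X}$. Plugging this into the definition of the source complex (Equation \eqref{eq:d-src}) gives
\[\src_{\d,X^\top} = \{\s=[x_0,\ldots,x_n] : \exists\, x'\in X \text{ with } (x',x_i) \in R_{\d,X^\top} \text{ for each } x_i\} = \sink_{\d,X},\]
where the last equality uses the definition of $\sink_{\d,X}$ from Equation \eqref{eq:d-sink}.

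Next I would observe that this equality of simplicial complexes is compatible with the filtration structure: for any $\d \leq \d'$, both the inclusion $\sink_{\d,X} \hookrightarrow \sink_{\d',X}$ and the inclusion $\src_{\d,X^\top} \hookrightarrow \src_{\d',X^\top}$ are just the canonical inclusions of subsets of the simplex on the common vertex set $X$, and hence coincide under the identification above. Thus the filtered simplicial complexes $\sink_X$ and $\src_{X^\top}$ are equal, and consequently their $k$-dimensional persistence vector spaces are equal, so $\dgm_k^{\si}(X) = \dgm_k^{\so}(X^\top)$ for every $k \in \Z_+$.

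Finally, to obtain $\dgm_k^{\mf{D}}(X) = \dgm_k^{\mf{D}}(X^\top)$, I would invoke Dowker duality (Corollary \ref{cor:dowker-dual}, which is a consequence of Theorem \ref{thm:dowker-functorial}): this gives $\dgm_k^{\si}(X) = \dgm_k^{\so}(X)$ and $\dgm_k^{\si}(X^\top) = \dgm_k^{\so}(X^\top)$, so combining with the equality just established yields
\[\dgm_k^{\mf{D}}(X) = \dgm_k^{\si}(X) = \dgm_k^{\so}(X^\top) = \dgm_k^{\mf{D}}(X^\top).\]
The argument is essentially a bookkeeping exercise; there is no substantive obstacle, since all the work is absorbed into the set-theoretic identity $R_{\d,X^\top} = R_{\d,X}^{\text{reverse}}$ and the nontrivial input (Dowker duality) is already proved.
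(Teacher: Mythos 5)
Your proof is correct and follows essentially the same route as the paper's: both establish the pointwise identity $\src_{\d,X^\top}=\sink_{\d,X}$, note compatibility with the inclusion maps of the filtrations, and then invoke Dowker duality (Corollary \ref{cor:dowker-dual}) for the final claim. The only cosmetic difference is that the paper formally routes the first equality through the Persistence Equivalence Theorem, whereas you observe directly that the filtered complexes are literally equal; both are fine.
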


\begin{remark}[Pair swaps and their effect]\label{rem:swap-rips} Let $(X,\w_X)\in \Ncal$, let $z,z'\in X$, and let $\s \in \pow(X)$. Then we have:
\[\max_{x,x'\in \s}\w_X(x,x') = \max_{x,x'\in \s}\w_X^{z,z'}(x,x').\]
Using this observation, one can then repeat the arguments used in the proof of Proposition \ref{prop:si-so} to show that: 
\[\dgm_k^{\mf{R}}(X)=\dgm_k^{\mf{R}}(S_X(z,z')),\text{ for each } k \in \Z_+.\]
This encodes the intuitive fact that Rips persistence diagrams are blind to
pair swaps. Moreover, succesively applying the pair swap operation over all pairs produces the transpose of the original network, and so it follows that $\dgm_k^{\mf{R}}(X)=\dgm_k^{\mf{R}}(X^\top)$.

On the other hand, $k$-dimensional Dowker persistence diagrams are not necessarily invariant to pair swaps when $k\geq
1$. Indeed, Example \ref{ex:3-node} below constructs a space $X$ for which
there exist points $z,z'\in X$ such that 
\[\dgm_1^{\mf{D}}(X)\neq\dgm_1^{\mf{D}}(S_X(z,z')).\]

However, 0-dimensional Dowker persistence diagrams are still invariant to pair swaps:

\begin{proposition}
\label{prop:dowker0pair}
Let $(X,\w_X)\in \Ncal$, let $z,z'$ be any two points in $Z$, and let $\s \in \pow(X)$. Then we have:
\[\dgm_0^{\mf{D}}(X)=\dgm_0^{\mf{D}}(S_X(z,z')).\]
\end{proposition}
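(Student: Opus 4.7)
The plan is to construct an isomorphism of the persistence vector spaces $\H_0(\sink_X)$ and $\H_0(\sink_{S_X(z,z')})$ that commutes with the internal structure maps, and then apply the Persistence Equivalence Theorem (Theorem~\ref{thm:pet}) to conclude. Since $0$-dim persistent homology depends only on the sequence of connected components of the $1$-skeleton of the filtration, the problem reduces to exhibiting a natural bijection $\Phi_\d: \pi_0(\sink_{\d,X}) \to \pi_0(\sink_{\d, S_X(z,z')})$ compatible with the inclusion-induced maps on $\pi_0$. By Dowker duality (Corollary~\ref{cor:dowker-dual}), working with the sink filtration suffices.

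The construction of $\Phi_\d$ exploits the fact that the pair swap $S_X(z,z')$ alters $\w_X$ only at the entries $(z, z')$ and $(z', z)$, which affects the sink complex only through the columns indexed by $z$ and $z'$. On components not meeting $\{z, z'\}$, I would set $\Phi_\d$ to be the identity. For components meeting $\{z, z'\}$, I would perform a case analysis on whether $z$ and $z'$ are vertices in the two complexes. The key observation is that if $z \in V(\sink_{\d,X}) \setminus V(\sink_{\d, S_X(z,z')})$ then necessarily $\w_X(z, z') \le \d < \w_X(z', z)$ and $\min_{y \ne z'} \w_X(z, y) > \d$; this in turn forces $z'$ to be a vertex of $\sink_{\d, S_X(z,z')}$, via the entry $\w_X^{z,z'}(z', z) = \w_X(z, z') \le \d$. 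In this case the component of $z$ in $\sink_{\d, X}$ should be mapped to the component of $z'$ in $\sink_{\d, S_X(z,z')}$, and a symmetric rule handles the case $z' \in V(\sink_{\d,X}) \setminus V(\sink_{\d, S_X(z,z')})$.

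The main obstacle will be verifying the naturality of $\Phi_\d$ with respect to the inclusion maps between filtration levels, i.e., commutativity of the square with vertical maps $\Phi_\d, \Phi_{\d'}$ and horizontal maps induced by the inclusions $\sink_{\d, X} \hookrightarrow \sink_{\d', X}$ and $\sink_{\d, S_X(z,z')} \hookrightarrow \sink_{\d', S_X(z,z')}$. This amounts to showing that any witness-chain in $\sink_{\d, X}$ that merges two elements can be translated into a chain in $\sink_{\d, S_X(z,z')}$ at the same resolution $\d$; in particular, any chain using $z$ or $z'$ as an intermediate witness must be rerouted through the opposite element of the pair, and one must check that this rerouting is always possible under the weight constraints implied by the case analysis above. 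Once naturality is secured, the resulting isomorphism of persistence vector spaces together with Theorem~\ref{thm:pet} yields the equality of $0$-dim Dowker persistence diagrams.
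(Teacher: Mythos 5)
Your overall strategy (a levelwise identification of $H_0$ compatible with the structure maps, followed by Theorem \ref{thm:pet}) is the same as the paper's, but the proposal stops exactly where the content of the proposition lies, and the one concrete device you introduce---the treatment of the case $z \in V(\sink_{\d,X})\setminus V(\sink_{\d,S_X(z,z')})$---does not work. The ``main obstacle'' you name, namely that every witness chain merging two vertices of $\sink_{\d,X}$ can be rerouted inside $\sink_{\d,S_X(z,z')}$ at the \emph{same} resolution $\d$, is precisely the heart of the matter, and you leave it as something ``one must check.'' The paper does this check by a case analysis on which point serves as the $\d$-sink of a $1$-simplex; the only nontrivial case is $[z,x']$ whose unique sink is $z'$ (or the symmetric situation), and it is resolved by the explicit identity $\p_1^\d([z,x']) = \p_1^\d([z,z']) + \p_1^\d([z',x'])$ together with the verification that $[z,z']$ and $[z',x']$ survive the swap (with $z$ and $z'$ as respective sinks). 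This yields $\im\p_1^{\d} = \im\p_1^{\d,S}$ for every $\d$, which is the substantive step; without it your argument is a plan rather than a proof.

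More seriously, the rule ``send the component of $z$ to the component of $z'$'' in your exceptional case is not well defined and cannot be repaired. In that case the only admissible sink for $z$ in $\sink_{\d,X}$ is $z'$, so every $w$ with $\w_X(w,z')\le\d$ forms an edge $[z,w]$ and lies in the component of $z$; nothing in your constraints forces such $w$ to lie in the component of $z'$ (or even in a common component) after the swap. Concretely, take $X=\{z,z',w\}$ with $\w_X(z,z')=\w_X(w,z')=1$ and all other weights equal to $10$: for $\d\in[1,10)$ the original sink complex is the single edge $[z,w]$ (one component), while after the $(z,z')$-swap $z$ is no longer a vertex and $z',w$ are isolated vertices (two components), so the ranks of $H_0$ differ and no filtration-compatible bijection on $\pi_0$ exists. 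This is why the paper's proof never enters your case: its first step is that $[x]\in\sink_{\d,X}\iff \w_X(x,x)\le\d$, so vertex membership is governed by diagonal entries, which a pair swap (with $z\ne z'$) never alters; hence $C_0^\d$, $\ker\p_0^\d$, and the structure maps on $0$-chains literally coincide for the two networks, and only the equality of $\im\p_1$ remains. That step tacitly uses that the diagonal realizes the row minimum (as for dissimilarity networks); your case analysis describes exactly the regime where this fails, and there, as the example shows, the conclusion itself can fail, so no rerouting argument along your route can close the gap.
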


\end{remark}

\begin{example} 
\label{ex:3-node}
Consider the three node dissimilarity networks $(X,\w_X)$ and
$(Y,\w_Y)$ in Figure \ref{fig:3-node-networks}. Note that $(Y,\w_Y)$
coincides with $S_X(a,c)$. We present both the Dowker and Rips persistence barcodes obtained from these networks. Note that the Dowker persistence barcode is sensitive to the difference between $(X,\w_X)$ and $(Y,\w_Y)$, whereas the Rips barcode is blind to this difference. We refer the reader to \S\ref{sec:exp} for details on how we compute these barcodes.

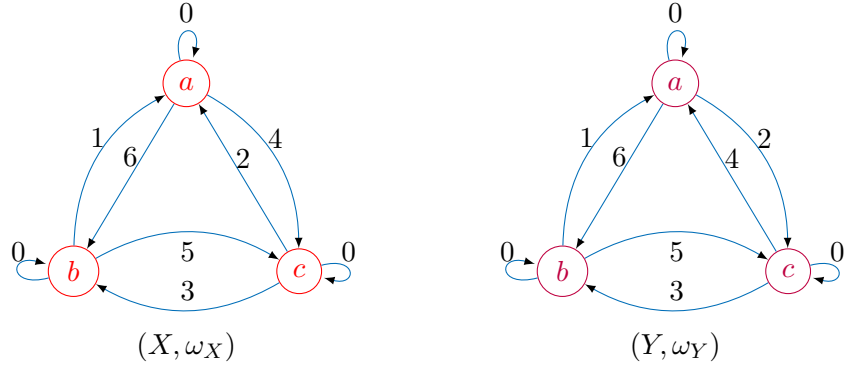
\begin{figure}
\begin{center}

\begin{tikzpicture}
\tikzset{>=latex}
\begin{scope}[draw,red]
\node[circle,draw](1) at (0,2.5){$a$};
\node[circle,draw](2) at (-1.5,0){$b$};
\node[circle,draw](3) at (1.5,0){$c$};
\node[black] at (0,-1){$(X,\w_X)$};
\end{scope}

\begin{scope}[xshift=0cm,draw,purple]
\node[circle,draw](4) at (6.5,2.5){$a$};
\node[circle,draw](5) at (5,0){$b$};
\node[circle,draw](6) at (8,0){$c$};
\node[black] at (6.5,-1){$(Y,\w_Y)$};
\end{scope}

\begin{scope}[draw=NavyBlue]
\path[->] (1) edge [loop above] node[above,pos=0.5]{$0$} (1);
\path[->] (2) edge [loop left] node[above,pos=0.5]{$0$} (2);
\path[->] (3) edge [loop right] node[above,pos=0.5]{$0$} (3);

\path[->] (4) edge [loop above] node[above,pos=0.5]{$0$} (4);
\path[->] (5) edge [loop left] node[above,pos=0.5]{$0$} (5);
\path[->] (6) edge [loop right] node[above,pos=0.5]{$0$} (6);

\path[->] (1) edge [bend left,in=180,out=0] node[above,pos=0.5]{$6$} (2);
\path[->] (2) edge [bend left] node[above,pos=0.5]{$1$} (1);
\path[->] (1) edge [bend left] node[above,pos=0.5]{$4$} (3);
\path[->] (3) edge [bend left,out=0,in=180] node[above,pos=0.5]{$2$} (1);
\path[->] (2) edge [ bend left] node[below,pos=0.5]{$5$} (3);
\path[->] (3) edge [bend left] node[above,pos=0.5]{$3$} (2);

\path[->] (4) edge [bend left,in=180,out=0] node[above,pos=0.5]{$6$} (5);
\path[->] (5) edge [bend left] node[above,pos=0.5]{$1$} (4);
\path[->] (4) edge [bend left] node[above,pos=0.5]{$2$} (6);
\path[->] (6) edge [bend left,in=180,out=0] node[above,pos=0.5]{$4$} (4);
\path[->] (5) edge [ bend left] node[below,pos=0.5]{$5$} (6);
\path[->] (6) edge [bend left] node[above,pos=0.5]{$3$} (5);
\end{scope}

\end{tikzpicture}
\end{center}
\caption{$(Y,\w_Y)$ is the $(a,c)$-swap of $(X,\w_X)$.}
\label{fig:3-node-networks}
\end{figure}

\begin{figure}
\begin{subfigure}{0.49\linewidth}
\includegraphics[width = \textwidth,keepaspectratio]
{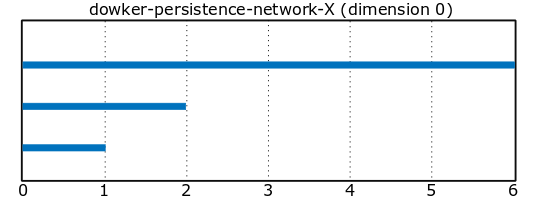}
\end{subfigure}
\begin{subfigure}{0.49\linewidth}
\includegraphics[width = \textwidth]
{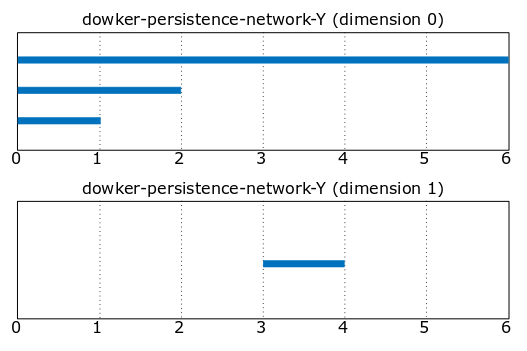}
\end{subfigure}
\caption{Dowker persistence barcodes of networks $(X,\w_X)$ and $(Y,\w_Y)$ from Figure \ref{fig:3-node-networks}.}
\label{fig:pers-nets}
\end{figure}

\begin{figure}
\begin{subfigure}{0.49\linewidth}
\includegraphics[width = \textwidth,keepaspectratio]
{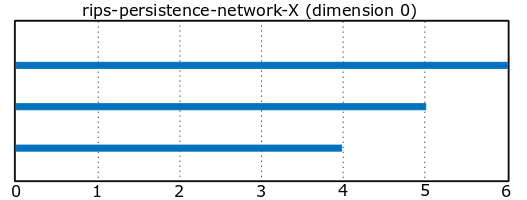}
\end{subfigure}
\begin{subfigure}{0.49\linewidth}
\includegraphics[width = \textwidth]
{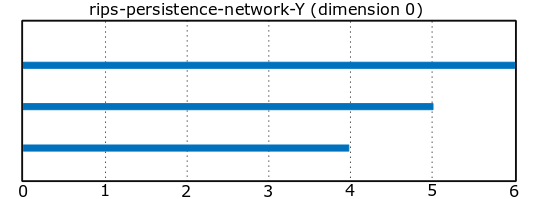}
\end{subfigure}
\caption{Rips persistence barcodes of networks $(X,\w_X)$ and $(Y,\w_Y)$ from Figure \ref{fig:3-node-networks}. Note that the Rips diagrams indicate no persistent homology in dimensions higher than 0, in contrast with the Dowker diagrams in Figure \ref{fig:pers-nets}.}
\label{fig:rips-pers-nets}
\end{figure}

To show how the Dowker complex is constructed, we also list the Dowker sink complexes of the networks in Figure \ref{fig:3-node-networks}, and also the corresponding homology dimensions across a range of resolutions. Note that when we write $[a,b](a)$, we mean that $a$ is a sink corresponding to the simplex $[a,b]$.
\begin{align*}
\sink_{0,X} = \set{[a],[b],[c]} && \dim(H_1(\sink_{0,X})) = 0\\
\sink_{1,X} = \set{[a],[b],[c],[a,b](a)} && \dim(H_1(\sink_{1,X})) = 0\\ 
\sink_{2,X} = \set{[a],[b],[c],[a,b](a),[a,c](a),[b,c](a),[a,b,c](a)} && \dim(H_1(\sink_{2,X})) = 0\\ 
\sink_{3,X} = \set{[a],[b],[c],[a,b](a),[a,c](a),[b,c](a),[a,b,c](a)} && \dim(H_1(\sink_{3,X})) = 0
\end{align*}

\begin{align*}
\sink_{0,Y} = \set{[a],[b],[c]} && \dim(H_1(\sink_{0,Y})) = 0\\
\sink_{1,Y} = \set{[a],[b],[c],[a,b](a)} && \dim(H_1(\sink_{1,Y})) = 0\\ 
\sink_{2,Y} = \set{[a],[b],[c],[a,b](a),[a,c](c)} && \dim(H_1(\sink_{2,Y})) = 0\\ 
\sink_{3,Y} = \set{[a],[b],[c],[a,b](a),[a,c](c),[b,c](b)} && \dim(H_1(\sink_{3,Y})) = 1\\ 
\sink_{4,Y} = \set{[a],[b],[c],[a,b](a),[a,c](a),[b,c](a),[a,b,c](a)} && \dim(H_1(\sink_{4,Y})) = 0\\ 
\end{align*}

Note that for $\d \in [3,4)$, $\dim(H_1(\sink_{\d,Y})) = 1$, whereas $\dim(H_1(\sink_{\d,X}))=0$ for each $\d\in \R$. 

\end{example}

Based on the discussion in Remark \ref{rem:swap-rips}, Proposition \ref{prop:dowker0pair}, and Example \ref{ex:3-node}, we conclude the following:
\begin{center}
\textbf{Moral: }\textit{Unlike Rips persistence diagrams, Dowker persistence diagrams are truly sensitive to asymmetry.}
\end{center}

We summarize some of these results:

\begin{theorem} 
\label{thm:sym-trans-summary}
Recall the symmetrization and transposition maps $\mf{s}$ and $\mf{t}$ from Definition \ref{defn:sym-trans}. Then:
\begin{enumerate}
\item $\mf{R}\circ \mf{s} = \mf{R}$,
\item $\mf{D}^{\so}\circ \mf{t} = \mf{D}^{\si}$, and 
\item $\mf{D}^{\si}\circ \mf{t} = \mf{D}^{\so}$.
\end{enumerate}
Also, there exist $(X,\w_X), (Y,\w_Y) \in \Ncal$ such that $(\mf{D}^{\si}\circ \mf{s})(X) \neq \mf{D}^{\si}(X)$, and $(\mf{D}^{\so}\circ \mf{s})(Y) \neq \mf{D}^{\so}(Y)$.

\end{theorem}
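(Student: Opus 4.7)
The plan is to dispatch the three equalities by unfolding definitions and to exhibit a single explicit counterexample, namely the cycle network $G_3$, for both non-invariance statements. For the first identity $\mf{R}\circ \mf{s} = \mf{R}$, the argument will be essentially Remark \ref{rem:rips-symm}: for every $\s\in \pow(X)$ the pairing $(x,x')\mapsto \w_X(x,x')$ on $\s\times\s$ ranges over both orderings of each unordered pair, so $\max_{x,x'\in\s}\w_X(x,x') = \max_{x,x'\in\s}\widehat{\w_X}(x,x')$, and hence the Rips complex at every resolution is unchanged by $\mf{s}$.

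For the transposition identities, I will unfold the definitions \eqref{eq:d-sink} and \eqref{eq:d-src}. Using $\w_{X^\top}(x',x_i)=\w_X(x_i,x')$, one verifies
\[
\s\in \src_{\d,X^\top}
\iff
\exists\, x'\in X:\;\w_X(x_i,x')\le \d\text{ for all }x_i\in\s
\iff
\s\in \sink_{\d,X},
\]
so $\src_{\d,X^\top}=\sink_{\d,X}$ as simplicial complexes on the common vertex set $X$; the canonical inclusions that structure the two filtrations also agree, since both are literal set-theoretic inclusions. This will give $\mf{D}^{\so}\circ \mf{t}=\mf{D}^{\si}$, and interchanging the roles of source and sink in the same calculation yields $\mf{D}^{\si}\circ \mf{t}=\mf{D}^{\so}$.

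For the non-invariance statements, I plan to take $X=Y=G_3$. By Theorem \ref{thm:cycleH1} and Dowker duality (Corollary \ref{cor:dowker-dual}), $\dgm_1^{\si}(G_3)=\dgm_1^{\so}(G_3)=\{(1,2)\}$ is nontrivial. In $\mf{s}(G_3)$, however, every off-diagonal weight equals $2$, so no pair $\{x_i,x_j\}$ admits a common $\d$-sink for $\d<2$: the only candidates for the sink are the three vertices themselves, and each forces one of $\widehat{\w_{G_3}}(x_i,\cdot)$ or $\widehat{\w_{G_3}}(x_j,\cdot)$ to equal $2$. Thus $\sink_{\d,\mf{s}(G_3)}$ is the discrete vertex set for $\d<2$ and jumps to the full $2$-simplex on $\{x_1,x_2,x_3\}$ at $\d=2$, so its $1$-dimensional persistence is trivial. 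Hence $\mf{D}^{\si}(G_3)\neq \mf{D}^{\si}(\mf{s}(G_3))$, and the analogous statement for $\mf{D}^{\so}$ follows immediately via Dowker duality. No significant obstacle is expected; the argument is essentially a definition chase plus the one-line verification above on $G_3$, and the only ``creative'' ingredient is recognizing that the cycle networks already characterized in Theorem \ref{thm:cycleH1} furnish the minimal witness.
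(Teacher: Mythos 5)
Your proof is correct, and its skeleton matches the paper's: item (1) is exactly Remark \ref{rem:rips-symm}, and your definition chase $\src_{\d,X^\top}=\sink_{\d,X}$ (with agreement of the inclusion maps) is precisely the computation at the heart of Proposition \ref{prop:si-so}, which is what the paper cites for items (2) and (3); if anything your version is marginally stronger, since it gives equality of the filtrations directly rather than only of the diagrams via the Persistence Equivalence Theorem. The one genuine difference is the witness for non-invariance under $\mf{s}$: the paper points to Example \ref{ex:3-node}, the three-node network $(X,\w_X)$ and its pair swap $(Y,\w_Y)$, which have distinct Dowker diagrams; to conclude non-invariance under $\mf{s}$ from that example one must additionally observe that $X$ and $Y$ have the \emph{same} max-symmetrization, so invariance would force their diagrams to coincide --- a step the paper leaves implicit. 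You instead use $G_3$: Theorem \ref{thm:cycleH1} (proved earlier, so there is no circularity) gives $\dgm_1^{\mf{D}}(G_3)=\{(1,2)\}$, and your direct check that $\sink_{\d,\mf{s}(G_3)}$ is discrete for $0\le\d<2$ and the full $2$-simplex for $\d\ge 2$ is correct, so the symmetrized network has trivial one-dimensional persistence; the source case follows either by your duality argument or simply because $\mf{s}(G_3)$ is symmetric, so its sink and source filtrations coincide. Your route buys a self-contained, fully explicit comparison (diagram of the symmetrized network computed outright) at the cost of invoking the heavier Theorem \ref{thm:cycleH1}, while the paper's route reuses an already-worked example but relies on the unstated shared-symmetrization observation. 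Only cosmetic quibble: for $\d<0$ the complex $\sink_{\d,\mf{s}(G_3)}$ is empty rather than the discrete vertex set, which does not affect the conclusion.
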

\begin{proof} These follow from Example \ref{ex:3-node}, Remark \ref{rem:rips-symm}, and Proposition \ref{prop:si-so}.\end{proof}


\section{Implementation and an experiment on network classification}\label{sec:exp}

In this section, we present the results of an experiment where we applied our methods to perform a classification task on a database of networks.
All persistent homology computations were carried out using the \texttt{Javaplex} package for Matlab. A full description of Javaplex can be found in \cite{tausz2011javaplex}. We used $\mathbb{K}=\Z_2$ as the field of coefficients for all our persistence computations. The dataset and software used for our computations are available as part of the \texttt{PersNet} software package on \url{https://research.math.osu.edu/networks/Datasets.html}. A version of our simulated hippocampal networks experiment has appeared in \cite{dowker-asilo}.


All networks in the following experiment were normalized to have weights in the range $[0,1]$. For each network, we computed Dowker sink complexes at resolutions $\d = 0.01,0.02,0.03,\ldots,1.00$. This filtration was then passed into Javaplex, which produced 0 and 1-dimensional Dowker persistence barcodes.

\subsection{Simulated hippocampal networks}
\label{sec:exp-arenas}
In the neuroscience literature, it has been shown that as an animal explores a given \emph{environment} or \emph{arena}, specific ``place cells" in the hippocampus show increased activity at specific spatial regions, called ``place fields" \cite{o1971hippocampus}. Each place cell shows a \emph{spike} in activity when the animal enters the place field linked to this place cell, accompanied by a drop in activity as the animal moves far away from this place field. To understand how the brain processes this data, a natural question to ask is the following: Is the time series data of the place cell activity, referred to as ``spike trains", enough to detect the structure of the arena?

Approaches based on homology \cite{curto2008cell} and persistent homology \cite{dabaghian2012topological} have shown positive results in this direction. In \cite{dabaghian2012topological}, the authors simulated the trajectory of a rat in an arena containing ``holes." A simplicial complex was then built as follows: whenever $n+1$ place cells with overlapping place fields fired together, an $n$-simplex was added. This yield a filtered simplicial complexed indexed by a time parameter. By computing persistence, it was then shown that the number of persistent bars in the 1-dimensional barcode of this filtered simplicial complex would accurately represent the number of holes in the arena. 

We repeated this experiment with the following change in methodology: we simulated the movement of an animal, and corresponding hippocampal activity, in arenas with a variety of obstacles. We then induced a directed network from each set of hippocampal activity data, and computed the associated 1-dimensional Dowker persistence diagrams. We were interested in seeing if the bottleneck distances between diagrams arising from similar arenas would differ significantly from the bottleneck distance between diagrams arising from different arenas. 
To further exemplify our methods, we repeated our analysis after computing the 1-dimensional Rips persistence diagrams from the hippocampal activity networks.

In our experiment, there were five arenas. The first was a square of side length $L=10$, with four circular ``holes" or ``forbidden zones" of radius $0.2L$ that the trajectory could not intersect. The other four arenas were those obtained by removing the forbidden zones one at a time. In what follows, we refer to the arenas of each type as \emph{4-hole, 3-hole, 2-hole, 1-hole,} and \emph{0-hole arenas}. For each arena, a random-walk trajectory of 5000 steps was generated, where the animal could move along a square grid with 20 points in each direction. The grid was obtained as a discretization of the box $[0,L]\times [0,L]$, and each step had length $0.05L$. The animal could move in each direction with equal probability. If one or more of these moves took the animal outside the arena (a disallowed move), then the probabilities were redistributed uniformly among the allowed moves. Each trajectory was tested to ensure that it covered the entire arena, excluding the forbidden zones. Formally, we write the time steps as a set $T:=\set{1,2,\ldots, 5000}$, and denote the trajectory as a map $\operatorname{traj}:T \r [0,L]^2$.

For each of the five arenas, 20 trials were conducted, producing a total of 100 trials. For each trial $l_k$, an integer $n_k$ was chosen uniformly at random from the interval $[150,200]$. Then $n_k$ place fields of radius $0.05L$ were scattered uniformly at random inside the corresponding arena for each $l_k$. An illustration of the place field distribution is provided in Figure \ref{fig:arenas-rasters}. A spike on a place field was recorded whenever the trajectory would intersect it. So for each $1\leq i\leq n_k$, the spiking pattern of cell $x_i$, corresponding to place field PF$_i$, was recorded via a function $r_i:T\r \set{0,1}$ given by:
\[r_i(t)=\begin{cases}
1 &:\text{if } \operatorname{traj}(t)\text{ intersects  } \text{PF}_i,\\
0 &: \text{otherwise}\end{cases} \qquad\qquad t\in T.\]

The matrix corresponding to $r_i$ is called the \emph{raster} of cell $x_i$. A sample raster is illustrated in Figure \ref{fig:arenas-rasters}. For each trial $l_k$, the corresponding network $(X_k,\w_{X_k})$ was constructed as follows: $X_k$ consisted of $n_k$ nodes representing place fields, and for each $1\leq i,j\leq n_k$, the weight $\w_{X_k}(x_i,x_j)$ was given by:
\begin{align*}
\w_{X_k}(x_i,x_j) &:=1-\frac{N_{i,j}(5)}{\sum_{i=1}^{n_k}N_{i,j}(5)},\\
\text{ where }
N_{i,j}(5)&=\card\left(\set{(s,t)\in T^2:t\in  [2,5000], t-s\in [1,5], r_j(t)=1,r_i(s)=1}\right).
\end{align*}

In words, $N_{i,j}(5)$ counts the pairs of times $(s,t), s < t,$ such that cell $x_j$ spikes (at a time $t$) after cell $x_i$ spikes (at a time $s$), and the delay between the two spikes is fewer than 5 time steps. The idea is that if cell $x_j$ frequently fires within a short span of time after cell $x_i$ fires, then place fields PF$_i$ and PF$_j$ are likely to be in close proximity to each other. The column sum of the matrix corresponding to $\w_{X_k}$ is normalized to 1, and so $\w_{X_k}^\top$ can be interpreted as the transition matrix of a Markov process.

\begin{figure}
\begin{subfigure}{0.3\linewidth}
\includegraphics[width = \textwidth,keepaspectratio]{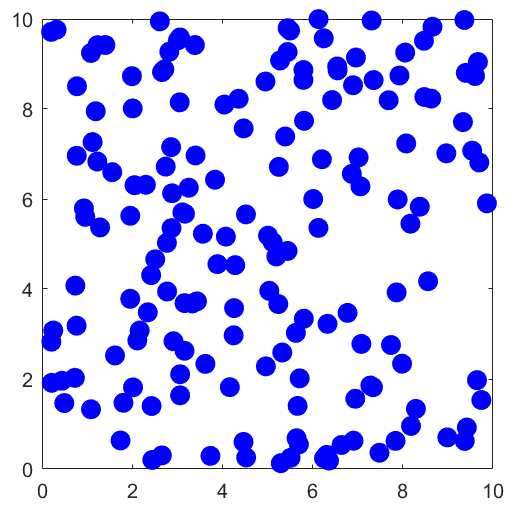}
\end{subfigure}
\begin{subfigure}{0.3\linewidth}
\includegraphics[width = \textwidth,keepaspectratio]{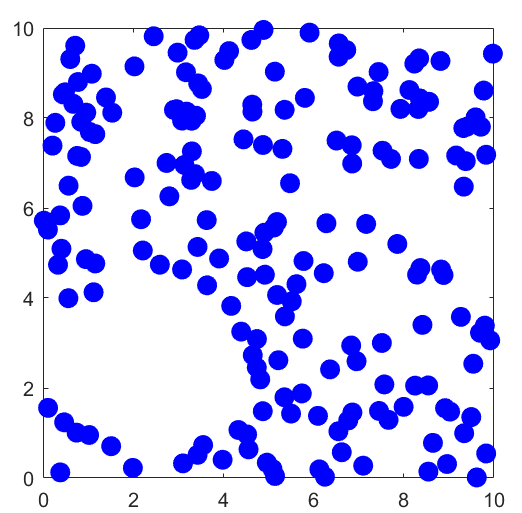}
\end{subfigure}
\begin{subfigure}{0.3\linewidth}
\includegraphics[width = \textwidth,keepaspectratio]{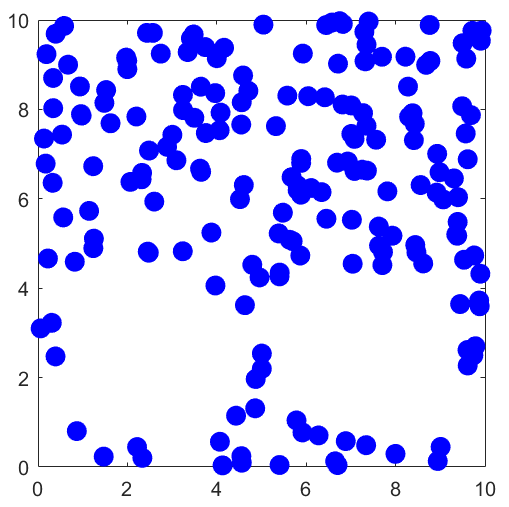}
\end{subfigure}
\begin{subfigure}{0.3\linewidth}
\includegraphics[width = \textwidth,keepaspectratio]{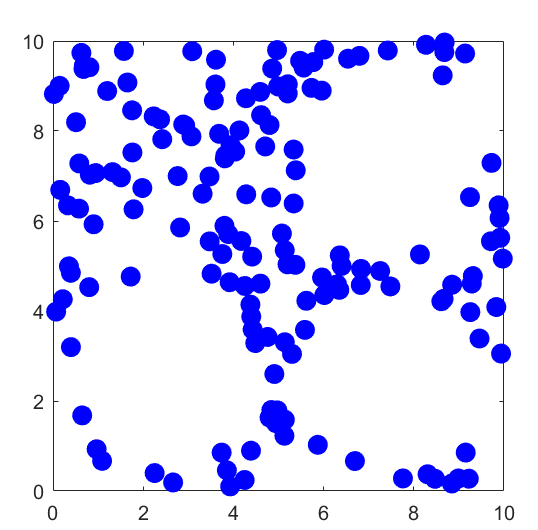}
\end{subfigure}
\begin{subfigure}{0.3\linewidth}
\includegraphics[width = \textwidth,keepaspectratio]{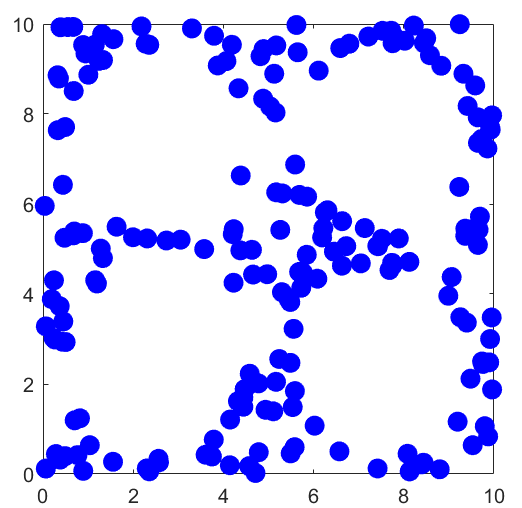}
\end{subfigure}
\begin{subfigure}{0.3\linewidth}
\includegraphics[width = \textwidth,keepaspectratio]{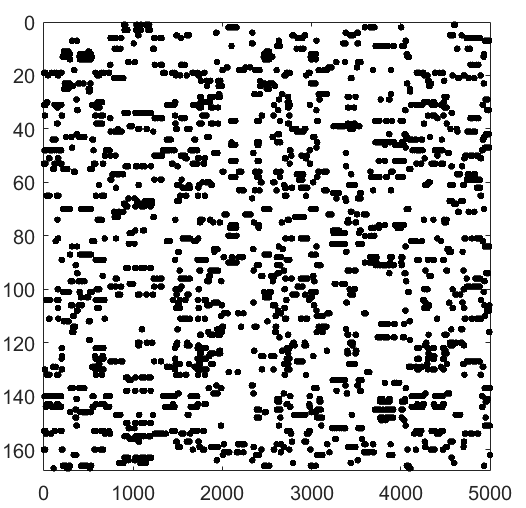}
\end{subfigure}
\caption{\textbf{Bottom right:} Sample place cell spiking pattern
  matrix. The $x$-axis corresponds to the number of time steps, and the $y$-axis corresponds to the number of place cells. Black dots represent spikes. \textbf{Clockwise from bottom
    middle:} Sample distribution of place field centers in 4, 3, 0, 1,
  and 2-hole arenas. 
}
\label{fig:arenas-rasters}
\end{figure}

Next, we computed the 1-dimensional Dowker persistence diagrams of each of the 100 networks. Note that $\dgm_1^{\mf{D}}(\w_X)=\dgm_1^{\mf{D}}(\w_X^\top)$ by Proposition \ref{prop:si-so}, so we are actually obtaining the 1-dimensional Dowker persistence diagrams of transition matrices of Markov processes. We then computed a $100\times 100$ matrix consisting of the bottleneck distances between all the 1-dimensional persistence diagrams. The single linkage dendrogram generated from this bottleneck distance matrix is shown in Figure \ref{fig:dendro-dowker-arenas}. The labels are in the format \texttt{env-<nh>-<nn>}, where \texttt{nh} is the number of holes in the arena/environment, and \texttt{nn} is the number of place fields. Note that with some exceptions, networks corresponding to the same arena are clustered together. We conclude that the Dowker persistence diagram succeeded in capturing the intrinsic differences between the five classes of networks arising from the five different arenas, even when the networks had different sizes.

We then computed the Rips persistence diagrams of each network, and computed the $100\times 100$ bottleneck distance matrix associated to the collection of 1-dimensional diagrams. The single linkage dendrogram generated from this matrix is given in Figure \ref{fig:dendro-rips-arenas}. Notice that the Rips dendrogram does not do a satisfactory job of classifying arenas correctly.

\begin{remark} We note that an alternative method of comparing the networks obtained from our simulations would have been to compute the pairwise network distances, and plot the results in a dendrogram. But $\dn$ is NP-hard to compute---this follows from the fact that computing $\dn$ includes the problem of computing Gromov-Hausdorff distance between finite metric spaces, which is NP-hard \cite{schmiedl}. So instead, we are computing the bottleneck distances between 1-dimensional Dowker persistence diagrams, as suggested by Remark \ref{rem:dowker-benefits}.

\end{remark}

\begin{figure}
\begin{center}
\includegraphics[width = 0.95\textwidth]{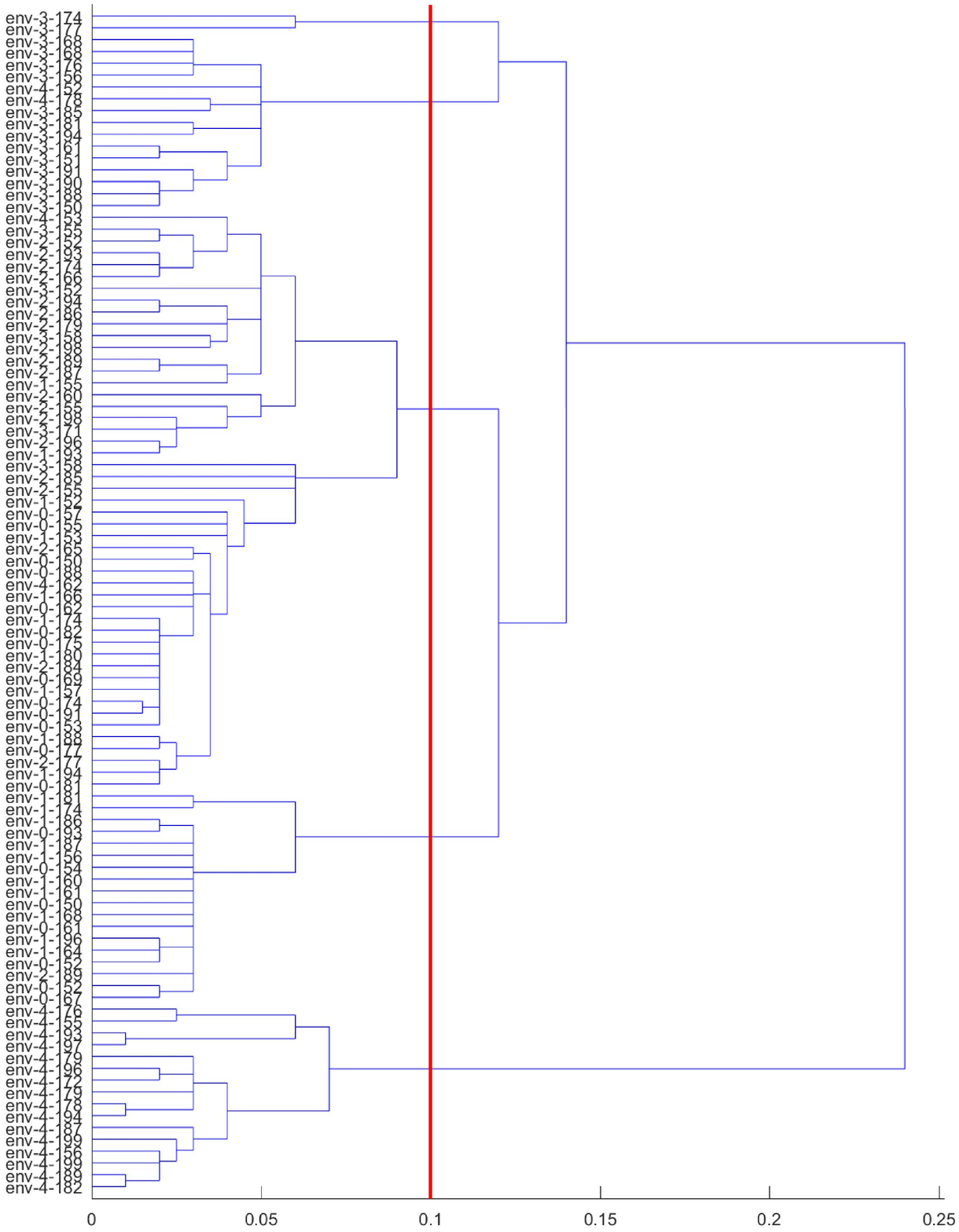}
\end{center}
\caption{Single linkage dendrogram corresponding to the distance matrix obtained by computing bottleneck distances between 1-dimensional Dowker persistence diagrams of our database of hippocampal networks (\S\ref{sec:exp-arenas}). Note that the 4, 3, and 2-hole arenas are well separated into clusters at threshold 0.1.} \label{fig:dendro-dowker-arenas}
\end{figure}

\begin{figure}
\begin{center}
\includegraphics[width = 0.95\textwidth]{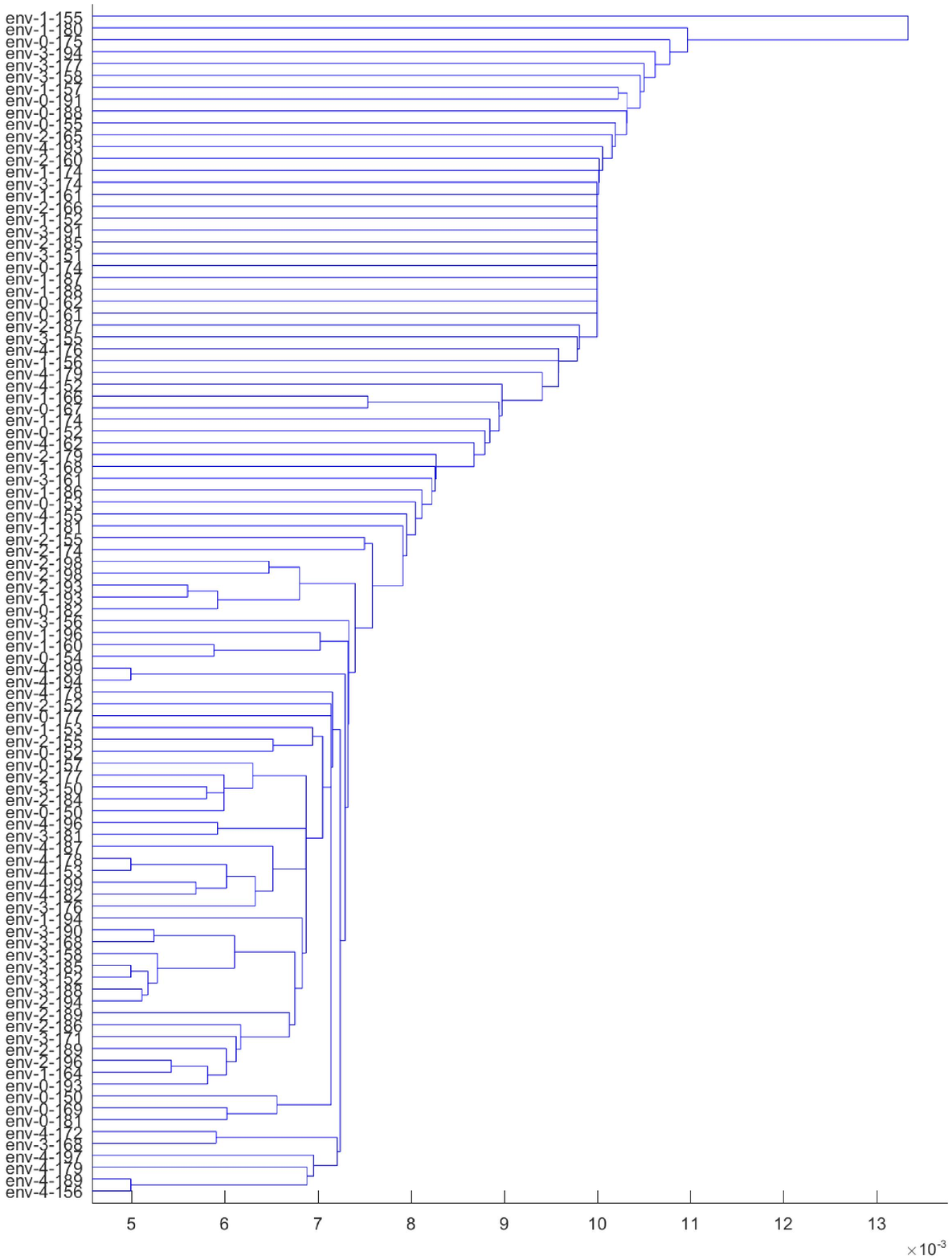}
\end{center}
\caption{Single linkage dendrogram corresponding to the distance matrix obtained by computing bottleneck distances between 1-dimensional Rips persistence diagrams of our database of hippocampal networks (\S\ref{sec:exp-arenas}). Notice that the hierarchical clustering fails to capture the correct arena types.} \label{fig:dendro-rips-arenas}
\end{figure}

\section{Discussion}
We provided a complete description of the Rips and
Dowker persistence diagrams of general networks. The stability results
we have provided give quantitative guarantees on the robustness of
these persistence diagrams. As a building block, we proved a functorial generalization of Dowker's theorem, which also yields an independent proof of a folklore strengthening of Dowker's theorem. We have provided numerous examples suggesting that
Dowker persistence diagrams are an appropriate method for analyzing
general asymmetric networks. For a particular class of such examples,
the family of cycle networks, we have fully characterized their Dowker persistence diagrams in all dimensions. Finally, we have implemented our methods for a classification task on a database of networks, and provided interpretations for our results.

We believe that the story of ``persistent homology of asymmetric
networks" has more aspects to be uncovered. Of particular interest to
us is the analysis of alternative methods of producing simplicial
complexes from asymmetric networks, for example, the \emph{directed
  flag complex} construction of \cite{dlotko2016topological}. Yet another interesting extension to the non-metric framework has appeared in \cite{edelsbrunner2016topological}, in the context of computing generalized \v{C}ech and Rips complexes for Bregman divergences. We
remark that a persistent homology framework for the directed flag
complex has been proposed by \cite{turner}, but the computational
aspects of this construction have not been addressed in the current
literature.  Another approach for computing persistence diagrams from
asymmetric networks, which bypasses the construction of any simplicial
complex and operates directly at the chain level is given in \cite{pph}. Some other interesting questions relate to cycle networks: for example, we would like to obtain a characterization of the Rips persistence diagrams of cycle networks for any dimension $k\geq 1$. Finally, it is important to devise more efficient implementations for the Dowker complexes we present here. It is likely that ideas from the literature on efficient construction of \v{C}ech complexes \cite{dantchev2012efficient, edelsbrunner2016topological} will be helpful in this regard.

\medskip
\paragraph{\textbf{Acknowledgments.}} This work was supported by NSF grants IIS-1422400 and CCF-1526513. We thank Pascal Wild and Zhengchao Wan for pointing out errors on an early preprint, and also Osman Okutan and Tim Porter for useful discussions. We are especially thankful to Henry Adams for numerous helpful observations and suggestions, especially regarding the material in Appendix \ref{sec:cycle-addendum}, and for suggesting the proof strategy for Theorem \ref{thm:cyc-cech-main}.

\bibliographystyle{alpha}
\bibliography{biblio}

\appendix

\section{Proofs} \label{app:proofs}

\begin{proof}[Proof of Lemma \ref{lem:stab}] The first inequality holds by the Algebraic Stability Theorem. For the second inequality, note that the contiguous simplicial maps in the diagrams above induce chain maps between the corresponding chain complexes, and these in turn induce equal linear maps at the level of homology vector spaces. To be more precise, first consider the maps $t_{\d+\eta,\d'+\eta}\circ \ph_\d$ and $\ph_{\d'}\circ s_{\d,\d'}$. These simplicial maps induce linear maps  $(t_{\d+\eta,\d'+\eta}\circ \ph_\d)_\#, (\ph_{\d'}\circ s_{\d,\d'})_\#: H_k(\mf{F}^\d) \r H_k(\mf{G}^{\d'+\eta})$. Because the simplicial maps are assumed to be contiguous, we have:
\[(t_{\d+\eta,\d'+\eta}\circ \ph_\d)_\# = 
(\ph_{\d'}\circ s_{\d,\d'})_\#.\]
By invoking functoriality of homology, we then have:
\[(t_{\d+\eta,\d'+\eta})_\# \circ (\ph_\d)_\# = 
(\ph_{\d'})_\#\circ (s_{\d,\d'})_\#.\]
Analogous results hold for the other pairs of contiguous maps. Thus we obtain commutative diagrams upon passing to homology, and so $\H_k(\mf{F}), \H_k(\mf{G})$ are $\eta$-interleaved for each $k\in \Z_+$. Thus we get:
\[\di(\H_k(\mf{F}), \H_k(\mf{G}))\leq \eta. \qedhere\]
\end{proof}

\begin{proof}[Proof of Proposition \ref{prop:dn-ko}] First we show that: 
\[ \dn(X,Y) \geq \tfrac{1}{2}\inf\{\max(\dis(\ph),\dis(\psi),C_{X,Y}(\ph,\psi), C_{Y,X}(\psi,\ph)) : \ph:X \r Y, \psi:Y \r X \text{ any maps}\}.\] 
Let $\eta = \dn(X,Y)$, and let $R$ be a correspondence such that $\dis(R) = 2\eta$. We can define maps $\ph:X\r Y$ and $\psi:Y\r X$ as follows: for each $x\in X$, set $\ph(x)=y$ for some $y$ such that $(x,y)\in R$. Similarly, for each $y\in Y$, set $\psi(y)=x$ for some $x$ such that $(x,y)\in R$. 
Thus for any $x \in X, y\in Y$, we obtain $|\w_X(x,\psi(y)) - \w_Y(\ph(x),y)| \leq 2\eta$ and $|\w_X(\psi(y),x) - \w_Y(y,\ph(x))| \leq 2\eta$. So we have both $C_{X,Y}(\ph,\psi) \leq 2\eta$ and $C_{Y,X}(\psi,\ph) \leq 2\eta$. Also for any $x,x' \in X$, we have $(x,\ph(x)),(x',\ph(x')) \in R$. Thus we also have 
$$|\w_X(x,x') - \w_Y(\ph(x),\ph(x'))| \leq 2\eta.$$
So $\dis(\ph) \leq 2\eta$ and similarly $\dis(\psi) \leq 2\eta$. This proves the ``$\geq$" case.

Next we wish to show:
\[ \dn(X,Y) \leq \tfrac{1}{2}\inf\{\max(\dis(\ph),\dis(\psi),C_{X,Y}(\ph,\psi), C_{Y,X}(\psi,\ph)) : \ph:X \r Y, \psi:Y \r X \text{ any maps}\}.\] 
Suppose $\ph, \psi$ are given, and $\frac{1}{2}\max(\dis(\ph),\dis(\psi),C_{X,Y}(\ph,\psi),C_{Y,X}(\psi,\ph)) = \eta$.

Let $R_X = \set{(x,\ph(x) : x\in X}$ and let $R_Y = \set{(\psi(y),y) : y\in Y}$. Then $R = R_X \cup R_Y$ is a correspondence. We wish to show that for any $z = (a,b), z' = (a',b') \in R$, \[|\w_X(a,a') - \w_Y(b,b')| \leq 2\eta.\] This will show that $\dis(R) \leq 2\eta$, and so $\dn(X,Y) \leq \eta$.

To see this, let $z,z' \in R$. Note that there are four cases: (1) $z,z' \in R_X$, (2) $z,z' \in R_Y$, (3) $z \in R_X, z' \in R_Y$, and (4) $z\in R_Y, z'\in R_X$. In the first two cases, the desired inequality follows because $\dis(\ph), \dis(\psi) \leq 2\eta$. The inequality follows in cases (3) and (4) because $C_{X,Y}(\ph,\psi) \leq 2\eta$ and $C_{Y,X}(\psi,\ph) \leq 2\eta$, respectively. Thus $\dn(X,Y) \leq \eta$. \end{proof}

\begin{proof}[Proof of Proposition \ref{prop:subdiv-identity}] It suffices to show that $\Phi$ is a simplicial approximation to $\mc{E}_{|\Sigma|}$, i.e. whenever $\mc{E}_{|\Sigma|}(x) \in |{\s}|$ for some vertex $x \in |\Si\1|$ and some simplex $\s \in |\Si|$, we also have $|\Phi|(x) \in |{\s}|$ \cite[\S 3.4]{spanier-book}. Here $|\s|$ denotes the \emph{closed simplex} of $\s$; for any simplex $\s=[v_0,\ldots, v_k]$, this is the collection of formal convex combinations $\sum_{i=0}^ka_iv_i$ with $a_i \geq 0$ for each $0\leq i \leq k$ and $\sum_{i=0}^ka_i =1$. 

Let $x = \sum_{i=0}^ka_i\s_i$ be a vertex in $|\Sigma\1|$, with each $a_i > 0$. Then we have $\mc{E}_{|\Sigma|}(x) = \sum_{i=0}^ka_i\mc{B}(\s_i) = \sum_{i=0}^ka_i\sum_{v\in \s_i}v/{\card(\s_i)},$ a vertex in $|\s_k|$. 

Also we have $|\Phi|(x) = \sum_{i=0}^ka_i\Phi(\s_i)$, a vertex in $|{\s_k}|$. Thus $\Phi$ is a simplicial approximation to $\mc{E}_{|\Sigma|}$, and so we have $|\Phi|\simeq \mc{E}_{|\Sigma|}$. \end{proof}

\begin{proof}[Proof of Proposition \ref{prop:si-so}]
Let $\d \in \R$. We first claim that $\sink_\d(X) = \src_\d(X^\top)$. Let $\s \in \sink_\d(X)$. Then there exists $x'$ such that $\w_X(x,x')\leq \d$ for any $x\in \s$. Thus $\w_{X^\top}(x',x)\leq \d$. So $\s \in \src_\d(X^{\top})$. A similar argument shows the reverse containment. This proves our claim. Thus for $\d \leq \d' \leq \d''$, we obtain the following diagram:
 \[\begin{tikzcd}
 \sink_\d(X) \arrow{r}\ar[-, double equal sign distance=3pt]{d} & \sink_{\d'}(X) \arrow{r}\ar[-, double equal sign distance=3pt]{d} & \sink_{\d''}(X) \arrow{r}\ar[-, double equal sign distance=3pt]{d} & \ldots \\
 \src_\d(X^\top) \arrow{r} & \src_{\d'}(X^\top)\arrow{r} & \src_{\d''}(X^\top)\arrow{r} & \ldots
 \end{tikzcd}
 \]
 Since the maps $\sink_\d \r \sink_{\d'}$, $\src_\d \r \src_{\d'}$ for $\d'\geq \d$ are all inclusion maps, it follows that the diagrams commute. Thus at the homology level, we obtain, via functoriality of homology, a commutative diagram of vector spaces where the intervening vertical maps are isomorphisms. By the Persistence Equivalence Theorem (\ref{thm:pet}), the diagrams $\dgm_k^{\si}(X)$ and $\dgm_k^{\so}{(X^\top)}$ are equal. By invoking Corollary \ref{cor:dowker-dual}, we obtain $\dgm_k^{\mf{D}}(X)=\dgm_k^{\mf{D}}(X^\top)$.\end{proof}

\begin{proof}[Proof of Proposition \ref{prop:dowker0pair}] Let $\d \in \R$. For notational convenience, we write, for each $k\in \Z_+$, 
\begin{center}
\begin{tabular}{lll}
$\sink_\d:=\sink_{\d,X} $&  $C_k^\d:=C_k(\sink_{\d,X})$ & $\p_k^\d:=\p_k^\d : C_k^\d \r C_{k-1}^\d$ \\
$\sink_{\d,S}:=\sink_{\d,S_X(z,z')}$&  $C_k^{\d,S}:=C_k(\sink_{\d,S_X(z,z')})$ & $\p_k^{\d,S}:=\p_k^{\d,S}:C_k^{\d,S} \r C_{k-1}^{\d,S}.$ 
\end{tabular}
\end{center} 
First note that pair swaps do not affect the entry of 0-simplices into the Dowker filtration. More precisely, for any $x\in X$, we can unpack the definition of $R_{\d,X}$ (Equation \ref{eq:relation}) to obtain:
\[[x]\in \sink_\d \iff \w_X(x,x)\leq \d \iff \w_X^{z,z'}(x,x)\leq \d \iff [x]\in \sink_{\d,S}.\]
Thus for any $\d \in \R$, we have $C_0^\d = C_0^{\d,S}$. Since all 0-chains are automatically 0-cycles, we have $\ker(\p_0^\d)=\ker(\p_0^{\d,S})$.

Next we wish to show that $\im(\p_1^{\d})=\im(\p_1^{\d,S})$ for each $\d \in \R$. Let $\g \in C_1^\d$. We first need to show the forward inclusion, i.e. that $\p_1^\d(\g) \in \im(\p_1^{\d,S})$. It suffices to show this for the case that $\g$ is a single 1-simplex $[x,x']\in \sink_\d$; the case where $\g$ is a linear combination of 1-simplices will then follow by linearity. Let $\g=[x,x']\in \sink_\d$ for $x,x'\in X$. Then we have the following possibilities:
\begin{enumerate}
\item $x'' \in X\setminus\{z,z'\}$ is a $\d$-sink for $[x,x']$.
\item $z$ (or $z'$) is the only $\d$-sink for $[x,x']$, and $x,x'\not\in \set{z,z'}$.
\item $z$ (or $z'$) is the only $\d$-sink for $[x,x']$, and either $x$ or $x'$ belongs to $\set{z,z'}$.
\item $z$ (or $z'$) is the only $\d$-sink for $[x,x']$, and both $x,x'$ belong to $\set{z,z'}$.
\end{enumerate}

In cases (1), (2), and (4), the $(z,z')$-pair swap has no effect on $[x,x']$, in the sense that we still have $[x,x']\in \sink_{\d,S}$. So $[x']-[x]=\p_1^\d(\g)=\p_1^{\d,S}(\g)\in \im(\p_1^{\d,S})$. Next consider case (3), and assume for notational convenience that $[x,x']=[z,x']$ and $z'$ is the only $\d$-sink for $[z,x']$. By the definition of a $\d$-sink, we have $\overline{\w}_X(z,z')\leq \d$ and $\overline{\w}_X(x',z')\leq \d$. Notice that we also have:
\[[z,z'],[z',x']\in \sink_\d, \text{ with $z'$ as a $\d$-sink}.\]

After the $(z,z')$-pair swap, we still have $\overline{\w}_X^{z,z'}(x',z')\leq \d$, but possibly $\overline{\w}_X^{z,z'}(z,z')> \d$. So it might be the case that $[z,x']\not\in \sink_{\d,S}$. However, we now have:
\begin{align*}
&[z',x']\in \sink_{\d,S}, \text{ with $z'$ as a $\d$-sink, and}\\
&[z,z'] \in \sink_{\d,S}, \text{ with $z$ as a $\d$-sink}.
\end{align*}
Then we have:
\begin{align*}
\p_1^\d(\g)=\p_1^\d([z,x'])=x'-z &=z'-z + x'-z'\\
&=\p_1^{\d}([z,z'])+\p_1^{\d}([z',x'])\\
&=\p_1^{\d,S}([z,z'])+\p_1^{\d,S}([z',x'])\in \im(\p_1^{\d,S}),
\end{align*}
where the last equality is defined because we have checked that $[z,z'],[z',x']\in \sink_{\d,S}$. Thus $\im(\p_1^{\d})\subseteq \im(\p_1^{\d,S})$, and the reverse inclusion follows by a similar argument. 

Since $\d\in \R$ was arbitrary, this shows that $\im(\p_1^{\d})= \im(\p_1^{\d,S})$ for each $\d \in \R$. Previously we had $\ker(\p_0^\d)=\ker(\p_0^{\d,S})$ for each $\d \in \R$. It then follows that $H_0(\sink_\d)=H_0(\sink_{\d,S})$ for each $\d \in \R$. 

Next let $\d'\geq \d \in \R$, and for any $k\in \Z_+$, let $f_k^{\d,\d'}:C_k^\d \r C_k^{\d'}, g_k^{\d,\d'}:C_k^{\d,S} \r C_k^{\d',S}$ denote the chain maps induced by the inclusions $\sink_\d \hr \sink_{\d'}, \sink_{\d,S} \hr \sink_{\d',S}$. Since $\sink_\d$ and $\sink_{\d,S}$ have the same 0-simplices at each $\d \in \R$, we know that $f_0^{\d,\d'}\equiv g_0^{\d,\d'}$. 

Let $\g\in \ker(\p_0^\d)=\ker(\p_0^{\d,S})$, and let $\g+\im(\p_1^\d) \in H_0(\sink_\d)$. Then observe that
\begin{align*}
(f_0^{\d,\d'})_\#(\g + \im(\p_1^\d)) &=f_0^{\d,\d'}(\g) + \im(\p_1^{\d'}) &&\text{($f_0^{\d,\d'}$ is a chain map)} \\
&=g_0^{\d,\d'}(\g) + \im(\p_1^{\d'}) &&\text{($f_0^{\d,\d'}\equiv g_0^{\d,\d'}$)}\\
&=g_0^{\d,\d'}(\g) + \im(\p_1^{\d',S}) &&\text{($\im(\p_1^{\d'})=\im(\p_1^{\d',S})$)}\\
&=(g_0^{\d,\d'})_\#(\g + \im(\p_1^{\d,S})).&&\text{($g_0^{\d,\d'}$ is a chain map)}
\end{align*}

Thus $(f_0^{\d,\d'})_\#=(g_0^{\d,\d'})_\#$ for each $\d'\geq \d \in \R$. Since we also have $H_0(\sink_\d)=H_0(\sink_{\d,S})$ for each $\d \in \R$, we can then apply the Persistence Equivalence Theorem (Theorem \ref{thm:pet}) to conclude the proof. \end{proof}

\section{Higher dimensional Dowker persistence diagrams of cycle networks}
\label{sec:cycle-addendum}
The contents of this section rely on results in \cite{adamaszek2015vietoris} and \cite{adamaszek2016nerve}. We introduce some minimalistic versions of definitions from the referenced papers to use in this section. The reader should refer to these papers for the original definitions. 

Given a metric space $(M,d_M)$ and $m\in M$, we will write $\overline{B(m,\e)}$ to denote a closed $\e$-ball centered at $m$, for any $\e > 0$. For a subset $X\subseteq M$ and some $\e>0$, the \emph{\v{C}ech complex} of $X$ at resolution $\e$ is defined to be the following simplicial complex:
\[\cech(X,\e):=\set{\s \subseteq X : \cap_{x\in \s}\overline{B(x,\e)} \neq \emptyset}.\]

In the setting of metric spaces, the \v{C}ech complex coincides with the Dowker source and sink complexes. We will be interested in the special case where the underlying metric space is the circle. We write $S^1$ to denote the circle with unit circumference. Next, for any $n\in \N$, we write $\mb{X}_n:=\set{0,\tfrac{1}{n},\tfrac{2}{n},\ldots, \tfrac{n-1}{n}}$ to denote the collection of $n$ equally spaced points on $S^1$ with the restriction of the arc length metric on $S^1$. Also let $G_n$ denote the $n$-node cycle network with vertex set $\mb{X}_n$ (in contrast with $\mb{X}_n$, here $G_n$ is equipped with the asymmetric weights defined in \S\ref{sec:cycle}). The connection between $\mb{X}_n$ and Dowker complexes of the cycle networks $G_n$ is highlighted by the following observation:

\begin{proposition}\label{prop:dowker-cech-cplx} Let $n\in \N$. Then for any $\d \in [0,1]$, we have $\cech(\mb{X}_n,\frac{\d}{2}) = \mf{D}^{\si}_{n\d,G_n}.$
\end{proposition}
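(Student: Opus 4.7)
The plan is to translate both sides of the claimed equality into conditions about enclosing arcs in $S^1$ and then verify that these arc-conditions coincide on subsets of $\mb{X}_n$.

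First, I would unpack the \v{C}ech side. The condition $\bigcap_{x \in \s}\overline{B(x, \d/2)} \neq \emptyset$ says there exists $p \in S^1$ with $d_{S^1}(p, x) \le \d/2$ for every $x \in \s$. Equivalently, $\s$ is contained in some closed arc of length $\d$ in $S^1$: take the arc $\overline{B(p,\d/2)}$ centered at $p$; and conversely, if $\s$ lies in an arc of length $\d$, its midpoint can be chosen as $p$.

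Next, I would unpack the Dowker side. Identifying $x_i \in X_n$ with $i/n \in \mb{X}_n$, we have $\w_{G_n}(x_i, x_j) = (j-i) \bmod n$, so the inequality $\w_{G_n}(x, x^*) \le n\d$ is equivalent (after rescaling by $1/n$) to the statement that the counterclockwise arc from $x$ to $x^*$ in $S^1$ has length at most $\d$. Hence $\s \in \mf{D}^{\si}_{n\d, G_n}$ iff there is some $x^* \in \mb{X}_n$ such that $\s$ lies in the closed arc $A_{x^*}$ of length $\d$ \emph{ending} at $x^*$.

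Finally, I would verify that the two arc-conditions agree. The implication from Dowker to \v{C}ech is immediate because $A_{x^*}$ is itself an arc of length $\d$ containing $\s$. For the converse, assume first that $\d<1$, and suppose $\s$ is contained in some arc $A$ of length $\d$, parametrized as $[a,a+\d] \subset \R/\Z$. Let $x^*$ be the point of $\s$ that is furthest from $a$ along $A$ (i.e.\ maximising the lift in $[0,\d]$); then $x^* \in \s \subseteq \mb{X}_n$, and every $x \in \s$ satisfies $a \le x \le x^* \le a+\d$ in the cyclic order starting from $a$, so $x \in [x^*-\d, x^*] = A_{x^*}$. This yields the Dowker condition. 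The case $\d=1$ is trivial: every enclosing arc is the whole circle, and any $x^* \in \s$ works. Combining with the matching trivial treatment of $\d=0$ (where both complexes degenerate to the vertex set), we obtain equality of the two complexes.

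The only real point of care is keeping the correspondence between the oriented arc "ending at $x^*$" in $S^1$ and the asymmetric weights of $G_n$ consistent with the chosen orientation of the edges; once that bookkeeping is fixed, the proof reduces to a straightforward manipulation of arcs on the circle.
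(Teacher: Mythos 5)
Your proposal is correct and follows essentially the same route as the paper's proof: both sides are reduced to the condition that $\s$ lies in a closed arc of length $\d$, with the Dowker side pinned down by taking the sink to be the point of $\s$ (equivalently, of $\mb{X}_n$) furthest along the arc in the direction of the edge orientation. Your version is, if anything, slightly more careful than the paper's, since you do not assume $\s$ is a full consecutive block of points and you explicitly flag the orientation bookkeeping.
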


The scaling factor arises because $G_n$ has diameter $\sim n$, whereas $\mb{X}_n\subseteq S^1$ has diameter $\sim 1/2$. This proposition provides a pedagogical step which helps us transport results from the setting of \cite{adamaszek2015vietoris} and \cite{adamaszek2016nerve} to that of the current paper.

\begin{proof}
For $\d =0$, both the \v{C}ech and Dowker complexes consist of the $n$ vertices, and are equal. Similarly for $\d=1$, both $\cech(\mb{X}_n,1)$ and $\mf{D}^{\si}_{n,G_n}$ are equal to the $(n-1)$-simplex. 

Now suppose $\d \in (0,1)$. Let $\s \in \mf{D}^{\si}_{n\d,G_n}$. Then $\s$ is of the form $[\tfrac{k}{n},\tfrac{k+1}{n},\ldots, \tfrac{\floor{k+n\d}}{n}]$ for some integer $0\leq k \leq n-1$, where the $n\d$-sink is $\tfrac{\floor{k+n\d}}{n}$ and all the numerators are taken modulo $n$. We claim that $\s \in \cech(\mb{X}_n,\tfrac{\d}{2})$. 
To see this, observe that $d_{S^1}(\tfrac{k}{n},\tfrac{\floor{k+n\d}}{n}) \leq \d$, and so $\overline{B(\tfrac{k}{n},\tfrac{\d}{2})} \cap \overline{B(\tfrac{\floor{k+n\d}}{n},\tfrac{\d}{2})} \neq \emptyset$. 
Then we have $\s \in \bigcap_{i=0}^{n\d}\overline{B\left(\tfrac{\floor{k+i}}{n},\tfrac{\d}{2}\right)}$, and so $\s \in \cech(\mb{X}_n,\tfrac{\d}{2})$.

Now let $\s \in \cech(\mb{X}_n,\tfrac{\d}{2})$. Then $\s$ is of the form $[\tfrac{k}{n},\tfrac{k+1}{n},\ldots, \tfrac{k+j}{n}]$ for some integer $0\leq k\leq n-1$, where $j$ is an integer such that $\tfrac{j}{n} \leq \d$. In this case, we have $\s = \mb{X}_n \cap_{i=0}^j\overline{B\left(\tfrac{k+i}{n},\d\right)}$. 
Then in $G_n$, after applying the scaling factor $n$, we have $\s \in \mf{D}^{\si}_{n\d,G_n}$, with $\tfrac{k+j}{n}$ as an $n\d$-sink in $G_n$. This shows equality of the two simplicial complexes.\end{proof}

\begin{theorem}[Theorem 3.5, \cite{adamaszek2016nerve}] 
\label{thm:cech-S1}
Fix $n\in \N$, and let $0\leq k \leq n-2$ be an integer. Then,
\[\cech(\mb{X}_n,\tfrac{k}{2n})\simeq \begin{cases}
\bigvee^{n-k-1}S^{2l} & \text{if } \tfrac{k}{n} = \tfrac{l}{l+1},\\
S^{2l+1} &\text{or if } \tfrac{l}{l+1} < \tfrac{k}{n} < \tfrac{l+1}{l+2},
\end{cases}\]
for some $l \in \Z_+$. Here $\bigvee$ denotes the wedge sum, and $\simeq$ denotes homotopy equivalence.

\end{theorem}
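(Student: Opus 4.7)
The plan is to identify $\cech(\mb{X}_n,\tfrac{k}{2n})$ combinatorially and then invoke the classification of \v{C}ech complexes of evenly spaced points on $S^1$ developed in \cite{adamaszek2015vietoris,adamaszek2016nerve}.

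First, I would unpack the definition to obtain $\cech(\mb{X}_n,\tfrac{k}{2n}) = \mc{N}(\{A_i\}_{i=0}^{n-1})$, where $A_i := \overline{B(i/n,\tfrac{k}{2n})} \subseteq S^1$ is the arc of length $k/n$ centered at $i/n$. A subset of indices $\s = \{i_0,\ldots,i_p\}$ is a simplex if and only if the arcs $\{A_{i_j}\}_j$ share a common point, and by reading off endpoints in cyclic order this is equivalent to the condition that every cyclic gap between consecutive indices of $\s$ (mod $n$) is at most $k$. The \v{C}ech complex is therefore the clique complex of the circulant graph $C(n,k)$ whose edges join vertices at cyclic distance at most $k$ on $\Z/n\Z$; in particular it is a flag complex depending only on the pair $(n,k)$.

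Second, the homotopy type of this clique complex is the main object of study in \cite{adamaszek2015vietoris,adamaszek2016nerve}. The central tool is discrete Morse theory: one constructs an acyclic matching on the face poset that pairs simplices with their cyclic shifts, leaving behind only critical cells whose locations are controlled by the position of $k/n$ relative to the rationals $l/(l+1)$. In the generic case $l/(l+1) < k/n < (l+1)/(l+2)$, a generic point of $S^1$ lies in exactly $l+1$ of the arcs, and the matching leaves precisely one $0$-cell and one $(2l+1)$-cell as critical, forcing the homotopy type $S^{2l+1}$. In the exceptional case $k/n = l/(l+1)$, the covering multiplicity drops at the arc endpoints and a refined matching yields $n-k-1$ critical $2l$-cells whose attaching maps collapse to the basepoint, producing $\bigvee^{n-k-1} S^{2l}$.

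The principal technical obstacle is precisely this Morse-theoretic construction: producing an acyclic matching with the right set of critical cells, and then, in the wedge case, certifying the wedge structure itself rather than merely matching Betti numbers. These constructions form the bulk of \cite{adamaszek2015vietoris,adamaszek2016nerve} and would be inherited here rather than reproved; the translation from their setting on $S^1$ to the cycle networks $G_n$ is carried out by Proposition \ref{prop:dowker-cech-cplx}, which supplies the scaling identification between the metric \v{C}ech filtration at radius $k/(2n)$ and the asymmetric Dowker sink complex at resolution $k$.
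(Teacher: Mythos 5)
The paper does not actually prove this statement -- it is imported verbatim as Theorem 3.5 of \cite{adamaszek2016nerve}, so the only obligation is to cite it correctly. Your proposal, however, inserts a reduction step that is wrong and that, if followed, would lead to a different (and incompatible) theorem. You claim that $\cech(\mb{X}_n,\tfrac{k}{2n})$ is the clique (flag) complex of the circulant graph on $\Z/n\Z$ joining vertices at cyclic distance at most $k$. That clique complex is the \emph{Vietoris--Rips} complex $\rips(\mb{X}_n,\tfrac{k}{n})$, which is in general strictly larger than the \v{C}ech complex: for example with $n=12$, $k=4$, the triple $\{0,\tfrac{4}{12},\tfrac{8}{12}\}$ has all pairwise distances equal to $\tfrac{4}{12}$, hence is a Rips simplex, but the three closed arcs of radius $\tfrac{4}{24}$ centered at these points have empty common intersection, so it is not a \v{C}ech simplex. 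The correct combinatorial description is that $\s$ is a simplex of $\cech(\mb{X}_n,\tfrac{k}{2n})$ iff $\s$ is contained in $k+1$ cyclically consecutive vertices (equivalently, some cyclic gap of $\s$ has length at least $n-k$); this is not determined by the $1$-skeleton, so the complex is not flag. (Your stated gap condition ``every cyclic gap between consecutive indices is at most $k$'' characterizes neither complex, and a generic point of $S^1$ lies in roughly $k$ of the arcs, not $l+1$.)

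This is not a cosmetic slip: the whole point of having both \cite{adamaszek2015vietoris} and \cite{adamaszek2016nerve} is that the Rips and \v{C}ech complexes of the circle have different homotopy-type thresholds -- $\tfrac{l}{2l+1}$ for Rips (cf.\ Theorem \ref{thm:vrcirc-main}) versus $\tfrac{l}{l+1}$ for \v{C}ech as in the statement you are proving. If your identification were right, applying the clique-complex (Rips) classification would produce spheres at the wrong parameter values and contradict the theorem. The legitimate routes are either to cite the nerve-of-arcs analysis of \cite{adamaszek2016nerve} directly, as the paper does, or to pass from \v{C}ech to Rips via the scale-changing homotopy equivalence $\rips(T_r(X),\tfrac{2r}{1+2r})\simeq\cech(X,r)$ of Theorem \ref{thm:vr-cech-cd} -- which is an equivalence after a point-set transformation and a change of scale, not an equality of complexes. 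Your appeal to discrete-Morse/cyclic-matching machinery is fine as an attribution of where the hard work lives, but it must be attached to the nerve complex of arcs, not to the circulant clique complex.
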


\thmdowkercyceven*

\begin{proof}[Proof of Theorem \ref{thm:dowker-cyc-even}] Let $l \in \N$ be such that $(l+1)$ divides $n$ and $0\leq k\leq n-2$. 
Then $\mf{D}^{\si}_{k,G_n} = \cech(\mb{X}_n,\tfrac{k}{2n})$ has the homotopy type of a wedge sum of $(n-k-1)$ copies of $S^{2l}$, by Theorem \ref{thm:cech-S1}. Here the equality follows from Proposition \ref{prop:dowker-cech-cplx}. Notice that $n-k-1 = \tfrac{n}{l+1}-1$. 
Furthermore, by another application of Theorem \ref{thm:cech-S1}, it is always possible to choose $\e >0$ small enough so that $\mf{D}^{\si}_{k-\e,G_n} = \cech(\mb{X}_n,\tfrac{k-\e}{2n})$ and $\mf{D}^{\si}_{k+\e,G_n} = \cech(\mb{X}_n,\tfrac{k+\e}{2n})$ have the homotopy types of odd-dimensional spheres. Thus the inclusions $\mf{D}^{\si}_{k-\e,G_n} \subseteq \mf{D}^{\si}_{k,G_n} \subseteq \mf{D}^{\si}_{k+\e,G_n}$ induce zero maps upon passing to homology. It follows that $\dgm^{\mf{D}}_{2l}(G_n)$ consists of the point $(\tfrac{nl}{l+1},\tfrac{nl}{l+1} + 1)$ with multiplicity $\tfrac{n}{l+1} -1$.

If $l \in \N$ does not satisfy the condition described above, then there does not exist an integer $1\leq j \leq n-2$ such that $j/n = l/(l+1)$. So for each $1\leq j \leq n-2$, $\mf{D}^{\si}_{j,G_n} = \cech(\mb{X}_n,\tfrac{j}{2n})$ has the homotopy type of an odd-dimensional sphere by Theorem \ref{thm:cech-S1}, and thus does not contribute to $\dgm^{\mf{D}}_{2l}(G_n)$. If $l$ satisfies the condition but $k \geq n-1$, then $\cech(\mb{X}_n,\tfrac{k}{2n})$ is just the $(n-1)$-simplex, hence contractible. \end{proof}


Theorem \ref{thm:dowker-cyc-even} gives a characterization of the even dimensional Dowker persistence diagrams of cycle networks. The most interesting case occurs when considering the 2-dimensional diagrams: we see that cycle networks of an even number of nodes have an interesting barcode, even if the bars are all short-lived. For dimensions 4, 6, 8, and beyond, there are fewer and fewer cycle networks with nontrivial barcodes (in the sense that only cycle networks with number of nodes equal to a multiple of 4, 6, 8, and so on have nontrivial barcodes). For a complete picture, it is necessary to look at odd-dimensional persistence diagrams. This is made possible by the next set of constructions. 

We have already recalled the definition of a Rips complex of a metric space. To facilitate the assessment of the connection to \cite{adamaszek2015vietoris}, we temporarily adopt the notation $\rips(X,\e)$ to denote the Vietoris-Rips complex of a metric space $(X,d_X)$ at resolution $\e >0$, i.e. the simplicial complex $\set{\s \subseteq X : \diam(\s) \leq \e}$. 

\begin{theorem}[Theorem 9.3, Proposition 9.5, \cite{adamaszek2015vietoris}] 
\label{thm:vr-cech-cd}
Let $0< r < \tfrac{1}{2}$. Then there exists a map $T_r: \pow(S^1) \r \pow(S^1)$ and a map $\pi_r: S^1 \r S^1$ such that there is an induced homotopy equivalence
\[\rips(T_r(X), \tfrac{2r}{1+2r}) \xr{\simeq} \cech(X,r).\]
Next suppose $X\subseteq S^1$ and let $0< r \leq r' < \tfrac{1}{2}$. Then there exists a map $\eta: S^1 \r S^1$ such that the following diagram commutes:
\[ \begin{tikzcd}[column sep=large]
\rips(T_r(X), \tfrac{2r}{1+2r}) \arrow{r}{\eta} \arrow{d}{\simeq}[swap]{\pi_r} &
\rips(T_{r'}(X), \tfrac{2r'}{1+2r'}) \arrow{d}{\simeq}[swap]{\pi_{r'}}\\
\cech(X,r) \arrow[hookrightarrow]{r}{\subseteq} &
\cech(X,r')
\end{tikzcd} \]

\end{theorem}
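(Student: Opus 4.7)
The plan is to prove the stated equivalences by exploiting the cyclic structure of $S^1$ to convert the \v{C}ech complex of $X$ at scale $r$ into a combinatorially simpler Vietoris--Rips complex on an enlarged vertex set. First I would record the combinatorial characterization of \v{C}ech simplices on the circle: for $X \subseteq S^1$ and a finite $\sigma \subseteq X$, the intersection $\bigcap_{x\in\sigma} \overline{B(x,r)}$ is nonempty iff $\sigma$ is contained in a closed arc of length at most $2r$, equivalently iff there is a center $c_\sigma \in S^1$ with $d_{S^1}(x,c_\sigma)\le r$ for every $x\in\sigma$.

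Next I would define $T_r(X) \subseteq S^1$ by adjoining to $X$ a canonical family of centers for the maximal \v{C}ech-compatible subsets of $X$, and take $\pi_r : S^1 \to S^1$ to be a continuous retraction that collapses each added center to the corresponding vertex of $X$ and interpolates linearly on the arcs in between. The rescaling factor $\tfrac{2r}{1+2r}$ is then determined by the requirement that the pairwise arc-lengths among points of $T_r(X)$ that jointly lie in some $\overline{B(c,r)}$ — which are bounded by $2r$ in the original metric — match the Rips threshold after renormalization of the circle's circumference to one. With these choices, one checks that every simplex of $\rips(T_r(X),\tfrac{2r}{1+2r})$ descends under $\pi_r$ to a \v{C}ech simplex of $X$, and conversely every \v{C}ech simplex of $X$ lifts through $X \hookrightarrow T_r(X)$ to a Rips simplex. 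The induced map on geometric realizations is then a homotopy equivalence, which I would establish by a nerve-type argument: both complexes are homotopy equivalent to a common subdivision obtained by collapsing the fibers of $\pi_r$ at the simplicial level.

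For the functorial square, when $0<r\le r'<\tfrac12$ the canonical construction of $T_\bullet$ nests compatibly, yielding a natural inclusion $T_r(X) \hookrightarrow T_{r'}(X)$ that respects the retractions $\pi_r,\pi_{r'}$. This inclusion, followed by vertex-level rescaling to account for the change from $\tfrac{2r}{1+2r}$ to $\tfrac{2r'}{1+2r'}$, defines the simplicial map $\eta$. Commutativity up to homotopy then reduces to checking that $\pi_{r'}\circ\eta$ and the canonical inclusion $\cech(X,r)\hookrightarrow\cech(X,r')$ precomposed with $\pi_r$ agree on the vertex set of $T_r(X)$; any residual discrepancy away from vertices can be absorbed by a contiguity argument analogous to those used in the proof of our Functorial Dowker Theorem (Theorem \ref{thm:dowker-functorial}).

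The main obstacle I expect is pinning down the precise form of $T_r$ and deriving the stretching factor $\tfrac{2r}{1+2r}$ rigorously. This factor is the least obvious ingredient: it encodes a subtle metric identity between the \v{C}ech diameter of subsets of $X$ and the Rips diameter of their lifts in $T_r(X)$, and extracting it requires a careful arc-length computation on $S^1$ that distinguishes between the original and normalized metrics. A secondary difficulty is ensuring that $T_r$ can be defined canonically enough for the square to commute (up to homotopy) without ad-hoc choices; getting this right is essential for the functorial statement, which we require in order to apply Theorem \ref{thm:pet} in the persistent setting.
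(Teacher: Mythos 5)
This statement is quoted, not proved, in the paper: it is Theorem 9.3 and Proposition 9.5 of \cite{adamaszek2015vietoris}, so the benchmark is the construction given there, which defines $T_r$ and $\pi_r$ explicitly (via a non-uniform reparametrization of the circle) and proves the equivalences using the cyclic-map/winding-fraction machinery that the paper separately quotes as Theorem \ref{thm:vrcirc-main}. Your sketch does not amount to a proof, because the two items you defer---the precise definition of $T_r$ and the derivation of the factor $\tfrac{2r}{1+2r}$---are the entire content of the result, and the construction you do propose fails. Concretely, if $T_r(X)$ is $X$ together with centers of the maximal \v{C}ech-compatible subsets, sitting in the same unit-circumference circle, then your claimed ``lifting'' direction is false: take $X=\{0,\tfrac13,\tfrac23\}$ and $r$ slightly larger than $\tfrac16$. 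Each pair of points of $X$ lies in a common closed ball of radius $r$, so $\{0,\tfrac13\}\in\cech(X,r)$, yet $d_{S^1}(0,\tfrac13)=\tfrac13>\tfrac{2r}{1+2r}$, so the image of this simplex under $X\hookrightarrow T_r(X)$ is not a simplex of $\rips(T_r(X),\tfrac{2r}{1+2r})$. A global renormalization of the circumference cannot repair this, since Rips complexes are invariant under uniform rescaling; the factor $1+2r$ in the reference comes from a genuinely non-uniform stretching of the circle, and without specifying that map there is nothing from which to ``derive'' the threshold.

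There is also a structural obstruction that your simplexwise ``descends/lifts'' picture ignores: every Rips complex is a flag complex, whereas $\cech(X,r)$ for $X\subseteq S^1$ need not be flag once $r\geq\tfrac16$ (in the example above all three edges are present but the $2$-simplex is not), so no simplex-level identification between the two sides can exist, and the equivalence must be produced by a genuine homotopy-theoretic argument. Your proposed justification---that both complexes are ``homotopy equivalent to a common subdivision obtained by collapsing the fibers of $\pi_r$''---is not such an argument: collapsing fibers of a simplicial map does not preserve homotopy type in general, and you exhibit no cover with contractible intersections to which a nerve theorem could apply. The cited proof instead checks that the relevant vertex maps are cyclic maps and invokes winding-fraction results of the kind recorded in Theorem \ref{thm:vrcirc-main}. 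Finally, your $\eta$ is not well defined as described: with your definition of $T_\bullet$, the points adjoined at scale $r$ (the midpoints $\tfrac16,\tfrac12,\tfrac56$ in the example) are in general not among those adjoined at scale $r'$, so the claimed nesting $T_r(X)\subseteq T_{r'}(X)$ is unjustified, and the commutativity of the square---which is exactly what the paper needs downstream in the proof of Theorem \ref{thm:cyc-cech-main}---is not established by the contiguity remark.
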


\begin{theorem}\label{thm:cyc-cech-main} Consider the setup of Theorem \ref{thm:vr-cech-cd}. If $\cech(X,r)$ and $\cech(X,r')$ are homotopy equivalent, then the inclusion map between them is a homotopy equivalence.
\end{theorem}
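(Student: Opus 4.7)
The starting point is the commutative diagram from Theorem \ref{thm:vr-cech-cd}, which gives $\pi_{r'}\circ \eta \simeq \iota\circ \pi_r$, where $\iota:\cech(X,r)\hookrightarrow \cech(X,r')$ denotes the inclusion. Since $\pi_r$ and $\pi_{r'}$ are homotopy equivalences, two applications of the two-out-of-three property for homotopy equivalences yield the chain of biconditionals: $\iota$ is a homotopy equivalence iff $\iota\circ \pi_r$ is, iff $\pi_{r'}\circ \eta$ is, iff $\eta$ is. Hence the problem reduces to showing that $\eta: \rips(T_r(X), \tfrac{2r}{1+2r}) \to \rips(T_{r'}(X), \tfrac{2r'}{1+2r'})$ is a homotopy equivalence.

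The second step is to pin down the common homotopy type of the source and target of $\eta$. Transporting the hypothesis $\cech(X,r) \simeq \cech(X,r')$ through $\pi_r$ and $\pi_{r'}$ gives $\rips(T_r(X), \tfrac{2r}{1+2r}) \simeq \rips(T_{r'}(X), \tfrac{2r'}{1+2r'})$ as abstract spaces. Now invoke the classification of homotopy types of Vietoris--Rips complexes of subsets of $S^1$ from \cite{adamaszek2015vietoris}: each such complex has the homotopy type of an odd-dimensional sphere $S^{2l+1}$ or of a wedge $\bigvee^m S^{2l}$ of even-dimensional spheres, where the integer $l$ is controlled by a combinatorial invariant counting essentially how many full turns around $S^1$ the chosen scale permits. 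The abstract homotopy equivalence between the two Rips complexes forces a common value of $l$ (and of $m$, in the wedge case).

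The third step is to verify that $\eta$ itself realizes this homotopy equivalence. The map $\eta$ is given explicitly in \cite{adamaszek2015vietoris} by a cyclic-order-preserving vertex rule on $S^1$. Using the explicit cycle representatives for the top homology of Rips complexes of cyclic point sets (alternating sums over the maximal cyclic simplices around $S^1$), one can track how $\eta$ acts at the top homology level and verify that it sends a generating cycle to another generating cycle of the target. Since the complexes in question are simply connected whenever $l \geq 1$, Whitehead's theorem then promotes the induced homology isomorphism to a homotopy equivalence; in the exceptional case $l=0$ one argues directly on $\pi_1$.

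The main obstacle is the homology-level verification in the third step: because $T_r(X)$ and $T_{r'}(X)$ are typically different subsets of $S^1$ and $\eta$ is not a subcomplex inclusion, cycle representatives must be tracked through $\eta$ with some care. A cleaner alternative is to leverage the fact, also from \cite{adamaszek2015vietoris}, that $\cech(X,s)$ changes homotopy type only at finitely many transition values of $s$; the hypothesis $\cech(X,r)\simeq \cech(X,r')$ then rules out any transition in the open interval $(r,r')$, so one could argue filtration-wise---verifying that each incrementally added simplex has a null-homotopic boundary---that every intermediate inclusion is a homotopy equivalence, and conclude the same for the composite $\iota$.
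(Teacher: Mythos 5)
Your first two steps coincide with the paper's: the commutative square of Theorem \ref{thm:vr-cech-cd} plus the fact that $\pi_r,\pi_{r'}$ are homotopy equivalences reduces the claim to showing that $\eta$ is a homotopy equivalence, and the hypothesis transports to an abstract homotopy equivalence $\rips(T_r(X),\tfrac{2r}{1+2r})\simeq\rips(T_{r'}(X),\tfrac{2r'}{1+2r'})$ whose homotopy type is pinned down by the classification in \cite{adamaszek2015vietoris}. The gap is your third step, which is where all the real work sits. You assert that one can ``track'' the action of $\eta$ on top homology and check that a generator goes to a generator, and then invoke Whitehead; but this verification is never carried out, and carrying it out amounts to re-proving the technical result the paper actually cites: $\eta$ is a \emph{cyclic} map of the Vietoris--Rips graphs (this is part of Proposition 9.5 of \cite{adamaszek2015vietoris}), and by Proposition 4.9 of \cite{adamaszek2015vietoris} (the second half of Theorem \ref{thm:vrcirc-main}) a cyclic map between graphs whose winding fractions both lie in the same open interval $\left(\tfrac{l}{2l+1},\tfrac{l+1}{2l+3}\right)$ induces a homotopy equivalence of the Rips complexes. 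The common homotopy type buys you exactly the winding-fraction hypothesis via the classification; without citing (or reproving) the cyclic-map statement, ``sends a generating cycle to a generating cycle'' is an unsupported claim, and in the wedge-of-even-spheres case, where the winding fraction sits exactly at $\tfrac{l}{2l+1}$, tracking a single generator is not even the right statement, since the top homology has rank equal to the number of wedge summands.

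Your proposed ``cleaner alternative'' does not repair this. First, the bare fact that $\cech(X,s)$ has finitely many transition values does not let you conclude from $\cech(X,r)\simeq\cech(X,r')$ that no transition occurs in $(r,r')$; the homotopy type could a priori change and change back, and what actually excludes this is the monotonicity of the winding fraction together with the classification, i.e.\ again the machinery of \cite{adamaszek2015vietoris}. Second, and more fundamentally, constancy of the abstract homotopy type along a filtration does not imply that the inclusion maps are homotopy equivalences --- that is precisely the content of Theorem \ref{thm:cyc-cech-main}, not something one may assume. The simplex-by-simplex criterion you suggest is also backwards: attaching a single $k$-simplex whose boundary sphere is null-homotopic in the smaller complex yields, up to homotopy, a wedge with $S^k$, so such an attachment changes the homotopy type; indeed any single-cell attachment changes the Euler characteristic and hence is never a homotopy equivalence, so an ``incremental'' argument of this form cannot work without a genuinely different mechanism (e.g.\ collapsibility pairings), which you do not supply.
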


Before providing the proof, we show how it implies Theorem \ref{thm:dowker-cyc-odd}.

\thmdowkercycodd*

\begin{proof}[Proof of Theorem \ref{thm:dowker-cyc-odd}] 
By Proposition \ref{prop:dowker-cech-cplx} and Theorem \ref{thm:cech-S1}, we know that $\sink_{k,G_n} = \cech(\mb{X}_n,\tfrac{k}{2n}) \simeq S^1 $ for integers $0 < k < \tfrac{n}{2}$. Let $b \in \N$ be the greatest integer less than $n/2$. Then by Theorem \ref{thm:cyc-cech-main}, we know that each inclusion map in the following chain is a homotopy equivalence:
\[\sink_{1,G_n} \subseteq \ldots \subseteq \sink_{b,G_n} = \sink_{\ceil{n/2}^-,G_n}.\]
It follows that $\dgm^{\mf{D}}_1(G_n) = \set{\left(1,\ceil{\tfrac{n}{2}}\right)}$. The notation in the last equality means that $\sink_{b,G_n} = \sink_{\d,G_n}$ for all $\d \in [b,b+1)$, where $b+1 = \ceil{n/2}$.

In the more general case, let $l \in \N$ and let $M_l$ be as in the statement of the result. Suppose first that $M_l$ is empty. Then by Proposition \ref{prop:dowker-cech-cplx} and Theorem \ref{thm:cech-S1}, we know that $\sink_{k,G_n}$ has the homotopy type of a wedge of even-dimensional spheres or an odd-dimensional sphere of dimension strictly different from $(2l+1)$, for any choice of integer $k$. Thus $\dgm^{\mf{D}}_{2l+1}(G_n)$ is trivial.

Next suppose $M_l$ is nonempty. By another application of Proposition \ref{prop:dowker-cech-cplx} and Theorem \ref{thm:cech-S1}, we know that $\sink_{k,G_n} = \cech(\mb{X}_n,\tfrac{k}{2n}) \simeq S^{2l+1} $ for integers $\tfrac{nl}{l+1} < k < \tfrac{n(l+1)}{l+2}$. 
Write $a_l:=\min\set{m\in M_l}$ and $b_l:=\max\set{m\in M_l}$. Then by Theorem \ref{thm:cyc-cech-main}, we know that each inclusion map in the following chain is a homotopy equivalence:
\[\sink_{a_l,G_n} \subseteq \ldots \subseteq \sink_{b_l,G_n} = \sink_{\ceil{n(l+1)/(l+2)}^-,G_n}.\]
It follows that $\dgm^{\mf{D}}_{2l+1}(G_n) = \set{\left(a_l,\ceil{\tfrac{n(l+1)}{l+2}}\right)}$.
\end{proof}

It remains to provide a proof of Theorem \ref{thm:cyc-cech-main}. For this, we need some additional machinery.

\paragraph{Cyclic maps and winding fractions}
We introduce some more terms from \cite{adamaszek2015vietoris}, but for efficiency, we try to minimize the scope of the definitions to only what is needed for our purpose. Recall that we write $S^1$ to denote the circle with unit circumference. So any $x\in S^1$ can be naturally identified with a point in $[0,1)$. We fix a choice of $0\in S^1$, and for any $x,x' \in S^1$, the length of a clockwise arc from $x$ to $x'$ is denoted by $\overrightarrow{d_{S^1}}(x,x')$. Then, for any finite subset $X\subseteq S^1$ and any $r \in (0,1/2)$, the \emph{directed Vietoris-Rips graph} $\vrg(X,r)$ is defined to be the graph with vertex set $X$ and edge set $\{(x,x') : 0 < \overrightarrow{d_{S^1}}(x,x') < r\}$. Next, let $\overrightarrow{G}$ be a Vietoris-Rips graph such that the vertices are enumerated as $x_0,x_1,\ldots, x_{n-1}$, according to the \emph{clockwise} order in which they appear. A \emph{cyclic map} between $\overrightarrow{G}$ and a Vietoris-Rips graph $\overrightarrow{H}$ is a map of vertices $f$ such that for each edge $(x,x') \in \overrightarrow{G}$, we have either $f(x)=f(x')$, or $(f(x),f(x')) \in \overrightarrow{H}$, and $\sum_{i=0}^{n-1}\overrightarrow{d_{S^1}}(f(x_i),f(x_{i+1})) = 1$. Here $x_n:=x_0$.

Next, the \emph{winding fraction} of a Vietoris-Rips graph $\overrightarrow{G}$ with vertex set $V(\overrightarrow{G})$ is defined to be the infimum of numbers $\tfrac{k}{n}$ such that there is an order-preserving map $V(\overrightarrow{G}) \r \Z/n\Z$ such that each edge is mapped to a pair of numbers at most $k$ apart. A key property of the winding fraction, denoted $\wf$, is that if there is a cyclic map between Vietoris-Rips graphs $\overrightarrow{G} \r \overrightarrow{H}$, then $\wf(\overrightarrow{G}) \leq \wf(\overrightarrow{H})$.  

\begin{theorem}[Corollary 4.5, Proposition 4.9, \cite{adamaszek2015vietoris}]
\label{thm:vrcirc-main}
Let $X\subseteq S^1$ be a finite set and let $0 < r < \tfrac{1}{2}$. Then,
\[\rips(X,r) \simeq \begin{cases}
S^{2l+1} &: \tfrac{l}{2l+1} < \wf(\vrg(X,r)) < \tfrac{l+1}{2l+3} \text{ for some } l\in \Z_+,\\
\bigvee^j S^{2l} &: \wf(\vrg(X,r)) = \tfrac{l}{2l+1}, \text{ for some } j\in \N.
\end{cases}\]
Next let $X' \subseteq S^1$ be another finite set, and let $r \leq r' < \tfrac{1}{2}$. Suppose $f:\vrg(X,r) \r \vrg(X',r')$ is a cyclic map between Vietoris-Rips graphs and $\tfrac{l}{2l+1} < \wf(\vrg(X,r)) \leq \wf(\vrg(X',r')) < \tfrac{l+1}{2l+3}$. Then $f$ induces a homotopy equivalence between $\rips(X,r)$ and $\rips(X',r')$.
\end{theorem}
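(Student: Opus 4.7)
The plan is to reduce the statement, via the commutative square of Theorem \ref{thm:vr-cech-cd}, to showing that the cyclic map $\eta$ between directed Vietoris--Rips graphs is a homotopy equivalence, and then invoke Theorem \ref{thm:vrcirc-main}. Concretely, consider the square
\[\begin{tikzcd}[column sep=large]
\rips(T_r(X), \tfrac{2r}{1+2r}) \arrow{r}{\eta} \arrow{d}{\pi_r}[swap]{\simeq} & \rips(T_{r'}(X), \tfrac{2r'}{1+2r'}) \arrow{d}{\pi_{r'}}[swap]{\simeq}\\
\cech(X,r) \arrow[hookrightarrow]{r} & \cech(X,r')
\end{tikzcd}\]
Since $\pi_r$ and $\pi_{r'}$ are homotopy equivalences and the square commutes, the bottom inclusion is a homotopy equivalence if and only if $\eta$ is. Thus it suffices to verify that $\eta$ is a homotopy equivalence under the hypothesis $\cech(X,r) \simeq \cech(X,r')$.

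Next, I would combine that hypothesis with the vertical equivalences to conclude that the two upper Rips complexes share a common homotopy type. By the classification in the first assertion of Theorem \ref{thm:vrcirc-main}, this common homotopy type is either (a) an odd-dimensional sphere $S^{2l+1}$, in which case both winding fractions $w := \wf(\vrg(T_r(X), \tfrac{2r}{1+2r}))$ and $w' := \wf(\vrg(T_{r'}(X), \tfrac{2r'}{1+2r'}))$ lie strictly inside the open interval $(\tfrac{l}{2l+1}, \tfrac{l+1}{2l+3})$, or (b) a wedge $\bigvee^j S^{2l}$, in which case both winding fractions equal the boundary value $\tfrac{l}{2l+1}$. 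In case (a), the map $\eta$ is a cyclic map of Vietoris--Rips graphs by construction (see Theorem \ref{thm:vr-cech-cd}), and cyclic maps are monotone in the winding fraction, so $w \leq w'$. Both then lie strictly between $\tfrac{l}{2l+1}$ and $\tfrac{l+1}{2l+3}$, so the second assertion of Theorem \ref{thm:vrcirc-main} applies directly and gives that $\eta$ is a homotopy equivalence, concluding this case.

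The main obstacle is the wedge case (b), because Theorem \ref{thm:vrcirc-main} as stated provides the cyclic-map criterion only at strict interior winding fractions, not at the boundary $w = w' = \tfrac{l}{2l+1}$. To address this, I would argue that once both wedges are forced to have the same number $j$ of $S^{2l}$ summands (by the assumed homotopy equivalence), the cyclic map $\eta$ must induce an isomorphism on $H_{2l}$: the explicit combinatorial description of the $j$ independent cycles in \cite{adamaszek2015vietoris} at the threshold winding fraction, combined with the fact that $\eta$ preserves the relevant clockwise structure, should let one track each generator across $\eta$. For $l \geq 1$ the wedges are simply connected, so Whitehead's theorem upgrades the homology isomorphism to a homotopy equivalence; the boundary case $l = 0$ reduces to a fundamental-group computation on $S^1$ or a connectivity count on $\bigvee^j S^0$. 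Pushing the diagram back down through $\pi_r$ and $\pi_{r'}$ then yields that the inclusion $\cech(X,r) \hookrightarrow \cech(X,r')$ is a homotopy equivalence in both cases.
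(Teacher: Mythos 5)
Your proposal does not prove the statement in question. The statement is Theorem \ref{thm:vrcirc-main}: the classification of the homotopy type of $\rips(X,r)$ for a finite $X \subseteq S^1$ in terms of the winding fraction of the directed graph $\vrg(X,r)$, together with the criterion that a cyclic map whose source and target winding fractions lie in the same open interval $\left(\tfrac{l}{2l+1},\tfrac{l+1}{2l+3}\right)$ induces a homotopy equivalence of Vietoris--Rips complexes. In the paper this result is imported wholesale---it is Corollary 4.5 and Proposition 4.9 of \cite{adamaszek2015vietoris}---and proving it would require the machinery developed there (cyclic graphs and their clique complexes, the dismantling/domination arguments that compute the homotopy type at and away from the threshold winding fractions), none of which appears in your argument.

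What you have written is instead a proof sketch of Theorem \ref{thm:cyc-cech-main} (that the inclusion $\cech(X,r)\hookrightarrow \cech(X,r')$ is a homotopy equivalence whenever the two \v{C}ech complexes are abstractly homotopy equivalent), and your sketch \emph{uses} Theorem \ref{thm:vrcirc-main} as its central ingredient---both the classification in its first assertion and the cyclic-map criterion in its second. Relative to the assigned statement this is circular: you assume exactly what you were asked to prove. (As a sketch of Theorem \ref{thm:cyc-cech-main} your outline tracks the paper's own argument, and your worry about the boundary case $\wf = \tfrac{l}{2l+1}$ is a legitimate one that the paper's proof passes over silently---but that is a different theorem from the one at hand.)
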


We now have the ingredients for a proof of Theorem \ref{thm:cyc-cech-main}. 

\begin{proof}[Proof of Theorem \ref{thm:cyc-cech-main}] Since the maps $\pi_r$ and $\pi_{r'}$ induce homotopy equivalences, it follows that 
\[\rips(T_r(X),\tfrac{2r}{1+2r}) \simeq \rips(T_{r'}(X),\tfrac{2r'}{1+2r'}).\] 
By the characterization result in Theorem \ref{thm:vrcirc-main}, we know that there exists $l \in \Z_+$ such that 
\[\tfrac{l}{2l+1}  < \wf(\vrg(T_r(X),\tfrac{2r}{1+2r})) \leq \wf(\vrg(T_{r'}(X),\tfrac{2r'}{1+2r'})) < \tfrac{l+1}{2l+3}.\]
The map $\eta$ in Theorem \ref{thm:vr-cech-cd} appears in \cite[Proposition 9.5]{adamaszek2015vietoris} through an explicit construction. Moreover, it is shown that $\eta$ induces a cyclic map $\wf(\vrg(T_r(X),\tfrac{2r}{1+2r})) \r \wf(\vrg(T_{r'}(X),\tfrac{2r'}{1+2r'}))$. Thus by Theorem \ref{thm:vrcirc-main}, $\eta$ induces a homotopy equivalence between $\rips(T_r(X),\tfrac{2r}{1+2r})$ and $\rips(T_{r'}(X),\tfrac{2r'}{1+2r'})$. Finally, the commutativity of the diagram in Theorem \ref{thm:vr-cech-cd} shows that the inclusion $\cech(X,r) \subseteq \cech(X,r')$ induces a homotopy equivalence. \end{proof}

\begin{remark} The analogue of Theorem \ref{thm:cyc-cech-main} for \v{C}ech complexes appears as Proposition 4.9 of \cite{adamaszek2015vietoris} for Vietoris--Rips complexes. We prove Theorem \ref{thm:cyc-cech-main} by connecting \v{C}ech and Vietoris-Rips complexes using Proposition 9.5 of \cite{adamaszek2015vietoris}. However, as remarked in \S9 of \cite{adamaszek2015vietoris}, one could prove Theorem \ref{thm:cyc-cech-main} directly using a parallel theory of winding fractions for \v{C}ech complexes.

\end{remark}

\end{document}